\newcommand{\mysection}{\setcounter{equation}{0} \section}
\renewcommand{\P}{\mathbb{P}}
\newcommand{\E}{\mathbb{E}}
\newcommand{\R}{\mathbb{R}}  
\newcommand{\I}{\mathbb{I}}  
\renewcommand{\b}{\mathbf{b}}
\newtheorem{defi}{Definition}
\newtheorem{THM}{Theorem}   
\newtheorem{REM}{Remark}
\newtheorem{PROP}[THM]{Proposition}
\newtheorem{lem}[THM]{Lemma}
\renewcommand{\S}{{\mathbb S}}
\def\1{\mbox{1\hspace{-0.25em}l}}
\newcommand \A[1]{{\bf (#1)}}
\def\leftB{[\![}
\def\rightB{]\!]}
\def\0{{\mathbf{0}}}
\def\p{{\partial}}
\title{
{\textbf{Schauder estimates for  drifted fractional operators in the supercritical case}}}
\author{\textbf{Paul-\'Eric Chaudru de Raynal}\footnote{Univ. Grenoble Alpes, Univ. Savoie Mont Blanc, CNRS, LAMA, 73000 Chamb\'ery, France. pe.deraynal@univ-smb.fr},
 \textbf{
St\'ephane Menozzi}\footnote{Laboratoire de Mod\'elisation Math\'ematique d'Evry (LaMME), Universit\'e d'Evry Val d'Essonne, 23 Boulevard de France 91037 Evry, France and Laboratory of Stochastic Analysis, HSE,
Shabolovka 31, Moscow, Russian Federation. stephane.menozzi@univ-evry.fr}
\  \textbf{ and Enrico Priola}\footnote{Universit\`a di Torino, Dipartimento di matematica ``Giuseppe Peano'', via Carlo Alberto, 10, 10123 Torino. enrico.priola@unito.it}
}
\begin{document}

\maketitle
\begin{abstract}
%We obtain Schauder estimates in the whole space for the fractional Laplacian in the super-%critical case $\alpha\in (0,1) $. For a wider class of spatial operators with equivalent symbol, the estimates remain true under additional constraints on $\alpha $ and the considered H\"older regularity index for the coefficients.
We 
%obtain global  Schauder estimates for 
 consider a non-local operator $L_{\textcolor{black}{ \alpha}}$ which is the sum of a fractional Laplacian $\triangle^{\alpha/2} $, $\alpha \in (0,1)$,  plus a first order  term which is    measurable   in the time variable
    and locally
$\beta$-H\"older continuous in the space variables. \textcolor{black}{Importantly, the fractional Laplacian $\Delta^{ \alpha/2} $  does not dominate the first order term.}
 We show that  global  parabolic Schauder estimates hold even in this case under the natural condition  $\alpha + \beta >1$. Thus, the constant appearing in the Schauder estimates is 
  in fact   independent of the $L^{\infty}$-norm of the first  order term.
In our approach  we  do not use the so-called extension property and 
%  This is why under the previous assumptions we can then 
 we can replace $\triangle^{\alpha/2}  $ with  other  operators of $\alpha$-stable type 
%with symbol 
which are somehow close, \textcolor{black}{including the relativistic $\alpha$-stable operator}. 
Moreover, when $\alpha \in (1/2,1)$, we can prove Schauder estimates for more general $\alpha$-stable type operators like the  singular cylindrical one, i.e., when  $\triangle^{\alpha/2} $ is replaced by a sum of one dimensional fractional Laplacians
  $\sum_{k=1}^d
   (\partial_{x_k x_k}^2 )^{\alpha/2}$. 
\end{abstract}

%{\color{black} Enrico:  some things must be corrected and rearranged (there could be also  some repetitions). In the text in red I put some comments or parts to be checked more.}

\mysection{Statement of the problem and main results}

We are interested in establishing global Schauder estimates for the following parabolic  integro-partial differential equation (IPDE):
\begin{eqnarray}\label{Asso_PDE}
\p_t  u(t,x) +  {L}_{\alpha} u(t,x) + F(t,x) \cdot D_x u(t,x) &=& -f(t,x),\quad \text{on }[0,T)\times \R^d, \notag\\
u(T,x) &=& g(x),\quad \text{on }\R^d,
\end{eqnarray}
where $T>0$ is a fixed final time horizon.

The operator $L_{\alpha }$ can be the fractional Laplacian $\triangle^{\alpha/2}$, i.e., for regular functions $\varphi: \R^d \to \R$,
\begin{equation}\label{lap} 
\triangle^{\alpha/2} \varphi(x
) ={\textcolor{black}{ {\rm p.v.}}} \int_{\R^d} [\varphi(x+y)-\varphi(x)] \nu_{\textcolor{black}{\alpha}} (dy),
 \;\; \text{where}\; \,
\nu_{\textcolor{black}{\alpha}} (dy) = C_{\alpha,d}\frac{dy}{|y|^{d+\alpha}},
\end{equation}
 or a more general \textcolor{black}{\textit{symmetric}} non-local $\alpha$-stable  operator with   \textcolor{black}{ symbol comparable to $-|\lambda|^\alpha $ but associated with a wider class of L\'evy measures $\nu_\alpha$  (see Section \ref{SEC_SETTING}  for our  precise assumptions). We can also consider some \textit{non-symmetric}  stable operators like the relativistic stable one with symbol $-\big( |\lambda|^2 + m^{\frac{2}{\alpha}} \big)^{\frac{\alpha}{2}} + m $, $m>0$.
 % that can be written 
%   %   is a symmetric stable L\'evy measure of order $\alpha\in (0,1) $. If we % now write
%   in polar coordinates $y=\rho s $, $(\rho,s)\in \R_+\times {\mathbb S}^{d-1} $
%   %, the previous measure $\nu_{\alpha} $ decomposes 
%   as 
%\begin{equation}\label{DECOMP_NUo}
%\nu_{\alpha}(dy)= \rho^{-1-\alpha} \, d\rho \, \tilde \mu(ds),\;\;\; \alpha \in (0,1);
%\end{equation}
%where $\tilde \mu $ is a non-degenerate symmetric measure on the sphere ${\mathbb S}^{d-1} $. Again, if $\tilde \mu $ is precisely the Lebsegue measure on the sphere, then we obtain ${\Delta}^{ \alpha /2} $. We refer to Section \ref{SEC_SETTING}  for our  precise assumptions. 
 }

In \eqref{Asso_PDE}, the source 
$f: [0,T]\times \R^d \to \R$ and terminal condition $g:\R^d\to \R$ are assumed to belong to some suitable H\"older spaces and to be bounded.
% (for simplicity). 

 The drift term $F$ can  be unbounded. It 
is only assumed to be (locally) $\beta$-H\"older continuous, $\beta \in (0,1)$, i.e.
$F : [0,T] \times \R^d  \to \R^d$ is Borel measurable, locally bounded 
and  there exists ${K_{0}}>0$ such that 
\begin{equation}\label{22}
|F(t,x) - F(t,y)| \le {K_{0}} |x-y|^{\beta},\;\;\; t \in [0,T],\;\; x,y \in \R^d\ s.t. \ |x-y|\le 1.
\end{equation}

In particular,   we concentrate on the so-called \textit{super-critical} case, i.e., $\alpha \in (0,1) $, {\color{black} although our estimates can be extended to the simpler case $\alpha \in [1,2)$%. maintaining  the same assumptions on $L_{\alpha}, f,g$ and $F$
}. 
The difficulty %of the problem 
is quite clear: \textcolor{black}{in the Fourier space}, $L_{\alpha}$  is of order $\alpha$ and {\it does not dominate}, \textcolor{black}{when $\alpha\in (0,1) $}, the drift  term which is roughly speaking of  order one %(\textcolor{black}{for related issues, we also refer to Remark 3.5 in \cite{prio:12}}).
(\textcolor{black}{see also Remark 3.5 in \cite{prio:12}} for related issues).

% as in \cite{kryl:prio:10}
%{\color{black} Enrico: Perhaps we can only assume the previous condition for $|x-y| \le 1$; we can think. }
%{\color{black} Stefano: Indeed, it should be enough a priori. Check in the proofs below}
%%%%% Non ne sono proprio pi sicuro...
In order to compensate the \textit{low} smoothing effect of $L_\alpha $, it is natural to ask \textit{more} on the H\"older exponent $\beta $ of the drift. Namely, we need that the gradient $D_x u(t,x)$ in \eqref{Asso_PDE} exists in the classical sense. To this end, {\color{black} since the smoothing effect of $L_\alpha$ on the $\beta$-H\"older source $f$ is \textit{expected} to be of order $(\alpha+\beta)$ in space, it is natural to consider $\alpha + \beta >1$. This condition also appears from a probabilistic viewpoint; it had indeed already been observed in the scalar case by Tanaka \textit{et al.} \cite{tana:tsuc:wata:74} that uniqueness might fail for the corresponding SDE when $\alpha+\beta\le 1 $.}

\textcolor{black}{In the previously described  framework, we obtain estimates like}
\begin{equation}
\label{EST_SCHAU11}
\|u\|_{L^\infty([0,T],C_b^{\alpha+\beta})}\le C(\|g\|_{C_b^{\alpha+\beta}}+\|f\|_{L^\infty([0,T],C_b^{\beta})}),
\end{equation}
\textcolor{black}{with usual notations for H\"older spaces, where $C$ is independent of $u$, $f$ and $g$.}

{\color{black}
An interesting example covered by our assumptions  is the non-local Ornstein-Uhlenbeck operator
\begin{equation}\label{ou1} 
%{L}_{\alpha} u(t,x) + F(t,x) \cdot D_x u(t,x) = 
\triangle^{\alpha/2} \varphi (x) + Ax \cdot D_x \varphi(x),
\end{equation}
when $F(t,x) = Ax$ and $A$ is any  $d \times d$ real matrix (in this case assumption \eqref{22}  holds for any $\beta \in (0,1)$). If $\alpha =2$ Schauder estimates where first proved by Da Prato and Lunardi \cite{DL95}. After that paper  the OU operator has been much investigated as  a prototype of operator with unbounded coefficients. \vskip 2mm
}

{\color{black}\noindent\textbf{Related results.}} Schauder estimates in the $\alpha$-stable non-local framework have been addressed by several authors,   always assuming that the drift term is globally bounded and  mainly assuming that $\alpha \ge 1$. 
  In \textcolor{black}{some}  papers,  the L\'evy measure $\nu_{\alpha}$ may also depend on $t$ and $x$. We mention \textcolor{black}{for instance}  the so-called stable-like setting, \textcolor{black}{corresponding to}  time-inhomogeneous operators of the form
\begin{equation}
\label{STAB_LIKE}
L_t\varphi(x)=\int_{\R^d} [\varphi(x+y)-\varphi(x)- \I_{\alpha\in [1,2)}  D_x \varphi(x) \cdot y ] m(t,x,y) \frac{dy}{|y|^{d+\alpha}}\, + \,  F(t,x) \cdot  D_xu(t,x) \I_{\alpha\in [1,2)}, 
\end{equation}
where the \textit{diffusion coefficient} $m $ is bounded from above and below, H\"older continuous in the spatial variable, and even in the $y$ variable for $\alpha=1$. \textcolor{black}{In that framework}, 
Mikulevicius and  Pragarauskas \cite{miku:prag:14} obtained parabolic Schauder type bounds on the whole space
 and derived from those estimates the well-posedness of the corresponding martingale problem. 
%Observe that, in \eqref{STAB_LIKE},  the spherical part of the jump %measure  
%is absolutely continuous w.r.t. the Lebesgue measure on the unit sphere %${\mathbb S}^{d-1}$ of $\R^d$. 
Observe that, in \eqref{STAB_LIKE}, in the super-critical case $\alpha\in (0,1) $, the drift \textcolor{black}{term} is set to 0. Again, this is mainly due to the fact that, in that case, the drift cannot be viewed anymore as a lower order perturbation of the fractional \textcolor{black}{operator}.
\vskip 1mm

In the driftless framework, the (elliptic)-Schauder type estimates for stable-like operators were first derived by Bass \cite{bass:09}. We can refer as well to the recent work of Imbert \textit{et al.} \cite{imbe:jin:shvy:16} concerning Schauder estimates for a driftless stable like operator of type \eqref{STAB_LIKE} for $\alpha=1 $ and some non-standard diffusion coefficients $m$ with applications to a non-local Burgers type equation. Eventually, still for $F=0$, in the general
  non-degenerate symmetric $\alpha$-stable setting,
   % that can be written 
%   %   is a symmetric stable L\'evy measure of order $\alpha\in (0,1) $. If we % now 
for which $\nu_\alpha $ writes   in polar coordinates $y=\rho s $, $(\rho,s)\in \R_+\times {\mathbb S}^{d-1} $
%   %, the previous measure $\nu_{\alpha} $ decomposes 
  as 
\begin{equation}\label{DECOMP_NUo}
\nu_{\alpha}(dy)= \rho^{-1-\alpha} \, d\rho \, \tilde \mu(ds), %\;\;\; \alpha \in (0,1);
\end{equation}
where $\tilde \mu $ is a non-degenerate symmetric measure on the sphere ${\mathbb S}^{d-1} $,
%. Again, if $\tilde \mu $ is precisely the Lebsegue measure on the sphere, then we obtain ${\Delta}^{ \alpha /2} $.
   % stable setting, 
we can also mention the works of Ros-Oton and Serra \cite{roso:serr:16} for interior and boundary elliptic-regularity and Fernandez-Real and Ros-Oton \cite{fern:roso:17} for parabolic equations. We can also refer to Kim and Kim \cite{kim:kim:15} for results on the whole space involving more general, but rotationally invariant, L\'evy measures, \textcolor{black}{or to Dong and Kim \cite{dong:kim:13} for stable like measures that might be non-symmetric and non-regular w.r.t the jump parameter}.

%Again, the drift appears when $\alpha\ge 1 $ because it dominates, or has at least the same order    

\vskip 1mm
In  the  elliptic setting,  when $\alpha\in [1,2) $ and  $L_{\alpha }$ is a non-degenerate symmetric $\alpha$-stable operator and for bounded H\"older drifts, global
Schauder estimates were obtained 
%fractional type operators with possibly singular spherical parts for the %jump measure, like e.g. the cylindrical fractional Laplacian, 
 by Priola, see e.g. Section 3 in \cite{prio:12} and \cite{prio:18} with respective applications to the strong well-posedness and Davie's uniqueness for the corresponding SDE. 
 %We consider such operators in Section 1.1.2. Indeed 
 Also, when $\alpha\in [1,2) $,
 elliptic Schauder estimates can be proved for more  general  L\'evy-type generators  invariant for translations, see Section 6 in \cite{prio:18} and  Remark \ref{chissa}.

 \vskip 1mm
 %In the driftless case, i.e. $F=0$ we can also mention, in the general
%  non-degenerate symmetric $\alpha$-stable setting,
% % stable setting, 
% the works of Ros-Oton and Serra \cite{roso:serr:16} for interior and boundary elliptic-regularity and Fernandez-Real and Ros-Oton \cite{fern:roso:17} for parabolic equations. We can also refer to Kim and Kim \cite{kim:kim:15} for results on the whole space involving more general, but rotationally invariant, L\'evy measures.
% 
 For a non trivial, and potentially \textit{rough}, drift, there is a rather large literature concerning the regularity of \eqref{Asso_PDE} when the drift  (possibly depending on the solution) %in  is in divergence form, i.e. $\nabla \cdot(Fu)(t,x) $ and 
 is divergence free, i.e., $\nabla \cdot F(t,x) $=0, in connection with the quasi-geostrophic equation even for $\alpha\in (0,1) $. We can mention the seminal work of Caffarelli and Vasseur \cite{caff:vass:10} and the work of Silvestre \textit{et al.} \cite{silv:vico:zlat:13} which exhibits counter-examples to regularity  of \eqref{Asso_PDE} when the terminal condition lacks good integrability properties or when the condition $\alpha+\beta>1 $ is not met (see Theorem 1.1 therein). \textcolor{black}{Conditions on divergence free drifts $F$ in Morrey-Campanato or Besov spaces giving the H\"older continuity of \eqref{Asso_PDE} are discussed in \cite{cham:meno:16} and \cite{cham:meno:18}}.\\
 %We point out that, from a probabilistic viewpoint, it had already been observed in the scalar setting by Tanaka \textit{et al.} \cite{tana:tsuc:wata:74} that uniqueness might fail for the corresponding SDE when $\alpha+\beta\le 1 $.

% \textcolor{black}{From S. to E. and P.E: I do not feel confident to speak of analytic counterpart for the results of Silvestre, Vicol and Zlatos. They seem to me much more involved than those by Tanaka et al. Is the above sentence OK for both of you?}
%  this This last property can somehow be viewed as an analytic counterpart of the non-uniqueness result for drifted stable driven SDEs of  Watanabe

When, $\alpha \in (0,1) $ and $F$ is H\"older continous and bounded (\textcolor{black}{but not necessarily divergence free}),  Silvestre obtained in \cite{silv:12} sharp Schauder estimates on balls for the 
%important case of the 
fractional Laplacian. His approach heavily relies on the so-called extension property, see \cite{caff:silv:07} or \cite{molc:ostr:69} for a more probabilistic approach, and therefore seems rather delicate to extend  to more general operators of stable type or with varying coefficients. Also, it seems that  our result when ${L}_{\alpha} = \triangle^{\alpha/2}$  and   $F$ satisfying \eqref{22}  cannot be obtained from the estimates by Silvestre using a standard  covering argument;
%\textcolor{black}{for a drift $F$ satisfying \eqref{22}};  
indeed  the Schauder constant in \cite{silv:12}  also depends  on the global boundedness of $F$. We can mention as well the recent work of Zhang and Zhao \cite{zhan:zhao:18} who address through probabilistic arguments the parabolic Dirichlet problem in the super-critical case for stable-like operators of the form \eqref{STAB_LIKE} with a non trivial bounded drift, i.e.,  getting rid of the indicator function for the drift. They \textcolor{black}{also} obtain interior Schauder estimates and some boundary decay estimates (see e.g. Theorem 1.5 therein).
\vskip 2mm

{\color{black}\noindent\textbf{Outline for Schauder estimates through perturbative approach.}} {\color{black} In this work we will establish global Schauder estimates for the solution of \eqref{Asso_PDE} inspired by the perturbative approach first introduced in \cite{chau:hono:meno:18} to derive \textcolor{black}{such} estimates in anisotropic H\"older spaces for degenerate Kolmogorov equations. 

\vskip 1mm
Roughly speaking, the main steps of \textcolor{black}{our} perturbative approach are the following: choose \textcolor{black}{first} a suitable \textcolor{black}{\textit{proxy}} for the main equation (\emph{i.e.,} an \textcolor{black}{integro-partial differential operator} whose associated semi-group and heat kernel are known and close enough to the original one), exhibit \textcolor{black}{then} suitable regularization propert\textcolor{black}{ies} associated with the proxy,  expand \textcolor{black}{consequently} the solution of the IPDE of interest around the proxy (Duhamel \textcolor{black}{type formula} or variation of constants formula) and eventually use such a representation to obtain Schauder estimates. Let us emphasize that the derivation of a robust Duhamel representation for the IPDE is crucial in this approach.
% since the \textcolor{black}{chosen} proxy must \textcolor{black}%{withstand} the significant disruptions necessary to establish bound\textcolor{black}{s} on \textcolor{black}{the} H\"older modulus of the solution when adopting such approach (see Proposition \ref{DUHAMEL_2_THE_COME_BACK} below or \textcolor{black}{S}ection 2.4 in the aforementioned work).  We will see that this perturbative procedure appears to be sufficiently robust and flexible to be adapted to our current framework. 

More precisely, the perturbative argument take\textcolor{black}{s} here the following form: first choose \textcolor{black}{a} flow $ \theta_{s,\tau}(\xi)=\xi+\int_{\tau}^sF(v,\theta_{v,\tau}(\xi))dv$ depending on parameters $\xi \in \R^d$ and $\tau \in [0,T]$ to be chosen carefully and introduce then   
 the \textcolor{black}{ time inhomogeneous} drift  $F(t, \theta_{t,\tau}(\xi) )$ \textcolor{black}{\textit{frozen}} \textit{along the considered flow}. \textcolor{black}{Rewrite then \eqref{Asso_PDE} as}  %and write 
  \begin{eqnarray} \label{we_INTRO}
\p_t  u(t,x) +  {L}_{\alpha} u(t,x) + F(t, \theta_{t,\tau}(\xi) ) \cdot D_x u(t,x) &=& -f(t,x) + 
 [F(t, \theta_{t,\tau}(\xi) )- F(t,x)]  \cdot D_x u(t,x)
,\, 
%\text{on }[0,T)\times \R^d, 
\notag\\
u(T,x) &=& g(x),
\quad \text{on }\R^d.
\end{eqnarray}
%Note that \eqref{we_INTRO} and \eqref{Asso_PDE} are the same equations. 
This system reflects more or less the main ingredients needed for our perturbative approach: the \textcolor{black}{integro-partial differential operator} in the above l.h.s. will be our proxy 
%(with proxy parameters $\tau$ and $\xi$) 
which is hence a \emph{frozen} version \textcolor{black}{of the operator in} \eqref{Asso_PDE}, where the freezing is done along the chosen flow, and the second term in the r.h.s. is precisely the error made when expanding the \textcolor{black}{solution} around the proxy. Roughly speaking, by the  Duhamel principle  we get a representation formula for the solution $u$ and we can perform  estimates  by choosing the proxy parameters $\tau$ and $\xi$. In this respect it is useful to look at the proof of Proposition   \ref{DUHAMEL} and in particular to the derivation of estimates \eqref{INTEGRABILITY_BOUND_WITH_RIGHT_PARAMETERS_GRAD} and \eqref{THE_SWAP_LIMITS}. On the other hand, a more general Duhamel  formula is needed in Section 2.4.2 to complete the proof of Schauder estimates. 

At this stage, let us eventually mention that when dealing with unbounded first order coefficients, the previous associated flow
is a rather natural object to consider in order to establish Schauder estimate\textcolor{black}{s} and was already used by Krylov and Priola  \cite{kryl:prio:10} in the diffusive setting.

\vskip 1mm
In comparison with \cite{chau:hono:meno:18}, where the main difficulties encountered \textcolor{black}{consisted in handling} the degeneracy of the operator and its associated anisotropic behavior (while the derivation of a Duhamel representation as well as the existence of a solution were the easier parts), we here face different \textcolor{black}{problems}, especially when trying to obtain a suitable Duhamel representation or when dealing with the existence part. Such difficulties come from two main features of our framework: the stable operator $L_\alpha $ induces major integrability issues and we consider drift terms that are only locally H\"older continuous (see again \eqref{22}).

To overcome these particularit\textcolor{black}{ies},} we introduce a \textcolor{black}{\textit{localized}} version of \eqref{we_INTRO}. %Up to this modification,  the perturbative approach of \cite{chau:hono:meno:18}  
 The point is to multiply $u$ by a suitable \textit{localizing} test function $\eta_{\tau, \xi} $ where $(\tau,\xi) $ are freezing parameters and to establish a Duhamel type representation formula for $u \eta_{\tau, \xi}$ (cf.  \textcolor{black}{equation \eqref{we}}). \textcolor{black}{We point out that, in our current setting, this localization is not \textit{simply} motivated by the fact to get weaker assumptions on $F$ (i.e., from \textit{global} to \textit{local} H\"older continuity). Indeed, even when $L_\alpha=\Delta^{\frac \alpha 2} $ for $\alpha\in (0,1/2) $, it is also needed to give a proper meaning to the Duhamel representation of the solution because of the low integrability properties of the underlying heat-kernel (see again Proposition \ref{DUHAMEL} and its proof)}.
Let us also emphasize that, the key to perform our analysis consists in having \textit{good controls} on the heat kernel (\textcolor{black}{or density}) $p_\alpha $ associated with $L_\alpha$ and some of its spatial derivatives (cf.  \A{NDb} in Section \ref{SEC_OP}). 

{\color{black} This will for instance be the case \textcolor{black}{when} the spherical measure 
 $\tilde \mu$ in \eqref{DECOMP_NUo} has a smooth density w.r.t. the Lebesgue measure of ${\mathbb S}^{d-1},$ following the work of Kolkoltsov \cite{kolo:97}. }
Roughly speaking, in \textcolor{black}{that framework}, the heat-kernel {\color{black} $p_{\alpha}$} associated with $L_\alpha$, and its first two derivatives, will behave \textit{similarly} to the rotationally invariant  {\color{black} density of  
 $\Delta^{\frac \alpha 2}$} for which we have precise pointwise controls. In this framework we establish Schauder estimates for any $\alpha\in (0,1) $ and $\beta\in (0,1) $ s.t. $\beta+\alpha>1 $ for the potentially unbounded drift satisfying \eqref{22}.

On the other hand, {\color{black} in the case of  }
more general, and possibly singular, fractional operators {\color{black} of symmetric stable type}, following the approach initiated by Watanabe \cite{wata:07} and also used in Huang \textit{et al.} \cite{huan:meno:prio:19} consisting in treating separately the small and large jumps for the considered characteristic time scale, {\color{black} we have an additional constraint.
 We are only able to derive} that the {\color{black} spatial} derivatives of the heat kernel $p_\alpha(t,z) $ {\color{black} \textcolor{black}{(}which have the expected additional time singularity associated with the derivation order\textcolor{black}{)},} can integrate $ z\mapsto |z|^\beta$, i.e.,  $\int_{\R^d} |z|^\beta |D_z^k p_\alpha(t,z)| dz<\infty,\ k\in \{1,2\}$, provided $\beta<\alpha $, {\color{black} $t>0$.} The constraint $\alpha+\beta>1 $ then gives that we can handle in {\color{black} this} general super-critical case, indexes $\alpha\in (1/2,1) $. The difference between {\color{black} the previous  two cases can be intuitively explained as follows: for the fractional Laplacian the derivation of $p_{\alpha}(t, \cdot)$} induces a concentration gain at infinity, see e.g. Bogdan and Jakubowicz \cite{bogd:jaku:07}, which precisely permits to get rid of the integrability constraints that {\color{black} we have to face;  for operators whose symbol is equivalent to $|\xi|^\alpha $ but whose L\'evy  measure $\nu_{\alpha}$  has a very singular spherical part, we do not have such concentration gain (cf. Remark \ref{dff})}.

Eventually, our approach will also allow to handle {\color{black}  stable fractional truncated operators viewing  the difference between the truncated and the non-truncated operators as a bounded  perturbative term under control}.

\vskip 2mm
{\color{black}\noindent\textbf{Organization of this paper.}} \textcolor{black}{The article is organized as follows. We state our precise framework and give our main results at the end of the current section. Section \ref{PERT} is then dedicated to the perturbative approach which is the central point to derive our estimates. In particular, we obtain therein some Schauder estimates for drifted operators along the inhomogeneous flow as well as the key Duhamel representation for solutions. We establish in Section \ref{ESISTENZA}  existence results. Eventually, Section \ref{SEC_PROOF_PROP_P} is devoted to the derivation in the previously described cases, i.e., stable-like and general stable operator for $\alpha\in (1/2,1) $, of the properties required to obtain our main estimates. The proof of some technical results concerning the stability properties of non-Lipschitz flows are postponed to Appendix \ref{APP_TEC}}. 
%%%%%% Check alpha in Enrico's work

%ich involves the dynamics of a  flow   associated with the 

\subsection{Setting}\label{SEC_SETTING}
%Let us now describe precisely our setting and also state our assumptions and main results.

\subsubsection{Operators considered}\label{SEC_OP}

%Let us now describe precisely our setting and also state our assumptions and %main results.
%{\color{black} Enrico: we have to mention other papers and the importance of the topic. For instance as in Mikulevicius, 2014, Schauder estimates allow to prove well-posedness of the martingale problem.}
%{\color{black} Stefano: I tried to do my best to propose something coherent from the references you indicated.}

We consider a L\'evy generator   ${L}$ such that  
 for $\phi \in C_0^\infty(\R^d)  $, where $C_0^\infty(\R^d)$ stands for the space of real-valued infinitely differentiable functions with compact support one has:
\begin{equation}
\label{STAB_OPERATOR}
L \phi(x)=\int_{\R^d}\big( \phi(x+y )-\phi(x) \big) \nu(dy),\;\;\; x\in \R^d,
\end{equation}
where  $\nu $ is a Borel measure on $\R^d$ such that 
%$\int_{\R^d} (1 \wedge |x|^{\alpha}) \nu(dx) < \infty$, for some $\alpha \in %(0,1)$, and $\nu (\{ 0\})=0$.  
$\int_{\R^d} (1 \wedge |x|) \nu(dx) < \infty$, and $\nu (\{ 0\})=0$ ($\nu$ is an example of L\'evy measure).  
%L_{\alpha}
It is well known (see e.g. Sato \cite{sato:99}) that there exists a convolution Markov semigroup $(P_t)$ associated with $L$: 
\begin{equation}\label{sd3}
P_t h(x) = \int_{\R^d} h(x+ y ) \mu_t (dy),
 \;\;  h \in B_b(\R^d), \,\; t>0,\;\; x \in \R^d,
\end{equation}
$P_0 =I$, where $(\mu_t)$ is a family of Borel probability measures on $\R^d$ and $B_b(\R^d)$ stands for the set of real-valued bounded measurable functions.
 The function $v(t,x)= P_t \phi (x)$ provides the
 %unique bounded  
 classical solution to the Cauchy problem  
\begin{equation}\label{c11}
  \p_t  v(t,x) =  {L} v(t,x) = Lv(t, \cdot)(x),\;\; t>0,\;\;
v(0,x) = \phi(x)
\quad \text{on }\R^d.
 \end{equation}
In probabilistic term,  $\mu_t$  is the distribution at time $t\ge 0$ of 
 a purely jump L\'evy process  $(Z_t)_{t\ge 0}$.

\vskip 1mm  \noindent \A{NDa}
We assume that $\mu_t$ has a $C^2$-density $p(t, \cdot)$, $t>0$, and that there exists $\alpha \in (0,1)$ such that
if $0<\gamma < \alpha$:  
\begin{equation}\label{inv}
\int_{\R^d} |y|^{\gamma}  p(t, y) dy \le c\,  t^{\gamma/ \alpha}, \;\;\; t \in [0,1]. 
\end{equation}
 for some $c= c(\gamma, \alpha) >0$.  
 %\\
%{\color{black} Enrico: probably we can weak  the previous condition by only %requiring that the Blumenthal index of $\nu$ is $\alpha$. This is more precise %but more difficult to be explained}

 In the sequel we write $L = L_{\alpha}$, $\nu = \nu_{\alpha}$
 $P_t = P_t^{\alpha}$ and 
 $p= p_{\alpha}$ {\color{black} in order to \textcolor{black}{explicitly emphasize} the dependence of these \textcolor{black}{objects} w.r.t. the parameter $\alpha$}. \\
 
 \noindent \A{NDb} To prove Schauder estimates with H\"older index $\beta \in (0,1)$, beside the condition $\alpha + \beta >1$, we need the following smoothing effect: %for  $\gamma =0 $ and $\gamma = \beta$ 
%we assume that 
 %$\alpha + \beta  >1$ and moreover 
 there exists a constant $c= c( \alpha, \beta) >0$
such that 
\begin{equation*}
%\label{235}  
\tag{${\mathscr P}_\beta $}
\int_{\R^d} |y|^{\beta}\, | D^{k}_y p_{\alpha}(t, y)| \, dy \le \frac{c}{t^{[k- \beta]/\alpha}},\;\;\; t \in (0,1],\;\;\; k=1,2. 
%\;\;\; ({\mathscr P}_\beta)
\end{equation*}
where $D^{1}_y p_{\alpha}(t, y) = D_y p_{\alpha}(t, y)$ and $D^{2}_y p_{\alpha}(t, y)$ denote the first and second derivatives in the $y$-variable. \qed

\vskip 2mm
%We refer  to assumption \A{ND} when \A{NDa} and  \A{NDb} hold.
{\color{black}
 \begin{REM} {\em It is  known that in the case of the fractional Laplacian $L_{\alpha}= \triangle^{\alpha/2}$  assumption 
 $({\mathscr P}_\beta)$  is always verified for any $\beta \in (0,1)$ and $\alpha \in (0,1)$. On the other hand, in the more general class of non-degenerate symmetric stable operators, $({\mathscr P}_\beta)$ holds only if $0< \beta < \alpha$ (see Proposition \ref{CTR_OF_INTEGRABILITY_DER_GEN_STABLE}). This condition together with $\alpha + \beta >1$ imposes $\alpha > 1/2$. \textcolor{black}{We also manage to establish $({\mathscr P}_\beta) $, $\beta\in (0,1) $ for the
 non-symmetric relativistic stable operator}.
}
\end{REM}
}

 \begin{REM} {\em 
 We mention that \A{NDb} is specifically needed to handle the remainder perturbative term in the r.h.s. of \eqref{we_INTRO} and can be viewed as a  \textit{sufficient} condition to cope with the supercritical case.}
 \end{REM}

\subsubsection{ Non-degenerate symmetric stable operators}
We {\color{black} now} introduce a class  of operators $L_{\alpha}$ which verify \A{NDa} and \A{NDb}. {\color{black}These operators} ${L}_{\alpha}$ will be the generator\textcolor{black}{s} of non-degenerate symmetric stable process\textcolor{black}{es}, i.e., $L_{\alpha} $ can be represented by \eqref{STAB_OPERATOR} where  $\nu = \nu_{\alpha} $ is a symmetric stable L\'evy measure of order $\alpha\in (0,1) $. If we now write in polar coordinates $y=\rho s $, $(\rho,s)\in \R_+\times {\mathbb S}^{d-1} $, the previous measure $\nu_{\alpha} $ decomposes as 
\begin{equation}\label{DECOMP_NU}
\nu_{\alpha}(dy)= \frac{d\rho \tilde \mu(ds)}{\rho^{1+\alpha}},
\end{equation}
where $\tilde \mu $ is a symmetric measure on the ${\mathbb S}^{d-1} $ which is a spherical part of $\nu_{\alpha} $. Again, if $\tilde \mu $ is precisely the Lebsegue measure on the sphere, then $L_\alpha={\Delta}^{\frac \alpha 2} $. It is easy to verify that $\int_{\R^d} (1 \wedge |x|) \nu_{\alpha}(dx) < \infty.$

The L\'evy symbol  associated with ${L}_{\alpha}$ is given by the L\'evy-Khintchine formula
\begin{equation}\label{LEVY_KHINTCHINE} 
\Psi(\lambda)= 
 \int_{\R^d} \big(  e^{i \langle  \lambda, y \rangle }  - 1
 %- \, { i \langle \lambda,y
%\rangle} \, {\I}_{ |y| \le 1 } \, (y) 
\big ) 
\nu_{\alpha} (dy),\;\; \lambda \in \R^d,
\end{equation}
\textcolor{black}{where $\langle \cdot ,\cdot \rangle$ denotes the Euclidean scalar product on $\R^d $}  (see, for instance Jacob \cite{jaco:05} or Sato \cite{sato:99}). In the current symmetric setting,  Theorem 14.10 in \cite{sato:99} then yields:
 \begin{equation}\label{LEVY_STABLE}
\Psi(\lambda)=- \int_{{\mathbb S}^{d-1}} |\langle \lambda,s\rangle|^\alpha \mu(ds),%\Big, %),
\end{equation}
where $\mu = C_{\alpha,d} \tilde{\mu}$ for a positive constant $C_{\alpha,d}$. The spherical measure $\mu $ is called the \textit{spectral measure} associated with $\nu_{\alpha} $. We suppose that $\mu $ is non-degenerate, i.e., 
%in the sense of \cite{wata:07}:
% \begin{trivlist}
%\item[\A{ND}] 
there exists $\eta\ge1$  s.t. for all $\lambda\in \R^d $,
\begin{equation}
\label{EQ_ND}
\eta^{-1}|\lambda|^\alpha \le  \int_{{\mathbb S}^{d-1}} |\langle \lambda,s\rangle|^\alpha \mu(ds) \le \eta|\lambda|^\alpha,\;\; \alpha \in (0,1).
\end{equation}
% \end{trivlist}
 We carefully point out that condition \eqref{EQ_ND} is fulfilled by many types of spherical measures $\mu $, from measure equivalent to the Lebesgue measure on ${\mathbb S}^{d-1} $ (a \textit{stable-like} case) to very singular ones, like sum of Dirac masses along the canonical directions, which would correspond to the pure cylindrical case (or equivalently to the sum of scalar fractional Laplacians):
 \begin{equation}\label{cil}
 \sum_{k=1}^d
   (\partial_{x_k x_k}^2 )^{\alpha/2}.
\end{equation}
%\vskip 1mm
For symmetric stable operators under \eqref{EQ_ND}, it is well known (see e.g. \cite{kolo:97}) that the associated convolution Markov semigroup $(P_t^{\alpha})$ (see \eqref{sd3}) has a $C^{\infty}$-smooth density $p_{\alpha} (t, \cdot)$.   
 Through Fourier inversion, we get for all $t>0, y\in \R^d $:  
\begin{equation}\label{DENSITY_FTI}
p_\alpha(t,y)=\frac{1}{(2\pi)^{d}} \int_{\R^d}\exp\big(-\langle y,\lambda\rangle\big) \exp\Big(-t\int_{{\mathbb S}^{d-1}}|\langle \lambda,s\rangle|^\alpha \mu(ds) \Big) d\lambda.
\end{equation}
From \eqref{DENSITY_FTI} and \eqref{EQ_ND} we derive directly \A{NDa}. Indeed we have the following scaling property:  $p_{\alpha}(t,y) = {t^{-d/ \alpha}} \, p_{\alpha} (1, t^{- 1/\alpha} y)$, $ t>0$, $ y \in \R^d$. 

%{\color{black} Enrico is it useful to leave the next 3 lines ?}\textcolor{black}{ S.:I would say so, it gives the singularity associated with derivation straight away from the inverse Fourier representation.} 

Moreover, {\color{black} we have}  the following  global upper bound for the derivatives of the heat-kernel{\color{black} :} there exists $C:=C(\eta)$ s.t. for all $k\in \{0,1,2\}$ and for all $t>0,\ y\in \R^d $,
\begin{eqnarray}
\label{BD_GLOB_GRAD}
|D_x^k p_\alpha(t,y)|\le \frac{C}{t^{\frac{d+k}{\alpha}}}, \;\; t>0.
\end{eqnarray}
which in turns yields with the notations of \eqref{sd3}: 
$\forall t>0,\ x\in \R^d,\ |D_x^kP_t^{\alpha} h(x)| \le {\color{black} C t^{-(d+k)/\alpha}}  \|h\|_\infty,\;\; t>0. 
$

The validity of \A{NDb} for  general symmetric non-degenerate stable operators follows by the next result (note that in this case we have the property for any $t>0$ and not only for $t\in (0,1]$). 
%for our analysis we need some additional smoothing properties of the %derivatives of the semi-group. Namely, we say that for $\gamma\in (0,1] $ and %a threshold $K$ property $({\mathscr P}_\gamma)$ is satisfied if there exists %a constant $c= c( \alpha, \gamma) >0$ such that for all $z\in \R^d,\ |z|\le %Kt^{\frac 1\alpha} $, % for any $\eta \in [0,\gamma]$,  
%\begin{equation}\label{23} \tag{${\mathscr P}_\gamma $}
%\int_{\R^d} |y|^{\gamma}\, | D^{k}_y p_{\alpha}(t, y+z)| \, dy \le \frac{c}%{t^{[k- \gamma]/\alpha}},\;\;\; k=1,2.
%\end{equation}
%In order to derive our Schauder estimates, it will be crucial that $({\mathscr %P}_\beta) $ holds, where $\beta  $ is the spatial   H\"older regularity index %of the source and s.t. $\alpha+\beta>1 $. 
%\textcolor{black}{The presence of $z$, which is lower than the typical time %order, emphasizes that a perturbation below this threshold does not %affect the smoothing properties}.
%Before stating our main results let us first recall that for any symmetric %stable measure satisfying \A{ND}, no matter how singular the spherical measure %$ \mu$ is, the following important integrability properties hold.
\begin{PROP}
%[Integrability of the derivatives] 
\label{CTR_OF_INTEGRABILITY_DER_GEN_STABLE} Let $\alpha \in (0,1)$.
Assume \eqref{EQ_ND} holds, then  for any $0 \le \gamma<\alpha $, there exists $C:=C(\eta,\gamma)$ s.t. for all $\ell\in \{1,2\} $,  
 \begin{gather} \label{wss0}
 \int_{\R^d} |y|^{\gamma}\, | D^{\ell}_y p_{\alpha}(t, y)| \, dy \le \frac{C}{t^{[\ell- \gamma]/\alpha}},\;\; t>0.
\end{gather}
\end{PROP}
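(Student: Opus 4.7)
My plan is to use the self-similarity of $p_\alpha$ to reduce the bound to the single scale $t=1$, and then control $\int |z|^\gamma |D^\ell p_\alpha(1,z)|\,dz$ by a Watanabe-type small/large jumps decomposition of the underlying L\'evy process.

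\emph{Scaling reduction.} Formula \eqref{DENSITY_FTI} together with \eqref{EQ_ND} gives $p_\alpha(t,y) = t^{-d/\alpha} p_\alpha(1, t^{-1/\alpha}y)$, hence $D_y^\ell p_\alpha(t,y) = t^{-(d+\ell)/\alpha}(D^\ell p_\alpha)(1,t^{-1/\alpha}y)$. The substitution $z = t^{-1/\alpha}y$ yields
\[
\int_{\R^d} |y|^\gamma |D_y^\ell p_\alpha(t,y)|\, dy = t^{(\gamma-\ell)/\alpha}\int_{\R^d} |z|^\gamma |D^\ell p_\alpha(1,z)|\, dz,
\]
so it suffices to prove $I_\ell:=\int_{\R^d}|z|^\gamma|D^\ell p_\alpha(1,z)|\,dz<\infty$ for $\ell\in\{1,2\}$ and $0\le\gamma<\alpha$.

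\emph{Small/large jumps decomposition.} Split $\nu_\alpha = \nu^s + \nu^l$ with $\nu^s := \nu_\alpha\,\mathbf{1}_{\{|y|\le 1\}}$ and $\nu^l := \nu_\alpha\,\mathbf{1}_{\{|y|>1\}}$, and let $Z_1^s, Z_1^l$ be independent with these L\'evy measures. Then $Z_1 \stackrel{d}{=} Z_1^s + Z_1^l$, so writing $p^s$ for the density of $Z_1^s$ and $\mu^l$ for the law of $Z_1^l$, we have $D^\ell p_\alpha(1,\cdot) = D^\ell p^s(1,\cdot)*\mu^l$. Using $|z|^\gamma \le C_\gamma(|z-y|^\gamma + |y|^\gamma)$ and Fubini,
\[
I_\ell \le C_\gamma \int_{\R^d} |w|^\gamma |D^\ell p^s(1,w)|\,dw + C_\gamma \Bigl(\int_{\R^d}|y|^\gamma\mu^l(dy)\Bigr) \int_{\R^d}|D^\ell p^s(1,w)|\,dw.
\]
The large-jump moment is finite because $\mu^l$ is the law at time $1$ of a compound Poisson process with finite L\'evy measure $\nu^l$ satisfying $\int|y|^\gamma\nu^l(dy)\le C\int_1^\infty \rho^{\gamma-1-\alpha}\,d\rho<\infty$ precisely thanks to $\gamma<\alpha$ and \eqref{DECOMP_NU}.

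\emph{Estimates on the small-jump density.} The Fourier symbol of $p^s(1,\cdot)$ is $e^{\Psi^s(\lambda)}$ with $\Psi^s(\lambda) = \int_{|y|\le 1}(\cos\langle\lambda,y\rangle-1)\nu_\alpha(dy)$. Since $\Psi - \Psi^s = \int_{|y|>1}(\cos\langle \lambda,y\rangle -1)\nu_\alpha(dy)$ is uniformly bounded, \eqref{EQ_ND} transfers to $-\Re\Psi^s(\lambda)\ge c|\lambda|^\alpha - C$, yielding $|e^{\Psi^s(\lambda)}|\le C'e^{-c|\lambda|^\alpha}$ for $|\lambda|$ large. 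Moreover, since $\alpha<1$, $\int_{|y|\le 1}|y|^k\,\nu^s(dy) < \infty$ for every $k\ge 1$, so $\Psi^s\in C^\infty(\R^d)$ with \emph{globally bounded} derivatives of every order; hence $D_\lambda^k\bigl((i\lambda)^\ell e^{\Psi^s(\lambda)}\bigr)$ is integrable for every $k$, the stretched-exponential decay absorbing all polynomial factors in $\lambda$. Writing $w^k D^\ell p^s(1,w)$ as the inverse Fourier transform of this quantity for $|k|\le \lceil\gamma\rceil + d + 1$ yields the pointwise bound $|D^\ell p^s(1,w)| \le C_N(1+|w|)^{-N}$ for any $N$, which makes both integrals involving $p^s$ finite and closes the argument.

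The main technical subtlety is the last step---balancing the $\lambda$-derivatives of $e^{\Psi^s(\lambda)}$ against the Fourier decay to produce sufficient polynomial decay in $w$ for $D^\ell p^s(1,\cdot)$. This relies crucially on the non-degeneracy \eqref{EQ_ND} to secure the $e^{-c|\lambda|^\alpha}$ envelope, and on $\alpha<1$ for the smoothness and bounded derivatives of $\Psi^s$; the fact that the bound holds for \emph{all} $t>0$ (and not just $t\in(0,1]$) is then a direct consequence of the scaling identity above.
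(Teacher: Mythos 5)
Your proof is correct and takes essentially the same route as the paper: Lemma \ref{SENS_SING_STAB} there rests on the same small/large-jump splitting at the characteristic threshold $t^{1/\alpha}$ (your scaling reduction to $t=1$ is an equivalent way to exploit the self-similarity), with the large-jump moment handled exactly as you do and the rapid decay of the small-jump density and its derivatives imported from Lemma A.2 of \cite{huan:meno:prio:19}. The only difference is that you prove that auxiliary Fourier estimate directly instead of citing it, which merely makes the argument self-contained.
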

This proposition can be proved following the arguments of Lemma 4.2 in \cite{huan:meno:prio:19}. A complete proof is provided in Section \ref{SEC_PROOF_PROP_P}  for the sake of completeness. 
Importantly, it gives for $\beta = \gamma$ with $\alpha+\beta>1 $ 
%the property $({\mathscr P}_\beta) $ provided $\beta<\alpha $ which from 
the constraint 
%$\alpha+\beta>1 $ imposes that 
$\alpha>1/2 $ in Schauder estimates for any non-degenerate symmetric stable operator.

\vskip 1mm For a class of more regular non-degenerate symmetric stable operators   including  the fractional Laplacian we have the following better result.  
\begin{PROP}
%[Integrability of the derivatives] 
\label{CTR_OF_INTEGRABILITY_DER_GEN_STABLE1} Let $\alpha \in (0,1)$.
Assume \eqref{EQ_ND} holds and that  the spectral measure $\mu  $ has a \textit{smooth} density equivalent to the Lebesgue on ${\mathbb S}^{d-1} $. Then for any $\gamma \in [0,1]$, there exists $C:=C(\eta,\gamma, \mu)$ s.t. for all $\ell\in \{1,2\} $,  
\begin{gather} \label{wss}
\int_{\R^d} |y|^{\gamma}\, | D^{\ell}_y p_{\alpha}(t, y)| \, dy \le \frac{C}{t^{[\ell- \gamma]/\alpha}},\;\; t>0.
\end{gather}
\end{PROP}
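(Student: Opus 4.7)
\medskip
\noindent\textbf{Proof proposal.}
My plan is to reduce to the unit time case and then split the integral between a neighborhood of the origin and its complement, using a pointwise upper bound on the derivatives of $p_\alpha(1,\cdot)$ that exhibits a concentration gain at infinity (the phenomenon evoked in the discussion following Proposition~\ref{CTR_OF_INTEGRABILITY_DER_GEN_STABLE}).

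First, the self-similarity $p_\alpha(t,y)=t^{-d/\alpha}p_\alpha(1,t^{-1/\alpha}y)$ yields $D^\ell_y p_\alpha(t,y)=t^{-(d+\ell)/\alpha}(D^\ell p_\alpha)(1,t^{-1/\alpha}y)$, and the change of variable $z=t^{-1/\alpha}y$ gives
\begin{equation*}
\int_{\R^d}|y|^\gamma\,|D^\ell_y p_\alpha(t,y)|\,dy \;=\; t^{(\gamma-\ell)/\alpha}\int_{\R^d}|z|^\gamma\,|D^\ell_z p_\alpha(1,z)|\,dz.
\end{equation*}
Hence it suffices to prove that the right-hand integral is finite for $\gamma\in[0,1]$ and $\ell\in\{1,2\}$, which would in fact produce the estimate \eqref{wss} for every $t>0$.

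Next, I would split $\int_{\R^d}|z|^\gamma|D^\ell_z p_\alpha(1,z)|\,dz=I_1+I_2$ at the unit ball. On $|z|\le 1$ the weight $|z|^\gamma$ is bounded by $1$, so $I_1\le \int_{|z|\le 1}|D^\ell_z p_\alpha(1,z)|\,dz$, which is finite since $p_\alpha(1,\cdot)\in C^\infty(\R^d)$ (by \eqref{DENSITY_FTI} and non-degeneracy \eqref{EQ_ND}). For $I_2$, the key ingredient is the pointwise estimate
\begin{equation*}
|D^\ell_z p_\alpha(1,z)|\;\le\; \frac{C}{(1+|z|)^{d+\alpha+\ell}},\qquad z\in\R^d,\;\;\ell\in\{0,1,2\},
\end{equation*}
which is known to hold under the present assumptions on $\mu$ (see Kolokoltsov \cite{kolo:97}); granting it, one has $I_2\le C\int_1^{\infty}r^{\gamma-1-\alpha-\ell}\,dr<\infty$ as soon as $\gamma<\alpha+\ell$, a condition that is automatic for $\gamma\le 1$ and $\ell\ge 1$ since $\alpha>0$.

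The main obstacle is therefore the derivation of the polynomial decay of the derivatives at the rate $(1+|z|)^{-(d+\alpha+\ell)}$, which is the precise quantitative translation of the \emph{concentration gain at infinity} that is available for $\Delta^{\alpha/2}$ (cf.~\cite{bogd:jaku:07}) and, more generally, when $\mu$ has a smooth positive density on $\S^{d-1}$. Sketchily, one writes
\begin{equation*}
D^\ell_z p_\alpha(1,z)=\frac{(-i)^\ell}{(2\pi)^d}\int_{\R^d}\lambda^{\otimes\ell}\,e^{-i\langle z,\lambda\rangle}\,e^{\Psi(\lambda)}\,d\lambda,
\end{equation*}
and uses that, by the smoothness of $\mu$, the symbol $\Psi(\lambda)=-\int_{\S^{d-1}}|\langle\lambda,s\rangle|^\alpha\mu(ds)$ is $C^\infty$ away from the origin with $|D^j\Psi(\lambda)|\le C_j|\lambda|^{\alpha-j}$; introducing a smooth dyadic decomposition separating $|\lambda|\lesssim |z|^{-1}$ from $|\lambda|\gtrsim |z|^{-1}$, in the latter region one performs $d+\alpha+\ell$ successive integrations by parts against $e^{-i\langle z,\lambda\rangle}$, gaining factors of $|z|^{-1}$, while in the former region one uses crude size bounds; summation of the resulting contributions yields the required decay. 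Once this pointwise estimate is granted, steps $I_1$ and $I_2$ combine with the scaling identity to produce \eqref{wss}.
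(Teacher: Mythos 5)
Your overall strategy (scaling to $t=1$, then splitting at the unit ball and invoking pointwise polynomial decay of $D^\ell p_\alpha(1,\cdot)$) is viable, and it is a genuinely different route from the paper's. The paper never uses absolute decay of the kernel: it relies on Kolokoltsov's \emph{relative} estimates, $|D_y p_\alpha(t,y)|\le c\,(t^{-1/\alpha}\wedge |y|^{-1})\,p_\alpha(t,y)$ and, for the second derivative, $|D^2_y p_\alpha(t,y)|\le c\,t^{-2/\alpha}p_\alpha(t,y)$ on $\{|y|\le K t^{1/\alpha}\}$ together with $|D^2_y p_\alpha(t,y)|\le c\,t^{-1}|y|^{\alpha-2}p_\alpha(t,y)$ on $\{|y|> K t^{1/\alpha}\}$ (obtained from Proposition 2.5 of \cite{kolo:97} via directional second derivatives). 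It then splits the integral at $|y|=t^{1/\alpha}$ and concludes using only $\int_{\R^d}p_\alpha(t,y)\,dy=1$ and the constraints $\gamma\le 1$, $\gamma+\alpha<2$. This avoids any two-sided bound of the type $p_\alpha(1,z)\asymp(1+|z|)^{-(d+\alpha)}$, which your argument needs; conversely, once the pointwise decay of the derivatives is granted, your argument is shorter and is exactly the ``concentration gain'' shortcut the paper itself mentions for $\triangle^{\alpha/2}$ via \cite{bogd:jaku:07}.

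Two caveats. First, the decay you claim for $\ell=2$, namely $(1+|z|)^{-(d+\alpha+2)}$, is stronger than what \cite{kolo:97} actually provides for a general smooth spectral density: combining Kolokoltsov's second-derivative estimate with $p_\alpha(1,z)\lesssim (1+|z|)^{-(d+\alpha)}$ only gives $(1+|z|)^{-(d+2)}$. This weaker rate is, however, all you need, since then $I_2\lesssim \int_1^{\infty} r^{\gamma-3}\,dr<\infty$ for $\gamma\le 1$; so you should quote the weaker bound (or directly the relative bounds). Second, your sketched Fourier derivation of the pointwise decay is not a proof: $e^{\Psi(\lambda)}$ is not smooth at $\lambda=0$ (there $|D^j\Psi(\lambda)|\sim |\lambda|^{\alpha-j}$ blows up), so repeated integration by parts generates non-integrable singularities at the origin once the number of integrations exceeds roughly $d+\alpha$, and in any case ``$d+\alpha+\ell$ successive integrations by parts'' is not a meaningful count for a non-integer order; the known proofs (Bogdan--Jakubowski in the isotropic case, Kolokoltsov's comparison arguments in general) proceed differently, so this step should be cited rather than re-derived as indicated. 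With these adjustments your proof goes through and yields \eqref{wss} for all $t>0$.
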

This result  can be derived using the estimates of Kolokoltsov \cite{kolo:97}. It extends to a wider class of spectral measure what was already known for the fractional Laplacian itself. Namely, since the derivatives of the associated heat-kernel exhibit  a decay improvement at infinity, see e.g. Bogdan  and Jakubowicz \cite{bogd:jaku:07}, then \eqref{wss} holds for any $\gamma \in [0,1] $. We prove this in Section 4.

Thus for $\beta = \gamma$ with $\alpha+\beta>1 $ we can prove 
%the property $({\mathscr P}_\beta) $ provided $\beta<\alpha $ which from 
%the constraint 
%$\alpha+\beta>1 $ imposes that 
 Schauder estimates for operators $L_{\alpha}$ as in Proposition \ref{CTR_OF_INTEGRABILITY_DER_GEN_STABLE1} for any $\alpha \in (0,1)$.

 \begin{REM} {\em We will be also able to treat a non-degenerate symmetric stable operator perturbed by a  bounded term after proving Schauder estimates for the non-degenerate symmetric stable operator; this is why we can also consider truncated stable operators. Namely, for a given truncation threshold $K>0$, we can consider as well, with the notations of \eqref{DECOMP_NU}, a L\'evy measure of the form 
\begin{equation}\label{DECOMP_NU_TRONC}
\nu_{\alpha, K}(dy)=\frac{d\rho \tilde \mu(ds)}{\rho^{1+\alpha}} \I_{\rho\in (0,K]}.
\end{equation}
   }
  \end{REM}

\begin{REM} \label{dff}
{\em We believe that formula \eqref{wss} with $\gamma \in [0,1]$ may hold for other class of stable operators. Indeed 
   estimates  on the derivatives of  densities of stable-like operators  which could be useful to establish   \eqref{wss} are already given in \cite{peng:17}.

On the other hand, one can check that \eqref{wss0} does not hold for $\gamma = \alpha$ in the case of a cylindrical fractional Laplacian $\sum_{k=1}^2
   (\partial_{x_k x_k}^2 )^{\alpha/2}.$ Indeed in such  case the density $p_{\alpha} (t,x) = $ $ q_{\alpha} (t,x_1) q_{\alpha} (t,x_2)$ ($q_{\alpha}(t,r)$ is the density of a one-dimensional fractional Laplacian) and so, for $t>0,$
$$
 \int_{\R^2} |y|^{\alpha} |D p_{\alpha}(t,y)| dy \;\;\; \text{is not finite.  }
$$
  }
  \end{REM}

 %%%%%%%%%
\def\ciao2 { 
%we can relax the natural integrability constraint in the stable setting and in %that case even $({\mathscr P}_1)$ holds.

The stable-like case, when  for $\alpha\in (0,1) $ and $\alpha+\beta>1 $. 
In whole generality, for given parameters $\alpha,\beta $ as above, we are able to prove our main results provided that $({\mathscr P}_\beta) $ holds. We can prove it in two main cases:
\begin{trivlist}
\item[-] The stable-like case, when  the spectral measure $\mu  $ has a \textit{smooth} density equivalent to the Lebesgue on ${\mathbb S}^{d-1} $, for $\alpha\in (0,1) $ and $\alpha+\beta>1 $. 
\item[-] General stable operator, with potentially singular spectral measures satisfying \A{ND} (typically the cylindrical fractional Laplacian), when $\alpha \in (1/2,1)$ and $\alpha+\beta>1 $.
\end{trivlist}
The second case is, as indicated above, a direct consequence of Proposition \ref{CTR_OF_INTEGRABILITY_DER_GEN_STABLE} whereas \\
%The restriction on the index in the second case is due to integrability constraints and the lack of decay improvement at infinity when  going to the derivatives.

We refer to Section \ref{SEC_PROOF_PROP_P} for the proof of Property $({\mathscr P}_\beta)$ in the indicated  cases.

We believe that ????

%Under \A{A}, it follows from direct Fourier inversion, using e.g. \eqref{LEVY_KHINTCHINE}, \eqref{LEVY_STABLE} and \A{ND}, that the operator $L_\alpha$ in \eqref{STAB_OPERATOR}
%admits a \textit{smooth} heat kernel, i.e. a function $(t,z)\mapsto p_\alpha(t,\cdot)$ s.t. the semi-group generated by $L_\alpha $
}
%%%%%%

\subsubsection{Relativistic Stable Operators}
Let us consider here $L_{\alpha} $ corresponding to the relativistic stable operator with symbol 
\begin{gather} \label{symb_rel1}
\Psi (\lambda ) = -\big( |\lambda|^2 + m^{\frac{2}{\alpha}} \big)^{\frac{\alpha}{2}} + m,
\end{gather}
for some $m>0$, $\alpha \in (0,1)$, $\lambda \in \R^d$. 
  It appears to be an important object in the study of relativistic Schr\"odinger operators (see  \cite{ryzn:02} and also the references therein).  
  
 The operator  $L_{\alpha} $ can be represented by \eqref{STAB_OPERATOR} with   $\nu = \nu_{\alpha, m} $  which  has   density
$
 { C_{\alpha,d}}{|x|^{-d- \alpha}} \, e^{- m ^{1/\alpha} \, |x|} \, $ $\cdot \, \phi ( m ^{1/\alpha} \, |x|), $ $ x \not = 0,
 $
with $0 \le \phi (s) \le c_{\alpha,d,m} (s^{\frac{d-1+\alpha}{2} } +1)$, $s \ge 0$ (see Lemma 2 in \cite{ryzn:02}). It is  clear that  $\int_{\R^d} (1 \wedge |x|) \nu_{\alpha}(dx) < \infty.$

Let us fix $\alpha \in (0,1)$. The heat-kernel $p_{\alpha,m}$ of such operator is given in formula (7) of \cite{ryzn:02} at page 4 (with the correspondence $2\beta = \alpha$):
\begin{gather} \label{re2}
p_{\alpha,m}(t,x) = e^{mt} \int_0^{\infty} g (u,x) e^{-m^{\frac{2}{\alpha}} u} \;\; \theta_{\alpha} (t,u) du,
\end{gather}
 where $g (u,x) = (4 \pi u)^{- d/2} e^{- |x|^2/ 4u}$ is the Gaussian kernel. Moreover  $\theta_{\alpha} (t,u)$, $u>0$, is the density function of the strictly $\alpha/2$-stable subordinator at time $t$ (see (4) in the indicated reference). It is well know  that $p_m(t, \cdot) \in C^{\infty}(\R^d)$, $t>0.$ In the limit case $m=0$, one gets the density of $\triangle^{\alpha/2}$:
\begin{gather} %\label{re2} 
  p_{\alpha}(t,x)=p_{\alpha,0}(t,x) = \int_0^{\infty} g (u,x)  \;\; \theta_{\alpha} (t,u) du, \;\; x \not =0,
\end{gather}
 which is also considered in the proof of Lemma 5 in \cite{bogd:jaku:07}; note  
  that $\theta_{\alpha} (t,u) \le c_1 \, t u^{-1 - \frac{\alpha}{2}}$.
 We obtain  easily that, for $t \in (0,1]$, $x \not =0$,
\begin{equation}\label{rel1}
p_{\alpha,m}(t,x) \le e^{m} p_{\alpha,0}(t,x)
\end{equation}
 and so \A{NDa} holds.  {It will be shown in Section \ref{SEC_P_BETA_REL} that  $({\mathscr P}_\beta) $ holds even in this non-symmetric case.}

\subsubsection{H\"older spaces and smoothness assumptions  }

% {\color{black} Enrico: perhaps we can reduce more.} 
 In the following, for a fixed terminal time $T$ and a Borel function $\psi:[0,T]\times \R^d \rightarrow \R^\ell,\ \ell \in \{1, \ldots, d\}$ which is $\gamma $-H\"older continuous in the space variable, $\gamma\in (0,1) $, uniformly in $t\in [0,T] $ we denote by
 \begin{equation}
 \label{HOLDER_MODULUS_WITH_TIME}
 [\psi]_{\gamma,T}:=\inf\{K>0: \forall (t,x,x')\in [0,T]\times (\R^{d})^2,\ |\psi(t,x)-\psi(t,x')|\le K|x-x'|^\gamma  \}.
 \end{equation}
We write that $ \psi \in L^\infty\big([0,T],C^\gamma(\R^d,\R^\ell) \big) %=:L^\infty([0,T],C^\gamma)
$ 
as soon as $[\psi]_{\gamma,T}<\infty $. % {\color{black} Enrico:  it was only a bit complicate before.} 
%and introduce the corresponding \textit{semi-norm}:
%\begin{equation}\label{LINF_B_SEMI_NORM}
%\|\psi\|_{L^\infty([0,T],C^\gamma)}:= [\psi]_{\gamma,T}<\infty.
%\end{equation}
%Here, $C^\gamma(\R^d,\R^\ell) $ denotes the  \textit{homogeneous} H\"older %space of index $\gamma $.

 If additionally the function $\psi$ is bounded, we write that $ \psi \in L^\infty\big([0,T],C_b^\gamma(\R^d,\R^\ell) \big)$, where the subscript precisely emphasizes the boundedness, and introduce the corresponding \textit{norm}:
 \begin{equation}
 \label{LINF_B_HOLD}
 \|\psi\|_{L^\infty(C_b^\gamma)}:=\|\psi\|_\infty+ [\psi]_{L^\infty(C^\gamma)}=\|\psi\|_\infty+[\psi]_{\gamma,T}, \ \|\psi\|_\infty:=\sup_{(t,x)\in [0,T]\times \R^d}|\psi(t,x)|.
 \end{equation}
Again, $C_b^\gamma(\R^d,\R^\ell) $ is the \textit{usual} H\"older space of index $\gamma $.
 
 We also naturally define, accordingly the spaces $L^\infty\big([0,T],C_b^{1+\gamma}(\R^d,\R^\ell) \big) $ and $ C_b^{1+\gamma}(\R^d,\R^\ell)$ respectively endowed with the norms:
 \begin{eqnarray}
\label{HOLDER_NORM_GE1}
\|\psi\|_{L^\infty([0,T],C_b^{1+\gamma})}
&:= & \|\psi\|_\infty+\|D\psi\|_\infty+ [D\psi ]_{\gamma, T}, 
\;\;\; \psi \in L^\infty([0,T],C_b^{1+\gamma}); \notag\\
\|\varphi\|_{C_b^{1+\gamma}}&:=& \|\varphi\|_\infty+\|D\varphi\|_\infty+[D\varphi]_{\gamma} \notag\\
&:=&\sup_{x\in \R^d}|\varphi(x)|+\sup_{x\in \R^d}|D\varphi(x)|+\sup_{(x,x')\in (\R^d )^2,x\neq x'}\frac{|D\varphi(x)- D\varphi(x')|}{|x-x'|^\gamma},
\end{eqnarray} 
% \textcolor{black}{From S. to E. and  P.E.: I tried to make the definitions of Holder spaces/norms shorter, tell me if you are convinced. %Ho cercato di farla breve, ditemi se vi convince.
% }\\
 % we will denote for $k\in \N,\ \gamma\in(0,1) $ by $C_b^{k+\gamma}(\R^d) $ the usual H\"older space endowed with the following norm. For $\varphi\in C_b^{k+\gamma}(\R^d) $,
%\begin{equation*}
%\|\varphi\|_{C_b^{k+\gamma}} := \|f\|_\infty+\sum_{i=1}^k \|D^i \varphi\|_\infty+ [D^k\varphi]_\gamma,\ [D^k\varphi]_\gamma:=\sup_{(x,x')\in (\R^d)^2}\frac{\|D^k \varphi(x)-D^k \varphi(x')\|}{|x-x'|^\gamma},
%\end{equation*}
%where $\|\cdot\| $ stands for an appropriate tensor norm. 
%In particular, the subscript  in $C_b^{k+\gamma}(\R^d) $ is kept to emphasize that the functions belonging to that space are bounded. 
%
%We will also importantly consider the homogeneous H\"older spaces, $C^{\gamma}(\R^d) $ (without subscript) endowed with the following semi-norm. For 
%$\varphi\in C^{\gamma}(\R^d) $,
%\begin{equation*}
%\|\varphi\|_{C^{\gamma}} :=  [\varphi]_\gamma,\ [\varphi]_\gamma:=\sup_{(x,x')\in (\R^d)^2}\frac{| \varphi(x)- \varphi(x')|}{|x-x'|^\gamma}.
%\end{equation*}
%We insist that the functions in $  C^{\gamma}(\R^d)$, which are a priori defined up to a constant,  are possibly unbounded. We recall also that $F$ is here a data of the problem and therefore clearly defined.\\
%\textcolor{black}{From S. to E. and P.E. : I chose to restrict to %$\alpha\in (0,1)  $ to put better emphasis on the critical case. I hope %it is transparent from what is also now written in introduction.}\\
 $ \varphi \in C_b^{1+\gamma}(\R^d,\R^\ell).$
With these notations at hand, for a fixed stability index $\alpha \in (0,1) $, and given final horizon $T>0$ our assumptions concerning the smoothness of the coefficients in \eqref{Asso_PDE} are the following:
\begin{trivlist}
\item[\A{S}] There exists $\beta\in (0,1) $ s.t. $\alpha+\beta >1 $ and the source $f\in L^\infty\big([0,T],C_b^\beta(\R^d,\R)\big)$, $g\in C_b^{\alpha+\beta}(\R^d,\R)$. 
\item[\A{D}] The drift/transport term  $F$  verifies \eqref{22} with $\beta$ as in  \A{S}  for some  $K_0 >0$.
\end{trivlist}
 {\sl We will say that assumption \A{A} is in force as soon as the above conditions \A{S}, \A{D}, \A{NDa} and  \A{NDb} hold. In particular under \A{A} we have that 
 property $({\mathscr P}_\beta)$ holds. }

%The smallest constant $K>0$ such that \eqref{22} holds is denoted by ${K_0}$.
%Moreover, if $u \in L^\infty([0,T],C_b^{\alpha+\beta})$,
%$$
%\|u\|_{L^\infty([0,T],C_b^{\alpha+\beta})} = \| Du \|_{\infty} + [Du]_{\alpha + \beta -1,T},\;\;\; \| Du \|_{\infty} = \sup_{(t,x) \in [0,T] \times \R^d} |Du(t,x)|
%$$
%and $\|f\|_{L^\infty([0,T],C_b^{\beta})} = \| f \|_{\infty} + [f]_{\beta,T}.$

\subsection{Main Results}

The solutions of \eqref{Asso_PDE} will be sought in function spaces which are the natural extension in the current stable framework of those considered in the diffusive setting by Krylov and Priola \cite{kryl:prio:10}. Namely, we introduce
${\mathscr C \, }_{b}^{\alpha+\beta}([0,T]\times\R^d) $ the set of functions $\psi(t,x) $ defined on $ [0,T]\times \R^d$ such that:
\begin{enumerate}
\item[(i)] The function $\psi$ is continuous on $[0,T]\times \R^d $.
\item[(ii)] For any $t\in [0,T]$ the function $\psi(t,\cdot)\in C_b^{\alpha+\beta}(\R^d) $ and the norm $\|\psi(t,\cdot)\|_{C_b^{\alpha + \beta}} $ 
is bounded w.r.t $t\in [0,T] $, i.e., $\psi\in L^\infty\big([0,T],C_b^{\alpha+\beta}(\R^d)\big) $.
\item[(iii)] There exists a function $\varphi_\psi : [0,T]\times \R^d \rightarrow \R$ s.t. for any smooth and compactly supported function $\eta \in C_0^\infty([0,T]\times \R^d) $, the product $(\varphi_\psi \eta)(t,x) $ is bounded and $\beta +\alpha-1$-H\"older continuous in space uniformly in $t\in [0,T] $ and for any $x\in \R^d,\ 0\le t<s\le T $, it holds that:
$$\psi(s,x)-\psi(t,x)=\int_t^s \varphi_\psi(v,x)dv. $$
For $\psi \in {\mathscr C \, }_{b}^{\alpha+\beta}([0,T]\times\R^d) $, we write $\partial_t \psi=\varphi_\psi $ which is actually the generalized derivative w.r.t. the time variable of the function $\psi $.
\end{enumerate}
Accordingly, having a solution to \eqref{Asso_PDE} in ${\mathscr C}_b^{\alpha+\beta}([0,T]\times \R^d) $ is equivalent to say that for  $0\le t<s\le T,\ x\in  \R^d $,
\begin{eqnarray}\label{INTEGRAL_VERSION}
u(t,x)=u(s,x)+\int_t^s dr f(r,x)-\int_t^s dr\Big(L_\alpha +F(r,x)  \cdot D \Big) u(r,x).
\end{eqnarray}

%As mentioned before, the next result also holds when $\alpha \in [1,2)$ with a simpler proof.
% {\color{black} Enrico: perhaps it is better as it was before; it was written $\alpha \in (0,2)$; in such case we should add that $\alpha + \beta <2$  and say that in the rest of the paper we concentrate on the more difficult case of $\alpha \in (0,1)$.}

\begin{THM}[Schauder Estimates]\label{THEO_SCHAU_ALPHA}
Let $\alpha\in (0,1) $ be fixed. Under \A{A}
%, assuming that property \A{NDb} holds for 
% with $\beta \in (0,1)$ and  $\alpha + \beta >1$,  
%$({\mathscr P}_\beta) $ holds, 
there exists a constant $C:=C(\A{A},T, {K_{0}})$ s.t. for any solution $u\in {\mathscr C}_{b}^{\alpha+\beta}([0,T]\times\R^d)  $ of \eqref{Asso_PDE}, it holds that:
\begin{equation}
\label{EST_SCHAU}
\|u\|_{L^\infty([0,T],C_b^{\alpha+\beta})}\le C(\|g\|_{C_b^{\alpha+\beta}}+\|f\|_{L^\infty([0,T],C_b^{\beta})}).
\end{equation}
\end{THM}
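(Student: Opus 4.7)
The plan is to implement the perturbative scheme outlined in the introduction. First, for any freezing couple $(\tau,\xi)\in[0,T]\times\R^d$, I construct the flow $\theta_{s,\tau}(\xi)=\xi+\int_\tau^s F(v,\theta_{v,\tau}(\xi))dv$; its existence and quantitative stability (despite $F$ being only locally H\"older) will come from the flow results of the appendix. Replacing $F(t,x)\cdot D_xu$ with $F(t,\theta_{t,\tau}(\xi))\cdot D_xu$ yields the proxy operator $L_\alpha + F(t,\theta_{t,\tau}(\xi))\cdot D_x$, which is translation invariant in space and whose fundamental solution is simply the translated kernel $p_\alpha\bigl(s-t,\cdot-\int_t^s F(v,\theta_{v,\tau}(\xi))dv\bigr)$. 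Rewriting \eqref{Asso_PDE} in the form \eqref{we_INTRO} exhibits the remainder $[F(t,\theta_{t,\tau}(\xi))-F(t,x)]\cdot D_xu$ which is small, by \A{D}, provided $x$ lies near $\theta_{t,\tau}(\xi)$.

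Next, because of the low integrability of $p_\alpha$ (particularly sharp for $\alpha<1/2$) and the fact that $F$ is only locally H\"older, I multiply $u$ by a smooth cutoff $\eta_{\tau,\xi}$ concentrated in a neighborhood of $\xi$ of radius of order one, and write a Duhamel formula for $u\,\eta_{\tau,\xi}$ as in Proposition \ref{DUHAMEL}. This produces a representation
\[
(u\eta_{\tau,\xi})(t,x)=\tilde P^{\tau,\xi}_{t,T}(g\eta_{\tau,\xi})(x)+\int_t^T\tilde P^{\tau,\xi}_{t,s}\!\left(f\eta_{\tau,\xi}+R^{\tau,\xi}_s\right)(x)\,ds,
\]
where $\tilde P^{\tau,\xi}$ is the semigroup generated by the proxy and $R^{\tau,\xi}$ gathers the remainder term $[F(s,\theta_{s,\tau}(\xi))-F(s,\cdot)]\cdot D_xu\cdot \eta_{\tau,\xi}$ together with commutator-type contributions $[L_\alpha,\eta_{\tau,\xi}]u$ and transport commutators with $\eta_{\tau,\xi}$, all of which are bounded by virtue of the support of $\eta_{\tau,\xi}$.

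To estimate $[u]_{\alpha+\beta,T}$, I compare $u(t,x)-u(t,x')$ for two spatial points using the Duhamel formula with $\tau=t$ and $\xi$ chosen as $x$ (or the midpoint). The standard dichotomy applies: if $|x-x'|^\alpha\geq T-t$, use $\|u\|_\infty$ and $\|g\|_{C_b^{\alpha+\beta}}$; otherwise, split the integral at time $t+|x-x'|^\alpha$. On each piece I differentiate twice the translated kernel $p_\alpha$, hit $f\eta_{\tau,\xi}$ with centering tricks at $\theta_{s,\tau}(\xi)$, and invoke exactly property $(\mathscr{P}_\beta)$ to integrate $|y|^\beta|D_y^k p_\alpha(s-t,y)|$ against the $\beta$-H\"older modulus of $f$. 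This is where $\alpha+\beta>1$ is essential for convergence of the time integral after derivation. For the remainder term, local H\"older continuity of $F$ bounded by the support of $\eta_{\tau,\xi}$ yields $|F(s,\theta_{s,\tau}(\xi))-F(s,y)|\le K_0|\theta_{s,\tau}(\xi)-y|^\beta$, and a similar estimate, together with the a priori control $\|D_xu\|_\infty\le \varepsilon[u]_{C_b^{\alpha+\beta}}+C_\varepsilon\|u\|_\infty$ obtained by interpolation, gives a bound of the form $\|u\|_{L^\infty(C_b^{\alpha+\beta})}\le C(\|g\|_{C_b^{\alpha+\beta}}+\|f\|_{L^\infty(C_b^\beta)})+\varepsilon\|u\|_{L^\infty(C_b^{\alpha+\beta})}+C_\varepsilon\|u\|_\infty$; the last term is absorbed using an elementary $L^\infty$ bound on $u$ derived directly from \eqref{INTEGRAL_VERSION}.

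The main obstacle will be the circularity between the remainder term, which involves $D_xu$, and the final norm controlling $[D_xu]_{\alpha+\beta-1,T}$: obtaining a true absorption requires a genuinely \emph{small} prefactor on the $[u]_{\alpha+\beta,T}$ contribution coming from the remainder. This in turn forces a delicate tuning of the localization scale of $\eta_{\tau,\xi}$ together with a careful control of the flow increment $|\theta_{s,\tau}(\xi)-\theta_{t,\tau}(\xi)|$ on a short time window $s-t\sim|x-x'|^\alpha$, where the unboundedness of $F$ and its merely local H\"older regularity must be compensated by the smoothing of $L_\alpha$ via $(\mathscr{P}_\beta)$. Making the constants uniform in $(\tau,\xi)$ and independent of $\|F\|_\infty$ (as announced in the abstract) is the second delicate point, and relies on the translation of the kernel being exactly by $\int_t^s F(v,\theta_{v,\tau}(\xi))dv$, which cancels out along the chosen freezing.
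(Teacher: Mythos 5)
Your overall architecture (freezing along the flow, localization, Duhamel expansion, use of $({\mathscr P}_\beta)$, diagonal/off-diagonal dichotomy, absorption in small time) is the one of the paper, but one decisive idea is missing: the \emph{change of freezing point inside the time integral} in the diagonal regime. With a single freezing couple $(\tau,\xi)=(t,x)$ (or the midpoint), the remainder contribution evaluated at the second spatial point $x'$ on the short-time piece $s\in[t,t_0]$, $t_0=t+c_0|x-x'|^\alpha$, cannot be closed. Indeed, bounding $|F(s,y)-F(s,\theta_{s,t}(x))|\le K_0|y-\theta_{s,t}(x)|^\beta$ and splitting $|y-\theta_{s,t}(x)|^\beta\le |y-m^{(t,x)}_{s,t}(x')|^\beta+|x-x'|^\beta$ (note $m^{(t,x)}_{s,t}(x')-\theta_{s,t}(x)=x'-x$ by \eqref{CORRESP_SHIFTED}), the second term forces you to integrate $\int_{\R^d}|D_xp_\alpha(s-t,z)|dz\sim (s-t)^{-1/\alpha}$ over $[t,t_0]$, which diverges for $\alpha<1$; and a Taylor expansion in $x-x'$ on that same piece produces $|x-x'|\int_t^{t_0}(s-t)^{(\beta-2)/\alpha}ds$, also divergent since $\alpha+\beta<2$. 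This is precisely why the paper establishes a second Duhamel formula with different freezing points before and after $t_0$ (Proposition \ref{DUHAMEL_2_THE_COME_BACK}): each spatial point uses its own freezing point for $s\le t_0$, and both share the freezing point $x$ for $s>t_0$, at the price of a discontinuity term $\tilde P^{(\tau,\xi')}_{\tau_0,t,\alpha}v_{\tau,\xi'}(\tau_0,x')-\tilde P^{(\tau,\tilde\xi')}_{\tau_0,t,\alpha}v_{\tau,\tilde\xi'}(\tau_0,x')$ which must itself be estimated (Lemma \ref{CTR_TERME_DISC}, relying on the flow Lemma \ref{FLOW_LEMMA}). Nothing in your plan produces or controls this term, and without it the gradient H\"older estimate does not close.

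A second gap concerns the cutoff commutators: you lump $uL_\alpha\eta_{\tau,\xi}$ and the bilinear jump term into the remainder and only claim they are bounded. A merely bounded source cannot be fed into the differentiated-kernel estimate, since it carries no cancellation at the freezing point and again yields the non-integrable singularity $\int_t^{\cdot}(s-t)^{-1/\alpha}ds$. The paper instead proves ${\mathcal S}_{\tau,\xi}\in L^\infty([0,T],C_b^\beta)$ via the interpolation/continuity argument for ${\mathscr T}_{\tau,\xi}$ in the proof of Proposition \ref{DUHAMEL}, and routes it, together with $f\eta_{\tau,\xi}$, through the frozen-semigroup Schauder estimate of Proposition \ref{SCHAUDER_FROZEN}; only the drift-difference term $R_{\tau,\xi}$ is treated through the $({\mathscr P}_\beta)$ cancellation. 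Related, smaller points: the cutoff must travel with the flow, $\eta_{\tau,\xi}(s,y)=\rho(y-\theta_{s,\tau}(\xi))$, not sit in a fixed neighborhood of $\xi$, otherwise your bound $|F(s,y)-F(s,\theta_{s,\tau}(\xi))|\le K_0|y-\theta_{s,\tau}(\xi)|^\beta$ on the support of the cutoff is unavailable for unbounded $F$; since $\alpha+\beta>1$ the quantity to estimate is the increment of $D_xu$, not of $u$, and differentiating the Duhamel identity at the matched freezing parameters requires the dominated-convergence justification of \eqref{THE_SWAP_LIMITS}; finally the small-time estimate must be iterated over subintervals of length $T/N$ to reach an arbitrary horizon $T$.
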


In particular, the previous control provides uniqueness in the considered function space. Associated with an existence result developed in Section \ref{ESISTENZA}, we eventually derive the following theorem.

\begin{THM}[Existence and Uniqueness]\label{EXISTENCE_UNIQUENESS_ALPHA}
Let $\alpha\in (0,1) $ be fixed. Under the assumptions of the previous theorem
%, if property $({\mathscr P}_\beta) $ holds, 
there exists a unique solution $u\in {\mathscr C }_{b}^{\alpha+\beta}([0,T]\times\R^d)  $ to \eqref{Asso_PDE} which also satisfies the estimate
\eqref{EST_SCHAU}.
\end{THM}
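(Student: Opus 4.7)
\textbf{Uniqueness} is immediate from Theorem \ref{THEO_SCHAU_ALPHA}: if $u_1, u_2 \in \mathscr{C}_b^{\alpha+\beta}([0,T]\times\R^d)$ both solve \eqref{Asso_PDE} with the same data $(f,g)$, then $w := u_1-u_2$ belongs to the same space and satisfies the homogeneous IPDE. Applying \eqref{EST_SCHAU} gives $\|w\|_{L^\infty(C_b^{\alpha+\beta})}=0$, hence $u_1\equiv u_2$.

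For \textbf{existence} the plan is a standard approximation scheme that crucially exploits the fact, emphasized in Theorem \ref{THEO_SCHAU_ALPHA}, that the Schauder constant depends on the drift only through the local H\"older seminorm $K_0$ and not through its (possibly infinite) $L^\infty$-norm. First, I would pick regular approximations $F_n \in C^\infty_b([0,T]\times\R^d;\R^d)$, $f_n \in L^\infty([0,T];C_b^\infty(\R^d))$ and $g_n \in C_b^\infty(\R^d)$ (obtained via spatial mollification combined with a smooth radial cutoff of $F$ at scale $n$) converging to $(F,f,g)$ locally uniformly, with the local H\"older constant of $F_n$ in the sense of \eqref{22} bounded by $2K_0$ and with $\sup_n\|f_n\|_{L^\infty(C_b^\beta)}$, $\sup_n\|g_n\|_{C_b^{\alpha+\beta}}$ finite and controlled by the corresponding norms of $f$ and $g$. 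For each fixed $n$, since $F_n$ is smooth and bounded, classical theory provides a solution $u_n \in \mathscr{C}_b^{\alpha+\beta}$ of the approximating equation; in practice this can be realized through the probabilistic representation
\[
u_n(t,x) = \E\Bigl[g_n(X_T^{t,x}) + \int_t^T f_n(s, X_s^{t,x})\,ds\Bigr],
\]
with $X^{t,x}$ the solution of the SDE with drift $F_n$ driven by the L\'evy process of generator $L_\alpha$, combined with the Duhamel expansion of Proposition \ref{DUHAMEL} applied to the smoothed problem. Theorem \ref{THEO_SCHAU_ALPHA} then yields a uniform bound
\[
\|u_n\|_{L^\infty(C_b^{\alpha+\beta})}\le C\bigl(\|g\|_{C_b^{\alpha+\beta}}+\|f\|_{L^\infty(C_b^\beta)}\bigr),
\]
with $C$ independent of $n$ thanks precisely to the $L^\infty$-independence of the Schauder constant and the uniformity of the H\"older data.

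To conclude, Arzel\`a--Ascoli together with a diagonal extraction produces a subsequence along which $u_{n_k}$ and $D_x u_{n_k}$ converge locally uniformly to some $u$ and $D_x u$, and $u$ inherits the Schauder bound. Passing to the limit in the integral formulation \eqref{INTEGRAL_VERSION} identifies $u$ as a solution: locally uniform convergence handles the drift term $F_{n_k}\cdot Du_{n_k} \to F\cdot Du$, while the bound $|u_n(t,x+y)-u_n(t,x)|\le C(|y|^{\alpha+\beta}\wedge 1)$, uniform in $n$ by the $C_b^{\alpha+\beta}$-control, lets one apply dominated convergence to the non-local term. The main obstacle will be checking item (iii) in the definition of $\mathscr{C}_b^{\alpha+\beta}$ for the limit, namely that $u$ admits a generalized time derivative $\varphi_u = -f - L_\alpha u - F\cdot Du$ whose product with any test function $\eta \in C_0^\infty([0,T]\times\R^d)$ is bounded and $(\alpha+\beta-1)$-H\"older in space uniformly in $t$. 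The spatial H\"older regularity follows from the uniform $C_b^{\alpha+\beta}$-bound on $u$ (which gives $(\alpha+\beta-1)$-H\"older control on $Du$ and, through a standard estimate on $L_\alpha$ acting on $C_b^{\alpha+\beta}$ functions, on $L_\alpha u$) together with the H\"older regularity of $f$ and the local H\"older regularity of $F$; here the $\eta$-localization is essential precisely to compensate the possible unboundedness of $F$, exactly as in the localization device already used in Section \ref{PERT} to set up the Duhamel representation.
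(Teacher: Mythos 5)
Your uniqueness argument and your final compactness step (uniform Schauder bound, Ascoli--Arzel\`a, passage to the limit in the integral formulation \eqref{INTEGRAL_VERSION}) are fine and essentially coincide with what is done in Section \ref{ESISTENZA}. The genuine gap is the existence of the approximating solutions $u_n\in{\mathscr C}_b^{\alpha+\beta}$: mollifying and truncating $F$ does not make the approximating problem ``classical''. For $\alpha\in(0,1)$ the drift term is \emph{not} dominated by $L_\alpha$ even when $F_n$ is smooth and bounded; in particular the continuity method (and any fixed-point scheme in $C_b^{\alpha+\beta}$ with source in $C_b^\beta$) breaks down because $F_n\cdot Dv$ only has spatial regularity $\alpha+\beta-1<\beta$, exactly the obstruction pointed out at the beginning of Section \ref{ESISTENZA}. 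The probabilistic representation $u_n(t,x)=\E[g_n(X_T^{t,x})+\int_t^T f_n(s,X_s^{t,x})ds]$ only defines a candidate function: proving that this candidate has $C_b^{\alpha+\beta}$ spatial regularity and a generalized time derivative is precisely the open step, and Proposition \ref{DUHAMEL} cannot close it, since that proposition \emph{assumes} a solution $u\in{\mathscr C}^{\alpha+\beta}$ and derives a representation for it; invoking it to produce $u_n$ is circular.

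The paper circumvents this by a vanishing viscosity argument rather than by mere mollification of the data: one adds $\varepsilon\Delta^{1/2}$ to the operator (equation \eqref{we_EPS}), so that the regularized problem is genuinely sub-critical (the order-one operator now dominates the drift), and solvability in ${\mathscr C}_b^{1+\beta}$ follows from \cite{miku:prag:14} together with the continuity method, the estimate \eqref{CTR_FRAC_OP_HOLDER} and a scaling argument which quantifies the blow-up of the Schauder constant as $\Theta_2(\varepsilon)=\varepsilon^{-1}$ in \eqref{FULL_SCHAUDER_VISCOUS}. This quantification is then what makes the scheme close: viewing $\varepsilon\Delta^{1/2}u^\varepsilon$ as a source, one gets $\varepsilon\|\Delta^{1/2}u^\varepsilon\|_{L^\infty([0,T],C_b^\beta)}\le C$ uniformly in $\varepsilon$, re-runs the perturbative analysis of Section \ref{PERT} to obtain the uniform bound \eqref{SCHAUDER_PRIMA_DI_COMPATTEZZA}, and only then passes to the limit as you do. Without an ingredient of this type (or some other genuine construction of solutions for the approximating problems), your scheme assumes the hardest part of the theorem.
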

We finish the section with some  comments  on the case $\alpha\in [1,2).$ 

\begin{REM}\label{chissa} {\em  When $\alpha\in [1,2) $, a quite
%mo\textcolor{black}{st} 
 general class of generators of L\'evy processes 
 % enerators 
%which are invariant for translations and 
such that elliptic Schauder estimates hold
 % hold seem{\color{black}s} 
is the one considered in Section 6 of  \cite{prio:18}.  To introduce such class we  define
%introduce 
 the L\'evy generator 
$$
L \phi (x) =
\int_{{\mathbb R}^d} \big(  \phi (x+y) -  \phi  (x)
   - 1_{ \{  |y| \le 1 \} } \,  D \phi (x) \cdot y   \big)
   \, \nu (dy),\;\; x \in {\mathbb R}^d, \;\;\; \phi \in C_0^{\infty} (\R^d).
$$
We assume that  the Blumenthal-Getoor exponent $\alpha_{0} = \alpha_{0}(\nu)$, 
 $
\alpha_{0} = \inf \big\{ \sigma >0 \;\; : \;\; { \int}_{\{ |x| \le 1  \, \}} |y|^{\sigma} \nu (dy) < \infty \big \}
$
  belongs to $(0,2)$. Moreover, we require that  the associated convolution semigroup $(P_t)$ verifies: $P_t (C_b({\mathbb R}^d)) \, \subset
C^1_b({\mathbb R}^d)$, ${t>0}$,
%(i.e., for any $f \in C_b({\mathbb R}^d)$, $t>0$, %$R_t f$ is %continuously %differentiable and bounded),
and, further, there exists 
$c_{\alpha_{0}}= c_{\alpha_0}(\nu) >0$ 
%($\alpha $ and $c_{\alpha}$  depend on the L\'evy measure $\nu$ and %on $d$)
 such that
%, for any $f \in C_b({\mathbb R}^d)$,
 \begin{align} \label{grad}
 \sup_{x \in {\mathbb R}^d}| D P_t f(x)| \le {c_{\alpha_{0}}}\,{t^{-\frac{1}{ \alpha_{0} } }} \cdot \sup_{x \in {\mathbb R}^d}| f(x)|,\;\; \;%\text{for any}\; 
t \in (0,1], \; f \in C_b ({\mathbb R}^d). 
\end{align}
%It is not difficult to prove that One can prove that  \A{NDa} and  \A{NDb} %implies 
%the previous two assumptions with $\alpha = \alpha_0$   for any $\beta \in %(0,1)$. 
Non-degenerate symmetric $\alpha$-stable operators, relativistic $\alpha$-stable operators, temperate $\alpha$-stable operators verify the previous two assumptions with $\alpha = \alpha_0$.

According to Theorem 6.7 in \cite{prio:18} we have, for $\alpha_0 \ge 1 $, $\beta \in (0,1)$ and $\beta + \alpha_0 >1$,
 \begin{equation} \label{sch4}
 \begin{array}{l}
 \lambda \| w_{\lambda}\|_{\infty} + [
Dw_{\lambda}]_{C^{{\alpha_0} + \beta - 1}_b (\R^d)}
 \;  \le \, C_0 \| \lambda w -  {\mathcal L} w  - b \cdot Dw \|_{C^{\beta}_b(\R^d)}, \;\;  \lambda \ge 1,
\end{array} 
\end{equation}
 assuming   $b \in C^{\beta}_b(\R^d, \R^d)$.
Such elliptic  estimates could be extended to the parabolic setting without difficulties.
}
\end{REM}

\mysection{Proof of the main results through a perturbative approach}\label{PERT}

\subsection{Frozen semi-group  and associated smoothing effects}
The key idea in our approach consists in considering a suitable \textit{proxy} IPDE, for which we have \textit{good} controls along which to expand a solution $u\in {\mathscr C}^{\alpha+\beta}([0,T]\times \R^d)$ to \eqref{Asso_PDE}. Under \A{A}, which involves potentially unbounded drifts, we will use for the proxy IPDE a non zero first order term which involves \textit{a} flow associated with the drift coefficient $F$ (which in the current setting exists from the Peano theorem). This flow is, for given \textit{freezing} parameters $(\tau,\xi)\in [0,T]\times \R^d$, defined as:

\begin{equation}\label{DEF_FLOW}
\theta_{s,\tau}(\xi)=\xi+\int_{\tau}^sF(v,\theta_{v,\tau}(\xi))dv, \;\; s \ge \tau,
\end{equation} 
 $\theta_{s, \tau}(\xi) = \xi$ for $s < \tau$.
 For $f$ and $g$ as in Theorem \ref{THEO_SCHAU_ALPHA} we then introduce our proxy IPDE:
\begin{eqnarray}
\label{FROZEN_Asso_PDE}
\p_t  \tilde u(t,x) +  L_\alpha \tilde u(t,x) + F(t,\theta_{t,\tau}(\xi)) \cdot D_x \tilde u(t,x) &=& -f(t,x),\quad \text{on }[0,T)\times \R^d, \notag\\
\tilde u(T,x) &=& g(x),\quad \text{on }\R^d.
\end{eqnarray}
Under \A{A}, it is clear that the time-dependent operator $L_{\alpha } +  F(t, \theta_{t,\tau}(\xi) ) \cdot D_x $ generates a family of transition probability (or two parameter transition semi-group)  $\big(\tilde P_{s,t,\alpha}^{(\tau,\xi)})_{0\le t\le s\le T}$. For fixed $0\le t< s\le T $, the associated heat-kernel writes: 
\begin{equation}\label{CORRESP_SHIFTED}
\tilde p_\alpha^{(\tau,\xi)}(t,s,x,y)= p_\alpha\Big(s-t,y-m_{s,t}^{(\tau,\xi)}(x)\Big),\ m_{s,t}^{(\tau,\xi)}(x):=x+\int_t^s F(v,\theta_{v,\tau}(\xi))dv,
\end{equation}
(we remark that, for fixed $(t,x)\in [0,T]\times \R^d $ and $ s\in [t,T]$,  the corresponding process with frozen drift is 
 $\tilde X_s^{\tau,\xi}=x+\int_t^s F(v,\theta_{v,\tau}(\xi))dv+Z_{s-t})$ with generator $L_\alpha $. Observe that from the above definition of the \textit{shift} $m_{s,t}^{(\tau,\xi)}(x) $ we have the important property 
 \begin{equation}
\label{CORRES_LINEARIZED_FLOW_AND_FLOW}
 m_{s,t}^{(\tau,\xi)}(x)|_{(\tau,\xi)=(t,x)}=\theta_{s,t}(x).
 \end{equation}

Let us now state the smoothing effect of the semi-group associated with \eqref{FROZEN_Asso_PDE}.
 \begin{lem}[Smoothing effects of the derivatives of the frozen semi-group]
\label{sd} Assume  \A{NDa} and  \A{NDb} 
 (so property $({\mathscr P}_\beta)$ holds).  There exists $C\ge 1$ s.t. for any $\varphi\in C^{\beta}(\R^d,\R)$, any freezing couple $(\tau,\xi) $, $\ell\in \{1,2\} $ and all $0\le t\le s\le T,\ x\in \R^d$ :
\begin{equation}\label{REG_BETA}
|D_x^\ell\tilde P_{s,t,\alpha}^{(\tau,\xi)}\varphi(x)|\le \frac{C [\varphi]_{\beta}}{(s-t)^{\frac{\ell}{\alpha}-\frac\beta \alpha}}.
\end{equation}
%Also, independently of $({\mathscr P}_\beta) $, for any $\gamma<\alpha $,
%$\varphi\in C^{\gamma}(\R^d,\R)$, any freezing couple $(\tau,\xi) $, $\ell\in %\{1,2\} $ and all $0\le t\le s\le T,\ x\in \R^d$ :
%\begin{equation}\label{REG_GAMMA}
%|D_x^\ell\tilde P_{s,t,\alpha}^{(\tau,\xi)}\varphi(x)|\le \frac{C %[\varphi]_{C^\gamma}}{(s-t)^{\frac{\ell}{\alpha}-\frac\gamma \alpha}}.
%\end{equation}
\end{lem}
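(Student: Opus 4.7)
The plan is to exploit the translation structure of the frozen semi-group to transfer $x$-derivatives of $\tilde p_\alpha^{(\tau,\xi)}$ onto derivatives of $p_\alpha$ in its space variable, and then close by a centering argument that feeds directly into property $({\mathscr P}_\beta)$. Concretely, using \eqref{CORRESP_SHIFTED} I would write, for fixed $0 \le t < s \le T$ and $x\in\R^d$,
\begin{equation*}
\tilde P_{s,t,\alpha}^{(\tau,\xi)}\varphi(x)=\int_{\R^d}\varphi(y)\, p_\alpha\!\left(s-t,\, y-m_{s,t}^{(\tau,\xi)}(x)\right)dy,
\end{equation*}
and note the key structural observation: since $m_{s,t}^{(\tau,\xi)}(x)=x+\int_t^s F(v,\theta_{v,\tau}(\xi))dv$, one has $D_x m_{s,t}^{(\tau,\xi)}(x)=I$, so the chain rule yields
\begin{equation*}
D_x^\ell \left[p_\alpha\!\left(s-t,\, y-m_{s,t}^{(\tau,\xi)}(x)\right)\right]=(-1)^\ell D_y^\ell p_\alpha\!\left(s-t,\, y-m_{s,t}^{(\tau,\xi)}(x)\right), \quad \ell\in\{1,2\}.
\end{equation*}
Differentiation under the integral (justified by the global bound \eqref{BD_GLOB_GRAD} combined with the decay of stable densities at infinity) then gives an integral representation of $D_x^\ell \tilde P_{s,t,\alpha}^{(\tau,\xi)}\varphi(x)$ as $(-1)^\ell\int \varphi(y) D_y^\ell p_\alpha(s-t,y-m_{s,t}^{(\tau,\xi)}(x))dy$.

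Write $x':=m_{s,t}^{(\tau,\xi)}(x)$. The second ingredient is the centering identity $\int_{\R^d} D_y^\ell p_\alpha(s-t,y-x')\,dy=0$ for $\ell\in\{1,2\}$, which follows by Fubini together with the fact that $D_y^{\ell-1}p_\alpha(s-t,\cdot)$ is integrable and vanishes at infinity (again combining the global pointwise bound \eqref{BD_GLOB_GRAD} with the algebraic tail decay of $p_\alpha$). Subtracting $\varphi(x')$ inside the integral leaves the value unchanged and yields
\begin{equation*}
D_x^\ell \tilde P_{s,t,\alpha}^{(\tau,\xi)}\varphi(x)=(-1)^\ell \int_{\R^d}\bigl[\varphi(y)-\varphi(x')\bigr]\,D_y^\ell p_\alpha(s-t,y-x')\,dy.
\end{equation*}

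Using the Hölder estimate $|\varphi(y)-\varphi(x')|\le[\varphi]_\beta\,|y-x'|^\beta$ and changing variables $z=y-x'$, I then bound
\begin{equation*}
\left|D_x^\ell \tilde P_{s,t,\alpha}^{(\tau,\xi)}\varphi(x)\right|\le [\varphi]_\beta \int_{\R^d}|z|^\beta\,|D_z^\ell p_\alpha(s-t,z)|\,dz,
\end{equation*}
and apply property $({\mathscr P}_\beta)$ directly to the right-hand side to conclude with the claimed bound $C[\varphi]_\beta\,(s-t)^{-(\ell-\beta)/\alpha}$. Note that the bound is uniform in $(\tau,\xi)$ since $x'$ only enters through a translation that is killed by the change of variables. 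The main (very mild) technical point is the justification of differentiation under the integral and of the vanishing of $\int D_y^\ell p_\alpha$, but both rely on standard integrability properties of stable densities already embedded in our assumptions; the real content of the estimate is carried by $({\mathscr P}_\beta)$ itself.
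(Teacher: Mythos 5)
Your proposal is correct and follows essentially the same route as the paper: exploit the translation structure \eqref{CORRESP_SHIFTED} to pass the $x$-derivatives onto $p_\alpha$, use the cancellation $\int_{\R^d}\tilde p_\alpha^{(\tau,\xi)}(t,s,x,y)\,dy=1$ (equivalently $\int D^\ell_y p_\alpha=0$) to subtract $\varphi(m_{s,t}^{(\tau,\xi)}(x))$, bound by the H\"older seminorm, change variables, and conclude via $({\mathscr P}_\beta)$. The only cosmetic difference is that you spell out the justification of differentiation under the integral and of the centering identity, which the paper takes for granted.
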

\begin{proof}
We recall  that, with the notation of \eqref{CORRESP_SHIFTED}, $m^{(\tau,\xi)}_{s,t}  (x):=x+\int_t^s F(v,\theta_{v,\tau}(\xi)) dv $.
Again, if $ (\tau,\xi)=(t,x)$ then $m^{(\tau,\xi)}_{s,t}  (x)=\theta_{s,t}(x)$ but this will not be the only case to be considered. Let us prove \eqref{REG_BETA}. We use a cancellation argument (recall that $\int_{\R^d}dy\tilde p_\alpha^{(\tau,\xi)}(t,s,x,y) =1$) and property $({\mathscr P}_\beta) $ to write:
 \begin{eqnarray*}
 |D_x\tilde P_{s,t,\alpha}^{(\tau,\xi)}\varphi(x)|=\Big|\int_{\R^d} dy D \tilde p_\alpha^{(\tau,\xi)}(t,s,x,y)\big[\varphi(y)-\varphi(m_{s,t}^{(\tau,\xi)}(x))\big]\Big|
\\  
 = 
 \Big |\int_{\R^d} dy D_x  p_\alpha (s-t ,y - m_{s,t}^{(\tau,\xi)}(x))\big[\varphi(y)-\varphi(m_{s,t}^{(\tau,\xi)}(x))\big]
 \Big|
 \\
 \le [\varphi]_{\beta} 
 \int_{\R^d} dy |D_x  p_\alpha (s-t ,y - m_{s,t}^{(\tau,\xi)}(x))| \,  |y-m_{s,t}^{(\tau,\xi)}(x)|^\beta
\\  
\le  [\varphi]_{\beta} 
 \int_{\R^d} d\tilde y | D_x  p_\alpha (s-t ,\tilde y )| \,  |\tilde y|^\beta\underset{(\mathscr P_{\beta})}{\le} \frac{C[\varphi]_\beta}{(s-t)^{\frac 1\alpha-\frac \beta\alpha}}. 
 \end{eqnarray*}
This proves the result for $\ell=1 $. The case $\ell=2 $ is dealt similarly. 
\end{proof}
\textcolor{black}{We point out that, if we restrict to bounded functions $ \varphi \in C_b^\beta(\R^d,\R)$, the above result can also be derived directly from standard interpolation arguments provided \eqref{EQ_ND} holds and without the integrability constraints of $({\mathscr P}_\beta)$ (see e.g. the proof of Theorem 3.3 in \cite{prio:12})}.

{\color{black}
We  now define our candidate to be the solution of the proxy IPDE. This candidate appears to be (if it exists and  is smooth enough) the representation of the solution of \eqref{FROZEN_Asso_PDE} obtained through \textcolor{black}{the} Duhamel principle. \textcolor{black}{Hence, we call it} the Duhamel representation associated with \eqref{FROZEN_Asso_PDE}. As it will appear, such a representation is robust enough to satisf\textcolor{black}{y} Schauder estimates \textcolor{black}{under the assumptions} of Theorem \ref{THEO_SCHAU_ALPHA}. \textcolor{black}{It then indeed provides} a solution to \eqref{FROZEN_Asso_PDE}.
\begin{defi}[Duhamel representation associated with \eqref{FROZEN_Asso_PDE}]
For any freezing couple $(\tau,\xi)\in [0,T]\times \R^d $, we call Duhamel representation associated with \eqref{FROZEN_Asso_PDE}, and we denote it by $\tilde u^{(\tau,\xi)}$ the map
\begin{eqnarray}
\label{FROZEN_SEMI_GROUP}
\tilde u^{(\tau,\xi)} : [0,T]\times \R^d \ni (t,x)\mapsto \tilde P_{T,t,\alpha}^{(\tau,\xi)}g(x)
+\int_t^T ds  \Big( \tilde P_{s,t,\alpha}^{(\tau,\xi)} f(s,\cdot)\Big)(x),\;\;\;  (t,x)\in [0,T]\times \R^d.
\end{eqnarray}
\end{defi}
}

\begin{PROP}[Schauder estimate for the frozen semi-group]\label{SCHAUDER_FROZEN}
Under \A{A}, for any freezing couple $(\tau,\xi)\in [0,T]\times \R^d $, the following Schauder estimate holds for the map $\tilde u^{(\tau,\xi)} $ defined by \eqref{FROZEN_SEMI_GROUP}. There exists a constant $C:=C(\A{A})$ s.t. 
\begin{equation}\label{SCHAUDER_FROZEN_BETA}
\|\tilde u^{(\tau,\xi)}\|_{L^\infty([0,T],C_b^{\alpha+\beta})}\le C\big(\|g\|_{C_b^{\alpha+\beta}}+\|f\|_{L^\infty([0,T],C_b^{\beta})} \big).
\end{equation}
\end{PROP}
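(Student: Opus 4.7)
The strategy is to bound separately the three components of the $C_b^{\alpha+\beta}$ norm of $\tilde u^{(\tau,\xi)}(t,\cdot)$, uniformly in $t\in[0,T]$ and in the freezing parameters $(\tau,\xi)$, using only two facts: (a) $\tilde P_{s,t,\alpha}^{(\tau,\xi)}$ is a Markov convolution-type operator whose kernel $\tilde p_\alpha^{(\tau,\xi)}(t,s,x,y)=p_\alpha(s-t,y-m_{s,t}^{(\tau,\xi)}(x))$ depends on $x$ only through the \emph{translation} $m_{s,t}^{(\tau,\xi)}(x)$ (so that $\nabla_x m_{s,t}^{(\tau,\xi)}=I$ and $\tilde P_{s,t,\alpha}^{(\tau,\xi)}$ commutes with $D_x$ and preserves spatial H\"older seminorms), and (b) the smoothing estimate \eqref{REG_BETA} from Lemma \ref{sd}.

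\textbf{Sup-norm and gradient.} Since $\tilde P_{s,t,\alpha}^{(\tau,\xi)}$ is a contraction on $L^\infty(\R^d)$, one immediately gets $\|\tilde u^{(\tau,\xi)}(t,\cdot)\|_\infty\le\|g\|_\infty+T\|f\|_\infty$. For the gradient, I would use on the terminal term the commutation property $D_x\tilde P_{T,t,\alpha}^{(\tau,\xi)}g=\tilde P_{T,t,\alpha}^{(\tau,\xi)}(Dg)$, yielding $\|D_x\tilde P_{T,t,\alpha}^{(\tau,\xi)}g\|_\infty\le\|Dg\|_\infty$, while on the source term I would apply \eqref{REG_BETA} with $\ell=1$ and $\varphi=f(s,\cdot)$ to get
\[
\int_t^T|D_x\tilde P_{s,t,\alpha}^{(\tau,\xi)}f(s,\cdot)(x)|\,ds\le C[f]_{\beta,T}\int_t^T(s-t)^{-(1-\beta)/\alpha}\,ds\le C\,T^{(\alpha+\beta-1)/\alpha}[f]_{\beta,T}.
\]
Finiteness of this time integral is the only place where the condition $\alpha+\beta>1$ enters at this stage.

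\textbf{H\"older seminorm of $D_x\tilde u^{(\tau,\xi)}$.} This is the heart of the Schauder estimate. For the terminal contribution, the commutation with $D_x$ plus the translation-invariance of $p_\alpha(s-t,\cdot)$ give $[\tilde P_{T,t,\alpha}^{(\tau,\xi)}Dg]_{\alpha+\beta-1}\le[Dg]_{\alpha+\beta-1}\le\|g\|_{C_b^{\alpha+\beta}}$. For the source, fix $x,x'\in\R^d$, set $\delta:=|x-x'|^\alpha$, and split
\[
\int_t^T\bigl[D_x\tilde P_{s,t,\alpha}^{(\tau,\xi)}f(s,\cdot)(x)-D_x\tilde P_{s,t,\alpha}^{(\tau,\xi)}f(s,\cdot)(x')\bigr]\,ds=\int_t^{(t+\delta)\wedge T}(\cdots)\,ds+\int_{(t+\delta)\wedge T}^T(\cdots)\,ds.
\]
On the first (diagonal) piece I estimate the integrand by the sum of the two $D_x$ terms using \eqref{REG_BETA} with $\ell=1$, contributing $C[f]_{\beta,T}\int_0^{\delta}r^{-(1-\beta)/\alpha}\,dr\le C[f]_{\beta,T}\,\delta^{(\alpha+\beta-1)/\alpha}=C[f]_{\beta,T}|x-x'|^{\alpha+\beta-1}$. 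On the second (off-diagonal) piece I apply the mean value theorem and \eqref{REG_BETA} with $\ell=2$, which yields $|x-x'|\int_{\delta}^{T}r^{-(2-\beta)/\alpha}\,dr$; since $(2-\beta)/\alpha>1$ the lower endpoint drives the estimate and produces $C|x-x'|\,\delta^{1-(2-\beta)/\alpha}=C|x-x'|^{\alpha+\beta-1}$. The two pieces thus combine into the required $(\alpha+\beta-1)$-H\"older control.

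\textbf{Main obstacle and uniformity.} The single essential analytic ingredient is the integrability of the singular kernels $r^{-(1-\beta)/\alpha}$ and $r^{1-(2-\beta)/\alpha}$ at $r=0^+$, which are simultaneously available \emph{only} under the supercritical balance $\alpha+\beta>1$ of \A{A}; this is the threshold that makes the perturbative scheme work. The uniformity in $(\tau,\xi)$ is automatic: the freezing parameters enter the estimates exclusively through the additive shift $m_{s,t}^{(\tau,\xi)}(x)$, which affects neither the $L^\infty$ norm, nor the spatial derivatives, nor the H\"older seminorms of the convolution $\tilde P_{s,t,\alpha}^{(\tau,\xi)}$. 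Putting the three bounds together gives \eqref{SCHAUDER_FROZEN_BETA} with $C=C(\A{A})$.
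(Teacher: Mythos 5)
Your proposal is correct, and for the core of the estimate -- the H\"older seminorm of the gradient of the source (Green-kernel) term -- it is exactly the paper's argument: split the time integral at the critical time $t+c_0|x-x'|^{\alpha}$ (you take $c_0=1$, which the paper explicitly allows for the frozen semigroup), use \eqref{REG_BETA} with $\ell=1$ on the short-time piece and the mean value theorem plus \eqref{REG_BETA} with $\ell=2$ on the long-time piece, with $\alpha+\beta>1$ giving integrability at $s=t$ and $\alpha+\beta<2$ making the far piece converge; your sup-norm and gradient bounds also coincide with the paper's. The only genuine deviation is the terminal term: you exploit that $m_{T,t}^{(\tau,\xi)}$ is a translation with unit gradient, so $D_x\tilde P_{T,t,\alpha}^{(\tau,\xi)}g=\tilde P_{T,t,\alpha}^{(\tau,\xi)}(Dg)$ and the operator preserves the $(\alpha+\beta-1)$-H\"older seminorm, which is a valid and shorter route; the paper instead runs the same diagonal/off-diagonal cancellation argument on this term (using \A{NDa} and Lemma \ref{sd}), which is longer here but rehearses the machinery needed later in the perturbative analysis, where the two spatial points carry different freezing parameters and the commutation shortcut no longer applies.
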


\begin{proof} We focus on the proof of \eqref{SCHAUDER_FROZEN_BETA} and proceed in three steps: control of the supremum norm of the function, of its gradient, of the H\"older modulus of the gradient.

\begin{trivlist}
\item[$(i) $] \emph{Supremum norm of the solution.}
 
The control for the supremum norm readily follows from \eqref{FROZEN_SEMI_GROUP}. Indeed, for a fixed frozen couple $(\tau,\xi)\in [0,T]\times\R^d $ and any $(t,x)\in [0,T]\times \R^{d} $:
\begin{eqnarray}
|\tilde u^{(\tau,\xi)}(t,x)|=\Big|\tilde P_{T,t,\alpha}^{(\tau,\xi)}g(x)
+\int_t^T ds  \Big( \tilde P_{s,t,\alpha}^{(\tau,\xi)} f(s,\cdot)\Big)(x)\Big|\le \|g\|_\infty+T \|f\|_\infty.\label{CTR_BD_SUP_FROZEN}
\end{eqnarray}

\item[$(ii) $] \emph{Supremum norm of the gradient.}

Let us now give a global bound for the gradient. Differentiating \eqref{FROZEN_SEMI_GROUP} yields:
\begin{eqnarray}
|D_x \tilde u^{(\tau,\xi)}(t,x)|&=&\Big|D_x \tilde P_{T,t,\alpha}^{(\tau,\xi)}g(x)
+\int_t^T ds  D_x \Big( \tilde P_{s,t,\alpha}^{(\tau,\xi)} f(s,\cdot)\Big)(x)\Big|\notag \\
&\le & \Big|D_x \tilde P_{T,t,\alpha}^{(\tau,\xi)}g(x) \Big|+ \Big|\int_t^T ds  D_x\Big(  \tilde P_{s,t,\alpha}^{(\tau,\xi)} f(s,\cdot)\Big)(x)\Big|.\label{PRELIM_CTR_GRADIENT_SUP_FROZEN}
\end{eqnarray}

From  \eqref{CORRESP_SHIFTED} write for $g\in C^{\alpha+\beta}_b(\R^d)$ recalling that $\alpha+\beta>1 $:
\begin{eqnarray}
D_x\tilde P_{T,t,\alpha}^{(\tau,\xi)}g(x)&=&D_x \int_{\R^d} dy \tilde p_{\alpha}^{(\tau,\xi)}(t,T,x,y) g(y)= D_x\int_{\R^d} dy p_{\alpha}(T-t,y-m_{T,t}^{(\tau,\xi)}(x)) g(y)
\notag
\\
&=&D_x 
\int_{\R^d} dy  p_{\alpha}\Big(T-t,y-\big(x+\int_t^T F(v,\theta_{v,\tau}(\xi) ) dv\big)\Big) g(y)
\notag\\
&=& D_x \int_{\R^d} dz 
p_{\alpha}\big(T-t,z \big) g\Big(z+ \big(x+\int_t^T F(v,\theta_{v,\tau}(\xi) ) dv\big)\Big),\notag\\
|D_x\tilde P_{T,t,\alpha}^{(\tau,\xi)}g(x)|&\le & 
%\Big| \int_{\R^d} dy  p_{\alpha}\Big(T-t,y-\big(x+\int_t^T F(v,\theta_{v,\tau}(\xi) ) %dv\big)\Big) D_y g(y)\Big|\le 
\|{D}g\|_\infty.\label{CTR_DG}
\end{eqnarray}
Let us now turn to control of the source in \eqref{PRELIM_CTR_GRADIENT_SUP_FROZEN}.  From Lemma \ref{sd} we derive
$$|D_x  (\tilde P_{s,t,\alpha}^{(\tau,\xi)} f(s,\cdot))(x)|\le \frac{C[f]_{\beta, T}}{(s-t)^{\frac 1\alpha-\frac\beta \alpha}}.$$
Since $\alpha+\beta>1 $ we thus get that $ 1/\alpha-\beta /\alpha<1 $ so that the above singularity is integrable in time. We therefore derive:
\begin{eqnarray}
\Big|\int_t^T ds  D_x\Big(  \tilde P_{s,t,\alpha}^{(\tau,\xi)} f(s,\cdot)\Big)(x)\Big|\le C[f]_{\beta,T}\int_t^T\frac{ds}{(s-t)^{\frac{1}{\alpha}-\frac{\beta}{\alpha}}}\le C[f]_{\beta,T}(T-t)^{\frac{\alpha+\beta-1}{\alpha}}. \label{CTR_PERT_INT_FROZEN}
\end{eqnarray}
Plugging \eqref{CTR_PERT_INT_FROZEN} and \eqref{CTR_DG} into \eqref{PRELIM_CTR_GRADIENT_SUP_FROZEN} gives the following bound for the gradient:
\begin{equation}
\label{CTR_GRAD_SUP_FROZEN}
|D_x \tilde u^{(\tau,\xi)}(t,x)|\le \|Dg\|_\infty+ C (T-t)^{\frac{\alpha+\beta-1}{\alpha}}[f]_{\beta,T}.
\end{equation}

\item[$(iii) $] \emph{H\"older modulus of the solution.}

It now remains to control the $\alpha+\beta-1 $ H\"older modulus of the gradient of the solution. To this end we introduce for a given time $t\in [0,T] $ and given spatial points $x,x'\in \R^d$, the notion of \textit{diagonal} and \textit{off-diagonal} regime.
  
For the frozen semi-group, we say that the off-diagonal regime (resp. diagonal) holds when $T-t\leq c_0 |x-x'|^\alpha$ (resp. $T-t\ge c_0 |x-x'|^\alpha$). We use here a constant $c_0$, which will be for our further perturbative analysis  meant to be small for circular type arguments to work. Anyhow, for the frozen semi-group this constant  could be arbitrary, for instance 1.

We first investigate the H\"older continuity in space of $x\mapsto D_{x} \tilde P_{T,t,\alpha}^{(\tau,\xi)}g(x)$.\\

$(iii)-(a)$ {\it  Off-diagonal regime.} Let $T-t\leq c_0 |x-x'|^\alpha$. Observe from \eqref{CORRESP_SHIFTED} that $ D_x \tilde p_\alpha^{(\tau,\xi)}(t,T,x,y)=-D_y\tilde p_\alpha^{(\tau,\xi)}(t,T,x,y)$. We therefore write:
\begin{eqnarray}%\label{D2_tildeP_g_Holder}
&&D_{x} \tilde P_{T,t,\alpha}^{(\tau,\xi)}g(x)- D_{x} \tilde P_{T,t,\alpha}^{(\tau,\xi)}g(x')
\nonumber \\
&=&
\bigg [ \int_{\R^{d}}  \tilde p_\alpha^{(\tau,\xi)}(t,T,x,y) D%_y
 g(y) dy- \int_{\R^{d}} \tilde p_\alpha^{(\tau,\xi)}(t,T,x',y) D%_y
 g(y) dy \bigg ]
\nonumber \\
&=&\bigg [ \int_{\R^{d}}  \tilde p_\alpha^{(\tau,\xi)}(t,T,x,y) [D%_y 
g(y)-
D%_y 
g(m_{t,T}^{(\tau,\xi)}(x))] dy\notag\\
&&+[D%_y 
g(m_{t,T}^{(\tau,\xi)}(x))-D%_y 
g(m_{t,T}^{(\tau,\xi)}(x'))]- \int_{\R^{d}} \tilde p_\alpha^{\xi'}(t,T,x',y) [D%_y 
g(y)-D%_y 
g(m_{T,t}^{(\tau,\xi')}(x))]dy\bigg],\notag
\end{eqnarray}
recalling that $\tilde p_\alpha^{(\tau,\xi)}(t,T,x,\cdot),\ \tilde p_\alpha^{(\tau,\xi')}(t,T,x',\cdot) $ are probability densities for the last equality.  Since $Dg$ is $\alpha+\beta-1 $-H\"older continuous and $\alpha+\beta-1<\alpha $, we therefore get:
\begin{eqnarray*}
&&|D_{x} \tilde P_{T,t,\alpha}^{(\tau,\xi)}g(x)- D_{x} \tilde P_{T,t,\alpha}^{(\tau,\xi)}g(x')|\\
& \le& [Dg]_{\alpha+\beta-1}\Big(\int_{\R^d} dy p_\alpha(T-t,y-m_{T,t}^{(\tau,\xi)}(x))|y-m_{T,t}^{(\tau,\xi)}(x)|^{\alpha+\beta-1}\\
&& +|m_{t,T}^{(\tau,\xi)}(x)-m_{t,T}^{(\tau,\xi)}(x')|^{\alpha+\beta-1}+\int_{\R^d} dy p_\alpha(T-t,y-m_{T,t}^{(\tau,\xi)}(x'))|y-m_{T,t}^{(\tau,\xi)}(x')|^{\alpha+\beta-1}\Big)\\
 &\le & [Dg]_{\alpha+\beta-1}\Big( C(T-t)^{\frac{\alpha+\beta-1}\alpha}+|x-x'|^{\alpha+\beta-1}\Big),
\end{eqnarray*} 
using property   \A{NDa}
% $(\mathscr P_{\beta}) $ 
%Proposition \ref{CTR_OF_INTEGRABILITY_DER_GEN_STABLE} %(or equivalently %property $(\mathscr P) $ 
and recalling that the mapping $x\mapsto m_{T,t}^{(\tau,\xi)}(x) $ is affine for the last inequality. On the considered off-diagonal regime, the previous bound eventually yields:
\begin{eqnarray}
|D_{x} \tilde P_{T,t,\alpha}^{(\tau,\xi)}g(x)- D_{x} \tilde P_{T,t,\alpha}^{(\tau,\xi)}g(x')|\le [Dg]_{\alpha+\beta-1}\big(C c_0^{\frac{\alpha+\beta-1}\alpha}+1 \big) |x-x'|^{\alpha+\beta-1},\label{BD_SG_G_OFF_DIAG}
\end{eqnarray}
which is the expected bound.\\

$(iii)-(b)$ {\it  Diagonal regime.} If $T-t > c_0 |x-x'|^\alpha$,
we directly write for all $(\tau,\xi) \in [0,T]\times \R^d $:
\begin{eqnarray}
\label{control_Holder_D2_P_g}
&&|D_{x} \tilde P_{T,t,\alpha}^{(\tau,\xi)}g(x)- D_{x} \tilde P_{T,t,\alpha}^{(\tau,\xi)}g(x')| 
\nonumber 
%\\
% &\leq& 
%\big |\int_{\R^{d}} [D_{x}\tilde p_\alpha^{\xi}(t,T,x,y) -D_{x}\tilde p_\alpha^{ \xi}%(t,T,x',y) ] g(y) dy  \big |
%\nonumber 
\\
&\le &\big |\int_{\R^{d}} [\tilde p_\alpha^{(\tau,\xi)}(t,T,x,y) -\tilde p_\alpha^{(\tau,\xi)}(t,T,x',y) ] D%_y 
g(y) dy  \big |
\nonumber \\
 &\leq& 
\big |\int_0^1 d\mu \int_{\R^{d}}  [D_{x} \tilde p_\alpha^{(\tau,\xi)}(t,T,x'+\mu(x-x'),y) \cdot (x-x') ]D%_y 
g(y) dy  \big |
\nonumber 
%&=:& \Delta_{t,T,\bxi,\bxi'} {\mathbf D} D_{\x_1}^2\tilde P_3g(\x,\x'),
\\
 &=& 
\big |\int_0^1 d\mu \int_{\R^{d}}  [D_{x}  p_\alpha (T-t, y - m^{\xi}_{T,t} (x'+\mu(x-x'))) \cdot (x-x') ]D%_y 
g(y) dy  \big |.
\nonumber 
\end{eqnarray}
%recalling the notation ${\mathbf D}=(D_{\x_1},\cdots, D_{\x_n})$ introduced after equation \eqref{KOLMO}. 
This contribution is again dealt through usual cancellation techniques recalling that since $\int_{\R^d} \tilde p_\alpha^{\xi}(t,T,x'+\mu(x-x'),y)dy=1$ then for $\ell\in \{1,2\},\ D_x^\ell \int_{\R^d} \tilde p_\alpha^{(\tau,\xi)}(t,T,x'+\mu(x-x'),y)dy=0 $. We get from the above estimate that denoting as well as in \eqref{CORRESP_SHIFTED} by $ m^{(\tau,\xi)}_{T,t} \big (x'+\mu(x-x')\big ):=x'+\mu(x-x')+\int_t^T F(v,\theta_{v,\tau}(\xi)) dv$, we obtain:
\begin{eqnarray} \label{control_Holder_D2_P_3g}
&&|D_{x} \tilde P_{T,t,\alpha}^{(\tau,\xi)}g(x)- D_{x} \tilde P_{T,t,\alpha}^{(\tau,\xi)}g(x')| 
 \\
 &\leq& 
\bigg |\int_0^1 d\mu\int_{\R^{d}} 
 [D_{x}  p_\alpha (T-t, y - m^{(\tau,\xi)}_{T,t} (x') -\mu(x-x'))) \cdot (x-x') ]
 \Big [D g \big (y \big )-D g\big (m^{(\tau,\xi)}_{T,t}  (x') +\mu(x-x') \big )\Big]dy.
 \nonumber
 \\
  &\leq&  [ Dg]_{{\alpha+\beta-1}}
\int_0^1 d\mu\int_{\R^{d}} 
 |D_{x}  p_\alpha (T-t, y - m^{(\tau,\xi)}_{T,t} (x') -\mu(x-x'))) \cdot (x-x') | \,
 | y - m^{(\tau,\xi)}_{T,t}  (x') - \mu(x-x') |^{\alpha + \beta -1}dy,
 \nonumber
\end{eqnarray}
since $g \in C^{\alpha+\beta}_b(\R^{d})$. 
From Lemma \ref{sd}, 
%(equation \eqref{REG_GAMMA} applied with $\gamma=\alpha + \beta -1$), 
we obtain
 \begin{eqnarray}
&&|D_{x} \tilde P_{T,t,\alpha}^{(\tau,\xi)}g(x)- D_{x} \tilde P_{T,t,\alpha}^{(\tau,\xi)}g(x')| %\big |_{\xi=x}
%%% A priori qua non serve nessun controllo sull \xi. La stima e' valida per qualsiasi \xi
\nonumber \\
 &\le& C [Dg]_{C_b^{\alpha+\beta-1}}   (T-t)^{-\frac 1\alpha+\frac{\alpha+\beta-1}\alpha} \,
  |x-x'|^{}
 \le C [Dg]_{\alpha+\beta-1} |x-x'|^{\alpha+\beta-1}\label{control_Holder_D_P_g1}, 
\end{eqnarray}
where the above constant $C$ also depends on $c_0$: since $c_0|x-x'|^\alpha< (T-t) $ and $\alpha+\beta<2 $, we indeed have $(T-t)^{-\frac 1 \alpha+\frac{\alpha+\beta-1}{\alpha}}|x-x'|\le (c_0 |x-x'|^\alpha)^{-\frac 1\alpha+\frac {\alpha+\beta-1}\alpha} |x-x'|\le C|x-x'|^{\alpha+\beta-1} $. Then, from equations \eqref{control_Holder_D_P_g1} and \eqref{BD_SG_G_OFF_DIAG} we get that, for all $x,x' \in \R^d$:
\begin{equation}
\label{CTR_BD_HOLDER_SG_G}
|D_{x} \tilde P_{T,t,\alpha}^{(\tau,\xi)}g(x)- D_{x} \tilde P_{T,t,\alpha}^{(\tau,\xi)}g(x')|\le C[Dg]_{\alpha+\beta-1}|x-x'|^{\alpha+\beta-1},
\end{equation}
which gives the expected control.

Let us now turn to the mapping $x\mapsto \int_t^T ds D_x\Big(\tilde P_{s,t,\alpha}^{(\tau,\xi)} f(s,\cdot)\Big) (x) $. For separate points, i.e.,  $|x-x'|>0$, while integrating in $s\in [t,T] $ we have that, accordingly with the previous terminology if $s\in [t, (t+ c_ 0|x-x'|^\alpha)\wedge T] $ then the \textit{off-diagonal} regime holds for the analysis of 
\begin{equation}\label{DEF_GRAD_SOURCE}
D_x\Big(\tilde P_{s,t,\alpha}^{(\tau,\xi)} f(s,\cdot)\Big) (x)-D_x\Big(\tilde P_{s,t,\alpha}^{(\tau,\xi)} f(s,\cdot)\Big) (x')
\end{equation}
 and similarly, if $s\in [ (t+ c_ 0|x-x'|^\alpha)\wedge T,T] $ then the \textit{diagonal} regime holds.

We now introduce the transition time $t_0$ defined as follows:
\begin{equation}\label{def_t0}
t_0=(t+c_0 |x-x'|^\alpha) \wedge T,
\end{equation}
i.e., $t_0$ precisely corresponds to the critical time at which a change of regime occurs. Observe that, if $t_0=T $ the contribution \eqref{DEF_GRAD_SOURCE} is in the \textit{off-diagonal} regime along the whole time interval.
% in the globally diagonal regime.

Introduce the following family of Green kernels.
\begin{eqnarray}\label{GREEN_FLESSIBILE_IN_TEMPO}
\forall 0\le v<r\le T,\;\tilde G_{r,v,\alpha}^{(\tau,\xi)} f(t,x)&:=&\int_v^{r} ds\int_{\R^{d}} dy \tilde p_\alpha^{(\tau,\xi)}(t,s,x,y)f(s,y). %\label{GREEN_SUR_SEGMENT_EN_TEMPS}
\end{eqnarray}
The off-diagonal contribution associated with the difference \eqref{DEF_GRAD_SOURCE} now writes:
\begin{eqnarray}
\big | D_{x} \tilde G_{t_0,t,\alpha}^{(\tau,\xi)} f(t,x) \!-\! D_{x} \tilde G_{t_0,t,\alpha}^{(\tau,\xi)} f(t,x') \big |  
& \le&
\Big |\int_t^{t_0} \!\!ds D_{x} \Big(\tilde P_{s,t,\alpha}^{(\tau,\xi)}f(s,\cdot)\Big)(x)\Big | 
\!\!\!+\!\Big |\int_t^{t_0} \!\!ds D_{x} \Big(\tilde P_{s,t,\alpha}^{(\tau,\xi)}f(s,\cdot)\Big)(x')\Big | 
\nonumber \\ \label{CTR_D1}
&\!\!\!\leq& C [f]_{T, \beta}
\int_t^{t_0} ds (s-t)^{-\frac{1}{\alpha}+ \frac{\beta}{\alpha}}
\le  C[f]_{T, \beta}|x-x'|^{\alpha+\beta-1},
\end{eqnarray}
using equation \eqref{REG_BETA} of Lemma \ref{sd} for the last but one inequality.

For the \textit{diagonal regime}, which appears if $t_0<T $ and corresponds in that case to the difference  $$D_{x}\tilde G_{T,t_0,\alpha}^{(\tau,\xi)} f(t,x)-D_{x}\tilde G_{T,t_0,\alpha}^{(\tau, \xi)} f(t,x'),$$ we have to be more subtle and perform a Taylor expansion of $D_{x} (\tilde P_{s,t, \alpha}^{\xi}f(s,\cdot)) $ similarly to what we did in \eqref{control_Holder_D2_P_3g}. Namely: 
\begin{eqnarray*}
 \big | D_{x}\tilde G_{T,t_0,\alpha}^{(\tau,\xi)} f(t,x) \!-\! D_{x} \tilde G_{T,t_0,\alpha}^{(\tau,\xi)} f(t,x') \big | 
&\le& \int_{t_0}^{T } ds \Big |\int_0^1 d\mu   D_{x}^2\Big( \tilde P_{s,t,\alpha}^{(\tau,\xi)}f(s,\cdot)\Big)(x'+\mu(x-x')) \cdot (x-x') \Big | \\
&\le& |x-x'|\int_{t_0}^{T } ds \int_0^1 d\mu \Big|D_{x}^2 \Big(\tilde P_{s,t,\alpha}^{(\tau,\xi)} f(s,\cdot)\Big)\big(x'+\mu (x-x')\big)\Big | \\
&\le& C [f]_{\beta, T}|x-x'|\int_{t_0}^{T } ds (s-t)^{-\frac 2\alpha+\frac \beta \alpha},
\end{eqnarray*}
using again Lemma \ref{sd}, equation \eqref{REG_BETA} for the last inequality. 
This finally yields, recalling that $t_0=\big(t+ c_ 0|x-x'|^\alpha\big)\wedge T$:
\begin{eqnarray}
\label{CTR_D2}
\big | D_{x} \tilde G_{T,t_0,\alpha}^{(\tau,\xi)} f(t,x) \!-\! D_{x} \tilde G_{T,t_0,\alpha}^{(\tau,\xi)} f(t,x') \big | 
&\le& C  [f]_{\beta, T}\, |x-x| (|x-x'|^\alpha)^{1-\frac2\alpha +\frac \beta \alpha}\notag\\
%&\le & C \|f\|_{L^\infty(C_{b,\d}^\gamma)} \sum_{i=1}^n |(\x-\x')_i|^{\frac{\gamma}{2i-1}}
%\notag\\
&\le& 
%\leq
 C  [f]_{\beta, T} \, |x-x'|^{\alpha+\beta-1}. 
\end{eqnarray}
Gathering \eqref{CTR_D1}  and  \eqref{CTR_D2} gives the stated estimate for the H\"older modulus of the gradient of the Green kernel. We eventually derive:
\begin{equation}
|D_{x}\tilde u^{(\tau,\xi)}(t,x) \!-\! D_{x}\tilde u^{(\tau,\xi)}(t,x')|\le C([Dg]_{\alpha+\beta-1}+[f]_{\beta, T}) |x-x'|^{\alpha+\beta-1}). \label{FINAL_BOUND_GR_HOLD_FROZEN}
\end{equation}

The estimate \eqref{SCHAUDER_FROZEN_BETA} of the proposition follows from \eqref{CTR_BD_SUP_FROZEN}, \eqref{CTR_GRAD_SUP_FROZEN} and \eqref{FINAL_BOUND_GR_HOLD_FROZEN}. 
%%%%%%%%%%%%%
\def\ciao3{
Now, to establish \eqref{SCHAUDER_FROZEN_GAMMA}, if $\alpha+\gamma>1 $ the proof follows exactly the same lines. When $\alpha+\gamma<1 $ one only has to control the H\"older modulus of the solution itself (instead of the gradient above) and the previous arguments can be readily adapted, i.e. there is no need to differentiate the mild formulation \eqref{FROZEN_SEMI_GROUP}. There are no cancellation arguments anymore but since $\tilde p_\alpha^{(\tau,\xi)}(t,s,x,y) $ is a density we can exploit the smoothness of $f,g$ proceeding e.g. like for the above control of the gradient of $\tilde P_{T,t,\alpha}^{(\tau,\xi)} g(x) $.
}
%%%%%%%%%%%%%%%%
\end{trivlist}
\end{proof}

{\color{black}
\begin{PROP} \label{PROP_FOURIER}
Let $\tilde u$ be the map defined in \eqref{FROZEN_SEMI_GROUP}. Then,
\begin{trivlist}
\item[(i)] if property $({\mathscr P}_\beta) $ holds, for any freezing couple $(\tau,\xi)\in [0,T]\times \R^d $ the function $\tilde u^{(\tau,\xi)}$ defined in \eqref{FROZEN_SEMI_GROUP} belongs to ${\mathscr C}^{\alpha+\beta}([0,T]\times \R^d) $ and solves \eqref{FROZEN_Asso_PDE};
\item[(ii)] ``conversely'', if for any freezing couple $(\tau,\xi)\in [0,T]\times \R^d $, $\tilde v^{(\tau,\xi)}$ is a solution to \eqref{FROZEN_Asso_PDE} in ${\mathscr C}^{\alpha+\beta}([0,T]\times \R^d) $ with bounded support, i.e., there exists a compact set $K\subset \R^d$ such that
\begin{equation}\label{supp}
\text{Supp}(\textcolor{black}{\tilde v^{(\tau,\xi)}}(t, \cdot )) \subset K,\;\;\; t \in [0,T].
\end{equation}
then $\tilde v^{(\tau,\xi)}$ = $\tilde u^{(\tau,\xi)}$ defined by \eqref{FROZEN_SEMI_GROUP}.
\end{trivlist}
\end{PROP}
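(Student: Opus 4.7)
The plan is to deduce both assertions from a single Fourier ingredient: the backward and forward Kolmogorov equations for the frozen density $\tilde p_\alpha^{(\tau,\xi)}(t,s,x,y) = p_\alpha(s-t, y - m_{s,t}^{(\tau,\xi)}(x))$. Its Fourier transform in $y$ equals $\exp\big(-i\langle\lambda,m_{s,t}^{(\tau,\xi)}(x)\rangle + (s-t)\Psi(\lambda)\big)$, with $\Psi$ the L\'evy symbol from \eqref{LEVY_KHINTCHINE}. Differentiating in $t$ and using $\partial_t m_{s,t}^{(\tau,\xi)}(x) = -F(t,\theta_{t,\tau}(\xi))$ produces exactly the symbol of $-(L_\alpha + F(t,\theta_{t,\tau}(\xi))\cdot D_x)$ acting on $\tilde p_\alpha^{(\tau,\xi)}$ in $x$; Fourier inversion yields
\begin{equation*}
\partial_t \tilde p_\alpha^{(\tau,\xi)}(t,s,x,y) + \big(L_\alpha + F(t,\theta_{t,\tau}(\xi))\cdot D_x\big)\tilde p_\alpha^{(\tau,\xi)}(t,s,x,\cdot)(y) = 0,\quad t<s,
\end{equation*}
and symmetrically a forward equation in $(s,y)$ with drift sign reversed.

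For part (i), the spatial H\"older regularity of $\tilde u^{(\tau,\xi)}(t,\cdot)$ uniformly in $t$ is given by Proposition~\ref{SCHAUDER_FROZEN}. To conclude membership in ${\mathscr C}^{\alpha+\beta}([0,T]\times\R^d)$ and solvability of \eqref{FROZEN_Asso_PDE}, I would differentiate \eqref{FROZEN_SEMI_GROUP} formally in $t$, substitute the backward Kolmogorov identity, and integrate on $[t,s]$ to obtain
\begin{equation*}
\tilde u^{(\tau,\xi)}(t,x) - \tilde u^{(\tau,\xi)}(s,x) = \int_t^s\!\big[\big(L_\alpha + F(r,\theta_{r,\tau}(\xi))\cdot D_x\big)\tilde u^{(\tau,\xi)}(r,\cdot)(x) + f(r,x)\big]\,dr,
\end{equation*}
which is precisely the integral form of \eqref{FROZEN_Asso_PDE} and simultaneously supplies the time continuity and the generalized time derivative needed for ${\mathscr C}^{\alpha+\beta}$. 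The delicate step is justifying the exchange of $L_\alpha + F\cdot D_x$ with the Duhamel integral over $s$; this is handled by rewriting $L_\alpha\tilde P_{s,t,\alpha}^{(\tau,\xi)}f(s,\cdot)(x)$ with the cancellation $f(s,y) - f(s, m_{s,t}^{(\tau,\xi)}(x))$ inserted inside the L\'evy integral defining $L_\alpha$, as in Lemma~\ref{sd}. Combined with property $({\mathscr P}_\beta)$ and the H\"older continuity of $f$, this cancellation upgrades the otherwise non-integrable singularity $(s-t)^{-2/\alpha}$ into the integrable $(s-t)^{-(2-\beta)/\alpha}$, precisely because $\alpha+\beta>1$.

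For part (ii), given $\tilde v^{(\tau,\xi)}$ a solution of \eqref{FROZEN_Asso_PDE} satisfying \eqref{supp}, set $\Theta(r):=(\tilde P_{r,t,\alpha}^{(\tau,\xi)}\tilde v^{(\tau,\xi)}(r,\cdot))(x)$ for $r\in[t,T]$. The compact spatial support of $\tilde v^{(\tau,\xi)}(r,\cdot)$ makes differentiation under the integral and integration by parts rigorous. Differentiating in $r$, substituting the forward Kolmogorov equation into the kernel-derivative piece and the PDE $\partial_r\tilde v^{(\tau,\xi)} = -L_\alpha\tilde v^{(\tau,\xi)} - F(r,\theta_{r,\tau}(\xi))\cdot D_y\tilde v^{(\tau,\xi)} - f$ into the value-derivative piece, the $L_\alpha$-contributions cancel by duality (transferring $L_\alpha$ from $\tilde p_\alpha^{(\tau,\xi)}$ onto $\tilde v^{(\tau,\xi)}$, with no boundary or tail terms thanks to \eqref{supp}), and the drift contributions cancel after an integration by parts in $y$. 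This yields $\Theta'(r) = -(\tilde P_{r,t,\alpha}^{(\tau,\xi)}f(r,\cdot))(x)$. Integrating from $t$ to $T$ together with $\Theta(t)=\tilde v^{(\tau,\xi)}(t,x)$ and $\Theta(T)=\tilde P_{T,t,\alpha}^{(\tau,\xi)}g(x)$ reproduces the Duhamel formula \eqref{FROZEN_SEMI_GROUP}, so $\tilde v^{(\tau,\xi)} = \tilde u^{(\tau,\xi)}$.

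The main obstacle in both parts is the rigorous passage of the non-local operator $L_\alpha$ through limits and integrals: in part~(i), the $r\to t^+$ singularity of $L_\alpha\tilde P_{r,t,\alpha}^{(\tau,\xi)}f(r,\cdot)$, controlled by $({\mathscr P}_\beta)$ and the H\"older regularity of $f$; in part~(ii), the transfer of $L_\alpha$ from $\tilde p_\alpha^{(\tau,\xi)}$ to $\tilde v^{(\tau,\xi)}$ through duality, which is rendered unproblematic at infinity precisely by the compact support hypothesis \eqref{supp}. All other manipulations are standard for semigroup representations.
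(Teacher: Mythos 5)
Your part (ii) is correct but follows a genuinely different route from the paper. The paper fixes $(\tau,\xi)$, absorbs the space-independent drift $b(t)=F(t,\theta_{t,\tau}(\xi))$ by the change of variables $h(t,x)=\tilde v^{(\tau,\xi)}(t,x-\int_t^T b(v)dv)$, and then takes the partial Fourier transform in $x$: the compact support hypothesis \eqref{supp} guarantees $L_\alpha h(t,\cdot)\in L^p$, the transformed equation becomes, for each frequency $\lambda$, a scalar integral equation with symbol $\Psi(\lambda)$ whose solution is $e^{(T-t)\Psi(\lambda)}\hat g(\lambda)+\int_t^T e^{(r-t)\Psi(\lambda)}\hat l(r,\lambda)dr$, and Fourier inversion gives \eqref{FROZEN_SEMI_GROUP}. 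You instead run the classical variation-of-constants/duality argument on $\Theta(r)=\tilde P_{r,t,\alpha}^{(\tau,\xi)}\tilde v^{(\tau,\xi)}(r,\cdot)(x)$, using the backward/forward Kolmogorov equations for the frozen kernel and transferring $L_\alpha$ onto $\tilde v^{(\tau,\xi)}$ by duality; the compact support (plus $\int(1\wedge|y|)\nu(dy)<\infty$ and $\tilde v^{(\tau,\xi)}(r,\cdot)\in C_b^{\alpha+\beta}$ with $\alpha+\beta>1$) makes Fubini and the integration by parts legitimate, and the a.e.\ differentiability in time supplied by the definition of ${\mathscr C}^{\alpha+\beta}$ suffices since $\Theta$ is absolutely continuous. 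This is a sound alternative; the paper's Fourier computation is more self-contained (no forward equation needed), while yours stays at the semigroup level and would adapt more directly to kernels not given by explicit symbols. One point you should make explicit: for the non-symmetric case covered by the paper (relativistic stable), the forward equation in $(s,y)$ involves the adjoint operator (L\'evy measure reflected), and it is precisely the duality step that converts it back into $L_\alpha$ acting on $\tilde v^{(\tau,\xi)}$; your phrasing glosses over this but the cancellation does go through.

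In part (i), your overall strategy (differentiate the Duhamel formula in $t$, justify the exchange of $L_\alpha+F\cdot D_x$ with the $s$-integral via a cancellation argument and property $({\mathscr P}_\beta)$) is the same as the paper's, which defers the details to Lemma 3.3 of \cite{kryl:prio:10}. However, your quantitative justification is wrong as stated: you claim the cancellation upgrades a singularity $(s-t)^{-2/\alpha}$ to $(s-t)^{-(2-\beta)/\alpha}$, ``integrable because $\alpha+\beta>1$''. Since $\alpha,\beta\in(0,1)$, one has $(2-\beta)/\alpha>1$ always (integrability would require $\alpha+\beta>2$), so the bound you invoke does not close the argument; you are implicitly treating $L_\alpha$ as a second-order operator. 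The correct bookkeeping is: $L_\alpha$ is of order $\alpha$, so the cancellation $f(s,y)-f(s,m_{s,t}^{(\tau,\xi)}(x))$ inside the nonlocal integral (equivalently, a bound of the type $\int|z|^\beta|\partial_u p_\alpha(u,z)|dz\lesssim u^{\beta/\alpha-1}$, which follows by scaling as in the proof of $({\mathscr P}_\beta)$ but is not literally $({\mathscr P}_\beta)$ with $k=2$) yields $|L_\alpha\tilde P_{s,t,\alpha}^{(\tau,\xi)}f(s,\cdot)(x)|\lesssim [f]_{\beta,T}(s-t)^{\beta/\alpha-1}$, integrable for any $\beta>0$; it is the drift term $F(t,\theta_{t,\tau}(\xi))\cdot D_x\tilde P_{s,t,\alpha}^{(\tau,\xi)}f(s,\cdot)(x)$, controlled by Lemma \ref{sd} with $\ell=1$ as $C[f]_{\beta,T}(s-t)^{(\beta-1)/\alpha}$, that needs $\alpha+\beta>1$. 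With this correction your argument for (i) matches the paper's.
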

}

\begin{proof} {\color{black} Assertion $(i)$ is \textcolor{black}{rather direct} in view of Proposition \ref{SCHAUDER_FROZEN}: first, it follows from this result that for all $(\tau, \xi)$ the map $\tilde u^{(\tau,\xi)}$ belongs to $L^{\infty}([0,T],C^{\alpha+\beta}_b)$. Secondly, we have from the proof of the aforementioned proposition that for all $x$ in $\R^d$, \textcolor{black}{the mapping $ s\in (t,T]  \mapsto (D_x + L_\alpha)[\tilde P_{s,t,\alpha}^{(\tau,\xi)}(f(s,\cdot))](x)$} is controlled by an integrable quantity on $(t,T]$. From the very definition of $\tilde p_\alpha^{(\tau,\xi)}$ (fundamental solution of \eqref{FROZEN_Asso_PDE}) one can hence invert the (time) differentiation and integral operator in second term in the r.h.s. of \eqref{FROZEN_SEMI_GROUP}. \textcolor{black}{Similar arguments apply for the first term $\tilde P_{T,t,\alpha}^{(\tau,\xi)}g(\cdot) $ in \eqref{FROZEN_SEMI_GROUP}}. This proves, \textcolor{black}{on} the one hand, that for all $(\tau, \xi)$ the map $\tilde u^{(\tau,\xi)}$ indeed belongs to ${\mathscr C}^{\alpha+\beta}([0,T]\times \R^d)$. On the other hand, using again the fact that the kernel $\tilde p_\alpha^{(\tau,\xi)}$ in $P_{\cdot,\cdot,\alpha}^{(\tau,\xi)}$ is a fundamental solution of \eqref{FROZEN_Asso_PDE} we obtain (inverting again differentiation and integration operator) that $\tilde u^{(\tau,\xi)}$ solves \eqref{FROZEN_Asso_PDE}. \textcolor{black}{The previous arguments are detailed in the diffusive setting in Lemma 3.3 in \cite{kryl:prio:10}}. This concludes the proof of the first point. 
\vskip 1mm

Concerning $(ii)$, }we first fix $\tau$ and $\xi$ and set $b(t) = F(t,\theta_{t,\tau}(\xi)) $, $t \in [0,T]$.  We define 
$$
h(t,x) = \tilde u \big (t, x - \int_t^T b(v)dv \big).
$$
Note that,  a.e. in $t$,  $\partial_t h (t,x) +  L_\alpha h(t,x) =  -f(t, x - \int_t^T b(v)dv ), $  $h(T,x) =g(x),$ $ x \in \R^d.$ Set $l(t,x) = f(t, x - \int_t^T b(v)dv ).$

We can  apply the (partial) Fourier transform in the $x$-variable to
$\p_t  h(t,x) +  L_\alpha  h(t,x)$
 and obtain, a.e. in $t$,
 $$
 \p_t  v(t,\lambda ) + {\cal F}( L_{\alpha} h(t,\cdot ))(\lambda ),
 %+ i b(t) \cdot  \lambda  \, v (t,\lambda )
 $$
 where $v(t, \lambda ) = {\cal F} h(t, \cdot)(\lambda )$. Note that, 
  for each $t \in [0,T]$, $L_\alpha h(t,\cdot )=  \int_{|y| \le 1} [h(t, \cdot + \, y)- h(t, \cdot )] \nu_{\alpha} (dy)$ $+  \int_{|y| >1} [h(t, \cdot + \, y)- h(t, \cdot )] \nu_{\alpha} (dy)$ belongs to $L^p(\R^d)$, for any $p \ge 1.$

 Now we use  the symbol $ \Psi(\lambda) =\Psi_{\alpha}(\lambda) $  of $L_{\alpha}$ given in \eqref{LEVY_KHINTCHINE}. We find  (cf. Section 3.3.2 in \cite{appl:09})
\begin {equation}
   v(s,\lambda ) -  v(t,\lambda )  + \Psi (\lambda) \int_t^s v(r,\lambda )dr    = - \int_t^s\hat l(r, \lambda), \;\;\;
 \lambda \in \R^d, \; \, t \le s \le T,
 \end{equation}
where ${\cal F}l(t, \cdot)(\lambda)= \hat l(t, \lambda)$ with the condition 
 $v(T, \lambda) = {\cal F}(g)(\lambda) $ $= \hat g (\lambda)$. The solution is given by 
$$
v (t, \lambda) = e^{(T-t)  \Psi(\lambda)} \, \hat g(\lambda) + 
 \int_t^T e^{(r-t) \Psi(\lambda)}   \hat l (r, \lambda) dr.
$$
 Using the stable convolution semigroup $P_t= P_t^{\alpha}$ associated \textcolor{black}{with} $L_{\alpha}$
and  the anti-Fourier transform we  get 
\begin{gather*}
h(t,x) =P_{T-t} g(x) + \int_t^T P_{r-t} \, l (r, \cdot )(x) dr.
\end{gather*}
It follows that  
$
\tilde u \big (t, x - \int_t^T b(v)dv \big)$ $ = P_{T-t} g(x) + \int_t^T P_{r-t} \, f (r, \cdot )(x - \int_r^T b(v)dv) dr.
$ Since  
$\int_r^T b(v)dv = \int_t^T b(v)dv - \int_t^r b(v)dv $, we arrive at 
\begin{gather*}
\tilde u \big (t, y\big) = P_{T-t} g(y +  \int_t^T b(v)dv) + \int_t^T P_{r-t} \, f (r, \cdot )(y +  \int_t^r b(v)dv ) dr.
\end{gather*}
which gives \eqref{FROZEN_SEMI_GROUP}.
 \end{proof}

\subsection{Duhamel type formulas}
The central point is that we will use the auxiliary \textit{proxy} IPDE \eqref{FROZEN_Asso_PDE} in order to derive appropriate quantitative controls on a solution $ u \in {\mathscr C}^{\alpha+\beta}([0,T]\times \R^d)$ of \eqref{Asso_PDE}. The parameters $(\tau,\xi) $ are set free and will be chosen in function of the control we aim to establish. 

Importantly, we can exploit equation \eqref{FROZEN_Asso_PDE} in the following proposition which gives a Duhamel type representation of the solution of \eqref{Asso_PDE} involving precisely the \textit{proxy} IPDE \eqref{FROZEN_Asso_PDE}. 
\begin{PROP}[A first Duhamel type representation]\label{DUHAMEL}
Let \A{A} %and property $({\mathscr P}_\beta) $ -> Adesso e' contenuta in \A{A}
hold. For a smooth non-negative spatial test function $\rho$ which is equal to  1 on the ball $B(0,1/2)$ and vanishes outside $B(0,1)$, we introduce using the proxy parameters $(\tau,\xi) $ the following cut-off function
\begin{gather*} 
  \eta_{\tau, \xi}(s,y) = \rho ( y - \theta_{s, \tau}(\xi)),\;\; y \in \R^d,\;\; s \in [0,T],
\end{gather*}
which precisely localizes around the frozen flow.

For $u\in {\mathscr C}^{\alpha+\beta}([0,T]\times \R^d) $ solving \eqref{Asso_PDE} the function $v_{\tau,\xi}:=u \eta_{\tau, \xi}$ solves the equation 
\begin{eqnarray} 
\p_t  v_{\tau,\xi}(t,x)  +  {L}_{\alpha} v_{\tau, \xi} (t,x)+ F(t, \theta_{t,\tau}(\xi) ) \cdot D_x v_{\tau, \xi}(t,x)
 &=& -  \Big[(\eta_{\tau, \xi} f)(t,x) + {\mathcal R}_{\tau,\xi}(t,x)\Big], \ (t,x)\in [0,T)\times \R^d,
% [F(t, \theta_{t,\tau}(\xi) )- F(t,x)]  \cdot D_x u(t,x)  \eta_{\tau, \xi} (t,x)+ u(t,x)L_\alpha \eta_{\tau,\xi}(t,x)\notag\\
 %+\int_{\R^d} \big(u(t,x+y)-u(t,x)\big)\big(\eta_{\tau,\xi}(t,x+y)-\eta_{\tau,\xi}(t,x+y) \big) \nu (dy)
\, 
%\text{on }[0,T)\times \R^d, 
\notag
\\
v_{\tau,\xi}(T,x) &=& g(x)  \eta_{\tau, \xi}(T,x),
\quad \text{on }\R^d, \label{we} %\notag
\end{eqnarray}
  where, $\nu = \nu_{\alpha}$,
  \begin{eqnarray}
  \label{DEF_R}
{\mathcal R}_{\tau,\xi}(t,x)&=&  \Big[ \big( [F(t,x)-F(t, \theta_{t,\tau}(\xi) )]  \cdot D_x u(t,x) \big) \eta_{\tau, \xi} (t,x)\Big]\notag \\
&&- \Big[u(t,x)L_\alpha \eta_{\tau,\xi}(t,x)+\int_{\R^d} \big(u(t,x+y)-u(t,x)\big)\big(\eta_{\tau,\xi}(t,x+y)-\eta_{\tau,\xi}(t,x) \big) \nu (dy)\Big]\notag\\
&=:&R_{\tau,\xi}(t,x)+{\mathcal S}_{\tau,\xi}(t,x),
  \end{eqnarray}
and for $u\in {\mathscr C}^{\alpha+\beta}([0,T]\times \R^d) $ solving \eqref{Asso_PDE}, ${\mathcal S}_{\tau,\xi}\in L^\infty([0,T],C_b^{\beta}(\R^d,\R)) $.

Importantly,  the following representations also hold:
\begin{eqnarray}
v_{\tau,\xi}(t,x)&=&\tilde u^{(\tau,\xi)}(t,x)+\int_t^T  ds \tilde P_{s,t,\alpha}^{(\tau,\xi)} \Big( R_{\tau,\xi}(s,\cdot)\Big) (x),\notag\\
D_x v_{\tau,\xi}(t,x)&=&D_x\tilde u^{(\tau,\xi)}(t,x)+ D_x\int_t^T  ds \tilde P_{s,t,\alpha}^{(\tau,\xi)} \Big(R_{\tau,\xi}(s,\cdot)\Big)(x)%,\notag\\
\label{DUAHM_1}
\end{eqnarray}
where the function $\tilde u^{(\tau,\xi)} $ solves 
\begin{eqnarray} 
\p_t  \tilde u^{(\tau,\xi)}(t,x)  +  {L}_{\alpha} \tilde u^{(\tau, \xi)} (t,x)+ F(t, \theta_{t,\tau}(\xi) ) \cdot D_x \tilde u^{(\tau, \xi)}(t,x)
 &=& -  \Big(\eta_{\tau, \xi} f + {\mathcal S}_{\tau,\xi}\Big)(t,x), \ (t,x)\in [0,T)\times \R^d,
% [F(t, \theta_{t,\tau}(\xi) )- F(t,x)]  \cdot D_x u(t,x)  \eta_{\tau, \xi} (t,x)+ u(t,x)L_\alpha \eta_{\tau,\xi}(t,x)\notag\\
 %+\int_{\R^d} \big(u(t,x+y)-u(t,x)\big)\big(\eta_{\tau,\xi}(t,x+y)-\eta_{\tau,\xi}(t,x+y) \big) \nu (dy)
\, 
%\text{on }[0,T)\times \R^d, 
\notag\\
\tilde u^{(\tau,\xi)}(T,x) &=& g(x)  \eta_{\tau, \xi}(T,x),
\quad \text{on }\R^d,\label{we_BASE}
\end{eqnarray}
Eventually,
\begin{equation}\label{FINAL_BD_FOR_GRADIENT}
\Big(D_x v_{\tau,\xi}(t,x)\Big)\Big|_{(\tau,\xi)=(t,x)}=D_xu(t,x)=\Big(D_x\tilde u^{(\tau,\xi)}(t,x)\Big)\Big|_{(\tau,\xi)=(t,x)}+ \int_t^T  ds  \Big[D_x \tilde P_{s,t,\alpha}^{(\tau,\xi)} \Big(R_{\tau,\xi}(s,\cdot)\Big)(x)\Big]\Big|_{(\tau,\xi)=(t,x)},
\end{equation}
 \end{PROP}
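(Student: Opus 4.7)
The plan is to derive \eqref{we} by a direct computation, then to verify the announced regularity of $\mathcal{S}_{\tau,\xi}$, then to apply the representation/uniqueness statement of Proposition \ref{PROP_FOURIER}(ii) to obtain \eqref{DUAHM_1}, and finally to evaluate at $(\tau,\xi)=(t,x)$ to get \eqref{FINAL_BD_FOR_GRADIENT}.

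First I would apply the operator $\partial_t + L_\alpha + F(t,\theta_{t,\tau}(\xi))\cdot D_x$ to the product $v_{\tau,\xi} = u\,\eta_{\tau,\xi}$. Using the flow equation $\partial_s \theta_{s,\tau}(\xi) = F(s,\theta_{s,\tau}(\xi))$ and the definition $\eta_{\tau,\xi}(s,y) = \rho(y-\theta_{s,\tau}(\xi))$, one gets
\begin{equation*}
\partial_t \eta_{\tau,\xi}(t,x) \;=\; -F(t,\theta_{t,\tau}(\xi))\cdot D_x \eta_{\tau,\xi}(t,x).
\end{equation*}
Combining this with the non-local Leibniz formula
\begin{equation*}
L_\alpha(u\eta)(x) \;=\; \eta(x)\,L_\alpha u(x) + u(x)\,L_\alpha\eta(x) + \int_{\R^d}\!(u(x+y)-u(x))(\eta(x+y)-\eta(x))\,\nu_\alpha(dy),
\end{equation*}
and the original IPDE \eqref{Asso_PDE} evaluated at $(t,x)$ times $\eta_{\tau,\xi}(t,x)$, a straightforward rearrangement yields exactly \eqref{we} with remainder $R_{\tau,\xi}+\mathcal{S}_{\tau,\xi}$ as in \eqref{DEF_R}: the transport term $\eta_{\tau,\xi}(F(t,x)-F(t,\theta_{t,\tau}(\xi)))\cdot D_x u$ produces $R_{\tau,\xi}$, while $u L_\alpha\eta_{\tau,\xi}$ and the carr\'e-du-champ term produce $\mathcal{S}_{\tau,\xi}$.

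Next I would check $\mathcal{S}_{\tau,\xi}\in L^\infty([0,T],C_b^\beta)$. Since $\eta_{\tau,\xi}(t,\cdot)$ is smooth, compactly supported, with bounds uniform in $(t,\tau,\xi)$, the function $L_\alpha\eta_{\tau,\xi}(t,\cdot)$ is bounded and smooth; combined with $u\in L^\infty([0,T],C_b^{\alpha+\beta})$, the contribution $u\,L_\alpha\eta_{\tau,\xi}$ is in $L^\infty([0,T],C_b^\beta)$. For the carr\'e-du-champ term, I would split the $\nu_\alpha$-integral at $|y|\le 1$ and $|y|>1$: in the small-jump regime both factors contribute $|y|^{1}$ (using $D_x u\in C_b^{\alpha+\beta-1}$ and $D\eta_{\tau,\xi}$ bounded), which is $\nu_\alpha$-integrable as $\alpha<1+\beta$; in the large-jump regime the compact support of $\eta_{\tau,\xi}$ and boundedness of $u$ together with $\int_{|y|>1}\nu_\alpha(dy)<\infty$ give a uniform bound. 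The $\beta$-H\"older modulus in $x$ is obtained by the same splitting applied to differences.

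Once \eqref{we} is established with $\mathcal{S}_{\tau,\xi}$ of the announced regularity, $v_{\tau,\xi}$ satisfies the spatial support condition \eqref{supp} and belongs to $\mathscr{C}^{\alpha+\beta}([0,T]\times\R^d)$. I would then apply Proposition \ref{PROP_FOURIER}(ii) separately to the two sub-problems with source $\eta_{\tau,\xi}f+\mathcal{S}_{\tau,\xi}$ (terminal $g\eta_{\tau,\xi}(T,\cdot)$) and source $R_{\tau,\xi}$ (zero terminal): the first gives $\tilde u^{(\tau,\xi)}$ as in \eqref{we_BASE}, the second gives $\int_t^T \tilde P^{(\tau,\xi)}_{s,t,\alpha}R_{\tau,\xi}(s,\cdot)(x)\,ds$. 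The differentiation in $x$ of this integral is justified by Lemma \ref{sd}: $R_{\tau,\xi}$ is bounded and $\beta$-H\"older in $x$ (it inherits the H\"older regularity of $F$ and $D_x u$), so the singularity $(s-t)^{-(1-\beta)/\alpha}$ in \eqref{REG_BETA} is integrable since $\alpha+\beta>1$. Finally, specializing $(\tau,\xi)=(t,x)$ one has $\theta_{t,t}(x)=x$, whence $\eta_{t,x}(t,x)=\rho(0)=1$ and, since $\rho\equiv1$ on $B(0,1/2)$, also $D_x\eta_{t,x}(t,x)=D\rho(0)=0$; thus $v_{\tau,\xi}(t,x)|_{(\tau,\xi)=(t,x)}=u(t,x)$ and $D_xv_{\tau,\xi}(t,x)|_{(\tau,\xi)=(t,x)}=D_xu(t,x)$, giving \eqref{FINAL_BD_FOR_GRADIENT}.

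The main obstacle is the regularity claim for $\mathcal{S}_{\tau,\xi}$: one must carefully exploit the smoothness and compact support of $\eta_{\tau,\xi}$ to make sense of the carr\'e-du-champ integral despite the low integrability of $\nu_\alpha$ for small $\alpha$, and likewise to give meaning to the Duhamel integral of $R_{\tau,\xi}$ (which is precisely the motivation emphasized in the introduction for introducing the localization $\eta_{\tau,\xi}$ rather than working directly with \eqref{we_INTRO}). Everything else is then a careful bookkeeping of the product rule and an application of Proposition \ref{PROP_FOURIER}(ii).
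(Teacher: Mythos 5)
Your overall architecture (product rule plus the flow identity to get \eqref{we}, regularity of ${\mathcal S}_{\tau,\xi}$, then Proposition \ref{PROP_FOURIER} and evaluation at $(\tau,\xi)=(t,x)$) is the same as the paper's, but there is a genuine gap at the decisive step, namely the gradient representation. You justify differentiating $\int_t^T ds\,\tilde P^{(\tau,\xi)}_{s,t,\alpha}R_{\tau,\xi}(s,\cdot)(x)$ by invoking Lemma \ref{sd} with the claim that $R_{\tau,\xi}$ is bounded and $\beta$-H\"older in $x$. That claim is false in general: $R_{\tau,\xi}=\big[F(t,\cdot)-F(t,\theta_{t,\tau}(\xi))\big]\cdot D_xu\,\eta_{\tau,\xi}$ inherits only the regularity of $D_xu\in C_b^{\alpha+\beta-1}$, so $R_{\tau,\xi}(s,\cdot)\in C_b^{\alpha+\beta-1}$ and not $C_b^{\beta}$ (recall $\alpha<1$, hence $\alpha+\beta-1<\beta$). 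Applying \eqref{REG_BETA} with the true exponent produces the singularity $(s-t)^{-(2-\alpha-\beta)/\alpha}$, which is integrable only under the extra condition $2\alpha+\beta>2$ — not assumed here. This is exactly the supercritical difficulty the proposition is designed to circumvent. The paper's route is different on this point: the second identity in \eqref{DUAHM_1} is obtained \emph{indirectly}, as the derivative of the difference of two functions already known to be differentiable ($v_{\tau,\xi}=u\eta_{\tau,\xi}$ and $\tilde u^{(\tau,\xi)}$), with no estimate on the remainder needed at general $(\tau,\xi)$; and the passage from \eqref{DUAHM_1} to \eqref{FINAL_BD_FOR_GRADIENT} (where $D_x$ is moved \emph{inside} the time integral) exploits the specific choice $(\tau,\xi)=(t,x)$: by \eqref{CORRES_LINEARIZED_FLOW_AND_FLOW}, $m_{s,t}^{(t,x)}(x)=\theta_{s,t}(x)$, so the kernel derivative is centered exactly at the point where $F(s,\cdot)-F(s,\theta_{s,t}(x))$ vanishes, and the estimate \eqref{INTEGRABILITY_BOUND_WITH_RIGHT_PARAMETERS_GRAD} only uses $\|Du\|_\infty$, \eqref{22} and $({\mathscr P}_\beta)$ to get the integrable bound $CK_0\|Du\|_\infty(s-t)^{-(1-\beta)/\alpha}$, after which dominated convergence on difference quotients (\eqref{THE_SWAP_LIMITS}) legitimates the swap. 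Your proposal misses both the falsity of the $\beta$-H\"older claim and the role of the alignment $m_{s,t}^{(t,x)}(x)=\theta_{s,t}(x)$, which is the heart of the argument.

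Two secondary points. First, your direct proof that ${\mathcal S}_{\tau,\xi}\in L^\infty([0,T],C_b^\beta)$ is under-justified: applying ``the same splitting to differences'' naively gives only the exponent $\alpha+\beta-1$ for the carr\'e-du-champ term, again because $Du$ is only $(\alpha+\beta-1)$-H\"older; to reach the exponent $\beta$ one must interpolate between the two available increment bounds (in $|x-x'|$ and in $|y|$), which is what the paper does abstractly via the continuity of ${\mathscr T}_{\tau,\xi}$ from $C_b^1$ to $C_b$ and from $C_b^{\alpha+\beta}$ to $C_b^1$ together with real interpolation of H\"older spaces. Second, Proposition \ref{PROP_FOURIER}(ii) cannot be applied ``separately to the two sub-problems'' since you do not have in hand an $\mathscr C^{\alpha+\beta}$ solution of the sub-problem with source $R_{\tau,\xi}$; the correct use is to apply it once to $v_{\tau,\xi}$ with the full source $\eta_{\tau,\xi}f+{\mathcal S}_{\tau,\xi}+R_{\tau,\xi}$ and then identify the first two contributions with $\tilde u^{(\tau,\xi)}$. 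This last point is a fixable structural slip, but the gap on the $R$-term differentiation is the one that must be repaired along the paper's lines.
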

\begin{REM} {\em 
The above representation formulas \eqref{DUAHM_1} are crucial in the sense that they allow to write any solution $u\in {\mathscr C}^{\alpha+\beta}([0,T]\times \R^d) $ of \eqref{Asso_PDE}  localized with a cut-off along the flow in terms of the solution $\tilde u^{(\tau,\xi)} $ to equation \eqref{FROZEN_Asso_PDE} with  modified source and terminal condition, namely $ -(\eta_{\tau, \xi} f) + {\mathcal S}_{\tau,\xi}$ and $ g  \eta_{\tau, \xi}(T,\cdot) $ respectively, and the remainder term $\int_t^T  ds \tilde P_{s,t,\alpha}^{(\tau,\xi)} \Big( R_{\tau,\xi}(s,\cdot)\Big) $. Roughly speaking, the regularity of $\tilde u^{(\tau,\xi)} $ follows from Proposition \ref{SCHAUDER_FROZEN} whereas the control of the remainder will precisely be the difficult remaining part of the work for which we will also need to specify, in the representations, the appropriate values of the freezing parameters $(\tau,\xi)\in [0,T]\times \R^d $. }
 \end{REM}

\begin{proof}
Let a solution $u\in {\mathscr C}^{\alpha+\beta}([0,T]\times \R^d) $ of \eqref{Asso_PDE} be given and let us prove that $v_{\tau,\xi}=u \eta_{\tau,\xi} $ satisfies \eqref{we}. Observe that:
\begin{eqnarray*}
\partial_t v_{\tau,\xi}(t,x)&=&\partial_t u(t,x) \eta_{\tau,\xi}(t,x)+u(t,x)\partial_t \eta_{\tau,\xi}(t,x)\\
&=&\partial_t u(t,x) \eta_{\tau,\xi}(t,x) - u(t,x) D \rho(x-\theta_{t,\tau}(\xi))\cdot F(t,\theta_{t,\tau}(\xi)),\\
F(t,\theta_{t,\tau}(\xi))\cdot D v_{\tau,\xi}(t,x)&=& \big( F(t,\theta_{t,\tau}(\xi))\cdot D u(t,x)\Big) \eta_{\tau,\xi}(t,x)+u(t,x)F(t,\theta_{t,\tau}(\xi)) \cdot D \rho(x-\theta_{t,\tau}(\xi)),\\
L_\alpha(v_{\tau,\xi}(t,x))&=&\big(L_\alpha u(t,x)\big)\eta_{\tau,\xi}(t,x)
+\big(L_\alpha \eta_{\tau,\xi}(t,x)\big)u(t,x)\\
&&+\int_{\R^d} \big(u(t,x+y)-u(t,x)\big)\big(\eta_{\tau,\xi}(t,x+y)-\eta_{\tau,\xi}(t,x+y) \big) \nu (dy)\Big]\\
&=&\big(L_\alpha u(t,x)\big)\eta_{\tau,\xi}(t,x)-{\mathcal S}_{\tau,\xi}(t,x).
\end{eqnarray*}
Summing the above terms yields:
\begin{eqnarray*}
&&\partial_t v_{\tau,\xi}(t,x)+F(t,\theta_{t,\tau}(\xi))\cdot D v_{\tau,\xi}(t,x)+L_\alpha(v_{\tau,\xi}(t,x))\\
&=& (\partial_t  u+ F(t,\theta_{t,\tau}(\xi))\cdot Du(t,x)+L_\alpha u(t,x)) \eta_{\tau,\xi}(t,x)-{\mathcal S}_{\tau,\xi}(t,x)\\
&=& (\partial_t  u+ F(t,x)\cdot Du(t,x)+L_\alpha u(t,x)) \eta_{\tau,\xi}(t,x)+\Big(\big( F(t,\theta_{t,\tau}(\xi))-F(t,x)\big)\cdot Du(t,x) \big)\Big) \eta_{\tau,\xi}(t,x)-{\mathcal S}_{\tau,\xi}(t,x)\\
&=&-\Big(f(t,x)\eta_{\tau,\xi}(t,x)+R_{\tau,\xi}(t,x)+{\mathcal S}_{\tau,\xi}(t,x)\Big)=-\Big(f(t,x)\eta_{\tau,\xi}+{\mathcal R}_{\tau,\xi}\Big)(t,x),
\end{eqnarray*}
which precisely gives equation \eqref{we}. In the above right-hand side it is clear that $f\eta_{\tau,\xi}\in L^\infty\big([0,T],C_b^\beta (\R^d,\R)\big) $. Also, the mapping $${R}_{\tau,\xi}:(t,x)\in [0,T]\times \R^d\mapsto \Big(\big( F(t,\theta_{t,\tau}(\xi))-F(t,x)\big)\cdot Du(t,x) \big)\Big) \eta_{\tau,\xi}(t,x)$$
belongs to $L^\infty\big([0,T], C_b^{\alpha+\beta-1}(\R^d,\R)\big)$. 

The lower regularity is precisely due to the fact that $ Du\in L^\infty\big([0,T],C_b^{\alpha+\beta-1}(\R^d,\R)\big)$. Anyhow, this is not a problem here since we are, for the moment, simply interested in finding the representation formulas in \eqref{DUAHM_1} which will in turn allow to investigate quantitative bounds related to the smoothness of $u$ (gradient bounds and H\"older moduli). Let us now show similarly to the proof of Theorem 3.4 in \cite{prio:12} that ${\mathcal S}_{\tau,\xi}\in L^\infty([0,T],C_b^\beta(\R^d,\R)) $. In the quoted reference, the previous smoothness property is obtained for $\alpha=1 $.

Introduce the non-local operator ${\mathscr T}_{\tau,\xi}$ defined for $f\in C_b^1(\R^d,\R) $ as:
$${\mathscr T}_{\tau,\xi}f(x):=\int_{\R^d} \big(f(x+y)-f(x)\big)\big(\eta_{\tau,\xi}(x+y)-\eta_{\tau,\xi}(x)\big) \nu(dy).$$
It is direct to check that ${\mathscr T}_{\tau,\xi}$ is continuous from $C_b^1(\R^d,\R) $ to $C_b(\R^d,\R) $. Observe now that for a function $f\in C_b^{\alpha+\beta}(\R^d,\R) $:
\begin{eqnarray*}
|D{\mathscr T}_{\tau,\xi}f(x)|&\le& \Big|\int_{\R^d} \big(Df(x+y)-Df(x)\big)\big(\eta_{\tau,\xi}(x+y)-\eta_{\tau,\xi}(x)\big) \nu(dy)\Big|\\
&&+\Big|\int_{\R^d} \big(f(x+y)-f(x)\big)\big(D\eta_{\tau,\xi}(x+y)-D\eta_{\tau,\xi}(x)\big) \nu(dy)\Big|\\
&\le & \Big(\|Df\|_{C_b^{\alpha+\beta-1}} \|\eta_{\eta,\tau}\|_{C_b^1}\int_{|y|\le 1}|y|^{\alpha+\beta}\nu(dy)+4\|Df\|_\infty\int_{|y|>1}\nu (dy) \\
&&+ \|Df\|_\infty \|\eta_{\tau, \xi}\|_{C_b^2}\int_{|y|\le 1}|y|^2\nu(dy)+4\|f\|_\infty\|D\eta_{\tau,\xi}\|_\infty\int_{|y|>1}\nu (dy)  \Big)\\
&\le& C \|\eta_{\tau,\xi}\|_{C_b^2}\|f\|_{C_b^{\alpha+\beta}}.
\end{eqnarray*}
Hence, ${\mathscr T}$ is also continuous from $C_b^{\alpha+\beta}(\R^d,\R) $ into  $ C_b^1(\R^d,\R)$.
We also recall the following interpolation equality between H\"older spaces:
$$\Big(C_b^1(\R^d,\R), C_b^{\alpha+\beta}(\R^d,\R)\Big)_{\beta,\infty}=C_b^{(1-\beta)+\beta(\alpha+\beta)} (\R^d,\R),$$
see e.g. Chapter 1 in Lunardi \cite{luna:09}.Therefore the operator ${\mathscr T}$ is also continuous from $ C_b^{(1-\beta)+\beta(\alpha+\beta)} (\R^d,\R)$ into $C_b^\beta(\R^d,\R) $ (see Theorem 1.1.6 in \cite{luna:09}). Recall that, since $\alpha+\beta>1 $, we indeed have $ (1-\beta)+\beta(\alpha+\beta)<\alpha+\beta$.  To derive  the stated smoothness of ${\mathcal S}_{\tau,\xi} $ we want to apply ${\mathscr T}_{\tau,\xi} $ to $u(t,\cdot)$ where $u\in  L^\infty\big([0,T],C_b^{\alpha+\beta}(\R^d,\R)\big)$. From the above computations and since $C_b^{\alpha+\beta}(\R^d,\R)\subset C_b^{(1-\beta)+\beta(\alpha+\beta)}(\R^d,\R)$ we readily derive that ${\mathscr T}_{\tau,\xi}u \in L^\infty\big([0,T],C_b^{\beta}(\R^d,\R)\big) $. The other term in ${\mathcal S}_{\tau,\xi} $, namely $ L_\alpha \eta_{\tau,\xi}(t,x) u(t,x)$ is handled without difficulties. This concludes the proof of the statement that ${\mathcal S}_{\tau,\xi} \in L^\infty\big([0,T],C_b^{\beta}(\R^d,\R)\big) $.

\textcolor{black}{It now follows from Proposition \ref{PROP_FOURIER} that }
{%\color{black} 
%Note now that the expression
\begin{eqnarray*}
v_{\tau,\xi}(t,x)&=&\tilde P_{T,t,\alpha}^{(\tau,\xi)}\big(g\textcolor{black}{\eta_{\tau,\xi}(T,\cdot)}\big)(x) + \int_t^T  ds \tilde P_{s,t,\alpha}^{(\tau,\xi)} \Big( \eta_{\tau, \xi} f + {\mathcal S}_{\tau,\xi}(s,\cdot)\Big) (x)+\int_t^T  ds \tilde P_{s,t,\alpha}^{(\tau,\xi)} \Big( R_{\tau,\xi}(s,\cdot)\Big) (x)\\
&=&\tilde u^{(\tau,\xi)}(t,x)+\int_t^T ds \Big( \tilde P_{s,t,\alpha}^{(\tau,\xi)}R_{\tau,\xi}(s,\cdot)\Big)(x),
\end{eqnarray*}
where 
\begin{eqnarray*}
 {\color{black}\tilde u}^{(\tau,\xi)}(t,x) = \tilde P_{T,t,\alpha}^{(\tau,\xi)}\big(g\textcolor{black}{\eta_{\tau,\xi}(T,\cdot)}\big)(x) + \int_t^T  ds \tilde P_{s,t,\alpha}^{(\tau,\xi)} \Big( \eta_{\tau, \xi} f + {\mathcal S}_{\tau,\xi}(s,\cdot)\Big) (x).
\end{eqnarray*}
 {\color{black}Also, from Definition \ref{FROZEN_SEMI_GROUP}, Proposition \ref{SCHAUDER_FROZEN} and \ref{PROP_FOURIER}, $\tilde u^{(\tau,\xi)} $ is the unique solution in \textcolor{black}{${\mathscr C}_b^{\alpha+\beta}([0,T]\times \R^d)$ of \eqref{we_BASE}}.}
%defines a continuous function which is also a mild solution to \eqref{we}. It is plain to check, through integration by parts argument, that for any $(\tau,\xi)$ such a map $V_{(\tau,\xi)}$ solves  \eqref{we} in the distributional sense and is moreover unique among all the continuous distributional solutions. On the other hand, we derive from Remark \ref{} and Proposition \ref{SCHAUDER_FROZEN}  that the unique solution \textcolor{black}{$\tilde u^{\tau,\xi}\in {\mathscr C}_b^{\alpha+\beta}([0,T]\times \R^d)$ of \eqref{we_BASE}}
%%belongs to $L^\infty\big([0,T],C_b^{\alpha+\beta}(\R^d,\R)\big) $ 
%and writes, according to \eqref{FROZEN_SEMI_GROUP}, 
%\begin{eqnarray}
% u^{\tau,\xi}(t,x) = \tilde P_{T,t,\alpha}^{(\tau,\xi)}\big(g\textcolor{black}{\eta_{\tau,\xi}(T,\cdot)}\big)(x) + \int_t^T  ds \tilde P_{s,t,\alpha}^{(\tau,\xi)} \Big( \eta_{\tau, \xi} f + {\mathcal S}_{\tau,\xi}(s,\cdot)\Big) (x).
%\end{eqnarray}
Therefore the first Duhamel representation formula in \eqref{DUAHM_1} holds.

}
%Now, we can apply equation \eqref{SCHAUDER_FROZEN_GAMMA} (with $\gamma=\alpha+\beta-1<\alpha $) of Proposition \ref{SCHAUDER_FROZEN} to derive from \eqref{we} that the first Duhamel representation formula in \eqref{DUAHM_1} holds. Note also that, still from the previous regularity statements on ${\mathcal S}_{\tau,\xi} $ we again derive from Proposition \ref{SCHAUDER_FROZEN}, equation \eqref{SCHAUDER_FROZEN_BETA} that $\tilde u^{\tau,\xi}\in L^\infty\big([0,T],C_b^{\alpha+\beta}(\R^d,\R)\big) $. 
Since both $D\tilde v_{\tau,\xi}(t,x) $ and $D\tilde u^{(\tau,\xi)}(t,x) $ exist (recall indeed that $\tilde v_{\tau,\xi} =u\eta_{\tau,\xi} $), we deduce that 
$$D_x \int_t^T ds \Big( \tilde P_{s,t,\alpha}^{(\tau,\xi)}R_{\tau,\xi}(s,\cdot)\Big)(x)$$ 
is also meaningful. This proves the second assertions of \eqref{DUAHM_1}. Eventually, recall now that under $({\mathscr P}_\beta) $, one has for $(\tau,\xi)=(t,x) $:
\begin{eqnarray}
&&\Big(|D_x\big( \tilde P_{s,t,\alpha}^{(\tau,\xi)}R_{\tau,\xi}(s,\cdot)\big)(x)|\Big)\Big|_{(\tau,\xi)=(t,x)}\notag\\
&\le&\int_{\R^d }dy \Big(|D_x \tilde p_{\alpha}^{(\tau,\xi)}(t,s,x,y)\big( F(s,y)-F(s,\theta_{s,\tau}(\xi)) \big) Du(s,y) \eta_{\tau,\xi}(y)|\Big)\Big|_{(\tau,\xi)=(t,x)} \notag\\ 
&\le & {K_{0}}\|Du\|_\infty \int_{\R^d} dy \Big(|D_x\tilde p_\alpha^{\tau,\xi}(t,s,x,y)| |y-\theta_{s,\tau}(\xi)|^\beta\Big)\Big|_{(\tau,\xi)=(t,x)}\notag\\
&\le & {K_{0}}\|Du\|_\infty \int_{\R^d} dy \Big(|D_xp_\alpha\big(s-t,y-m_{s,t}^{(\tau,\xi)}(x)\big)| |y-\theta_{s,\tau}(\xi)|^\beta\Big)\Big|_{(\tau,\xi)=(t,x)}\notag\\
&\le & {K_{0}}\|Du\|_\infty \int_{\R^d} dy |D_xp_\alpha\big(s-t,z\big)| \Big|_{z=y-\theta_{s,t}(x)} |y-\theta_{s,t}(x)|^\beta\notag\\
&\underset{({\mathscr P}_\beta)}{\le} & \frac{C{K_{0}}\|Du\|_\infty}{(s-t)^{\frac1\alpha-\frac \beta\alpha}},\label{INTEGRABILITY_BOUND_WITH_RIGHT_PARAMETERS_GRAD}
\end{eqnarray}
recalling from \eqref{CORRES_LINEARIZED_FLOW_AND_FLOW} that $m_{s,t}^{(\tau,\xi)}(x)|_{(\tau,\xi)=(t,x)}=\theta_{s,t}(x) $ for the last but one inequality.
Hence, we derive the following important control
\begin{eqnarray}
&&\Big(D_x \int_t^T ds\big( \tilde P_{s,t,\alpha}^{(\tau,\xi)}R_{\tau,\xi}(s,\cdot)\big)(x) \Big)\Big|_{(\tau,\xi)
=(t,x)}\notag\\
&=&\lim_{\varepsilon \rightarrow 0} \varepsilon^{-1}\Big(\int_{t}^T ds\int_{\R^d}dy \Big[\tilde p_\alpha\big(s-t,y-(x+\varepsilon+\int_t^s F(v,\theta_{s,\tau}(\xi)) dv)\big)\notag\\
&&-\tilde p_\alpha\big(s-t,y-(x+\int_t^s F(v,\theta_{s,\tau}(\xi)) dv)\big) \Big]\Big\{F(s,y)-F(s,\theta_{s,\tau}(\xi)) \Big\}Du(s,y)\eta_{\tau,\xi}(s,y)\Big)_{(\tau,\xi)=(t,x)}\notag\\
&=&\lim_{\varepsilon \rightarrow 0} \varepsilon^{-1}\Big(\int_{t}^T ds\int_{\R^d}dy \Big[\tilde p_\alpha\big(s-t,y-(x+\varepsilon+\int_t^s F(v,\theta_{s,t}(x)) dv)\big)\notag\\
&&-\tilde p_\alpha\big(s-t,y-(x+\int_t^s F(v,\theta_{s,t}(x)) dv)\big) \Big] \Big\{F(s,y)-F(s,\theta_{s,t}(x)) \Big\}Du(s,y)\eta_{t,x}(s,y)\Big)\notag\\
&=&\Big(\int_{t}^T ds\int_{\R^d}dy \lim_{\varepsilon \rightarrow 0} \varepsilon^{-1}\Big[\tilde p_\alpha\big(s-t,y-(x+\varepsilon+\int_t^s F(v,\theta_{s,t}(x)) dv)\big)\notag\\
&&-\tilde p_\alpha\big(s-t,y-(x+\int_t^s F(v,\theta_{s,t}(x)) dv)\big) \Big] \Big\{F(s,y)-F(s,\theta_{s,t}(x)) \Big\}Du(s,y)\eta_{t,x}(s,y)\Big)\notag\\
&=&\int_t^T ds\Big(D\big( \tilde P_{s,t,\alpha}^{(\tau,\xi)}R_{\tau,\xi}(s,\cdot)\big)(x)\Big)\Big|_{(\tau,\xi)=(t,x)},\label{THE_SWAP_LIMITS}
\end{eqnarray}
where \eqref{INTEGRABILITY_BOUND_WITH_RIGHT_PARAMETERS_GRAD} allowed to use the bounded convergence theorem in the last but one equality. Equation \eqref{THE_SWAP_LIMITS} in turn yields \eqref{FINAL_BD_FOR_GRADIENT}. Indeed, 
 $D_x v_{\tau,\xi}(t,x)=D_x u(t,x) \eta_{\tau,\xi}(t,x)+u(t,x)D_x \eta_{\tau,x}(x) $ and for $(\tau,\xi=(t,x)) $ one has $ \big(\eta_{\tau,\xi}(t,x)\big)_{(\tau,\xi)=(t,x)}=1, (D_x \eta_{\tau,\xi}(t,x))_{(\tau,\xi)=(t,x)}=0 $.
 
\end{proof}

\begin{REM}[\em About the localization in the Duhamel formula]
{\em We mention that the localization with the cut-off $\eta_{\tau,\xi}$ is precisely needed because we imposed in \eqref{22}  a local H\"older continuity condition. Anyhow, even if we had assumed a global H\"older assumption, such a localization would still be needed to give a meaning to the first identity in \eqref{DUAHM_1} when $\alpha<1/2$ (recall that $({\mathscr P}_\beta) $ involves  the derivatives of the heat-kernel). For $\alpha >1/2 $ and $\beta<\alpha $ such a localization could have been avoided for a globally $\beta $-H\"older continuous drift $F$.}
\end{REM}

\subsection{Derivation of the main \textit{a priori} estimates %through the Analysis of the remainder terms
}
From the representations \eqref{DUAHM_1} and \eqref{FINAL_BD_FOR_GRADIENT} in Proposition \ref{DUHAMEL} we see that, since we also know from Proposition \ref{SCHAUDER_FROZEN} that $\tilde u^{(\tau,\xi)} $ is itself \textit{smooth}, the main term which remains to be investigated is the remainder $  \int_t^T ds \big(\tilde P_{s,t,\alpha}^{(\tau,\xi)} R(s,\cdot)\big)(x) $.
\vskip 1mm

{\color{black} In the following, we first give bounds for the  solution $u\in L^\infty\big([0,T],C_b^{\alpha+\beta}(\R^d,\R)\big)$. % in paragraph  \ref{CTR_HOLD_GRAD_SUBSUB}
\textcolor{black}{Estimates for the} supremum norm of the solution and its gradient are given in Lemma \ref{LEMMA_SUP_BOUNDS_SOL_AND_GRAD} and the control of the H\"older modulus of the gradient are \textcolor{black}{stated} in Lemma \ref{LEMME_HOLDER_GRAD}. Then, we eventually  prove Theorem  \textcolor{black}{\ref{THEO_SCHAU_ALPHA}} in paragraph \ref{CTR_HOLD_GRAD_SUBSUB}.\\

We emphasize that \textcolor{black}{as the proof of Lemma \ref{LEMME_HOLDER_GRAD}}  requires a thorough analysis (namely a refinement of Proposition \ref{DUHAMEL}, in order to consider different freezing points in function of the diagonal and off-diagonal regimes introduced in the proof of Proposition \ref{SCHAUDER_FROZEN}), \textcolor{black}{it will be postponed to} the next subsection \ref{PROOF_LEM_HOLD_GRAD_SUB}.}
%The H\"older modulus of the gradient will be dealt in a second time in \ref{CTR_HOLD_GRAD_SUBSUB}. This estimate requires a thorough analysis as it will require a refinement of Proposition \ref{DUHAMEL}, in order to consider different freezing points in function of the diagonal and off-diagonal regimes introduced in the proof of Proposition \ref{SCHAUDER_FROZEN}.

\subsubsection{Control of the supremum norms for the solution and its gradient and associated H\"older modulus}\label{CONTROL_SUP_GRAD_SUBSUB}
As an important corollary of Proposition \ref{DUHAMEL}, we get the next estimates for the supremum norm of $u$ and its gradient.
\begin{lem}[Control of the supremum norm of the solution and the gradient]\label{LEMMA_SUP_BOUNDS_SOL_AND_GRAD}
Assume \A{A} (thus property $(\mathscr P_\beta) $ holds). Let $u \in {\mathscr C}^{\alpha+\beta}([0,T]\times \R^d)$ be a solution of \eqref{Asso_PDE}. There exists a constant $C:=C(\A{A})$ s.t. for all $(t,x)\in [0,T]\times  \R^d$:
 \begin{eqnarray}
|u(t,x)|&\le&  \|g\|_\infty+T\|f\|_\infty+ {K_{0}} \|Du \|_\infty T,\label{BD_SUP}\\
|Du(t,x)|&\le & \|Dg\|_\infty + C(T-t)^{\frac{\alpha+\beta-1}{\alpha}}\big([f]_{\beta,T}+\|u\|_{L^\infty([0,T],C_b^{\alpha+\beta})} +{K_{0}}\|Du\|_\infty\big)\notag\\
&\le & \|Dg\|_\infty + C(T-t)^{\frac{\alpha+\beta-1}{\alpha}}\big([f]_{\beta,T}+\|u\|_{L^\infty([0,T],C_b^{\alpha+\beta})}(1+{K_{0}})\big) .\label{BD_SUP_GRAD}
\end{eqnarray}
\end{lem}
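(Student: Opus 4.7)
The plan is to evaluate the Duhamel representations \eqref{DUAHM_1} and \eqref{FINAL_BD_FOR_GRADIENT} of Proposition~\ref{DUHAMEL} at the diagonal freezing point $(\tau,\xi)=(t,x)$. Since $\eta_{t,x}(t,x)=\rho(0)=1$ and $D\eta_{t,x}(t,x)=0$, this yields
\begin{equation*}
u(t,x)=\tilde u^{(t,x)}(t,x)+\int_t^T\tilde P_{s,t,\alpha}^{(t,x)}(R_{t,x}(s,\cdot))(x)\,ds
\end{equation*}
together with the analogous identity for $Du(t,x)$ given by \eqref{FINAL_BD_FOR_GRADIENT}, so the lemma reduces to bounding each of the four resulting pieces with tools already at hand.

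For the supremum estimate \eqref{BD_SUP}, I would first use that $\tilde P_{s,t,\alpha}^{(t,x)}$ is a Markov semigroup, hence an $L^\infty$-contraction; together with $|\eta_{t,x}|\le 1$ this yields $|\tilde P_{T,t,\alpha}^{(t,x)}(g\eta_{t,x}(T,\cdot))(x)|\le\|g\|_\infty$ and $|\tilde P_{s,t,\alpha}^{(t,x)}(\eta_{t,x}(s,\cdot)f(s,\cdot))(x)|\le\|f\|_\infty$. The remainder $R_{t,x}(s,\cdot)$ is supported on $\{|y-\theta_{s,t}(x)|\le 1\}$, where \eqref{22} gives $|F(s,y)-F(s,\theta_{s,t}(x))|\le K_0$, so $\|R_{t,x}(s,\cdot)\|_\infty\le K_0\|Du\|_\infty$ and, after integration in time, $TK_0\|Du\|_\infty$. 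The only remaining contribution is the one coming from $\mathcal{S}_{t,x}$ inside $\tilde u^{(t,x)}$: splitting the integral in \eqref{DEF_R} into $\{|y|\le 1\}$ and $\{|y|>1\}$ as in the continuity argument for $\mathcal{S}$ in the proof of Proposition~\ref{DUHAMEL}, one obtains $\|\mathcal{S}_{t,x}(s,\cdot)\|_\infty\le C\|u(s,\cdot)\|_\infty$. This generates a spurious $CT\|u\|_\infty$ contribution which is absorbed on the l.h.s.\ by first deriving the bound over a short sub-interval $[T-\delta,T]$ with $C\delta\le 1/2$ and then iterating a finite number of times, the resulting $T$-dependence being packaged into the constant of the statement.

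For the gradient estimate \eqref{BD_SUP_GRAD} I would control $(D_x\tilde u^{(\tau,\xi)}(t,x))|_{(\tau,\xi)=(t,x)}$ by reproducing step $(ii)$ of the proof of Proposition~\ref{SCHAUDER_FROZEN} applied to the modified data $g\eta_{t,x}(T,\cdot)$ and $\eta_{t,x}f+\mathcal{S}_{t,x}$; the $C_b^{\alpha+\beta}$-norm of the former is controlled by $C\|g\|_{C_b^{\alpha+\beta}}$ and the $L^\infty C_b^\beta$-norm of the latter by $[f]_{\beta,T}+C\|u\|_{L^\infty(C_b^{\alpha+\beta})}$, thanks to the $C_b^\beta$-regularity of $\mathcal{S}$ already proved in Proposition~\ref{DUHAMEL}, whence a bound of the form $\|Dg\|_\infty+C(T-t)^{(\alpha+\beta-1)/\alpha}([f]_{\beta,T}+\|u\|_{L^\infty(C_b^{\alpha+\beta})})$. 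The remainder piece is bounded directly by the pointwise estimate \eqref{INTEGRABILITY_BOUND_WITH_RIGHT_PARAMETERS_GRAD} already obtained in the proof of Proposition~\ref{DUHAMEL}, which controls it by $CK_0\|Du\|_\infty(s-t)^{-(1-\beta)/\alpha}$; the singularity is integrable thanks to $\alpha+\beta>1$ and integrates to $CK_0\|Du\|_\infty(T-t)^{(\alpha+\beta-1)/\alpha}$. Summing these two contributions gives the first inequality in \eqref{BD_SUP_GRAD}, and the second follows at once from $\|Du\|_\infty\le\|u\|_{L^\infty(C_b^{\alpha+\beta})}$. The one genuine subtlety in the whole argument is the handling of $\mathcal{S}_{t,x}$: once its $C_b^\beta$-estimate from Proposition~\ref{DUHAMEL} (respectively its $L^\infty$-estimate, used together with the short-interval bootstrap) is available, both bounds follow routinely from the Duhamel identities and the frozen semi-group estimates.
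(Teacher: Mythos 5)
Your treatment of \eqref{BD_SUP_GRAD} is essentially the paper's own proof: evaluate \eqref{FINAL_BD_FOR_GRADIENT} at $(\tau,\xi)=(t,x)$, bound $D_x\tilde u^{(t,x)}$ by step $(ii)$ of Proposition \ref{SCHAUDER_FROZEN} applied to the modified data $g\eta_{t,x}(T,\cdot)$ and $\eta_{t,x}f+\mathcal{S}_{t,x}$ (the paper makes the bound $[\mathcal{S}_{\tau,\xi}]_{\beta,T}\le C\|u\|_{L^\infty([0,T],C_b^{\alpha+\beta})}$ quantitative through the interpolation estimate \eqref{CTR_HOLDE_MODULI}; your appeal to the continuity of ${\mathscr T}_{\tau,\xi}$ from Proposition \ref{DUHAMEL} amounts to the same thing), and control the remainder by integrating \eqref{INTEGRABILITY_BOUND_WITH_RIGHT_PARAMETERS_GRAD}, the singularity being integrable since $\alpha+\beta>1$. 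Where you genuinely differ is \eqref{BD_SUP}: the paper evaluates \eqref{DUAHM_1} at the diagonal and asserts the displayed constants directly, which, read literally, leaves no room for the contribution of $\mathcal{S}_{t,x}$ sitting inside $\tilde u^{(t,x)}$; you keep that term, estimate $\|\mathcal{S}_{t,x}(s,\cdot)\|_\infty\le C\|u(s,\cdot)\|_\infty$ (correct, since $\nu_\alpha$ integrates $1\wedge|y|$ and $\eta$ is smooth with compact support), and remove it by a short-interval absorption and backward iteration, which is legitimate because the Duhamel representation of Proposition \ref{DUHAMEL} applies on each subinterval with terminal datum $u(T-k\delta,\cdot)\in C_b^{\alpha+\beta}$. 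This is arguably more careful than the paper, but the price is that you obtain \eqref{BD_SUP} only with a multiplicative constant $C(\A{A},T)$ in front (the iteration compounds a factor at each step), not with the literal coefficients $1$, $T$, $K_0T$ of the display; that loss is harmless for the only use made of the lemma, namely the small-time circular argument proving Theorem \ref{THEO_SCHAU_ALPHA}, where one could equally well carry the extra $CT\|u\|_\infty$ term and absorb it there, but you should state the weaker form of the constant explicitly. The remaining glosses (an extra $C\|g\|_\infty$ from differentiating the cut-off in $g\eta_{t,x}(T,\cdot)$, and $\|f\|_\infty$ entering through $[f\eta_{\tau,\xi}]_{\beta,T}$) are common to your write-up and to the paper's own proof, so they raise no additional concern.
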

\begin{proof}
Equation \eqref{BD_SUP} readily follows from \eqref{DUAHM_1} taking $ (\tau,\xi)=(t,x)$ and observing that $v_{(\tau,\xi)}(t,x)|_{(\tau,\xi)=(t,x)}=u(t,x)\eta_{t,x}(t,x)=u(t,x) $.
To derive \eqref{BD_SUP_GRAD} we start from \eqref{FINAL_BD_FOR_GRADIENT} to write:
\begin{eqnarray*}
|D_xu(t,x)|\le |D_x \tilde u^{(\tau,\xi)}(t,x)|\Big|_{(\tau,\xi)=(t,x)}+|\int_t^T ds\Big(D\big( \tilde P_{s,t,\alpha}^{(\tau,\xi)}R(s,\cdot)\big)(x)\Big)\Big|_{(\tau,\xi)=(t,x)}|. 
\end{eqnarray*}
From equation \eqref{we_BASE} and the proof of Proposition \ref{SCHAUDER_FROZEN} (see equation \eqref{CTR_GRAD_SUP_FROZEN}) and \eqref{INTEGRABILITY_BOUND_WITH_RIGHT_PARAMETERS_GRAD} we thus derive, recalling that $1/\alpha -\beta/\alpha<1 $:
\begin{eqnarray}
|D_xu(t,x)|&\le& \|Dg\|_\infty+C\Big( (T-t)^{\frac{\alpha+\beta-1}{\alpha}}\big([f\eta_{\tau,\xi}+{\mathcal S}_{\tau,\xi}]_{\beta,T}\big) \Big|_{(\tau,\xi)=(t,x)} +{K_{0}}\|Du\|_\infty\int_t^T \frac{ds}{(s-t)^{\frac 1\alpha-\frac\beta\alpha}}\Big)\notag\\
&\le& \|Dg\|_\infty+C(T-t)^{\frac{\alpha+\beta-1}{\alpha}} \Big( \big([f\eta_{\tau,\xi}+{\mathcal S}_{\tau,\xi}]_{\beta,T}\big) \Big|_{(\tau,\xi)=(t,x)} +{K_{0}}\|Du\|_\infty\Big).\label{PREAL_GRAD_BD}
\end{eqnarray}
It therefore remains to precise the quantity $\big([f\eta_{\tau,\xi}+{\mathcal S}_{\tau,\xi}]_{\beta,T}\big) \Big|_{(\tau,\xi)=(t,x)}\le   \big([f\eta_{\tau,\xi}]_{\beta,T} +[{\mathcal S}_{\tau,\xi}]_{\beta,T}\big) \Big|_{(\tau,\xi)=(t,x)}$. We have,
\begin{eqnarray}
[f\eta_{\tau,\xi}]_{\beta,T}&\le& [f]_{\beta,T}+C\|f\|_\infty\le C \|f\|_{L^\infty([0,T],C_b^\beta)},\notag\\
\ [{\mathcal S}_{\tau,\xi}]_{\beta,T}&\le& [(L_\alpha \eta_{\tau,\xi}) u]_{\beta,T}+[{\mathscr T}_{\tau,\xi}]_{\beta,T} \le C\Big(\|u\|_\infty+\|Du\|_\infty+ \|u\|_{L^\infty([0,T],C_b^{(1-\beta)+\beta(\alpha+\beta)})}\Big)\notag\\
&\le& C_{\beta,\alpha}\big( (1+\varepsilon^{-1})(\|u\|_\infty +\|Du\|_\infty )+\varepsilon \|u\|_{L^\infty([0,T],C_b^{\alpha+\beta})}\big), \label{CTR_HOLDE_MODULI}
\end{eqnarray}
for any $\varepsilon\in (0,1) $ using for the last inequality that for all $t\in [0,T] $, the usual interpolation inequality
$$[u(t,\cdot)]_{(1-\beta)+\beta(\alpha+\beta)}\le [u(t,\cdot)]_{\alpha+\beta}^{s/(\alpha+\beta)} [u(t,\cdot)]_1^{ 1- s/(\alpha+\beta)},
$$ 
for $s=(1-\beta)+\beta(\alpha+\beta)-1$ (see e.g. \cite{kryl:96}, p. 40, (3.3.7)) then yields from the Young inequality $[u(t,\cdot)]_{(1-\beta)+\beta(\alpha+\beta)}\le C_{\beta,\alpha} \big(\varepsilon [u(t,\cdot)]_{\alpha+\beta}+\varepsilon^{-1} [u(t,\cdot)]_1\big)%\le C_{\beta,\alpha}\|u\|_{L^\infty([0,T],C_b^{\alpha+\beta})} 
$. Plugging \eqref{CTR_HOLDE_MODULI} with $\varepsilon=1/2 $ into \eqref{PREAL_GRAD_BD} gives \eqref{BD_SUP_GRAD}.
This completes the proof.
\end{proof}
\begin{REM}[On the $\beta $-H\"older modulus of ${\mathcal S}_{\tau,\xi} $] \label{MOD_HOLDER_S_TAU_XI} {\em 
We eventually mention that equation \eqref{CTR_HOLDE_MODULI} will also be crucial, for a parameter $\varepsilon $ sufficiently small, when investigating the H\"older modulus of the gradient, in order to make the circular argument working.}
\end{REM}

Concerning the H\"older modulus of the gradient of the solution we have the following control whose proof is presented in the next section.
\begin{lem}[H\"older modulus of the gradient]\label{LEMME_HOLDER_GRAD}
Assume \A{A}. Let $u \in {\mathscr C}^{\alpha+\beta}([0,T]\times \R^d)$ be a solution of \eqref{Asso_PDE}. There exists two constant $C_1:=C(\A{A})>0$ and $C_2:=C(\A{A})>0$ such that
   \begin{eqnarray}
 [D u]_{\beta+\alpha-1, T} &\le& C_1\Big\{\Big((1+c_0)\|g\|_{C_b^{\alpha+\beta}} + \Big(c_0^{\frac{\alpha+\beta-1}{\alpha}}[f]_{\beta,T} +  T^{\frac{\alpha+\beta-1}\alpha}\|f\|_{L^\infty([0,T],C_b^\beta)}\Big)\notag\\
 &&+K_0 (c_0^{1+\frac{\beta-2}{\alpha}}+ c_0^{\frac {\alpha+\beta-1}{\alpha}})\|Du\|_{L^\infty } \Big\}\notag\\
 &&+ \Big(\frac 14 + C_2c_0^{\frac{\alpha+\beta-1}{\alpha}}(1+{K_{0}})\Big)\|u\|_{L^\infty([0,T],C_b^{\alpha+\beta})}.\label{CTR_HOLDER_MOD_PROV}
 \end{eqnarray}
\end{lem}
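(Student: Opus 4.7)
The plan is to start from the representation \eqref{FINAL_BD_FOR_GRADIENT}, which writes $D_xu(t,x)$ as the sum of $D_x\tilde u^{(\tau,\xi)}(t,x)\big|_{(\tau,\xi)=(t,x)}$ and a Green-kernel remainder $\int_t^T\! D_x(\tilde P_{s,t,\alpha}^{(\tau,\xi)} R_{\tau,\xi}(s,\cdot))(x)\big|_{(\tau,\xi)=(t,x)}\, ds$. To bound $|D_xu(t,x)-D_xu(t,x')|$, I immediately reduce to $|x-x'|<1/2$ (the opposite case yielding a contribution controlled by $4\|Du\|_\infty$) and introduce the transition time $t_0=(t+c_0|x-x'|^\alpha)\wedge T$, so as to distinguish the \emph{diagonal} regime $T-t>c_0|x-x'|^\alpha$ from the \emph{off-diagonal} regime $T-t\le c_0|x-x'|^\alpha$, in the spirit of step $(iii)$ in the proof of Proposition \ref{SCHAUDER_FROZEN}.

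\textbf{Diagonal regime.} Since $\rho\equiv 1$ on $B(0,1/2)$ and $\theta_{t,t}(x)=x$, the cutoff $\eta_{(t,x)}(t,\cdot)$ and its derivatives take the values $1$ and $0$ respectively at both $x$ and $x'$; hence $u(t,\cdot)=v_{(t,x)}(t,\cdot)$ and $D_xu(t,\cdot)=D_xv_{(t,x)}(t,\cdot)$ locally around these points, which allows a \emph{common} freezing $(\tau,\xi)=(t,x)$. Then Proposition \ref{DUHAMEL} gives
\[
D_xu(t,x)-D_xu(t,x')=\bigl(D_x\tilde u^{(t,x)}(t,x)-D_x\tilde u^{(t,x)}(t,x')\bigr)+\int_t^T\!\!\bigl(D_x\tilde P_{s,t,\alpha}^{(t,x)}R_{(t,x)}(s,\cdot)(x)-D_x\tilde P_{s,t,\alpha}^{(t,x)}R_{(t,x)}(s,\cdot)(x')\bigr)\,ds.
\]
The first bracket is controlled by Proposition \ref{SCHAUDER_FROZEN} applied to $\tilde u^{(t,x)}$, whose source is $\eta_{(t,x)}f+\mathcal S_{(t,x)}$ and terminal condition $g\,\eta_{(t,x)}(T,\cdot)$; the interpolation inequality \eqref{CTR_HOLDE_MODULI} for $[\mathcal S_{(t,x)}]_{\beta,T}$ provides, for any $\varepsilon>0$, a sum of a term $\lesssim\varepsilon\|u\|_{L^\infty([0,T],C_b^{\alpha+\beta})}$ and lower-order pieces, which after choosing $\varepsilon$ small explains the $\tfrac14\|u\|_{L^\infty C_b^{\alpha+\beta}}$ in \eqref{CTR_HOLDER_MOD_PROV}. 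The second bracket is treated by splitting $\int_t^T=\int_t^{t_0}+\int_{t_0}^T$: on $[t,t_0]$, I use the direct pointwise bound \eqref{INTEGRABILITY_BOUND_WITH_RIGHT_PARAMETERS_GRAD} (coming from $({\mathscr P}_\beta)$ with $\ell=1$) on each of the two evaluations, whose time singularity $(s-t)^{(\beta-1)/\alpha}$ integrates to $c_0^{(\alpha+\beta-1)/\alpha}|x-x'|^{\alpha+\beta-1}$ times $K_0\|Du\|_\infty$; on $[t_0,T]$, I perform a Taylor expansion and use $({\mathscr P}_\beta)$ with $\ell=2$ to control $|D_x^2\tilde P_{s,t,\alpha}^{(t,x)}R_{(t,x)}(s,\cdot)|$ by $CK_0\|Du\|_\infty(s-t)^{(\beta-2)/\alpha}$, and integrate $|x-x'|\cdot(s-t)^{(\beta-2)/\alpha}$ to recover $|x-x'|^{\alpha+\beta-1}$ multiplied by $c_0^{1+(\beta-2)/\alpha}K_0\|Du\|_\infty$.

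\textbf{Off-diagonal regime.} Here a common freezing can no longer be enforced, so I take $(t,x)$ for $D_xu(t,x)$ and $(t,x')$ for $D_xu(t,x')$ and bound each piece separately via Lemma \ref{LEMMA_SUP_BOUNDS_SOL_AND_GRAD}, paying attention to the fact that the $\|Dg\|_\infty$ part of \eqref{BD_SUP_GRAD} carries no $(T-t)$ decay. For this principal piece, I observe that because $m_{T,t}^{(\tau,\xi)}$ is affine in $x$, one has $D_x\tilde p_\alpha^{(\tau,\xi)}=-D_y\tilde p_\alpha^{(\tau,\xi)}$; an integration by parts then rewrites $D_x\tilde P_{T,t,\alpha}^{(\tau,\xi)}(g\eta_{\tau,\xi}(T,\cdot))(x)$ as $\int p_\alpha(T-t,z)\,D(g\eta_{\tau,\xi}(T,\cdot))(z+\theta_{T,t}(x))\,dz$ after the change of variables $z=y-\theta_{T,t}(x)$. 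Taking the difference of the two freezings reduces, via the H\"older regularity of $Dg$ and the flow stability proved in Appendix \ref{APP_TEC} (which gives $|\theta_{T,t}(x)-\theta_{T,t}(x')|\le C(1+c_0)|x-x'|$ on the relevant time scale $T-t\le c_0|x-x'|^\alpha$), to a bound of order $(1+c_0)\|g\|_{C_b^{\alpha+\beta}}|x-x'|^{\alpha+\beta-1}$. All remaining contributions involving $f$, $\mathcal S_{\tau,\xi}$ and $R_{\tau,\xi}$ come with an explicit factor $(T-t)^{(\alpha+\beta-1)/\alpha}\le c_0^{(\alpha+\beta-1)/\alpha}|x-x'|^{\alpha+\beta-1}$ that supplies the prescribed H\"older rate, with the $T^{(\alpha+\beta-1)/\alpha}\|f\|_{L^\infty C_b^\beta}$ term accounting for the global integration in the Green-kernel piece.

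\textbf{Main obstacle.} The subtle point is the joint use of $({\mathscr P}_\beta)$ for $\ell=2$ (necessary for the Taylor-expansion integral to converge in the diagonal regime, without which the $K_0\|Du\|_\infty$-remainder would produce a non-integrable singularity) together with the non-Lipschitz flow stability in the off-diagonal regime; both steps genuinely rely on $\alpha+\beta>1$. Another delicate point is choosing $\varepsilon$ carefully in the $\mathcal S_{\tau,\xi}$ bound so as to produce exactly the $\tfrac14$ factor while keeping the residual $\varepsilon^{-1}(\|u\|_\infty+\|Du\|_\infty)$ absorbable into $C_1$ via Lemma \ref{LEMMA_SUP_BOUNDS_SOL_AND_GRAD}. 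After assembling all contributions the shape \eqref{CTR_HOLDER_MOD_PROV} emerges, with the $c_0$-dependence positioned so that a later choice of $c_0$ small enough will allow the $C_2c_0^{(\alpha+\beta-1)/\alpha}(1+K_0)\|u\|_{L^\infty C_b^{\alpha+\beta}}$ term to be absorbed in the circular argument of the forthcoming Schauder estimate.
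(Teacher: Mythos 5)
Your off-diagonal regime and your treatment of the frozen terminal/source parts (via Proposition \ref{SCHAUDER_FROZEN}, the interpolation \eqref{CTR_HOLDE_MODULI} and the flow lemma) do follow the paper. The genuine gap is in the diagonal regime: you use a single common freezing point $(\tau,\xi)=(t,x)$ over the whole interval $[t,T]$ and, on the local off-diagonal piece $[t,t_0]$, you invoke \eqref{INTEGRABILITY_BOUND_WITH_RIGHT_PARAMETERS_GRAD} ``for each of the two evaluations''. That bound is only valid when the freezing parameters match the spatial evaluation point: it rests on $m_{s,t}^{(\tau,\xi)}(x)|_{(\tau,\xi)=(t,x)}=\theta_{s,t}(x)$, so that the H\"older weight $|y-\theta_{s,\tau}(\xi)|^\beta$ appearing in $R_{\tau,\xi}$ vanishes exactly at the center of the kernel $D_xp_\alpha(s-t,\cdot-m_{s,t}^{(\tau,\xi)}(x))$. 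For the evaluation at $x'$ with freezing $(t,x)$ the two centers differ by $x-x'$, and the best $({\mathscr P}_\beta)$ gives is
\begin{equation*}
\int_{\R^d}|D_xp_\alpha(s-t,z)|\,|z+(x'-x)|^\beta\,dz\;\le\;C\Big((s-t)^{\frac{\beta-1}{\alpha}}+|x-x'|^\beta\,(s-t)^{-\frac1\alpha}\Big),
\end{equation*}
and $(s-t)^{-1/\alpha}$ is not integrable at $s=t$ when $\alpha\in(0,1)$; re-centering $Du\,\eta$ to exploit a cancellation only trades this for a $(s-t)^{(\alpha+\beta-2)/\alpha}$ singularity weighted by $[Du]_{\alpha+\beta-1,T}$, which is neither integrable when $2\alpha+\beta\le 2$ nor compatible with the circular argument. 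For the same reason the interchange of $D_x$ and $\int_t^T ds$ at the point $x'$ with the mismatched freezing $(t,x)$, which you take from Proposition \ref{DUHAMEL}, is not justified near $s=t$: the paper proves \eqref{THE_SWAP_LIMITS} only for $(\tau,\xi)=(t,x)$ evaluated at $x$ itself.

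This is exactly why the paper needs the Duhamel formula with change of freezing point (Proposition \ref{DUHAMEL_2_THE_COME_BACK}): on $[t,t_0]$ each evaluation keeps its own freezing point ($\xi'=x'$ for the point $x'$), and the common freezing $\tilde\xi'=x$ is used only on $[t_0,T]$, where $s-t\ge c_0|x-x'|^\alpha$ and your Taylor expansion with $({\mathscr P}_\beta)$, $\ell=2$, is indeed legitimate. The price is a discontinuity term $\tilde P_{\tau_0,t,\alpha}^{(\tau,\xi')}v_{\tau,\xi'}(\tau_0,x')-\tilde P_{\tau_0,t,\alpha}^{(\tau,\tilde\xi')}v_{\tau,\tilde\xi'}(\tau_0,x')$, entirely absent from your decomposition, which must be estimated separately (Lemma \ref{CTR_TERME_DISC}, using Lemma \ref{FLOW_LEMMA}) and which contributes to the $C_2c_0^{\frac{\alpha+\beta-1}{\alpha}}(1+K_0)\|u\|_{L^\infty([0,T],C_b^{\alpha+\beta})}$ term in \eqref{CTR_HOLDER_MOD_PROV}. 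Without the change-of-freezing representation and the control of this discontinuity term, your diagonal-regime argument does not close.
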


%\textcolor{black}{From S. to E. and P.E. : from the localization procedure, this is not true anymore. The circular argument will be performed at the very end.
%Observe carefully that equation \eqref{BD_SUP_GRAD} already gives the pointwise control on the gradient if the fixed final time horizon $T$ is small enough. Indeed, if $C {K_0}(T-t)^{\frac{\alpha+\beta-1}{\alpha}}\le 1/2$, which can always been achieved provided $T$ is small enough,
% then:
% \begin{equation}\label{CTR_GRAD_GLOBALE}
% |D_xu(t,x)|\le 2C(\|Dg\|_\infty+ [f]_{L^\infty (C^\beta)}(T-t)^{\frac{\alpha+\beta-1}{\alpha}}).
% \end{equation}
%}

\subsubsection{Final derivation of Theorem \ref{THEO_SCHAU_ALPHA}: Schauder estimate for the solution of \eqref{Asso_PDE}}\label{CTR_HOLD_GRAD_SUBSUB}
Observe carefully from the above Lemmas that the norm of $u$ appears in the r.h.s of the previous controls. However, those contributions are multiplied either by a constant $c_0$, either by a function of $T$ or a small constant. Provided these quantities can be chosen small enough, we can conclude the proof of our main estimates through a circular argument, i.e., the norms in the r.h.s. will be absorbed by those on the l.h.s. When doing so, we end up with Schauder estimates in small time only. To extend it to an arbitrary  fixed horizon $T$, we eventually use the fact that Schauder estimates precisely provide a kind of stability result in the class ${\mathscr C}^{\alpha+\beta}([0,T]\times \R^d) $ so that the final bound follows by inductive application of the estimate in small time.
\vskip 1mm

%
%
%  which is \textit{small} provided $T$ also is.  Hence, for small final times $T$, provided we also establish a good control for the H\"older modulus of the gradient, we will conclude the proof of our Schauder estimates, for \textit{small} times, through a circular argument, i.e. the norms in the r.h.s. will be absorbed by those on the l.h.s.  The extension to an arbitrary  fixed final time $T$ then follows from the fact that Schauder estimates precisely provide a kind of stability result in the class ${\mathscr C}^{\alpha+\beta}([0,T]\times \R^d) $ and the final bound will follow by inductive application of the estimate in small time.

{ \color{black}

 Note now that thanks to \eqref{BD_SUP}, \eqref{BD_SUP_GRAD} we have from \eqref{CTR_HOLDER_MOD_PROV} that
  \begin{eqnarray*}
&&\|u\|_{L^{\infty}} + \|Du\|_{L^{\infty}} + [D u]_{\beta+\alpha-1, T} \\
&\le& C_1\Big\{\Big((1+c_0+K_0 (c_0^{1+\frac{\beta-2}{\alpha}}+ c_0^{\frac {\alpha+\beta-1}{\alpha}})+K_0T)\|g\|_{C_b^{\alpha+\beta}}\\
 &&+ \Big(c_0^{\frac{\alpha+\beta-1}{\alpha}}[f]_{\beta,T} +\big(1+K_0 (c_0^{1+\frac{\beta-2}{\alpha}}+ c_0^{\frac {\alpha+\beta-1}{\alpha}})+K_0T\big)(T-t)^{\frac{\alpha+\beta-1}{\alpha}}\|f\|_{L^\infty([0,T],C_b^\beta)}\Big)\Big\}\\
 &&+\Psi(K_0,c_0,\alpha,\beta,\A{A})\|u\|_{L^\infty([0,T],C_b^{\alpha+\beta})}
 \end{eqnarray*}
up to a modification of $C_1$ and where
\begin{eqnarray*}
\Psi(K_0,c_0,\alpha,\beta,\A{A}) =\frac 14 + C_2c_0^{\frac{\alpha+\beta-1}{\alpha}}(1+{K_{0}}) + C(T-t)^{\frac{\alpha+\beta-1}{\alpha}}(1+{K_{0}})\Big(1+K_0T+C_1K_0 (c_0^{1+\frac{\beta-2}{\alpha}}+ c_0^{\frac {\alpha+\beta-1}{\alpha}})\Big).
 \end{eqnarray*}
We can hence choose $c_0$ small enough so that $C_2c_0^{\frac{\alpha+\beta-1}{\alpha}}(1+{K_{0}})\leq 1/4$ and then $T$ small enough so that $C(T-t)^{\frac{\alpha+\beta-1}{\alpha}}(1+{K_{0}})\Big(1+K_0T+C_1K_0 (c_0^{1+\frac{\beta-2}{\alpha}}+ c_0^{\frac {\alpha+\beta-1}{\alpha}})\Big)\leq 1/4$. This} eventually yields that there exists $\tilde C:=\tilde C (\A{A}, c_0)$ s.t.
\begin{eqnarray}
\|u\|_{L^\infty([0,T],C_b^{\alpha+\beta})}&\le& \tilde C \Big( \|g\|_{C_b^{\alpha+\beta}}+\|f\|_{L^\infty([0,T],C_b^\beta)}\Big)  +\frac{3}{4}\|u\|_{L^\infty([0,T],C_b^{\alpha+\beta})}\notag\\
&\le & 4\tilde C \Big( \|g\|_{C_b^{\alpha+\beta}}+\|f\|_{L^\infty([0,T],C_b^\beta)}\Big) .\label{SCHAUDER_SMALL_TIME}
\end{eqnarray}
Equation \eqref{SCHAUDER_SMALL_TIME} provides the control of Theorem \ref{THEO_SCHAU_ALPHA} for $T$ small enough, i.e., for $T\le T_0(\A{A}, ({\mathscr P}_\beta)) $. The result is extended to an arbitrary time $T$ considering $N $ subintervals of $[0,T] $ s.t. $T/N\le T_0 $ and applying inductively \eqref{SCHAUDER_SMALL_TIME} going backwards in time on the time intervals $[(i-1)T/N,iT/N] $, $i\in \leftB 1,N\rightB $ considering as \textit{final} condition on the current time interval the function $g_i(x):=u(iT/n,x) $ which precisely belongs to $C_b^{\alpha+\beta} $ from the previous application of the Schauder estimate \eqref{SCHAUDER_SMALL_TIME} if $i<N $ or by \eqref{Asso_PDE} if $i=N$.  This proves Theorem \ref{THEO_SCHAU_ALPHA} is complete provided Lemma \ref{LEMME_HOLDER_GRAD} holds.\qed 

\subsection{Proof of Lemma \ref{LEMME_HOLDER_GRAD}}\label{PROOF_LEM_HOLD_GRAD_SUB}
%\begin{proof}{Proof of Lemma \ref{LEMME_HOLDER_GRAD}}
Let $ t\in [0,T]$ be fixed. For this part of the analysis, we distinguish two cases, either the given points $(x,x')\in \R^d $ are for a fixed $t\in [0,T) $ in a globally \textit{off-diagonal} regime, i.e., $c_0|x-x'|^\alpha\ge (T-t)$ for a constant $c_0$ to be specified but meant to be small. This means that the spatial distance is larger than the characteristic time-scale up to a prescribed constant which will be useful to equilibrate the computations. In this case, we will \textit{mainly} use the controls of Lemma \ref{LEMMA_SUP_BOUNDS_SOL_AND_GRAD}.
%\eqref{Termine_perturbativo_derivato} to control the perturbative term. However, from \eqref{GRAD_SG_PERT}, we also need to establish in this case that we have some smoothing effects in time for the gradient of the frozen drifted semi-group 
%$\tilde P_{s,t,\alpha}^{(\tau,\xi)} $. 

%{\color{black} Enrico: this introduction maybe could be reduced; it is a bit  long. }

In the \textit{diagonal} case, $c_0|x-x'|^\alpha\le (T-t) $, the spatial points are closer than the typical time-scale magnitude but in the time integration for the source and the perturbative term (see e.g. \eqref{DER_DUHAMEL} below), when $ (s-t)\le c_0|x-x'|^\alpha$ there is again a \textit{local off-diagonal regime}. The key point is that to handle these terms properly it will be useful to be able to change freezing point, i.e., it seems reasonable that, when the spatial points are in a \textit{local diagonal regime}, i.e., $(s-t)\ge c_0|x-x'|^\alpha $,  the auxiliary frozen densities are considered for the same freezing parameter and conversely that in the \textit{locally off-diagonal regime} the densities are frozen along their own spatial argument (similarly to equation \eqref{FINAL_BD_FOR_GRADIENT} in Lemma \ref{LEMMA_SUP_BOUNDS_SOL_AND_GRAD}). We are thus faced with a change of freezing point in the Duhamel formulation. This approach was already used in \cite{chau:hono:meno:18} to obtain Schauder estimates for degenerate local Kolmogorov equations and can be used in the current setting. 

%\subsubsection{Change of freezing point in the Duhamel formulation}
%
%Importantly, to perform our full analysis it will be important to extend the previous representation results of Proposition \ref{DUHAMEL} in order to be able to change spatial freezing point along the considered time interval. This last point will be the key to derive 
%the H\"older continuity of the gradient of a solution $u\in {\mathscr C}^{\alpha+\beta}([0,T]\times \R^d) $ of \eqref{Asso_PDE}.

\subsubsection{Off-Diagonal Regime}
Let $x,x' \in \R^d$ be s.t. $c_0|x-x'|^\alpha\ge (T-t)  $. In that case, {\color{black} we claim that:
\begin{equation}\label{CTR_HOLDER_HD_LEMMA9}
|Du(t,x)-Du(t,x')|%\notag\\
\le C|x-x'|^{\alpha+\beta-1}\Big([Dg]_{{\beta+\alpha-1}}(1+c_0)+c_0^{\frac{\alpha+\beta-1}{\alpha}}\big([f]_{\beta,T}+\|u\|_{L^\infty([0,T],C_b^{\alpha+\beta})}(1+{K_{0}})\big)\Big).
\end{equation}
}
Indeed, we readily get from \eqref{FINAL_BD_FOR_GRADIENT}, \eqref{BD_SUP_GRAD} and the proof of Lemma \ref{LEMMA_SUP_BOUNDS_SOL_AND_GRAD} that:
\begin{eqnarray}
|Du(t,x)-Du(t,x')|\le C\Bigg(\Big|D\tilde P_{T,t,\alpha}^{(\tau,\xi)} (g\eta_{\tau,\xi}(T,\cdot))(x)\big|_{(\tau,\xi)=(t,x)}-D\tilde P_{T,t,\alpha}^{(\tau,\xi')} (g\eta_{\tau,\xi'}(T,\cdot))(x')\big|_{(\tau,\xi')=(t,x')} \Big|\notag\\
\Bigg|\int_t^{T}ds \int_{\R^d} dy \Big(D_x \tilde p_\alpha^{(\tau,\xi)}(t,s,x,y)\big( \eta_{\tau,\xi}f- {\mathcal S}_{\tau,\xi}(s,y)\big)\Big)\Big|_{(\tau,\xi)=(t,x)} \Bigg|\notag\\+\Bigg|\int_t^{T}ds \int_{\R^d} dy \Big(D_x \tilde p_\alpha^{(\tau,\xi)}(t,s,x',y)\big(\eta_{\tau,\xi'} f- {\mathcal S}_{\tau,\xi'}\big)(s,y)\Big)\Big|_{(\tau,\xi')=(t,x')} \Bigg|\notag\\
+\Bigg|\int_t^{T}ds \int_{\R^d} dy\Big(D_x \tilde p_\alpha^{(\tau,\xi)}(t,s,x,y) \big( F(s,y)-F(s,\theta_{s,\tau}(\xi)))\cdot Du(s,y) \big) \eta_{\tau,\xi}(s,y)\Big)\Big|_{(\tau,\xi)=(t,x)} \Bigg|\notag\\
+\Bigg|\int_t^{T}ds \int_{\R^d} dy\Big(D_x \tilde p_\alpha^{(\tau,\xi')}(t,s,x',y) \big( F(s,y)-F(s,\theta_{s,\tau}(\xi')))\cdot Du(s,y) \big) \eta_{\tau,\xi'}(s,y)\Big)\Big|_{(\tau,\xi')=(t,x')} \Bigg|\Bigg)\notag\\
\le C \Big(\Big|D\tilde P_{T,t,\alpha}^{(\tau,\xi)} (g\eta_{\tau,\xi}(T,\cdot))(x)\big|_{(\tau,\xi)=(t,x)}-D\tilde P_{T,t,\alpha}^{(\tau,\xi')} (g\eta_{\tau,\xi'}(T,\cdot))(x')\big|_{(\tau,\xi')=(t,x')} \Big| \notag\\
+(T-t)^{\frac{\alpha+\beta-1}{\alpha}}\big([f]_{\beta,T}+\|u\|_{L^\infty([0,T],C_b^{\alpha+\beta})}(1+{K_{0}})\big)\Big)\notag\\
C \Big(\Big|D\tilde P_{T,t,\alpha}^{(\tau,\xi)} (g\eta_{\tau,\xi}(T,\cdot))(x)\big|_{(\tau,\xi)=(t,x)}-D\tilde P_{T,t,\alpha}^{(\tau,\xi')} (g\eta_{\tau,\xi'}(T,\cdot))(x')\big|_{(\tau,\xi')=(t,x')} \Big| \notag\\
+c_0^{\frac{\alpha+\beta-1}{\alpha}}|x-x'|^{\alpha+\beta-1}\big([f]_{\beta,T}+\|u\|_{L^\infty([0,T],C_b^{\alpha+\beta})}(1+{K_{0}})\big)\Big).\label{THE_HOLDER_GRAD_OFF_DIAG}
\end{eqnarray}
The last term in the r.h.s. gives a \textit{good} control in the sense that provided $c_0$ is small, the above bound is compatible with the previously indicated circular argument to absorb the norms of $u$ in the r.h.s. Hence to conclude in that case it remains to handle the difference of the frozen semi-groups observed at different freezing points in space. 

 Recall from \eqref{CORRESP_SHIFTED} that $ D_x \tilde p_\alpha^{(\tau,\xi)}(t,T,x,y)=-D_y\tilde p_\alpha^{(\tau,\xi)}(t,T,x,y)$ and  write:
\begin{eqnarray}%\label{D2_tildeP_g_Holder}
&&\Big(D_{x} \tilde P_{T,t,\alpha}^{(\tau,\xi)}\big(g\eta_{\tau,\xi}(T,\cdot)\big)(x)- D_{x} \tilde P_{T,t,\alpha}^{(\tau,\xi')}\big(g \eta_{\tau,\xi'}(T,\cdot)\big)(x')\Big)%\Big|_{(\tau,\xi,\xi')=(t,x,x')}
\nonumber \\
&=&
\bigg [ \int_{\R^{d}}  \tilde p_\alpha^{(\tau,\xi)}(t,T,x,y) D%_y
\big( g\eta_{\tau,\xi}\big)(y) dy- \int_{\R^{d}} \tilde p_\alpha^{(\tau,\xi')}(t,T,x',y) D%_y
\big( g \eta_{\tau,\xi'}\big)(y) dy \bigg ]%\Big|_{(\tau,\xi,\xi')=(t,x,x')}
\nonumber \\
&=&\bigg [ \int_{\R^{d}}  \tilde p_\alpha^{(\tau,\xi)}(t,T,x,y) [D%_y 
\big(g \eta_{\tau,\xi}\big)(y)-D%_y 
\big( g \eta_{\tau,\xi} \big)(\theta_{T,t}(\xi))] dy\notag\\
&&+[D%_y
 \big( g\eta_{\tau,\xi}\big)(\theta_{T,t}(\xi))-D%_y 
 \big( g\eta_{\tau,\xi'}\big) (T, \theta_{T,t}(\xi'))]- \int_{\R^{d}} \tilde p_\alpha^{(\tau,\xi')}(t,T,x',y) [D%_y 
 \big(g\eta_{\eta,\xi'}\big)(y)-D%_y 
 \big(g \eta_{\eta,\xi'} \big)(\theta_{T,t}(\xi'))]dy\bigg],\notag
\end{eqnarray}
recalling that $\tilde p_\alpha^{(\tau,\xi)}(t,T,x,\cdot),\ \tilde p_\alpha^{(\tau,\xi')}(t,T,x',\cdot) $ are probability densities for the last equality.  Taking $\tau=t,\xi=x,\xi'=x' $ we now write from  \eqref{CORRESP_SHIFTED} (recall that $\alpha + \beta -1 < \alpha$) and observing from the definition of the cut-off functions $\eta_{\tau,\xi},\eta_{\tau,\xi'} $ that $\eta_{\tau,\xi}(\theta_{T,\tau}(\xi))\big|_{(\tau,\xi)=(t,x)}=\eta_{\tau,\xi'}(\theta_{T,\tau}(\xi'))\big|_{(\tau,\xi')=(t,x')}=1$, $D \eta_{\tau,\xi}(\theta_{T,\tau}(\xi))\big|_{(\tau,\xi)=(t,x)}=D \eta_{\tau,\xi'}(\theta_{T,\tau}(\xi'))\big|_{(\tau,\xi')=(t,x')}=0 $: 
\begin{eqnarray*}%\label{D2_tildeP_g_Holder}
&&|D_{x} \tilde P_{T,t,\alpha}^{(\tau,\xi)}\big(g\eta_{\tau,\xi}(T,\cdot) \big)(x)- D_{x} \tilde P_{T,t}^{(\tau,\xi')}\big( g\eta_{\tau,\xi'}(T,\cdot)\big)(x')|_{(\tau,\xi,\xi')=(t,x,x')}
\nonumber \\
&\le &C\bigg [ \int_{\R^{d}}   p_\alpha(T-t,y-m_{T,t}^\xi(x)) \big([D%_y
g]_{{\beta+\alpha-1}}+1\big)|y-\theta_{T,t}(\xi)|^{\beta+\alpha-1} dy%\notag\\
%&&
+[D%_y
g]_{{\beta+\alpha-1}}|\theta_{T,t}(\xi)-\theta_{T,t}(\xi')|^{\alpha+\beta-1}\notag\\
&&+ \int_{\R^{d}}  p_\alpha(T-t,y-m_{T,t}^{\xi'}(x')) \big([D%_y
g]_{{\beta+\alpha-1}}+1\big)|y-\theta_{T,t}(\xi')|^{\beta+\alpha-1} dy\Bigg]\Big|_{(\xi,\xi')=(x,x')},\notag\\
&\le &C\bigg [ \int_{\R^{d}}   p_\alpha(T-t,y-\theta_{T,t}(x)) \big([D%_y
g]_{{\beta+\alpha-1}}+1\big)|y-\theta_{T,t}(x)|^{\beta+\alpha-1} dy%\notag\\
%&&
+[D%_y
g]_{{\beta+\alpha-1}}|\theta_{T,t}(x)-\theta_{T,t}(x')|^{\alpha+\beta-1}\\
&&+ \int_{\R^{d}}  p_\alpha(T-t,y-\theta_{T,t}(x'))\big( [D%_y
g]_{{\beta+\alpha-1}}+1\big)|y-\theta_{T,t}(x')|^{\beta+\alpha-1} dy\Bigg].\notag
\end{eqnarray*}
%Since $\alpha+\beta-1<\alpha $, we indeed have finite spatial integrals in the above %equation. 
With the corresponding scaling (cf.  \A{NDa})
%Proposition  fff \ref{CTR_OF_INTEGRABILITY_DER_GEN_STABLE} with $ \ell=0$) 
we derive:
\begin{eqnarray}%\label{D2_tildeP_g_Holder}
|D_{x} \tilde P_{T,t,\alpha}^{(\tau,\xi)}\big( g\eta_{\tau,\xi}(T,\cdot)\big)(x)- D_{x} \tilde P_{T,t}^{(\tau,\xi')}\big( g\eta_{\tau,\xi'}(T,\cdot) \big)(x')|\Big|_{(\tau,\xi,\xi')=(t,x,x')}\notag\\
\le C([D g]_{{\beta+\alpha-1}}+1)\Big[(T-t)^{\frac{\beta+\alpha-1}{\alpha}}+|\theta_{T,t}(x)-\theta_{T,t}(x')|^{\alpha+\beta-1}\Big].\label{PREAL_GRAD_SG_HD}
\end{eqnarray}
From the spatial regularity of $F$ we have the following key result whose proof is postponed to the Appendix.
\begin{lem}[Controls on the flows] \label{FLOW_LEMMA}
Let $\alpha+\beta>1 $ and $F$ satisfying  \eqref{22}.  Then there exists a constant $C\ge 1$ s.t. for all $0\le t\le s\le T\le 1,\ (x,x')\in (\R^d)^2 $:
\begin{equation}
\label{equiv_flow}
|\theta_{s,t}(x)-\theta_{s,t}(x')|\le C(|x-x'|+(s-t)^{\frac{1}{1-\beta}})\le C(|x-x'|+(s-t)^{\frac{1}{\alpha}}).
\end{equation}
\end{lem}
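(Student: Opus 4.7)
The plan is to derive a nonlinear integral inequality satisfied by $\phi(s) := |\theta_{s,t}(x) - \theta_{s,t}(x')|$, $s \in [t,T]$, and then apply a Bihari-type comparison. From \eqref{DEF_FLOW},
$$\phi(s) \le |x-x'| + \int_t^s \bigl|F(v,\theta_{v,t}(x)) - F(v,\theta_{v,t}(x'))\bigr|\,dv.$$
Since assumption \eqref{22} is only local, I would first upgrade it into a global bound via a standard chain/telescoping argument: given $y_1,y_2\in\R^d$ with $|y_1-y_2|>1$, set $n:=\lceil|y_1-y_2|\rceil$ and the collinear points $z_i:=y_1+(i/n)(y_2-y_1)$, so that $|z_{i+1}-z_i|\le 1$. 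Iterating \eqref{22} along the chain and using $n\le 2|y_1-y_2|$ gives $|F(v,y_1)-F(v,y_2)|\le K_0 n^{1-\beta}|y_1-y_2|^\beta \le 2^{1-\beta}K_0|y_1-y_2|$. Combining with the regime $|y_1-y_2|\le 1$ yields the global H\"older-plus-linear bound
$$|F(v,y_1)-F(v,y_2)| \le K_0'\bigl(|y_1-y_2|^\beta + |y_1-y_2|\bigr),\quad v\in[0,T],\ y_1,y_2\in\R^d,$$
with $K_0':=K_0(1+2^{1-\beta})$.

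Plugging this into the control of $\phi$,
$$\phi(s) \le |x-x'| + K_0' \int_t^s \bigl(\phi(v)^\beta + \phi(v)\bigr)\,dv,$$
and comparison with the Cauchy problem $\psi'(s)=K_0'(\psi(s)^\beta+\psi(s))$, $\psi(t)=|x-x'|$, whose right-hand side is nondecreasing, yields $\phi(s)\le\psi(s)$ on $[t,T]$. I would then analyze $\psi$ via a case split on the threshold $\psi=1$. When $|x-x'|\ge 1$, $\psi$ is nondecreasing so $\psi\ge 1$ throughout, and $\psi^\beta\le\psi$ combined with linear Gronwall gives $\psi(s)\le|x-x'|e^{2K_0'T}\le C|x-x'|$ since $T\le 1$. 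When $|x-x'|<1$, set $\tau:=\inf\{s>t:\psi(s)=1\}\wedge T$; on $[t,\tau]$, $\psi\le 1$ forces $\psi^\beta+\psi\le 2\psi^\beta$, hence $(\psi^{1-\beta})'\le 2K_0'(1-\beta)$, which after integration and the convexity bound $(a+b)^{1/(1-\beta)}\le 2^{\beta/(1-\beta)}(a^{1/(1-\beta)}+b^{1/(1-\beta)})$ yields $\psi(s)\le C_\beta(|x-x'|+(s-t)^{1/(1-\beta)})$. On $(\tau,T]$, linear Gronwall bounds $\psi$ by the constant $e^{2K_0'T}$; the continuity condition $\psi(\tau)=1$ combined with the previous bound forces $|x-x'|+(\tau-t)^{1/(1-\beta)}\ge 1/C_\beta$, so this constant is in turn absorbed into $C(|x-x'|+(s-t)^{1/(1-\beta)})$ on that range.

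This establishes the first inequality of \eqref{equiv_flow}. The second follows immediately from $s-t\le T\le 1$ combined with the hypothesis $\alpha+\beta>1$, which gives $1/(1-\beta)\ge 1/\alpha$ and hence $(s-t)^{1/(1-\beta)}\le(s-t)^{1/\alpha}$. The main technical point to be careful about is the mixed nonlinearity $\psi^\beta+\psi$ arising from the global upgrade of \eqref{22}: the classical $r^\beta$-only Bihari lemma cannot directly handle the linear contribution inherited from the chain argument, and the assumption $T\le 1$ enters precisely to absorb the exponential Gronwall factor in the super-threshold regime.
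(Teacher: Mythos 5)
Your proof is correct, but it follows a genuinely different route from the paper. The paper never passes through a nonlinear comparison: it mollifies the drift at a scale $\delta$, using $\|F-F_\delta\|_\infty\le K_0\delta^\beta$ and $\|DF_\delta\|_\infty\le K_0\delta^{\beta-1}$, applies the \emph{linear} Gronwall lemma to get $|\theta_{s,t}(x)-\theta_{s,t}(x')|\le(|x-x'|+2K_0\delta^\beta(s-t))\exp(K_0\delta^{\beta-1}(s-t))$, and then optimizes $\delta^{1-\beta}=s-t$, which produces the exponent $\frac{1}{1-\beta}$ directly; the local nature of \eqref{22} is dealt with by a short remark that the lemma is only invoked for $|x-x'|$ small. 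Your argument instead upgrades \eqref{22} to the global bound $|F(v,y_1)-F(v,y_2)|\le K_0'(|y_1-y_2|^\beta+|y_1-y_2|)$ by chaining, and then handles the resulting mixed nonlinearity by a Bihari-type case analysis around the threshold $\psi=1$ ($(\psi^{1-\beta})'\le 2K_0'(1-\beta)$ below it, linear Gronwall above it, with the constant absorbed thanks to $\psi(\tau)=1$); all the individual estimates there check out, including the convexity step and the absorption of the super-threshold constant, and your route has the merit of proving \eqref{equiv_flow} for \emph{all} $(x,x')\in(\R^d)^2$ without any localization caveat, which is closer to the lemma as literally stated. The paper's mollification trick buys simplicity (no nonlinear ODE, no threshold bookkeeping) and sidesteps the one small imprecision in your write-up: when $|x-x'|=0$ the Cauchy problem $\psi'=K_0'(\psi^\beta+\psi)$, $\psi(t)=0$ is not uniquely solvable, so "comparison yields $\phi\le\psi$" should be stated with the maximal solution, or, more simply, you can run your threshold analysis directly on the $C^1$ majorant $\Phi(s):=|x-x'|+K_0'\int_t^s(\phi(v)^\beta+\phi(v))\,dv$, which satisfies $\Phi'\le K_0'(\Phi^\beta+\Phi)$ and dominates $\phi$; with that routine adjustment your proof is complete.
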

From \eqref{equiv_flow} with $ s=T$ and \eqref{PREAL_GRAD_SG_HD}, we therefore derive that:
\begin{eqnarray}%\label{D2_tildeP_g_Holder}
&&|D_{x} \tilde P_{T,t,\alpha}^{(\tau,\xi)}\big(g \eta_{\tau,\xi}(T,\cdot)\big)(x)- D_{x} \tilde P_{T,t}^{(\tau,\xi')}\big( g\eta_{\tau,\xi'}\big)(x')| \Big|_{(\tau,\xi,\xi')=(t,x,x')}\notag\\
&\le& C([Dg]_{{\beta+\alpha-1}}+1)\Big[(T-t)^{\frac{\beta+\alpha-1}{\alpha}}+|x-x'|^{\alpha+\beta-1}\Big]\notag\\
&\le &C([Dg]_{{\beta+\alpha-1}}+1)(1+c_0)|x-x'|^{\alpha+\beta-1}.\label{CTR_GRAD_SG_HD}
\end{eqnarray}
{\color{black}Plugging \eqref{CTR_GRAD_SG_HD} into \eqref{THE_HOLDER_GRAD_OFF_DIAG} eventually yields for the off-diagonal regime that \eqref{CTR_HOLDER_HD_LEMMA9} holds.}

\subsubsection{Diagonal-Regime} It is here assumed that for given points $(t,x,x')\in [0,T]\times (\R^d)^2 $,  $c_0|x-x'|^\alpha\le (T-t)  $. \textbf{All the statements of the paragraph tacitly assume this condition holds}. We first need here to consider a Duhamel representation formula for which we change freezing point along the time integration variable.
With the previous notations of Proposition \ref{DUHAMEL} the following expansion of $u$ and its gradient holds.

%{\color{black} Enrico: we really need $\tau_0$ ? It is  complicate}

\begin{PROP}[Duhamel formula with change of freezing points]\label{DUHAMEL_2_THE_COME_BACK}
Let $u\in {\mathscr C}_b^{\alpha+\beta}([0,T]\times \R^d) $ be a solution of \eqref{Asso_PDE}. For fixed %$i\in \{0,1\} $, 
$(t,x')\in [0,T]\times \R^d $ and any freezing parameters $(\tau,\xi',\tilde \xi') \in [0,T]\times (\R^d)^2$, for all $\tau_0\in [t,T]$:
\begin{eqnarray}
 v_{\tau,\xi'}(t,x')&=&\tilde P_{T,t,\alpha}^{(\tau,\tilde \xi')}(g\eta_{\tau,\tilde \xi'}(T,\cdot))(x')+\tilde G_{\tau_0,t,\alpha}^{(\tau,\xi')} (f\eta_{\tau,\xi'} -{\mathcal S}_{\tau,\xi'})(t,x')+\tilde G_{T,\tau_0,\alpha}^{(\tau,\tilde \xi')} (f\eta_{\tau,\tilde \xi'} -{\mathcal S}_{\tau,\tilde \xi'})(t,x')\notag\\
&&+\tilde P_{\tau_0,t,\alpha}^{(\tau,\xi')} v_{\tau,\xi'}(\tau_0, x')-\tilde P_{\tau_0,t,\alpha}^{(\tau,\tilde \xi')} v_{\tau,\tilde \xi'}(\tau_0, x')\notag\\
&&+\int_t^T ds \int_{\R^{d}}dy\Bigg(\I_{s\le \tau_0}\tilde p_\alpha^{(\tau,\xi')}(t,s,x',y)\Big( \big(F(s,y)-F(s,\theta_{s,\tau}(\xi'))\big)\cdot Du(s,y)  \Big) \eta_{\tau,\xi'}(s,y)\notag\\
&&\quad +\I_{s>\tau_0} \tilde p_\alpha^{(\tau,\tilde\xi')}(t,s,x',y)\Big(\big(F(s,y)-F(s,\theta_{s,\tau}(\tilde \xi'))\big) \cdot D u(s,y) \Big)\eta_{\tau,\tilde \xi'}(s,y)  \Bigg),%\notag\\
\label{INTEGRATED_DIFF_BXI}
\end{eqnarray}
where $ v_{\tau,\xi'}(t,x')= (u  \eta_{\tau, \xi'})(t,x')$ and 
 we recall from \eqref{GREEN_FLESSIBILE_IN_TEMPO} that:
\begin{eqnarray*}
\forall 0\le v<r\le T,\;\tilde G_{r,v,\alpha}^{(\tau,\xi')} f(t,x)&:=&\int_v^{r} ds\int_{\R^{d}} dy \tilde p_\alpha^{(\tau,\xi')}(t,s,x',y)f(s,y). %\label{GREEN_SUR_SEGMENT_EN_TEMPS}
\end{eqnarray*}
Let now $x,x'\in \R^d $ be s.t. $c_0|x-x'|^\alpha\le  T-t $. Then we can differentiate the previous expression for suitable freezing parameters. Namely:
\begin{eqnarray}
&&\big( D_x v_{\tau,\xi'}(t,x')\big)|_{(\tau,\xi')=(t,x)}\notag\\
&=&\big(D_x\tilde P_{T,t,\alpha}^{(\tau,\tilde \xi')}(g\eta_{\tau,\tilde \xi'}(T,\cdot))(x')\big)_{(\tau,\tilde \xi')=(t,x)}+\big(D\tilde G_{\tau_0,t,\alpha}^{(\tau,\xi')} (f\eta_{\tau,\xi'}-{\mathcal S}_{\tau,\xi'} )(t,x')\big)_{(\tau_0,\tau,\xi')=(t_0,t,x')}\notag\\
&&+\big(D_x\tilde G_{T,\tau_0,\alpha}^{(\tau,\tilde \xi')} (f\eta_{\tau,\tilde \xi'} -{\mathcal S}_{\tau,\tilde \xi'})(t,x')\big)_{(\tau_0,\tau,\tilde \xi')=(t_0,t,x)}\notag\\
&&+\big(D_x\tilde P_{\tau_0,t,\alpha}^{(\tau,\xi')} v_{\tau,\xi'}(\tau_0, x')\big)_{(\tau_0,\tau,\xi')=(t_0,t,x')}-\big(D\tilde P_{\tau_0,t,\alpha}^{(\tau,\tilde \xi')} v_{\tau,\tilde \xi'}(\tau_0, x')\big)_{(\tau_0,\tau,\tilde \xi')=(t_0,t,x)}\notag\\
&&+\int_t^T ds \int_{\R^{d}}dy\Bigg(\I_{s\le \tau_0}D\tilde p_\alpha^{(\tau,\xi')}(t,s,x',y)\Big( \big(F(s,y)-F(s,\theta_{s,\tau}(\xi'))\big)\cdot Du(s,y)  \Big) \eta_{\tau,\xi'}(s,y)\notag\\
&&\quad +\I_{s>\tau_0} D\tilde p_\alpha^{(\tau,\tilde\xi')}(t,s,x',y)\Big(\big(F(s,y)-F(s,\theta_{s,\tau}(\tilde \xi'))\big) \cdot D u(s,y) \Big)\eta_{\tau,\tilde \xi'}(s,y)  \Bigg)_{(\tau_0,\tau,\xi',\tilde \xi')=(t_0,t,x',x)},\notag
\label{INTEGRATED_DIFF_BXI_GRAD}
\end{eqnarray}
where  $t_0=t+c_0|x-x'|^\alpha$ as in \eqref{def_t0}.
\end{PROP}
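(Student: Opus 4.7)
My plan is to derive \eqref{INTEGRATED_DIFF_BXI} by two complementary applications of the Duhamel representation already established in Proposition \ref{DUHAMEL}, splicing the intervals $[t,\tau_0]$ and $[\tau_0,T]$ with different freezing parameters on each piece, and then to obtain the gradient formula by term-by-term differentiation with the specific choices dictated by the diagonal regime.

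First, I would apply the same reasoning that yielded \eqref{DUAHM_1} to $v_{\tau,\xi'}$---which solves \eqref{we} with source $-(\eta_{\tau,\xi'}f+R_{\tau,\xi'}+\mathcal{S}_{\tau,\xi'})$---on the restricted interval $[t,\tau_0]$, with the role of the terminal datum $g\eta_{\tau,\xi'}(T,\cdot)$ now played by $v_{\tau,\xi'}(\tau_0,\cdot)$. This yields
\[
v_{\tau,\xi'}(t,x')=\tilde P_{\tau_0,t,\alpha}^{(\tau,\xi')}v_{\tau,\xi'}(\tau_0,\cdot)(x')+\tilde G_{\tau_0,t,\alpha}^{(\tau,\xi')}(f\eta_{\tau,\xi'}+\mathcal{S}_{\tau,\xi'})(t,x')+\int_t^{\tau_0}ds\int_{\R^d}dy\,\tilde p_\alpha^{(\tau,\xi')}(t,s,x',y)R_{\tau,\xi'}(s,y).
\]
Symmetrically, applying the Duhamel formula to $v_{\tau,\tilde\xi'}$ on $[\tau_0,T]$ represents $v_{\tau,\tilde\xi'}(\tau_0,x')$ in terms of the terminal datum $g\eta_{\tau,\tilde\xi'}(T,\cdot)$ plus the integrated source and remainder on $[\tau_0,T]$. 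Composing this identity with $\tilde P_{\tau_0,t,\alpha}^{(\tau,\tilde\xi')}$ and using the semigroup property of the frozen Markov kernel---valid here because $\tilde\xi'$ stays fixed throughout the composition, so the kernel $\tilde p_\alpha^{(\tau,\tilde\xi')}(t,s,x,y)=p_\alpha(s-t,y-m^{(\tau,\tilde\xi')}_{s,t}(x))$ inherits the Chapman--Kolmogorov identity from $p_\alpha$---rewrites $\tilde P_{\tau_0,t,\alpha}^{(\tau,\tilde\xi')}v_{\tau,\tilde\xi'}(\tau_0,\cdot)(x')$ as the sum of $\tilde P_{T,t,\alpha}^{(\tau,\tilde\xi')}(g\eta_{\tau,\tilde\xi'}(T,\cdot))(x')$, $\tilde G_{T,\tau_0,\alpha}^{(\tau,\tilde\xi')}(f\eta_{\tau,\tilde\xi'}+\mathcal{S}_{\tau,\tilde\xi'})(t,x')$ and an integral with $\tilde p_\alpha^{(\tau,\tilde\xi')}(t,s,x',y)R_{\tau,\tilde\xi'}(s,y)$ on $s\in[\tau_0,T]$. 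Adding and subtracting this expression in the first formula produces exactly \eqref{INTEGRATED_DIFF_BXI}: the boundary mismatch $\tilde P_{\tau_0,t,\alpha}^{(\tau,\xi')}v_{\tau,\xi'}(\tau_0,\cdot)(x')-\tilde P_{\tau_0,t,\alpha}^{(\tau,\tilde\xi')}v_{\tau,\tilde\xi'}(\tau_0,\cdot)(x')$ survives the splitting, while the two remainder integrals combine into the single indicator-weighted expression in the claim.

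For the gradient identity, I would differentiate the representation in $x'$ and then specialize the freezing parameters to $(\tau,\xi')=(t,x')$ on $[t,t_0]$ and $(\tau,\tilde\xi')=(t,x)$ on $[t_0,T]$, with $\tau_0=t_0=t+c_0|x-x'|^\alpha$; the diagonal assumption $c_0|x-x'|^\alpha\le T-t$ ensures $t_0\in[t,T]$, so the splitting is nondegenerate. For the pieces involving $g\eta$, $f\eta$, $\mathcal{S}$ and the propagators of $v$ at time $\tau_0$, the interchange of derivative and integral follows from elementary dominated convergence, since the shift $x'\mapsto m^{(\tau,\xi')}_{s,t}(x')$ is affine in $x'$ and $\tilde p_\alpha^{(\tau,\xi')}$ together with its first spatial derivative satisfy the standard bounds from \A{NDa} and $({\mathscr P}_\beta)$. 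The delicate step is the remainder integral: reproducing verbatim the argument of \eqref{INTEGRABILITY_BOUND_WITH_RIGHT_PARAMETERS_GRAD}--\eqref{THE_SWAP_LIMITS}, the H\"older bound $|F(s,y)-F(s,\theta_{s,\tau}(\xi'))|\le K_{0}|y-\theta_{s,\tau}(\xi')|^\beta$ combined with property $({\mathscr P}_\beta)$ applied to $|D_{x'}\tilde p_\alpha^{(\tau,\xi')}|$ yields the integrable time singularity $(s-t)^{(\beta-1)/\alpha}$ on $[t,t_0]$ and correspondingly on $[t_0,T]$ after relabeling the freezing to $(t,x)$, which legitimizes the swap of $D_{x'}$ with the time-space integrals. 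The main obstacle, beyond careful sign and index bookkeeping, is precisely this dominated convergence step; but since it is conceptually identical to the argument of \eqref{THE_SWAP_LIMITS} and only requires running it twice (once for each freezing parameter), the extension is essentially routine.
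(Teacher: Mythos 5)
Your argument is correct and yields exactly the identity \eqref{INTEGRATED_DIFF_BXI}, but the way you produce it differs slightly from the paper's derivation. The paper starts from the restricted-interval representation \eqref{GREEN_SUR_SEGMENT_EN_TEMPS}, differentiates it with respect to the intermediate time $r$ (identity \eqref{DER_DUHAMEL}), and then integrates that identity over $[t,t_0]$ with freezing point $\xi'$ and over $[t_0,T]$ with freezing point $\tilde \xi'$; the discontinuity term $\tilde P_{\tau_0,t,\alpha}^{(\tau,\xi')} v_{\tau,\xi'}(\tau_0,\cdot)(x')-\tilde P_{\tau_0,t,\alpha}^{(\tau,\tilde\xi')} v_{\tau,\tilde\xi'}(\tau_0,\cdot)(x')$ then appears as the boundary mismatch of the two integrations, and all kernels automatically have base time $t$, so no composition of propagators is needed. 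You instead splice two Duhamel representations (on $[t,\tau_0]$ with $\xi'$, and on $[\tau_0,T]$ with $\tilde\xi'$ applied to the terminal datum $v_{\tau,\tilde\xi'}(\tau_0,\cdot)$) and collapse $\tilde P_{\tau_0,t,\alpha}^{(\tau,\tilde\xi')}\tilde P_{s,\tau_0,\alpha}^{(\tau,\tilde\xi')}$ into $\tilde P_{s,t,\alpha}^{(\tau,\tilde\xi')}$ via Chapman--Kolmogorov, recovering the discontinuity term by adding and subtracting $\tilde P_{\tau_0,t,\alpha}^{(\tau,\tilde\xi')} v_{\tau,\tilde\xi'}(\tau_0,\cdot)(x')$. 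This is legitimate: since the freezing pair $(\tau,\tilde\xi')$ is kept fixed, the shift $m^{(\tau,\tilde\xi')}_{s,t}$ in \eqref{CORRESP_SHIFTED} is additive in time and the frozen kernel inherits the convolution identity from $p_\alpha$, and the Fubini step exchanging $\tilde P_{\tau_0,t,\alpha}^{(\tau,\tilde\xi')}$ with $\int_{\tau_0}^T ds$ is harmless; what you buy is that you avoid justifying the a.e. differentiability in $r$ used in \eqref{DER_DUHAMEL}, at the price of invoking the two-parameter semigroup property explicitly. Two minor bookkeeping points: your Green terms carry $f\eta_{\tau,\xi'}+{\mathcal S}_{\tau,\xi'}$, which is consistent with \eqref{we_BASE} (the sign written in the statement of the proposition is the paper's own notational wrinkle, since ${\mathcal S}_{\tau,\xi}$ is defined in \eqref{DEF_R} with the minus sign absorbed), and for the differentiated formula the genuinely singular term is indeed only the perturbative one, for which your re-run of the \eqref{INTEGRABILITY_BOUND_WITH_RIGHT_PARAMETERS_GRAD}--\eqref{THE_SWAP_LIMITS} argument with the two freezing choices $(t,x')$ on $[t,t_0]$ and $(t,x)$ on $[t_0,T]$ is exactly what the paper does (noting, as it does, that for $s>\tau_0>t$ bounded convergence applies directly).
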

%The representations of the previous proposition were already successfully used in
%\cite{chau:hono:meno:18}, for mollified coefficients to derive Schauder estimates for degenerate Kolmogorov equations with non-linear first order term.
The previous proposition thus emphasizes that changing the freezing point according to the current (local) diagonal or off-diagonal regime can actually been done  up to an additional discontinuity term. 
\begin{proof}
Restarting from \eqref{DUAHM_1} we can indeed rewrite for given $(t,x')\in [0,T]\times \R^{d} $
 and any $ r\in (t,T], (\tau,\xi')\in [0,T]\times \R^{d}$:
\begin{eqnarray}
v_{\tau,\xi'}(t,x')&=&\tilde P_{r,t,\alpha}^{(\tau,\xi')} v_{\tau,\xi'}(r, x)+ \tilde G_{r,t,\alpha}^{(\tau,\xi')} (f\eta_{\tau,\xi'}-{\mathcal S}_{\tau,\xi'})(t,x')\notag\\
&&+\int_t^{r} ds \int_{\R^{d}} dy\tilde p_\alpha^{(\tau,\xi')}(t,s,x',y)\big(F(s,y)-F(s,\theta_{s,\tau}(\xi))\big)\cdot D u(s,y) \eta_{\tau,\xi'}(s,y). %\notag \\
%\forall 0\le v<r\le T,\;\;\;\;\; \tilde G_{r,v,\alpha}^{(\tau,\xi')} f(t,x)&=&\int_v^{r} ds\int_{\R^{d}} dy \tilde p_\alpha^{(\tau,\xi')}(t,s,x',y)f(s,y).
 \label{GREEN_SUR_SEGMENT_EN_TEMPS}
\end{eqnarray}
According to Proposition 5, we obtain that for a.e. $r\in (t,T]$  for any $\xi'\in \R^{nd} $:
\begin{eqnarray}
\label{DER_DUHAMEL}
0=\partial_r [\tilde P_{r,t,\alpha}^{(\tau,\xi')} v_{\tau,\xi'}(r, x')]+\int_{\R^{d} }dy\tilde p_\alpha^{(\tau,\xi')}(t,r,x',y)\big(f\eta_{\tau,\xi'}-{\mathcal S}_{\tau,\xi'}\big)(r,y)\notag\\
+\int_{\R^{d}} dy \tilde p_\alpha^{(\tau,\xi')}(t,r,x',y)\Big(\big(F(r,y)-F(r,\theta_{r,\tau}(\xi))\big)\cdot D u(r,y)\Big) \eta_{\tau,\xi'}(r,y) .
\end{eqnarray}
Integrating \eqref{DER_DUHAMEL} with respect to $r$ between $t $ and $t_0\in (t,T]$ for a first given $\xi' $ and between $t_0 $ and $T$ with a possibly different $\tilde \xi'  $ yields:
\begin{eqnarray}
0&=&\tilde P_{t_0,t,\alpha}^{(\tau,\xi')} v_{\tau,\xi'}(t_0, x')-v_{\tau,\xi'}(t,x')+\int_{t}^{t_0}ds \int_{\R^{d}}dy \tilde p_\alpha^{(\tau,\xi')}(t,s,x',y)(f\eta_{\tau,\xi'}-{\mathcal S}_{\tau,\xi'})(s,y)\notag\\
&&+\int_t^{t_0} ds \int_{\R^{d}}dy \tilde p_\alpha^{(\tau,\xi')}(t,s,x',y)\Big(\big(F(s,y)-F(s,\theta_{s,\tau}(\xi'))\big)\cdot D u(s,y)\Big) \eta_{\tau,\xi'}(s,y)\notag\\
&&+\tilde P_{T,t,\alpha}^{(\tau,\tilde \xi')} v_{\tau,\tilde \xi'}(T, x')-\tilde P_{t_0,t,\alpha}^{(\tau,\tilde \xi')}v_{\tau,\tilde \xi'}(t_0,x')+\int_{t_0}^Tds \int_{\R^{d}}dy \tilde p_\alpha^{(\tau,\tilde \xi')}(t,s,x',y)(f\eta_{\tau,\tilde \xi'} -{\mathcal S}_{\tau,\tilde \xi'})(s,y)\notag\\
&&+\int_{t_0}^T ds \int_{\R^{d}}dy \tilde p_\alpha^{(\tau,\tilde\xi')}(t,s,x',y)\Big(\big(F(s,y)-F(s,\theta_{s,\tau}(\tilde \xi'))\big)\cdot D u(s,y)\Big)\eta_{\tau,\tilde \xi'}(s,y)\notag.
\end{eqnarray}
Since $v_{\tau,\tilde \xi'}(T,x')=(g\eta_{\tau, \tilde \xi'}(T,\cdot))(x') $ (terminal condition), using the notations of \eqref{GREEN_SUR_SEGMENT_EN_TEMPS}, the above equation rewrites:
\begin{eqnarray}
v_{\tau,\xi'}(t,x')&=&\tilde P_{T,t,\alpha}^{(\tau,\tilde \xi')}(g\eta_{\tau,\tilde \xi'}(T,\cdot) )(x')+\tilde G_{t_0,t,\alpha}^{(\tau,\xi')} (f\eta_{\tau,\xi'}-{\mathcal S}_{\tau,\xi'} )(t,x')+\tilde G_{T,t_0,\alpha}^{(\tau,\tilde \xi')} (f\eta_{\tau,\tilde \xi'}-{\mathcal S}_{\tau,\tilde \xi'})(t,x')\notag\\
&&+\tilde P_{t_0,t,\alpha}^{(\tau,\xi')} v_{\tau,\xi'}(t_0, x')-\tilde P_{t_0,t,\alpha}^{(\tau,\tilde \xi')} v_{\tau,\tilde \xi'}(t_0, x')\notag\\
&&+\int_t^T ds \int_{\R^{d}}dy\Bigg(\I_{s\le t_0}\tilde p_\alpha^{(\tau,\xi')}(t,s,x',y)\Big(\big(F(s,y)-F(s,\theta_{s,\tau}(\xi'))\big)\cdot D u(s,y)\Big) \eta_{\tau,\xi'}(s,y)\notag\\
&&\quad +\I_{s>t_0} \tilde p_\alpha^{(\tau,\tilde\xi')}(t,s,x',y)\Big(\big(F(s,y)-F(s,\theta_{s,\tau}(\tilde \xi'))\big) \cdot D u(s,y)  \Big)\eta_{\tau,\tilde \xi'}(s,y) \Bigg).
\label{INTEGRATED_DIFF_BXI_BIS}
\end{eqnarray}
%We see that for $\xi ' \neq\tilde  \xi ' $  an additional discontinuity term appears w.r.t. \eqref{DUAHM_1}. It precisely comes from the change of freezing point along the time variable. 
This gives \eqref{INTEGRATED_DIFF_BXI_BIS}.
 Expression \eqref{INTEGRATED_DIFF_BXI} can then, for the indicated freezing parameters, be differentiated in space reproducing the arguments used to derive \eqref{FINAL_BD_FOR_GRADIENT} and noting that when $s>\tau_0>t $ the bounded convergence theorem readily applies for the last contribution. This gives \eqref{INTEGRATED_DIFF_BXI_GRAD}.  
\end{proof}

%A natural choice  consists in taking  \textbf{then},
%%\textcolor{black}{ the diagonal time}
%\begin{equation}%\label{def_t0}
%t_0=t+c_0 |x-x'|^\alpha \wedge T,
%\end{equation}
%as defined in \eqref{def_t0} i.e. $t_0$ precisely corresponds to the critical time at which a change of regime occurs in the globally diagonal regime.

%\subsubsection{Bounds for the H\"older norm of the gradient}
We can from \eqref{INTEGRATED_DIFF_BXI} express the full expression of the difference to be investigated. Namely, for all $(t,x,x')\in [0,T]\times (\R^d)^2 $, recalling that for $(\tau,\xi,\xi')=(t,x,x') $, $\eta_{\tau,\xi}(t,x)=\eta_{\tau,\xi'}(t,x')=1 $ we get:
\begin{eqnarray}
&&|D_xu(t,x')-D_x u(t,x)|\notag\\
&=&\Big|D_x (u\eta_{\tau,\xi'}) (t,x')\big|_{(\tau,\xi')=(t,\xi')}-D_x (u\eta_{\tau,\xi})(t,x)\big|_{(\tau,\xi)=(t,x)} \Big| \notag\\&\le& \Big|\Big[D_x\tilde P_{T,t,\alpha}^{(\tau,\tilde \xi')}\big(\eta_{\tau,\tilde \xi'}(T,\cdot)g\big)(x')-D_x\tilde P_{T,t,\alpha}^{(\tau, \xi)}\big(\eta_{\tau, \xi}(T,\cdot) g\big)(x)\Big]\Big|\Big|_{(\tau,\tilde \xi',\xi)=(t,x,x)}\notag\\
&&+\big|D_x\tilde P_{\tau_0,t,\alpha}^{(\tau,\xi')} v_{\tau,\xi'}(\tau_0, x')-D_x\tilde P_{\tau_0,t,\alpha}^{(\tau,\tilde \xi')} v_{\tau,\tilde \xi'}(\tau_0, x')\big| \Big|_{(\tau_0,\tau,\xi',\tilde \xi')=(t_0,t,x',x)}\notag\\
&&+|D_x\tilde G_{\tau_0,t,\alpha}^{(\tau,\xi')}\big( f\eta_{\tau,\xi'} -{\mathcal S}_{\tau,\xi'} \big)(t,x')+D_x\tilde G_{T,\tau_0,\alpha}^{(\tau,\tilde \xi')} \big( f\eta_{\tau,\tilde \xi'}-{\mathcal S}_{\tau,\tilde \xi'}\big)(t,x') \notag\\
&&-D_x\tilde G_{T,t,\alpha}^{(\tau, \xi)} (f\eta_{\tau,\xi}-{\mathcal S}_{\tau,\xi})(t,x)| \Big|_{(\tau_0,\tau,\xi',\tilde \xi',\xi)=(t_0,t,x',x,x)}\notag\\
&&+\Big| \int_t^T ds \int_{\R^{d}}dy\Bigg(\I_{s\le \tau_0} D_x\tilde p_\alpha^{(\tau,\xi')}(t,s,x',y)\Big(\big(F(s,y)-F(s,\theta_{s,\tau}(\xi'))\big)\cdot Du(s,y)\Big)\eta_{\tau,\xi'}(s,y)\notag\\
&&\quad +\I_{s>\tau_0} D_x \tilde p_\alpha^{(\tau,\tilde\xi')}(t,s,x',y)\Big(\big(F(s,y)-F(s,\theta_{s,\tau}(\tilde \xi'))\big)\cdot Du(s,y)\Big)\eta_{\tau,\tilde \xi'}(s,y)\notag\\
&&- D_x \tilde p_\alpha^{(\tau,\xi)}(t,s,x,y)\Big(\big(F(s,y)-F(s,\theta_{s,\tau}( \xi))\big)  \cdot D u(s,y)\Big) \eta_{\tau,\xi}(s,y) \Bigg)_{(\tau_0,\tau,\xi',\tilde \xi',\xi)=(t_0,t,x',x,x)}\Big|.\notag
\label{FULL_DIFF}
\end{eqnarray}
Since we have initially assumed $T$ \textit{small} and we are currently in the diagonal-regime, $|x-x'|\le [(T-t)/c_0]^{\frac 1\alpha}$,  we may assume $|x-x'| \le 1$ provided $c_0$ is small enough. In such case, we obtain the following estimate.

\begin{lem}
There exists $C\ge 1$ s.t. for all $(t,x,x') \in [0,T] \times \R^{d} \times \R^{d}$, s.t. $c_0|x-x'|^\alpha\le T-t $,
one has for $g\in C^{\beta+\alpha}(\R^d,\R)$:
\begin{eqnarray*}
&&|D_xu(t,x')-D_x u(t,x)|\\
&\le& |x-x'|^{\alpha+\beta-1}\Bigg\{ C\bigg(\big([ Dg ]_{C^{\beta+\alpha-1}_{b}}+1\big) + \|g\|_{C_b^{\alpha+\beta}}+T^{\frac{\alpha+\beta-1}\alpha}\|f\|_{L^\infty([0,T],C_b^\beta)}\\
&& +c_0^{\frac{\alpha+\beta-1}\alpha} ([D u]_{\beta+\alpha-1, T}+\|u\|_\infty )(1+{K_0})+ {K_0} (c_0^{1+\frac{\beta-2}{\alpha}}+ c_0^{\frac {\alpha+\beta-1}{\alpha}})\|Du\|_{L^\infty } \bigg) +\frac 14\|u\|_{L^\infty([0,T],C_b^{\alpha+\beta})}\Bigg\}
\end{eqnarray*}
\end{lem}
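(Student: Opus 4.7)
The plan is to start from the full expression for $|D_xu(t,x')-D_xu(t,x)|$ displayed just before the lemma, which follows from Proposition \ref{DUHAMEL_2_THE_COME_BACK} with the specific choice of freezing parameters $(\tau_0,\tau,\xi',\tilde\xi',\xi)=(t_0,t,x',x,x)$, where $t_0=t+c_0|x-x'|^\alpha\le T$ by the standing diagonal assumption. This expression splits into four blocks, namely the terminal-datum frozen semigroup, the discontinuity term at $t_0$, the Green-kernel block, and the perturbative remainder. I bound each of these by $|x-x'|^{\alpha+\beta-1}$ times an appropriate combination of the ingredients announced in the statement.

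For the terminal-datum block, I observe that both frozen semigroups share the \emph{same} freezing $(\tau,\tilde\xi',\xi)=(t,x,x)$ at the two points $x$ and $x'$; it therefore reduces to the diagonal-regime H\"older estimate of step $(iii)$-$(b)$ in the proof of Proposition \ref{SCHAUDER_FROZEN}, applied to $g\eta_{t,x}(T,\cdot)\in C_b^{\alpha+\beta}$, and yields a $C\big([Dg]_{C_b^{\alpha+\beta-1}}+\|g\|_{C_b^{\alpha+\beta}}\big)|x-x'|^{\alpha+\beta-1}$ contribution (the $\|g\|_{C_b^{\alpha+\beta}}$ factor accounting for the lower-order pieces produced when differentiating the product $g\eta_{t,x}$). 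For the Green-kernel block I would split the full $[t,T]$-integral at $t_0$ and recombine: the $[t,t_0]$-piece becomes a difference of two short-time Green kernels, bounded termwise by Lemma \ref{sd} with interval length, giving a prefactor $(t_0-t)^{(\alpha+\beta-1)/\alpha}=c_0^{(\alpha+\beta-1)/\alpha}|x-x'|^{\alpha+\beta-1}$, while the $[t_0,T]$-piece compares the \emph{same} Green kernel at $x$ and $x'$ and is treated by the Taylor-expansion argument leading to \eqref{CTR_D2}. The $\beta$-H\"older modulus of $\mathcal{S}_{t,x}$ is controlled through the interpolation inequality \eqref{CTR_HOLDE_MODULI} (see Remark \ref{MOD_HOLDER_S_TAU_XI}) with interpolation parameter $\varepsilon$ tuned to produce precisely the $\tfrac14\|u\|_{L^\infty(C_b^{\alpha+\beta})}$ remainder, while the $(1+K_0)(\|u\|_\infty+[Du]_{\beta+\alpha-1,T})$ and $T^{(\alpha+\beta-1)/\alpha}\|f\|_{L^\infty(C_b^\beta)}$ terms arise from the non-interpolated pieces and from the contribution of $f\eta_{t,x}$.

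The perturbative remainder is similarly split at $s=t_0$. On $[t,t_0]$ each of the two summands is bounded separately by cancellation and $(\mathscr P_\beta)$ with $\ell=1$ (cf.\ \eqref{INTEGRABILITY_BOUND_WITH_RIGHT_PARAMETERS_GRAD}), contributing $CK_0\|Du\|_\infty\int_t^{t_0}(s-t)^{(\beta-1)/\alpha}ds\le CK_0 c_0^{(\alpha+\beta-1)/\alpha}\|Du\|_\infty|x-x'|^{\alpha+\beta-1}$. On $[t_0,T]$ both summands share the freezing $(t,x)$ and differ only in the spatial evaluation ($x'$ vs.\ $x$), so a Taylor expansion of $\tilde p_\alpha^{(t,x)}(t,s,\cdot,y)$ in its first spatial variable produces the factor $|x-x'|$ times $D_x^2\tilde p_\alpha^{(t,x)}$ integrated against $|y-\theta_{s,t}(x)|^\beta$; the $\ell=2$ version of $(\mathscr P_\beta)$ then yields $CK_0\|Du\|_\infty|x-x'|\int_{t_0}^T(s-t)^{(\beta-2)/\alpha}ds\le CK_0 c_0^{1+(\beta-2)/\alpha}\|Du\|_\infty|x-x'|^{\alpha+\beta-1}$, using that $(\beta-2)/\alpha+1<0$ so the integral is dominated by the contribution at the lower endpoint $t_0-t$.

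Finally, the discontinuity term $\big[D_x\tilde P_{t_0,t,\alpha}^{(t,x')} v_{t,x'}(t_0,\cdot)-D_x\tilde P_{t_0,t,\alpha}^{(t,x)} v_{t,x}(t_0,\cdot)\big](x')$ I decompose as (a) same density, different function, $D_x\tilde P_{t_0,t,\alpha}^{(t,x')}[v_{t,x'}(t_0,\cdot)-v_{t,x}(t_0,\cdot)](x')$, and (b) same function, different density, $D_x\big[\tilde P_{t_0,t,\alpha}^{(t,x')}-\tilde P_{t_0,t,\alpha}^{(t,x)}\big]v_{t,x}(t_0,\cdot)(x')$. Part (a) exploits $v_{t,x'}-v_{t,x}=u(\eta_{t,x'}-\eta_{t,x})$, whose $C^1$-norm is $O(|x-x'|)$ by Lemma \ref{FLOW_LEMMA} and the smoothness of $\rho$, combined with the translation-invariance property $\|D\tilde P_{t_0,t,\alpha}^{(\tau,\xi)}\varphi\|_\infty\le\|D\varphi\|_\infty$. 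Part (b) expresses the difference of densities via a Taylor expansion in the shift, and estimates the shift difference $m_{t_0,t}^{(t,x')}(x')-m_{t_0,t}^{(t,x)}(x')=\int_t^{t_0}[F(v,\theta_{v,t}(x'))-F(v,\theta_{v,t}(x))]dv$ by \eqref{22} and Lemma \ref{FLOW_LEMMA} as $O(K_0 c_0|x-x'|^{\alpha+\beta})$; the resulting expression is integrated against $v_{t,x}(t_0,\cdot)$, using cancellation to produce the H\"older modulus of $Dv_{t,x}(t_0,\cdot)$ controlled by $\|u\|_{L^\infty(C_b^{\alpha+\beta})}$, and yields the desired $|x-x'|^{\alpha+\beta-1}$ scaling with the correct $K_0$- and $c_0$-prefactors. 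Summing the four blocks produces \eqref{CTR_HOLDER_MOD_PROV}. I expect the discontinuity term (b) to be the main technical obstacle, as it requires balancing the H\"older regularity of $F$ along the flow against the integrability of $D_x^2 p_\alpha$ on the short interval $[t,t_0]$, which is precisely where the localization by $\eta_{t,\cdot}$ becomes essential to give meaning to the estimates.
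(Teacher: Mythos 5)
Your handling of three of the four blocks coincides with the paper's own proof: the terminal-datum block is exactly Lemma \ref{lem_Holder_tildePg} (same freezing $(t,x)$ at both points, diagonal-regime argument of Proposition \ref{SCHAUDER_FROZEN}), the Green-kernel block is Lemma \ref{lemme_Holder_Gg} (split at $t_0$, Lemma \ref{sd} on $[t,t_0]$, Taylor expansion on $[t_0,T]$, interpolation \eqref{CTR_HOLDE_MODULI} tuned to keep the $\|u\|_{L^\infty([0,T],C_b^{\alpha+\beta})}$ coefficient below $1/4$), and the perturbative remainder is Lemma \ref{HOLDER_CTR_PERT_PART} (termwise $({\mathscr P}_\beta)$ with $\ell=1$ on $[t,t_0]$, Taylor expansion plus $({\mathscr P}_\beta)$ with $\ell=2$ on $[t_0,T]$). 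The only genuine departure is your treatment of the discontinuity term, and that is where your plan has a gap.

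In your part (b) you Taylor-expand the difference of the two shifted kernels in the shift, which forces second derivatives of $p_\alpha$ at the worst possible time $t_0-t=c_0|x-x'|^\alpha$. Even after moving one derivative onto $v_{t,x}(t_0,\cdot)$ and using cancellation, the best this route gives is $|m^{(t,x')}_{t_0,t}(x')-m^{(t,x)}_{t_0,t}(x')|\,[Dv_{t,x}(t_0,\cdot)]_{\alpha+\beta-1}\,(t_0-t)^{\frac{\alpha+\beta-2}{\alpha}}\le C K_0\, c_0^{2+\frac{\beta-2}{\alpha}}\,[Dv_{t,x}(t_0,\cdot)]_{\alpha+\beta-1}\,|x-x'|^{\alpha+\beta-1}$, using the mean estimate \eqref{CTR_DIFF_MEAN} and $|x-x'|\le 1$. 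The exponent $2+\frac{\beta-2}{\alpha}=\frac{2\alpha+\beta-2}{\alpha}$ is negative whenever $2\alpha+\beta<2$, which is perfectly compatible with $\alpha+\beta>1$; you therefore produce a constant that blows up as $c_0\to 0$ in front of a quantity comparable to $\|u\|_{L^\infty([0,T],C_b^{\alpha+\beta})}$. This is exactly the type of term the statement excludes (that norm may only appear in the $\frac14$-slot or with a positive power of $c_0$), it cannot be removed by taking $c_0$ small, and it is not multiplied by a power of $T$ that the later circular argument could exploit without further work; so your claim of getting ``the correct $K_0$- and $c_0$-prefactors'' is not substantiated. Your part (a) has a milder version of the same issue: $C(\|u\|_\infty+\|Du\|_\infty)|x-x'|$ carries none of the required $c_0^{\frac{\alpha+\beta-1}{\alpha}}$ smallness. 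The paper's Lemma \ref{CTR_TERME_DISC} never differentiates the kernel in this block: it subtracts $Du$, resp. $Dv_{\tau,\tilde\xi'}$, evaluated at the transported means inside each integral, uses only the moment bound \A{NDa} for $p_\alpha$ together with the H\"older modulus of $Du$ to produce $(t_0-t)^{\frac{\alpha+\beta-1}{\alpha}}=c_0^{\frac{\alpha+\beta-1}{\alpha}}|x-x'|^{\alpha+\beta-1}$ (see \eqref{PREAL_FIN}), and then compares the two means through \eqref{22} and Lemma \ref{FLOW_LEMMA}. Within your own decomposition the fix is to estimate, in (b), $\int p_\alpha(t_0-t,z)\big[Dv_{t,x}(t_0,z+m_1)-Dv_{t,x}(t_0,z+m_2)\big]dz$ directly by the H\"older modulus of $Dv_{t,x}(t_0,\cdot)$ acting on $|m_1-m_2|\le CK_0c_0|x-x'|^{\alpha+\beta}$, rather than Taylor-expanding $p_\alpha$ in the shift.
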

The proof of Lemma \ref{LEMME_HOLDER_GRAD} then follows from above Lemma and control \eqref{CTR_HOLDER_HD_LEMMA9}. The remainder of this part is dedicated to the proof of the above estimate.

\paragraph{Controls of the frozen semi-group.}
%This is done in the 
Note that the freezing points $\tilde \xi'=\tilde \xi=x $. Hence, the first contribution in the r.h.s. of \eqref{FULL_DIFF} can be handled similarly to the proof of  Proposition \ref{SCHAUDER_FROZEN} (see equations \eqref{BD_SG_G_OFF_DIAG}-\eqref{CTR_BD_HOLDER_SG_G}).
Namely, we get the following lemma.
\begin{lem}\label{lem_Holder_tildePg}
There exists $C\ge 1$ s.t. for all $(t,x,x') \in [0,T] \times \R^{d} \times \R^{d}$, s.t. $c_0|x-x'|^\alpha\le T-t $,
one has for $g\in C^{\beta+\alpha}(\R^d,\R)$:
\begin{eqnarray*}
&&\big | D_{x} \tilde P_{T,t,\alpha}^{(\tau,\xi)}\big(g\eta_{\tau,\xi}(T,\cdot)\big)(x)-D_{x} \tilde P_{T,t,\alpha}^{(\tau,\tilde \xi')}\big(g\eta_{\tau,\tilde \xi'}(T,\cdot)\big)(x') \big | \Big|_{(\tau,\tilde \xi',\xi)=(t,x,x)}\\
&= &\big | D_{x} \tilde P_{T,t,\alpha}^{(\tau,\xi)}\big(g\eta_{\tau,\xi}(T,\cdot)\big)(x)-D_{x} \tilde P_{T,t,\alpha}^{(\tau, \xi)}\big(g\eta_{\tau, \xi}(T,\cdot)\big)(x') \big | \Big|_{(\tau,\xi)=(t,x)}\\
&\leq& C \big([ D(g \eta_{\tau,\xi})(T,\cdot)]_{C^{\beta+\alpha-1}_{b}}\big)\Big|_{(\tau,\xi)=(t,x)} |x-x'|^{\alpha+\beta-1}\le C \big([ Dg ]_{C^{\beta+\alpha-1}_{b}}+1\big) |x-x'|^{\alpha+\beta-1}.
\end{eqnarray*}
\end{lem}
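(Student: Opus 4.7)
The plan is to adapt essentially verbatim the diagonal regime analysis already carried out in the proof of Proposition \ref{SCHAUDER_FROZEN} (equations \eqref{control_Holder_D2_P_3g}--\eqref{CTR_BD_HOLDER_SG_G}). The key observation is that, once the evaluation $(\tau,\tilde\xi',\xi)=(t,x,x)$ is performed, both gradient terms involve the \emph{same} frozen couple $(\tau,\xi)=(t,x)$, so the left hand side reduces, for this fixed freezing pair, to the H\"older difference of $x\mapsto D_x\tilde P_{T,t,\alpha}^{(\tau,\xi)}(g\eta_{\tau,\xi}(T,\cdot))(x)$ evaluated at two nearby points $x$ and $x'$ satisfying the diagonal condition $c_0|x-x'|^\alpha\le T-t$. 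The argument is then that of Proposition \ref{SCHAUDER_FROZEN}, with $g$ replaced by the product $g\eta_{\tau,\xi}(T,\cdot)$.

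First I would Taylor-expand in space, writing with $x_\mu:=x'+\mu(x-x')$,
\[
D_x\tilde P_{T,t,\alpha}^{(\tau,\xi)}(g\eta_{\tau,\xi})(x)-D_x\tilde P_{T,t,\alpha}^{(\tau,\xi)}(g\eta_{\tau,\xi})(x')=\int_0^1 D_x^2\tilde P_{T,t,\alpha}^{(\tau,\xi)}(g\eta_{\tau,\xi})(x_\mu)\cdot (x-x')\,d\mu,
\]
and then exploit the cancellation $D_x^2\int_{\R^d}\tilde p_\alpha^{(\tau,\xi)}(t,T,x_\mu,y)dy=0$ (which follows from $\tilde p_\alpha^{(\tau,\xi)}(t,T,x_\mu,\cdot)$ being a probability density) to rewrite, using \eqref{CORRESP_SHIFTED}, the integrand as
\[
\int_{\R^d} D_x^2 p_\alpha\bigl(T-t,y-m_{T,t}^{(\tau,\xi)}(x_\mu)\bigr)\cdot(x-x')\Bigl[D(g\eta_{\tau,\xi})(y)-D(g\eta_{\tau,\xi})(m_{T,t}^{(\tau,\xi)}(x_\mu))\Bigr]dy.
\]
Using the $(\alpha+\beta-1)$-H\"older continuity of $D(g\eta_{\tau,\xi})(T,\cdot)$ together with the smoothing estimate of Lemma \ref{sd} applied with $\ell=2$ and index $\alpha+\beta-1<\alpha$ (this is exactly the point where property $(\mathscr P_\beta)$ enters), the above quantity is bounded by $C\,[D(g\eta_{\tau,\xi})(T,\cdot)]_{\alpha+\beta-1}\,(T-t)^{-\frac{2}{\alpha}+\frac{\alpha+\beta-1}{\alpha}}\,|x-x'|$. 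The diagonal assumption $c_0|x-x'|^\alpha\le T-t$ then absorbs the negative power of $T-t$, since $\alpha+\beta-2<0$ yields $(T-t)^{(\alpha+\beta-2)/\alpha}\,|x-x'|\le c_0^{(\alpha+\beta-2)/\alpha}|x-x'|^{\alpha+\beta-1}$, which delivers the expected spatial order $|x-x'|^{\alpha+\beta-1}$ (with a constant depending on $c_0$ that is harmlessly included in $C$).

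It remains only to control $[D(g\eta_{\tau,\xi})(T,\cdot)]_{C_b^{\alpha+\beta-1}}$ by $C([Dg]_{C_b^{\alpha+\beta-1}}+1)$. Using the product rule $D(g\eta_{\tau,\xi})=Dg\,\eta_{\tau,\xi}+g\,D\eta_{\tau,\xi}$ and observing that $\eta_{\tau,\xi}(T,\cdot)=\rho(\cdot-\theta_{T,\tau}(\xi))$ is a translate of the fixed smooth compactly supported bump $\rho$, so that its $C^2$-norm is bounded independently of the freezing parameters $(\tau,\xi)$, one obtains readily, combined with the boundedness of $g\in C_b^{\alpha+\beta}$, the estimate $[D(g\eta_{\tau,\xi})(T,\cdot)]_{\alpha+\beta-1}\le C(\|Dg\|_\infty+[Dg]_{\alpha+\beta-1}+\|g\|_\infty)\le C([Dg]_{C_b^{\alpha+\beta-1}}+1)$. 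I do not foresee any real obstacle here: the argument is a direct transcription of the diagonal regime estimate of Proposition \ref{SCHAUDER_FROZEN}, the only new ingredient being the harmless H\"older control of the multiplication by the cut-off $\eta_{\tau,\xi}$.
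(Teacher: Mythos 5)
Your overall strategy is exactly the paper's: since after the evaluation $(\tau,\tilde\xi',\xi)=(t,x,x)$ both terms carry the same frozen couple, the paper simply handles this contribution by re-running the diagonal-regime estimate of Proposition \ref{SCHAUDER_FROZEN} (equations \eqref{control_Holder_D2_P_3g}--\eqref{control_Holder_D_P_g1}) with $g$ replaced by $g\eta_{\tau,\xi}(T,\cdot)$, plus the elementary product-rule bound on $[D(g\eta_{\tau,\xi})]_{\alpha+\beta-1}$ that you also give. So the route is the right one, and your treatment of the cut-off and of the restriction to the diagonal regime (legitimate, since the hypothesis is $c_0|x-x'|^\alpha\le T-t$) matches the paper.

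There is, however, a derivative-counting slip in your central display that, taken literally, would not give the claimed rate. You pair $D_x^2 p_\alpha\bigl(T-t,\cdot-m_{T,t}^{(\tau,\xi)}(x_\mu)\bigr)$ with the increments $D(g\eta_{\tau,\xi})(y)-D(g\eta_{\tau,\xi})(m_{T,t}^{(\tau,\xi)}(x_\mu))$: that is one derivative too many, since once a derivative has been transferred onto $g\eta_{\tau,\xi}$ (as in \eqref{CTR_DG}, using that $x\mapsto m_{T,t}^{(\tau,\xi)}(x)$ is a translation) only a single $D_x p_\alpha$ remains on the kernel. Consistently, your announced singularity $(T-t)^{-\frac2\alpha+\frac{\alpha+\beta-1}{\alpha}}=(T-t)^{\frac{\alpha+\beta-3}{\alpha}}$ is too strong: combined with $c_0|x-x'|^\alpha\le T-t$ it would only produce $|x-x'|^{\alpha+\beta-2}$, not $|x-x'|^{\alpha+\beta-1}$. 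Your very next line silently uses the exponent $\frac{\alpha+\beta-2}{\alpha}$, which is the correct one and corresponds to the first-derivative bound $\int_{\R^d}|D_z p_\alpha(T-t,z)|\,|z|^{\alpha+\beta-1}dz\le C(T-t)^{\frac{\alpha+\beta-2}{\alpha}}$, i.e. the $\gamma=\alpha+\beta-1<\alpha$ version of $({\mathscr P}_\beta)$ (available under the standing assumptions by Propositions \ref{CTR_OF_INTEGRABILITY_DER_GEN_STABLE} and \ref{CTR_OF_INTEGRABILITY_DER_GEN_STABLE1}, and used in the same implicit way in the paper), applied with $k=1$, not $\ell=2$. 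With the identity corrected to $D_x p_\alpha$ against increments of $D(g\eta_{\tau,\xi})$ — exactly \eqref{control_Holder_D2_P_3g} — the rest of your argument, including the absorption of $(T-t)^{\frac{\alpha+\beta-2}{\alpha}}|x-x'|$ via the diagonal condition and the uniform-in-$(\tau,\xi)$ control of the cut-off, goes through and coincides with the paper's proof.
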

%\textcolor{black}{Before entering into the proof of such a result, let us emphasize that, as suggested by the above off-diagonal and diagonal splitting, the constants $C$ appearing in the r.h.s. of the above equations should depend on $c_0$. This is true, but since these terms are not planned to be passed in the l.h.s. of the final estimate,  we do not keep track of this dependence.} 

\paragraph{Smoothing effects associated with the Green kernel. 
%(see \eqref{GREEN_SUR_SEGMENT_EN_TEMPS}).
}
Let us recall that in the proof of the Schauder estimates for the \textit{frozen} operator (Proposition \ref{SCHAUDER_FROZEN}), to control in the \textit{global} diagonal-regime the H\"older norms of the Green kernel, we split into two parts 
the time integrals according to the position of the time integration variable w.r.t. the change of regime time $t_0$ (see \eqref{def_t0})  \textit{a posteriori} chosen to be 
$
t_0:=t+ c_ 0|x-x'|^\alpha.
$ 
This is again the splitting according to the (now local) off-diagonal and diagonal regime. 
%The point is that for the Green kernel, if $t_0<T $ both regimes appear.

\begin{lem}\label{lemme_Holder_Gg} Under \A{A} and for $T$ small enough, there exists a constant $C:=C(\A{A},T)$ s.t for fixed  points $(t,x,x')\in [0,T]\times (\R^{d})^2 $, s.t. $c_0|x-x'|^\alpha\le T-t $ and for all $f\in L^\infty\big([0,T], C^\beta(\R^{d},\R) \big)$:
\begin{eqnarray} 
&&\Big ( |D_{x} \tilde G_{\tau_0,t,\alpha}^{(\tau,\xi)} \big(f\eta_{\tau,\xi}-{\mathcal S}_{\tau,\xi}\big) (t,x)- D_{x}\tilde G_{\tau_0,t,\alpha}^{(\tau,\xi')}\big( f \eta_{\tau,\xi'}-{\mathcal S}_{\tau,\xi'}\big)(t,x')|\Big|_{(\tau_0,\tau,\xi,\xi')=(t_0,t,x,x')}\notag\\
&&+|D_{x} \tilde G_{T,\tau_0,\alpha}^{(\tau, \xi)} \big(f\eta_{\tau,\xi}-{\mathcal S}_{\tau,\xi}\big)(t,x)- D_{x}\tilde G_{T,\tau_0,\alpha}^{(\tau,\tilde \xi')} \big(f\eta_{\tau,\tilde \xi'}-{\mathcal S}_{\tau,\tilde \xi'}\big)(t,x') | \Big|_{(\tau_0,\tau,\xi,\tilde \xi')=(t_0,t,x,x)} \Big)\notag\\
&
\leq & \Big( C \big(\|g\|_{C_b^{\alpha+\beta}}+T^{\frac{\alpha+\beta-1}\alpha}\|f\|_{L^\infty([0,T],C_b^\beta)}\big)+\frac 14\|u\|_{L^\infty([0,T],C_b^{\alpha+\beta})}\Big)|x-x'|^{\alpha+\beta-1}.
%C 
%\big([f]_{\beta,T}+1+\|u\|_{L^\infty([0,T],C_b^{\alpha+\beta})}\big) |x-x'|^{\alpha + \beta -1}.%\label{CTR_MODULO_HOLDER_GREEN}
\end{eqnarray}
%with $\xi=\tilde \xi' =x, \xi'=x' $ and $[f]_{T, \beta} = \sup_{t \le T} [f(t, \cdot)]_{\beta}$ .
\end{lem}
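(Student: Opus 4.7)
The plan is to decompose the estimate into the two natural pieces appearing in the statement, which correspond to the local off-diagonal regime $s\in[t,t_0]$ and the local diagonal regime $s\in[t_0,T]$ (with $t_0 = t + c_0|x-x'|^\alpha$). For the first piece I will bound the two Green-kernel contributions separately, without cancellation, since the freezing parameters $\xi = x$ and $\xi' = x'$ differ; the spatial points also differ. The key estimate is the one-derivative smoothing from Lemma \ref{sd}: for $h\in L^\infty([0,T],C_b^\beta)$ and any frozen couple $(\tau,\xi)$,
\begin{equation*}
|D_x\tilde P_{s,t,\alpha}^{(\tau,\xi)}h(s,\cdot)(x)| \le C\,[h(s,\cdot)]_\beta\,(s-t)^{-(1-\beta)/\alpha},
\end{equation*}
so that after integration on $[t,t_0]$ I obtain a factor $(t_0-t)^{(\alpha+\beta-1)/\alpha} = c_0^{(\alpha+\beta-1)/\alpha}|x-x'|^{\alpha+\beta-1}$, reproducing the argument of \eqref{CTR_D1}.

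For the local diagonal piece I exploit the fact that both freezing parameters collapse to $(\tau,\xi)=(\tau,\tilde\xi')=(t,x)$. Hence I am merely bounding the space-H\"older modulus of $x\mapsto D_x\tilde G_{T,t_0,\alpha}^{(t,x)}(f\eta_{t,x}-\mathcal S_{t,x})(t,x)$ between $x$ and $x'$. I perform a first-order Taylor expansion along the segment $[x',x]$, apply the second-derivative bound of Lemma \ref{sd} to get the singularity $(s-t)^{-(2-\beta)/\alpha}$, and integrate on $[t_0,T]$; since $(2-\beta)/\alpha > 1$ this gives $|x-x'|\,(t_0-t)^{(\alpha+\beta-2)/\alpha} = c_0^{(\alpha+\beta-2)/\alpha}|x-x'|^{\alpha+\beta-1}$, exactly as in \eqref{CTR_D2}. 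Both contributions are therefore of order $|x-x'|^{\alpha+\beta-1}$ times the sup over $(\tau,\xi)$ of $[f\eta_{\tau,\xi} - \mathcal S_{\tau,\xi}]_{\beta,T}$ (times constants depending on $c_0$, which we accept since $c_0$ will be fixed in the final Schauder argument).

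The remaining work, and the main subtlety, is controlling $[f\eta_{\tau,\xi} - \mathcal S_{\tau,\xi}]_{\beta,T}$ by the right-hand side of the claim. For the source term the bound $[f\eta_{\tau,\xi}]_{\beta,T} \le C\|f\|_{L^\infty([0,T],C_b^\beta)}$ is immediate. For $\mathcal S_{\tau,\xi}$ I use the interpolation estimate already derived in \eqref{CTR_HOLDE_MODULI}, namely
\begin{equation*}
[\mathcal S_{\tau,\xi}]_{\beta,T}\le C_{\beta,\alpha}\bigl((1+\varepsilon^{-1})(\|u\|_\infty+\|Du\|_\infty) + \varepsilon\,\|u\|_{L^\infty([0,T],C_b^{\alpha+\beta})}\bigr),
\end{equation*}
valid for every $\varepsilon\in(0,1)$. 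I then invoke \eqref{BD_SUP} and \eqref{BD_SUP_GRAD} from Lemma \ref{LEMMA_SUP_BOUNDS_SOL_AND_GRAD} to replace $\|u\|_\infty + \|Du\|_\infty$ by a quantity of the form $C(\|g\|_{C_b^{\alpha+\beta}} + T^{(\alpha+\beta-1)/\alpha}\|f\|_{L^\infty([0,T],C_b^\beta)} + \|u\|_{L^\infty([0,T],C_b^{\alpha+\beta})})$ (the last term carrying a $T^{(\alpha+\beta-1)/\alpha}$ or $K_0$-type prefactor).

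The hardest bookkeeping step, and the genuine obstacle, is then to pick $\varepsilon$ (and, combined with the eventual choice of $T$ and $c_0$ small in the final Schauder argument) so that all the coefficients multiplying $\|u\|_{L^\infty([0,T],C_b^{\alpha+\beta})}$ on the right-hand side — both the one coming from the $\varepsilon$-interpolation and the one from Lemma \ref{LEMMA_SUP_BOUNDS_SOL_AND_GRAD} — add up to at most $\tfrac14$, which is exactly the threshold needed for the circular absorption performed in subsection \ref{CTR_HOLD_GRAD_SUBSUB}. Once $\varepsilon$ is fixed in this way, the remaining terms organize themselves into $C(\|g\|_{C_b^{\alpha+\beta}} + T^{(\alpha+\beta-1)/\alpha}\|f\|_{L^\infty([0,T],C_b^\beta)})$, which, combined with the geometric factor $|x-x'|^{\alpha+\beta-1}$ extracted in the first two paragraphs, yields the announced estimate.
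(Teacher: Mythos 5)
Your overall route is the paper's: split at $t_0=t+c_0|x-x'|^\alpha$, bound the local off-diagonal piece by applying the first-derivative smoothing of Lemma \ref{sd} to each Green kernel separately and integrating over $[t,t_0]$, treat the local diagonal piece (where indeed $\xi=\tilde\xi'=x$, so kernel and source coincide) by a Taylor expansion and the second-derivative smoothing, and reduce everything to a bound on $[f\eta_{\tau,\xi}-{\mathcal S}_{\tau,\xi}]_{\beta,T}$ via \eqref{HOLD_MOD_WITH_CUT_OFF}, \eqref{CTR_HOLDE_MODULI} and Lemma \ref{LEMMA_SUP_BOUNDS_SOL_AND_GRAD}; this is exactly what is done in \eqref{THE_GREEN_WITH_CUT_OFF_AND_COMMUT} and the lines around it.

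The gap is the step you yourself call ``the genuine obstacle'' and then only assert. The diagonal time integral gives $|x-x'|\int_{t_0}^T (s-t)^{-\frac2\alpha+\frac\beta\alpha}ds\le C\,c_0^{\frac{\alpha+\beta-2}\alpha}|x-x'|^{\alpha+\beta-1}$, and since $\alpha+\beta<2$ the factor $c_0^{\frac{\alpha+\beta-2}\alpha}$ \emph{blows up} as $c_0\downarrow0$. Hence your parenthetical ``times constants depending on $c_0$, which we accept'' cannot be applied to the contribution multiplying $\|u\|_{L^\infty([0,T],C_b^{\alpha+\beta})}$: the lemma needs that coefficient to be $\le 1/4$ uniformly in $c_0$, because in subsection \ref{CTR_HOLD_GRAD_SUBSUB} $c_0$ is subsequently taken \emph{small} to absorb other terms, which makes $c_0^{\frac{\alpha+\beta-2}\alpha}$ larger, not smaller (so invoking ``$c_0$ small'' here points the wrong way). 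The missing quantitative tuning is to choose the interpolation parameter in \eqref{CTR_HOLDE_MODULI} of size $\varepsilon\sim c_0^{\frac{2-(\alpha+\beta)}\alpha}$ and then $T$ small in function of $\varepsilon$ (hence of $c_0$), which yields the refined bound \eqref{CTR_HOLDE_MODULI_RAFFINATI} with $\|u\|$-coefficient $c_0^{\frac{2-(\alpha+\beta)}\alpha}/8$; this exactly cancels the $c_0^{\frac{\alpha+\beta-2}\alpha}$ of the diagonal regime and is harmless against the $c_0^{\frac{\alpha+\beta-1}\alpha}\le1$ of the off-diagonal one, giving $1/8+1/8=1/4$. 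Without specifying this scaling, ``pick $\varepsilon$ so the coefficients add up to at most $1/4$'' is not justified; with it, the rest of your argument coincides with the paper's proof.
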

%%%%%% S: Non avevo visto bene le cose. Sistemato.
%\textcolor{black}{From S. to E. and P.E. : there is an additional contribution in $ \|u\|_{L^\infty([0,T],C_b^{\alpha+\beta})}$ which come from the $\beta $-H\"older bound of the contribution ${\mathcal S}_{\tau,\xi} $ which itself comes from the commutator involved by the localization procedure. The point is that precisely this contribution prevents from using a direct circular argument, i.e. without scaling.}
\begin{proof}[Proof of Lemma \ref{lemme_Holder_Gg}]
Let us first control the H\"older moduli of the arguments of the Green kernel. Observe that:
\begin{equation}\label{HOLD_MOD_WITH_CUT_OFF}
[f\eta_{\tau,\xi}]_{\beta,T} \le [f]_{\beta,T}+C\|f\|_\infty\le C \|f\|_{L^\infty([0,T],C_b^\beta)}.
\end{equation}
On the other hand, from \eqref{CTR_HOLDE_MODULI} and Lemma \ref{LEMMA_SUP_BOUNDS_SOL_AND_GRAD} we derive that for any $\varepsilon\in (0,1) $:
\begin{eqnarray*}
[{\mathcal S}_{\tau,\xi}]_{\beta,T}&\le& C_{\beta,\alpha}\big( (1+\varepsilon^{-1})(\|u\|_\infty +\|Du\|_\infty )+\varepsilon \|u\|_{L^\infty([0,T],C_b^{\alpha+\beta})}\big),\notag\\
&\le &  C_{\beta,\alpha}\big( (1+\varepsilon^{-1})(\|g\|_\infty+T\|f\|_\infty +\|Du\|_\infty(1+{K_{0}}) )+\varepsilon \|u\|_{L^\infty([0,T],C_b^{\alpha+\beta})}\big)\notag\\
&\le & C_{\beta,\alpha}(1+{K_{0}})\Big( (1+\varepsilon^{-1})\big(\|g\|_{\alpha+\beta}+T^{\frac{\alpha+\beta-1}{\alpha}}\|f\|_{L^\infty([0,T],C_b^\beta)}  \big)\notag\\
&&+\big( (1+\varepsilon^{-1})T^{\frac{\alpha+\beta-1}{\alpha}}(1+{K_{0}})+\varepsilon \big) \|u\|_{L^\infty([0,T],C_b^{\alpha+\beta})}\Big). 
\end{eqnarray*}
Now, for $\varepsilon $ small enough and $T$ small enough w.r.t. $\varepsilon $ it is clear from the above equation that 
\begin{eqnarray}
[{\mathcal S}_{\tau,\xi}]_{\beta,T}&\le & C_{\beta,\alpha}(1+{K_{0}})\Big( (1+\varepsilon^{-1})\big(\|g\|_{\alpha+\beta}+T^{\frac{\alpha+\beta-1}{\alpha}}\|f\|_{L^\infty([0,T],C_b^\beta)}  \big)\Big)%\notag\\
%&&
+\frac{c_0^{\frac{2-(\alpha+\beta)}\alpha}}8 \|u\|_{L^\infty([0,T],C_b^{\alpha+\beta})}. \notag\\
\label{CTR_HOLDE_MODULI_RAFFINATI}
\end{eqnarray}

Let us then consider for the difference of the Green kernels the \textit{off-diagonal regime}.  We readily get from Lemma \ref{sd} (with $\tau =t$) and the above equations \eqref{HOLD_MOD_WITH_CUT_OFF} and \eqref{CTR_HOLDE_MODULI_RAFFINATI} that
\begin{eqnarray} 
&&\big | D_{x} \tilde G_{\tau_0,t,\alpha}^{(\tau,\xi)}\big( f \eta_{\tau,\xi}-{\mathcal S}_{\tau,\xi}\big)(t,x) \!-\! D_{x} \tilde G_{\tau_0,t,\alpha}^{(\tau,\xi')} (f\eta_{\tau,\xi'}-{\mathcal S}_{\tau,\xi'} )(t,x') \big | \Big|_{(\tau_0,\tau,\xi,\xi')=(t_0,t,x,x')} 
\nonumber \\
& \le&
\Big |\int_t^{t_0} \!\!ds D_{x} \tilde P_{s,t,\alpha}^{(\tau,\xi)}(f\eta_{\tau,\xi}-{\mathcal S}_{\tau,\xi} )(s,x)\Big | \bigg|_{(\tau,\xi)=(t,x)}
\!\!\!+\!\Big |\int_t^{t_0} \!\!ds D_{x} \tilde P_{s,t,\alpha}^{(\tau,\xi')}(f\eta_{\tau,\xi'}-{\mathcal S}_{\tau,\xi'} )(s,x')\Big | \bigg|_{\xi'=x'}
\nonumber \\ 
&\!\!\!\leq& C ([f\eta_{\tau,\xi}]_{ \beta,T}+[{\mathcal S}_{\tau,\xi}]_{\beta,T}+[f\eta_{\tau,\xi'}]_{ \beta,T}+[{\mathcal S}_{\tau,\xi'}]_{\beta,T})\big|_{(\tau,\xi,\xi')=(t,x,x')}
\int_t^{t_0} ds (s-t)^{-\frac{1}{\alpha}+ \frac{\beta}{\alpha}}\notag\\
&\le&  \Big( C \big(\|g\|_{C_b^{\alpha+\beta}}+T^{\frac{\alpha+\beta-1}\alpha}\|f\|_{L^\infty([0,T],C_b^\beta)}\big)+\frac 18\|u\|_{L^\infty([0,T],C_b^{\alpha+\beta})}\Big)|x-x'|^{\alpha+\beta-1},\notag\\
\label{THE_GREEN_WITH_CUT_OFF_AND_COMMUT}
\end{eqnarray}
recalling that $c_0 \le 1$ for the last inequality.
%using the controls \eqref{CTR_HOLDE_MODULI} established in the proof of Lemma \ref{LEMMA_SUP_BOUNDS_SOL_AND_GRAD} for the $\beta $-H\"older modulus of ${\mathcal S}_{\tau,\xi},{\mathcal S}_{\tau,\xi'}  $ for the last but one inequality.

For the \textit{diagonal regime}, we proceed as in the proof of Lemma \ref{SCHAUDER_FROZEN} (see equation \eqref{CTR_D2}) using again the previous controls \eqref{HOLD_MOD_WITH_CUT_OFF} and \eqref{CTR_HOLDE_MODULI_RAFFINATI} to bound $[f\eta_{\tau,\xi}-{\mathcal S}_{\tau,\xi}]_{\beta,T} $. We obtain a control similar to \eqref{THE_GREEN_WITH_CUT_OFF_AND_COMMUT} for:
 $$|D_{x} \tilde G_{T,\tau_0,\alpha}^{(\tau, \xi)} \big(f\eta_{\tau,\xi}-{\mathcal S}_{\tau,\xi}\big)(t,x)- D_{x}\tilde G_{T,\tau_0,\alpha}^{(\tau,\tilde \xi')} \big(f\eta_{\tau,\tilde \xi'}-{\mathcal S}_{\tau,\tilde \xi'}\big)(t,x') | \Big|_{(\tau_0,\tau,\xi,\tilde \xi')=(t_0,t,x,x)},$$ which then gives the statement. 
\end{proof}

\paragraph{Smoothing effects associated with the discontinuity term.}
\label{SEC_DISC}
It now remains to control the contribution arising from the change of freezing point in equation \eqref{FULL_DIFF}. 
The main result of this section is the next lemma.
\begin{lem}[Control of the discontinuity terms]
\label{CTR_TERME_DISC} 
There exists $C:=C(\A{A})$ s.t. for all $(t,x,x') \in [0,T] \times \R^{d}\times \R^{d}$, s.t. $c_0|x-x'|^\alpha\le T-t $, for $t_0=\big(t+ c_ 0|x-x'|^\alpha\big)\le  T$ as in \eqref{def_t0}, 
%taking $\xi'=x',\tilde \xi'=x $,
\begin{eqnarray*}
 &&\big|D_{x}\tilde P_{\tau_0,t,\alpha}^{(\tau,\xi')} v_{\tau,\xi'}(\tau_0, x')- D_{x} \tilde P_{\tau_0,t,\alpha}^{(\tau,\tilde  \xi')} v_{\tau,\tilde \xi'}(\tau_0, x') \big |_{(\tau_0,\xi',\tilde \xi')=(t_0, x',x)} \\
 &\leq&  C c_0^{\frac{\alpha+\beta-1}\alpha} 
 ([D u]_{\beta+\alpha-1, T}+\|u\|_\infty )(1+{K_0}) |x-x'|^{\alpha+\beta-1}.
% \nonumber \\
\end{eqnarray*}
\end{lem}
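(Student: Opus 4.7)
Abbreviate $\theta':=\theta_{\tau_0,t}(x')$, $\tilde m:=m_{\tau_0,t}^{(t,x)}(x')=x'+\int_t^{\tau_0}F(v,\theta_{v,t}(x))\,dv$ and $\Delta:=\theta'-\tilde m=\int_t^{\tau_0}[F(v,\theta_{v,t}(x'))-F(v,\theta_{v,t}(x))]\,dv$. Combining \eqref{22} and Lemma~\ref{FLOW_LEMMA}, $|\Delta|\le CK_0(c_0+c_0^{1+\beta/\alpha})|x-x'|^{\alpha+\beta}$. The drift in the proxy~\eqref{FROZEN_Asso_PDE} being spatially constant, $\tilde P^{(t,\xi)}_{\tau_0,t,\alpha}$ commutes with spatial translations, so integration by parts---valid because $v_{t,\xi}(\tau_0,\cdot)=u(\tau_0,\cdot)\eta_{t,\xi}(\tau_0,\cdot)$ has compact support and lies in $C^1$---yields
$$
D_{x'}\tilde P^{(t,\xi)}_{\tau_0,t,\alpha}v_{t,\xi}(\tau_0,\cdot)(x') = \tilde P^{(t,\xi)}_{\tau_0,t,\alpha}[Dv_{t,\xi}(\tau_0,\cdot)](x'),
$$
and the quantity to estimate splits as $B_1+B_2$ with
$$
B_1 := [\tilde P^{(t,x')}_{\tau_0,t,\alpha}-\tilde P^{(t,x)}_{\tau_0,t,\alpha}]Dv_{t,x'}(\tau_0,\cdot)(x'), \quad B_2 := \tilde P^{(t,x)}_{\tau_0,t,\alpha}[Dv_{t,x'}(\tau_0,\cdot)-Dv_{t,x}(\tau_0,\cdot)](x').
$$

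For $B_2$, the expansion $Dv_{t,\xi}=Du\,\eta_{t,\xi}+u\,D\eta_{t,\xi}$ combined with $\rho\in C^2_c$ and Lemma~\ref{FLOW_LEMMA} gives the pointwise bound $|Dv_{t,x'}(\tau_0,y)-Dv_{t,x}(\tau_0,y)|\le C(\|Du\|_\infty+\|u\|_\infty)|x-x'|$, with support at distance at least $1/2$ from one of $\theta_{\tau_0,t}(x'),\theta_{\tau_0,t}(x)$ since the two cutoffs coincide where both equal $1$. For $T$ small, $|x-x'|\le 1/4$, so that support sits at distance at least $1/4$ from $\tilde m$; combining the moment bound from~\A{NDa} with exponent $\gamma=\alpha+\beta-1<\alpha$ and Markov's inequality yields $\int_{|y-\tilde m|\ge 1/4}p_\alpha(\tau_0-t,y-\tilde m)\,dy\le C(\tau_0-t)^{(\alpha+\beta-1)/\alpha}$, and therefore
$$
|B_2|\le C(\|Du\|_\infty+\|u\|_\infty)|x-x'|(\tau_0-t)^{(\alpha+\beta-1)/\alpha} \le Cc_0^{(\alpha+\beta-1)/\alpha}(\|Du\|_\infty+\|u\|_\infty)|x-x'|^{\alpha+\beta-1},
$$
using $|x-x'|\le 1$.

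For $B_1$, I subtract the constant $Du(\tau_0,\theta')$ under the integral (allowed since $\int[p_\alpha(\tau_0-t,y-\theta')-p_\alpha(\tau_0-t,y-\tilde m)]\,dy=0$) and expand the density difference to first order along the segment $\tilde m+\mu\Delta$, $\mu\in[0,1]$, obtaining
$$
|B_1|\le |\Delta|\int_0^1 d\mu\int_{\R^d}|D_yp_\alpha(\tau_0-t,y-\tilde m-\mu\Delta)||Dv_{t,x'}(\tau_0,y)-Du(\tau_0,\theta')|\,dy.
$$
Splitting $Dv_{t,x'}(\tau_0,\cdot)-Du(\tau_0,\theta')$ into a H\"older piece $|[Du(\tau_0,y)-Du(\tau_0,\theta')]\rho(y-\theta')|\le [Du]_{\beta+\alpha-1,T}|y-\theta'|^{\alpha+\beta-1}$ and a boundary piece supported at $|y-\theta'|\ge 1/2$ bounded by $C(\|Du\|_\infty+\|u\|_\infty)$, I control the first via $({\mathscr P}_\beta)$ at index $\alpha+\beta-1$, namely $\int|y|^{\alpha+\beta-1}|D_yp_\alpha(\tau_0-t,y)|\,dy\le C(\tau_0-t)^{(\alpha+\beta-2)/\alpha}$, while the second is handled by the Markov-type argument used for $B_2$. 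Inserting the bound on $|\Delta|$ and using the diagonal identity $(\tau_0-t)^{1/\alpha}=c_0^{1/\alpha}|x-x'|$, this eventually yields $|B_1|\le CK_0c_0^{(\alpha+\beta-1)/\alpha}([Du]_{\beta+\alpha-1,T}+\|u\|_\infty+\|Du\|_\infty)|x-x'|^{\alpha+\beta-1}$. Finally, $\|Du\|_\infty$ is absorbed into $\|u\|_\infty+[Du]_{\beta+\alpha-1,T}$ by standard interpolation in the spirit of Lemma~\ref{LEMMA_SUP_BOUNDS_SOL_AND_GRAD}.

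The hard part will be the sharp calibration of cancellations in $B_1$ needed to produce the exponent $(\alpha+\beta-1)/\alpha$ of $c_0$: a blunt estimate of the residual $|Du(\tau_0,\theta')-Du(\tau_0,\tilde m)|$ via $[Du]_{\beta+\alpha-1,T}|\Delta|^{\alpha+\beta-1}$ would only yield the weaker exponent $\alpha+\beta-1$, so the smoothing estimate $({\mathscr P}_\beta)$ on $|D_yp_\alpha|$ must be exploited at its sharp order jointly with the drift-flow scaling $|\Delta|=O(K_0c_0|x-x'|^{\alpha+\beta})$.
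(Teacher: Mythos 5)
Your overall architecture is close to the paper's: the commutation $D_x\tilde P^{(\tau,\xi)}_{\tau_0,t,\alpha}v=\tilde P^{(\tau,\xi)}_{\tau_0,t,\alpha}[Dv]$, the bound $|\Delta|\le CK_0c_0|x-x'|^{\alpha+\beta}$ via \eqref{22} and Lemma~\ref{FLOW_LEMMA}, the treatment of $B_2$ (cut-off difference supported away from the kernel's centre, then Markov's inequality with the moment bound \A{NDa} at exponent $\alpha+\beta-1<\alpha$), and the interpolation absorbing $\|Du\|_\infty$ are all sound. The genuine gap is exactly where you flag ``the hard part'': the term $B_1$. Taylor-expanding the kernel difference costs a factor $(t_0-t)^{-1/\alpha}$ that your cancellation does not recover. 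Indeed the H\"older piece gives $|\Delta|\,[Du]_{\alpha+\beta-1,T}\int|D_yp_\alpha(t_0-t,\cdot)|\,|y-\theta'|^{\alpha+\beta-1}dy\lesssim K_0c_0|x-x'|^{\alpha+\beta}\,[Du]_{\alpha+\beta-1,T}\,(t_0-t)^{\frac{\alpha+\beta-2}{\alpha}}=K_0[Du]_{\alpha+\beta-1,T}\,c_0^{\frac{2\alpha+\beta-2}{\alpha}}|x-x'|^{2(\alpha+\beta-1)}$ (and similarly the remainder $|\Delta|^{\alpha+\beta}(t_0-t)^{-1/\alpha}$ carries $c_0^{\alpha+\beta-\frac1\alpha}$). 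After dividing by $|x-x'|^{\alpha+\beta-1}$ and using only $|x-x'|\le 1$, the coefficient of $[Du]_{\alpha+\beta-1,T}$ is $K_0c_0^{(2\alpha+\beta-2)/\alpha}$, whose exponent falls short of the claimed $(\alpha+\beta-1)/\alpha$ by $(1-\alpha)/\alpha>0$ and is \emph{negative} whenever $\alpha<1-\beta/2$ (e.g. $\alpha=0.3$, $\beta=0.8$), i.e. it blows up as $c_0\to 0$. So the inequality you announce for $B_1$ does not follow from your computation, and a bound whose coefficient in front of $[Du]_{\alpha+\beta-1,T}$ grows as $c_0\to0$ cannot serve the purpose of this lemma in the circular argument of paragraph~\ref{CTR_HOLD_GRAD_SUBSUB}, where precisely that coefficient must be made small by shrinking $c_0$ before $T$. (One could try to rescue your route by trading the surplus $|x-x'|^{\alpha+\beta-1}\le((T-t)/c_0)^{(\alpha+\beta-1)/\alpha}$ for a power of $T$, but that proves a different statement and requires redoing the final bookkeeping; you do not do this.)

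The paper avoids the issue by never differentiating the kernel here: writing each term as $\int\tilde p_\alpha\,Dv$, it centres each integral at its own transported point ($m^{(t,x')}_{t_0,t}(x')=\theta_{t_0,t}(x')$ by \eqref{CORRES_LINEARIZED_FLOW_AND_FLOW}, resp. $m^{(t,x)}_{t_0,t}(x')$), bounds the two centred terms by $C([Du]_{\alpha+\beta-1,T}+\|u\|_\infty)(t_0-t)^{(\alpha+\beta-1)/\alpha}$ using only \A{NDa}, and estimates the residual $|Du(t_0,\theta_{t_0,t}(x'))-Du(t_0,m^{(t,x)}_{t_0,t}(x'))|\le[Du]_{\alpha+\beta-1,T}|\Delta|^{\alpha+\beta-1}\lesssim[Du]_{\alpha+\beta-1,T}(K_0c_0)^{\alpha+\beta-1}|x-x'|^{\alpha+\beta-1}$ via \eqref{CTR_DIFF_MEAN} --- precisely the ``blunt'' estimate you discarded. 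A positive power of $c_0$ is all that is needed downstream, so the exponent $\alpha+\beta-1$ is not a weakness; your decision to reject it and seek $(\alpha+\beta-1)/\alpha$ through the kernel derivative is the wrong turn. A secondary point: invoking $({\mathscr P}_\beta)$ at the indices $\alpha+\beta-1$ and $0$ goes beyond the literal assumption \A{NDb}; it does hold in all the concrete settings of Propositions \ref{CTR_OF_INTEGRABILITY_DER_GEN_STABLE}, \ref{CTR_OF_INTEGRABILITY_DER_GEN_STABLE1} and for the relativistic operator, and can be recovered from the indices $\beta$ and $0$ by splitting at $|z|=(t_0-t)^{1/\alpha}$, but this should be said; in any case the paper's argument for this lemma needs no such extension.
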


We can write:
\begin{eqnarray*}
&&\Big(D_{x} \tilde P_{\tau_0,t,\alpha}^{(\tau,\xi')} v_{\tau,\xi'}(\tau_0, x')- D_{x} \tilde P_{\tau_0,t,\alpha}^{(\tau,\tilde \xi')} v_{\tau,\xi'}(t_0, x')\Big)_{(\tau_0,\tau,\xi',\tilde \xi')=(t_0,t,x',x)}  \nonumber \\
&=&\Bigg [ \int_{\R^{d}}  \tilde p_\alpha^{(\tau,\xi')}(t,\tau_0,x',y) Dv_{\tau,\xi'}(\tau_0,y) dy
 -\int_{\R^{d}}  \tilde p_\alpha^{(\tau,\tilde \xi')}(t,\tau_0,x',y) Dv_{\tau,\tilde \xi'}(\tau_0,y) dy \Bigg ]_{(\tau_0,\tau,\xi',\tilde \xi')=(t_0,t,x',x)}\notag\\
&=&
\Bigg [ \int_{\R^{d}}  \tilde p_\alpha^{(\tau,\xi')}(t,\tau_0,x',y) [Dv_{\tau,\xi'}(t_0,y)-Du(\tau_0,m_{\tau_0,t}^{(\tau,\xi')}(x'))] dy
\nonumber \\
&&\quad -\int_{\R^{d}}  \tilde p_\alpha^{(\tau,\tilde \xi')}(t,\tau_0,x',y) [Dv_{\tau,\tilde \xi'}(\tau_0,y)-Dv_{\tau,\tilde\xi'}(\tau_0,m_{\tau_0,t}^{(\tau,\tilde \xi')}(x'))] dy \Bigg ]_{(\tau_0,\tau,\xi',\tilde \xi')=(t_0,t,x',x)}\notag\\
&&+|Dv_{\tau,\tilde  \xi'}(\tau_0,m_{\tau_0,t}^{(\tau,\tilde \xi')}(x'))-Dv_{\tau,\xi'}(\tau_0,m_{\tau_0,t}^{(\tau,\xi')}(x')|_{(\tau_0,\tau,\xi',\tilde \xi')=(t_0,t,x',x)}.%\notag\\
\label{control_hoped_mathbf_R_ineq_trig_BIS}
\end{eqnarray*}
Exploiting now the regularity of $Du$ and the integrability property 
  \A{NDa} 
%of Proposition \ref{CTR_OF_INTEGRABILITY_DER_GEN_STABLE} (with $\ell=0 $), 
we then derive:
\begin{eqnarray}
&&|D_{x} \tilde P_{\tau_0,t,\alpha}^{(\tau,\xi')} v_{\tau,\xi'}(\tau_0, x')- D_{x} \tilde P_{\tau_0,t,\alpha}^{(\tau,\tilde \xi')} v_{\tau,\tilde \xi'}(\tau_0, x')|_{(\tau_0,\tau,\xi',\tilde \xi')=(t_0,t,x',x)}\notag\\
&\le& C \big([Du]_{\alpha+\beta-1, T}+\|u\|_\infty\big)\Big[(t_0-t)^{\frac{\alpha+\beta-1}\alpha}+|m_{t_0,t}^{(t,x)}(x')-m_{t_0,t}^{(t,x')}(x')|^{\alpha+\beta-1}\Big]\notag\\
&\le& C \big([Du]_{\alpha+\beta-1, T}+\|u\|_\infty\big)\Big[c_0^{\frac{\alpha+\beta-1}\alpha}|x-x'|^{\alpha+\beta-1}+|m_{t_0,t}^{(t,x)}(x')-m_{t_0,t}^{(t,x')}(x')|^{\alpha+\beta-1}\Big],
\label{PREAL_FIN}
\end{eqnarray}
recalling from \eqref{def_t0} that $t_0=t+c_0 |x-x'|^\alpha
 $ for the last inequality. It therefore remains to control $m_{t_0,t}^{(t,x)}(x')-m_{t_0,t}^{(t,x')}(x')$. Write:
 \begin{eqnarray}
|m_{t_0,t}^{(t,x)}(x')-m_{t_0,t}^{(t,x')}(x')|&=&|[x'+\int_{t}^{t_0}F(v,\theta_{v,t}(x))dv]-[x'+\int_{t}^{t_0}F(v,\theta_{v,t}(x'))dv]|\notag\\
&\le & {K_0}\int_t^{t_0} dv |\theta_{v,t}(x)-\theta_{v,t}(x')|^\beta\le C {K_0}\, (t_0-t)\big(|x-x'|+(t-t_0)^{\frac 1\alpha}\big)^\beta\notag\\
&\le & C{K_0} c_0 |x-x'|^{\alpha+\beta}\le C {K_0} c_0 |x-x'|, \label{CTR_DIFF_MEAN}
%&\le &  [F]_{L^\infty(C^\beta)}(t_0-t)\big(|\tilde \xi'-\tilde \xi|+(t-t_0)^{\frac 1\alpha}\big)^\beta
 \end{eqnarray}
 using  Lemma \ref{FLOW_LEMMA} for the second inequality. We have also assumed w.l.o.g. that $|x-x'|\le 1  $ and exploited as well that $\alpha+\beta>1 $. Plugging \eqref{CTR_DIFF_MEAN} into \eqref{PREAL_FIN} yields the statement of the lemma.

\paragraph{Smoothing effects associated with the perturbative term.}
This section is dedicated to the investigation of the spatial H\"older continuity of the perturbative term in \eqref{FULL_DIFF}.  We prove the following Lemma which is  the most difficult part of the proof of Theorem \ref{THEO_SCHAU_ALPHA}.

\begin{lem}\label{HOLDER_CTR_PERT_PART} Under \A{A}, there exists a constant $C:=C(\A{A},T)$  s.t. for fixed  $(t,x,x')\in [0,T]\times (\R^{d})^2 $ s.t. $ c_0|x-x'|^\alpha\le T-t$, we have that:
\begin{eqnarray}
\Big| \int_t^T ds \int_{\R^{d}}dy\Big(\I_{s\le t_0} D_x \tilde p_\alpha^{(\tau,\xi')}(t,s,x',y)\Big(\big(F(s,y)-F(s,\theta_{s,\tau}(\xi'))\big)\cdot Du(s,y)\Big)\eta_{\tau,\xi'}(s,y)\notag\\
\quad +\I_{s>t_0} D_x \tilde p_\alpha^{(\tau,\tilde\xi')}(t,s,x',y)\Big(\big(F(s,y)-F(s,\theta_{s,\tau}(\tilde \xi'))\big) \cdot Du(s,y)\Big)\eta_{\tau,\tilde \xi'}(s,y)
\notag\\
- D_x \tilde p_\alpha^{(\tau,\xi)}(t,s,x,y)\Big(\big(F(s,y)-F(s,\theta_{s,\tau}( \xi))\big)  \cdot D u(s,y)\Big) \eta_{\tau,\xi}(s,y)\Big|_{(\tau_0,\tau,\xi,\xi',\tilde \xi')=(t_0,t,x,x',x)}\notag\\
 \leq  C{K_0} (c_0^{1+\frac{\beta-2}{\alpha}}+ c_0^{\frac {\alpha+\beta-1}{\alpha}})\|Du\|_{L^\infty }  |x-x'|^{\alpha+\beta-1}.\label{esti_holder_deriv_sec_perturpart}
\end{eqnarray}
\end{lem}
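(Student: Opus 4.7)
I would split the time integral at the transition time $t_0=t+c_0|x-x'|^\alpha\le T$ and treat the two resulting regimes by methods analogous to those of Proposition \ref{SCHAUDER_FROZEN}, adapted to the current change-of-freezing framework. After the substitution $(\tau_0,\tau,\xi,\xi',\tilde\xi')=(t_0,t,x,x',x)$, the indicator $\I_{s\le t_0}$ selects the first and third summands, which carry the \emph{distinct} freezings $(t,x')$ and $(t,x)$, whereas $\I_{s>t_0}$ selects the second and third summands, which now share the \emph{common} freezing $(t,x)$. This dichotomy matches the local off-diagonal/diagonal analysis in Proposition \ref{SCHAUDER_FROZEN} and is precisely what the change-of-freezing representation of Proposition \ref{DUHAMEL_2_THE_COME_BACK} was designed to enable.

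\emph{Local off-diagonal regime $s\in[t,t_0]$.} Since the two summands carry different freezings, I would bound them separately by the triangle inequality. The support property of the cut-off, namely that $\eta_{\tau,\xi}(s,\cdot)$ vanishes outside the unit ball centred at $\theta_{s,\tau}(\xi)$, combined with the local H\"older assumption \eqref{22}, yields $|F(s,y)-F(s,\theta_{s,t}(\xi))|\,\eta_{\tau,\xi}(s,y)\le K_0|y-\theta_{s,t}(\xi)|^\beta$ for each choice $\xi\in\{x,x'\}$. Property $({\mathscr P}_\beta)$ applied to $|D_x p_\alpha(s-t,\cdot)|$ then gives $\int |D_x p_\alpha(s-t,y-\theta_{s,t}(\xi))|\,|y-\theta_{s,t}(\xi)|^\beta\, dy\le C(s-t)^{-(1-\beta)/\alpha}$. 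Multiplying by $K_0\|Du\|_\infty$ and integrating in $s\in[t,t_0]$ produces the bound $CK_0\|Du\|_\infty (t_0-t)^{(\alpha+\beta-1)/\alpha}=CK_0\|Du\|_\infty c_0^{(\alpha+\beta-1)/\alpha}|x-x'|^{\alpha+\beta-1}$, which accounts for the second term in \eqref{esti_holder_deriv_sec_perturpart}.

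\emph{Local diagonal regime $s\in(t_0,T]$.} Here both kernels share the freezing $(t,x)$, so the difference of the two summands can be expanded by a first-order Taylor formula along the segment $[x',x]$, producing the factor $|x-x'|$ together with $D_x^2\tilde p_\alpha^{(t,x)}(t,s,x'+\mu(x-x'),y)$ for some $\mu\in[0,1]$. The same support argument gives $|F(s,y)-F(s,\theta_{s,t}(x))|\,\eta_{t,x}(s,y)\le K_0|y-\theta_{s,t}(x)|^\beta$, and $({\mathscr P}_\beta)$ with $k=2$ then produces the time singularity $(s-t)^{-(2-\beta)/\alpha}$. Since $\alpha\in(0,1)$ and $\beta<1$, the exponent $(2-\beta)/\alpha$ exceeds $1$, so this singularity is not integrable near $t$ but it is on $[t_0,T]$, producing the boundary factor $(t_0-t)^{1-(2-\beta)/\alpha}=c_0^{1+(\beta-2)/\alpha}|x-x'|^{\alpha+\beta-2}$. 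Multiplying by the Taylor factor $|x-x'|$ and $K_0\|Du\|_\infty$ yields the first term in \eqref{esti_holder_deriv_sec_perturpart}.

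The main obstacle is to justify rigorously that differentiation under the integral sign is legitimate after the substitution of the freezing parameters, especially in the diagonal regime where one differentiates twice the kernel $\tilde p_\alpha^{(t,x)}$ while the freezing parameter itself equals $x$. This is handled in the spirit of the dominated convergence argument leading to \eqref{INTEGRABILITY_BOUND_WITH_RIGHT_PARAMETERS_GRAD}: the dominating function provided by $({\mathscr P}_\beta)$ for $|D_x^2 p_\alpha(s-t,\cdot)|\,|\cdot|^\beta$ is integrable in $s$ on $[t_0,T]$, allowing to pass to the limit. A second subtlety is the small but crucial bookkeeping of the constants: one must track which contribution generates $c_0^{(\alpha+\beta-1)/\alpha}$ and which generates $c_0^{1+(\beta-2)/\alpha}$, since only the precise dependence on $c_0$ will allow the circular argument of Subsection \ref{CTR_HOLD_GRAD_SUBSUB} to close.
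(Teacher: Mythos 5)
Your decomposition is exactly the paper's: split at $t_0=t+c_0|x-x'|^\alpha$, handle $s\le t_0$ by bounding the two differently-frozen terms separately with the cut-off support, the local H\"older bound on $F$ and $({\mathscr P}_\beta)$ with $k=1$ (giving the $c_0^{\frac{\alpha+\beta-1}{\alpha}}$ contribution), and handle $s>t_0$, where both kernels share the freezing $(t,x)$, by a Taylor expansion producing $|x-x'|$ and a second derivative of the kernel (giving the $c_0^{1+\frac{\beta-2}{\alpha}}$ contribution). The one step you state too quickly is the application of $({\mathscr P}_\beta)$ with $k=2$ in the diagonal regime: after the Taylor expansion the kernel is evaluated at $y-\theta_{s,t}(x)+\mu(x-x')$ while the H\"older weight coming from $F$ and the cut-off is $|y-\theta_{s,t}(x)|^\beta$, so the centers do not match and $({\mathscr P}_\beta)$ does not apply verbatim. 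The paper resolves this by the sub-additivity $|y|^\beta\le |y+\mu(x-x')|^\beta+|\mu(x-x')|^\beta$ and then, for the second piece, by using that in the local diagonal regime $|x-x'|\le c_0^{-1/\alpha}(s-t)^{1/\alpha}$ (see \eqref{wee}), which converts $|x-x'|^\beta$ into the same time singularity $(s-t)^{\frac{\beta-2}{\alpha}}$ at the price of a $c_0$-dependent constant; $({\mathscr P}_\beta)$ is then applied twice, with $\gamma=\beta$ and $\gamma=0$. This is a routine repair, and with it your argument and exponent bookkeeping (including the justification of differentiation under the integral in the spirit of \eqref{INTEGRABILITY_BOUND_WITH_RIGHT_PARAMETERS_GRAD}) coincide with the paper's proof.
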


As already used for  the semi-group and the Green kernel, we split the investigations into two parts: the first one is done when the system is in the \emph{local off-diagonal} regime w.r.t. the current integration time $s$ (i.e. for $s \leq t_0$) and the other one when the system is in the  \emph{local diagonal} regime (i.e., for $s > t_0$). We also recall that the critical time giving the change of regime is \textcolor{black}{(chosen after potential differentiation)} $t_0=t+c_0|x-x'|^\alpha\le  T$ (in the current global diagonal regime).\\ 

$\bullet$  Control of \eqref{esti_holder_deriv_sec_perturpart}: {\it  Local Off-Diagonal case.} 
 Arguing as in  Lemma \ref{lemme_Holder_Gg} we write for the \textit{local off-diagonal} regime:
 \begin{eqnarray}
 \label{SPLIT_LAMBDA_DIAG}
&&|D_{x}\Delta_{{\rm \textbf{off-diag}}}^{\xi,\xi'}(t,x,x')|\big|_{(\xi,\xi')=(x,x')} \notag\\
&:=&\notag\Big| \int_t^{t_0} ds \,  \int_{\R^{d}} \Big(D_x \tilde p_\alpha^{(\tau,\xi')}(t,s,x',y)\Big(\big(F(s,y)-F(s,\theta_{s,\tau}( \xi'))\big)\cdot Du(s,y) \Big) \eta_{\tau,\xi'}(s,y)\notag\\
&&
- D_x p_\alpha^{(\tau,\xi)}(t,s,x,y)\Big(\big(F(s,y)-F(s,\theta_{s,\tau}( \xi))\big)  \cdot D u(s,y) \Big) \eta_{\tau,\xi}(s,y)\Big|_{(\tau,\xi,\xi')=(t,x,x')}\notag\\
 &\leq & C{K_0}
  \| Du \|_{\infty}
 \Bigg[\Big |\int_t^{t_0} \!\! ds \int_{\R^{d}}\!\!\! dy
|D_{x}  \tilde p_\alpha^{(\tau,\xi)} (t,s,x,y)| |y-\theta_{s,t}(\xi)|^\beta  \Big | \bigg |_{(\tau,\xi)=(t,x)}
\notag\\
&&+ \Big | \int_t^{t_0} \!\! ds \int_{\R^{d}}\!\!\! dy
  |D_{x} \tilde p_\alpha^{(\tau,\xi ')} (t,s,x',y)| |y-\theta_{s,t}(\xi')|^\beta   \Big | \bigg |_{(\tau,\xi')=(t,x')}\Bigg].
\end{eqnarray}
We readily get since  $t_0=t+c_0|x-x'|^\alpha\le T$ using the integrability property $(\mathscr P_\beta) $: 
\begin{eqnarray}
|D_{x} \Delta_{{\rm \textbf{off-diag}}}^{\xi,\xi'}(t,x,x')|\big|_{(\xi,\xi')=(x,x')}&\leq&
C{K_0} \int_t^{t_0} \frac{ds}{(s-t)^{\frac 1\alpha-\frac \beta\alpha}} \|Du\|_{L^\infty}\notag\\
&\leq& C{K_0} \|Du\|_{L^\infty} c_0^{\frac{\alpha+\beta-1}{\alpha}}|x-x'|^{\alpha+\beta-1} .\label{LE_LAB_MANQUANT}
%\notag
 \end{eqnarray}

$\bullet$  Control of \eqref{esti_holder_deriv_sec_perturpart}: {\it Local Diagonal case.} 
Let us now turn to the control of 
\begin{eqnarray}
&&|D_{x} \Delta_{{\rm \textbf{diag}}}^{\xi,\tilde \xi'}(t,T,x,x')|\Big|_{(\xi,\tilde \xi')=(x,x)}
 \label{CTR_PERT_DIAG} \\
&:=&\notag\Big| \int_{t_0}^Tds \int_{\R^{d}} dy \Big(D_x\tilde p_\alpha^{(\tau,\tilde \xi')}(t,s,x',y)\Big(\big(F(s,y)-F(s,\theta_{s,t}(\tilde \xi'))\big)\cdot Du(s,y) \Big)\eta_{\tau,\tilde \xi'}(s,y)\notag\\
&&
-D_x\tilde p_\alpha^{(\tau,\xi)}(t,s,x,y)\Big(\big(F(s,y)-F(s,\theta_{s,t}( \xi))\big)  \cdot D u(s,y)\Big) \eta_{\tau,\xi}(s,y)\Big|_{(\tau, \xi,\tilde \xi')=(t,x,x)}\notag\\
&:=&\notag\Big| \int_{t_0}^T ds \int_{\R^{d}} dy\Big(D_x\tilde p_\alpha^{(\tau,\xi)}(t,s,x',y)-D_x\tilde p_\alpha^{(\tau,\xi)}(t,s,x,y)\Big)\notag\\
&& \times \Big(\big(F(s,y)-F(s,\theta_{s,t}( \xi))\big)\cdot D u(s,y)\Big)\eta_{\tau,\xi}(s,y)\Big|_{ (\tau,\xi)=(t,x)}\notag\\
&\leq& 
\| Du \|_{\infty} |x-x'|  \int_{t_0}^T ds\int_0^1 d\mu\int_{\R^{d}}dy \big| D^2   p_\alpha (s-t, y - \theta_{s,t}(x) +\mu (x-x'))| \, |F(s,y)-F(s,\theta_{s,t}( x)) |\eta_{t,x}(s,y),
 \notag
 \end{eqnarray}
recalling \eqref{CORRESP_SHIFTED} for the last inequality.
%\textcolor{black}{From S. to E. and P.E. : I suggest to erase the subscript in %$x$ in $D_x^2 p_\alpha$. Indeed, there is a possible confusion with the %argument inside, in $x$. I think $D^2 p_\alpha $ is enough since there is just %one spatial argument in that case. }
We  finally get:
\begin{eqnarray*}
&&|D_{x} \Delta_{{\rm \textbf{diag}}}^{\xi,\tilde \xi'}(t,T,x,x')|\Big|_{(\xi,\tilde \xi')=(x,x)}\\
&\le& \| Du \|_{\infty}{K_{0}} |x-x'|  \int_{t_0}^T ds\int_0^1 d\mu\int_{\R^{d}}dy \big| D^2   p_\alpha (s-t, y - \theta_{s,t}(x) +\mu (x-x'))| \, |y-\theta_{s,t}( x) |^\beta\I_{|y-\theta_{s,t}( x) |\le 2}\\
&\le& \| Du \|_{\infty}{K_{0}} |x-x'|  \int_{t_0}^T ds\int_0^1 d\mu\int_{\R^{d}}dy \big| D^2   p_\alpha (s-t, y  +\mu (x-x'))| \, |y |^\beta
\\
&\le& \| Du \|_{\infty}{K_{0}} |x-x'|  \int_{t_0}^T ds\int_0^1 d\mu \Big[ \int_{\R^{d}}dy \big| D^2   p_\alpha (s-t, y  +\mu (x-x'))| \, |y +\mu (x-x') |^\beta 
\\ &+ &
\int_{\R^{d}}dy \big| D^2   p_\alpha (s-t, y  +\mu (x-x'))| \, |\mu (x-x') |^\beta  \Big]
\end{eqnarray*}
%{\color{black} Enrico : is a bit complicated :One may wonder why, since we have localized with the cut-off $\eta_{t,x} $, we should have some integrability subtleties. Recall anyhow that in small time, $(T-t)^{\frac 1\alpha}\le 1 $ so that we still  almost \textit{see} all the density and not only the diagonal-regime. 
%}

Now   since   $ s \in [t_0,T]$  and $  t_0=t+c_0|x-x'|^\alpha$ we have $s-t \ge c _0|x-x'|^\alpha$ and so 
\begin{equation} \label{wee}
 |x-x'| \le  (c_0)^{-1/\alpha} \, (s-t)^{1/\alpha} \le  K(s-t)^{\frac 1\alpha} 
\end{equation}
  if the threshold $K$ is chosen large enough.
   Applying  property $({\mathscr P}_\beta) $ twice with $\gamma = \beta$ and $\gamma =0$ 
% taking $z=\mu (x-x') $ which indeed satisfies $|z|\le (c_0^{-1}(s-t))^{\frac %1\alpha}\le K(s-t)^{\frac 1\alpha} $, if the threshold $K$ is chosen large %enough, 
we eventually derive: 
\begin{gather} \notag
|D_{x} \Delta_{{\rm \textbf{diag}}}^{\xi,\tilde \xi'}(t,x,x')|\big|_{(\xi,\tilde \xi')=(x,x')}
\leq C {K_{0}}\| D u\|_\infty |x-x'| \int_{t_0}^T \frac{ds}{(s-t)^{\frac2\alpha-\frac \beta\alpha}} 
\\ \notag
\le  C {K_0}\|Du\|_{L^\infty} \, (t_0-t)^{1+\frac{\beta}{\alpha}-\frac{2}{\alpha}}|x-x'|
\\
\le  C{K_0}\|Du\|_{L^\infty} |x-x'|^{\alpha+\beta-1},
\label{FINAL_CONTROL_PERT_DIAG}
\end{gather}
where  $C= C(c_0,   \A{A} ) >0$.
%recalling that  for $s\in [t_0,T], |x-x'|/(s-t)^{\frac 1\alpha}\le c_0^{-\frac %1\alpha} $. %The proof is complete.
Equations \eqref{LE_LAB_MANQUANT} and \eqref{FINAL_CONTROL_PERT_DIAG} give the statement of the lemma.

%%%
\def\b {
Recalling from the definition in \eqref{CORRESP_SHIFTED} that the mapping $z\mapsto m_{s,t}^\xi(z) $ is affine, we get $m_{s,t}^\xi(x'+\mu(x-x'))=m_{s,t}^\xi(x)+(1-\mu)(x'-x) $. We then rewrite:
\begin{eqnarray}
 &&|D_{x} \Delta_{{\rm \textbf{diag}}}^{\xi,\xi'}(t,x,x')|\big|_{(\xi,\xi')=(x,x')}\notag\\
&\leq&C\|F\|_{L^\infty(C^{\beta})}\| D u\|_\infty |x-x'| \int_{t_0}^T \frac{ds}{(s-t)^{\frac2\alpha-\frac \beta\alpha}}\int_0^1 d\mu\notag\\
&&\times \int_{\R^{d}}dy \bar p_\alpha\Big(s-t,y-\big(\theta_{s,t}(x)+(1-\mu)(x'-x)\big)\Big)\left(\frac{|y-\theta_{s,t}(x)|}{(s-t)^{\frac 1\alpha}} \right)^\beta.  \label{CTR_PERT_DIAG}
 \end{eqnarray}
Recalling that for $s\in [t_0,T], |x-x'|/(s-t)^{\frac 1\alpha}\le c_0^{-\frac 1\alpha} $, we now use the following lemma whose proof is postponed to Appendix \ref{APP_TEC}.
 \begin{lem}[Integration of the derivatives in the diagonal regime]\label{INT_DIAG}
 There exists $C:=C(\A{A})$ s.t. for $c_0|x-x'|^\alpha\le (s-t) $, $c_0\le 1$, $0\le t<s\le T $ and all $\mu\in [0,1] $ :
$$ %\Big(\int_{\R^{d}}dy \big| D^2_{x}  \tilde p_\alpha^{\xi} (t,s,x'+\mu (x-x'),y)\big| |y-\theta_{s,t}(\xi)|^{\beta}\Big)\Big|_{\xi=x}
\int_{\R^{d}}dy \bar p_\alpha\Big(s-t,y-\big(\theta_{s,t}(x)+(1-\mu)(x'-x)\big)\Big)\left(\frac{|y-\theta_{s,t}(x)|}{(s-t)^{\frac 1\alpha}} \right)^\beta
\le %\frac{C}{(s-t)^{\frac 2\alpha-\frac \beta\alpha}}
C c_0^{-\frac{(d+\beta)}\alpha}.$$
 \end{lem}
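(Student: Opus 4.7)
The plan is to first translate the integrand to place $\bar p_{\alpha}$ in \emph{centered} position, then to split the resulting weight into a centered part (handled by the stable scaling of $\bar p_{\alpha}$) and a translation part (handled by the local diagonal assumption).

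Specifically, I would introduce the change of variable $z= y-(\theta_{s,t}(x)+(1-\mu)(x'-x))$, so that $y-\theta_{s,t}(x)=z+(1-\mu)(x'-x)$. The integral of interest becomes
\begin{equation*}
J:=\int_{\R^{d}}\bar p_\alpha(s-t,z)\,\left(\frac{|z+(1-\mu)(x'-x)|}{(s-t)^{1/\alpha}}\right)^{\beta}dz.
\end{equation*}
Since $\beta<1$ the function $r\mapsto r^{\beta}$ is subadditive, hence $|z+(1-\mu)(x'-x)|^{\beta}\le|z|^{\beta}+|x-x'|^{\beta}$, which gives the splitting $J\le J_{1}+J_{2}$ with
\begin{equation*}
J_{1}:=\int_{\R^{d}}\bar p_\alpha(s-t,z)\,\left(\frac{|z|}{(s-t)^{1/\alpha}}\right)^{\beta}dz,\qquad J_{2}:=\left(\frac{|x-x'|}{(s-t)^{1/\alpha}}\right)^{\beta}\int_{\R^{d}}\bar p_\alpha(s-t,z)\,dz.
\end{equation*}

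For $J_{1}$, the $\alpha$-stable self-similarity of $\bar p_{\alpha}$ (namely $\bar p_{\alpha}(t,z)=t^{-d/\alpha}\phi_{\alpha}(t^{-1/\alpha}z)$ for a function $\phi_{\alpha}$ inherited from the pointwise bound on the derivatives of the heat kernel) gives, after scaling $w=(s-t)^{-1/\alpha}z$, the bound $J_{1}\le\int_{\R^{d}}\phi_{\alpha}(w)|w|^{\beta}dw=:C$, provided $\phi_{\alpha}(w)|w|^{\beta}$ is integrable. The latter fact follows either from $(\mathscr P_{\beta})$ applied to the pointwise majorant that defines $\bar p_{\alpha}$, or directly from the explicit decay of $\phi_{\alpha}$ at infinity.

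For $J_{2}$, I would first use the same scaling to get $\int_{\R^{d}}\bar p_\alpha(s-t,z)\,dz\le C$, and then invoke the current local-diagonal assumption $c_{0}|x-x'|^{\alpha}\le s-t$, which yields $|x-x'|/(s-t)^{1/\alpha}\le c_{0}^{-1/\alpha}$ and hence $J_{2}\le Cc_{0}^{-\beta/\alpha}$. Summing the two contributions and using $c_{0}\le 1$ to bound $1\le c_{0}^{-\beta/\alpha}\le c_{0}^{-(d+\beta)/\alpha}$ produces the announced estimate.

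The only potentially delicate point is the justification of the scaling of $\bar p_{\alpha}$, i.e.\ to check that the pointwise majorant on the derivatives of $p_{\alpha}$ used to define $\bar p_{\alpha}$ in \eqref{CTR_PERT_DIAG} is compatible with a finite $\int\phi_{\alpha}(w)(1+|w|^{\beta})dw$. Under \A{A}, this is guaranteed by property $(\mathscr P_{\beta})$ (with $\gamma=0$ and $\gamma=\beta$ respectively), which supplies precisely the two integrability statements used above for $J_{1}$ and $J_{2}$; no pointwise majorant on $\bar p_{\alpha}$ is actually needed, so the argument transfers verbatim to the general stable and relativistic settings considered in the paper.
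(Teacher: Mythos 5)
Your proof is correct, but it follows a different route from the paper's own proof of Lemma \ref{INT_DIAG}. The paper splits the spatial domain of integration into a ball $\{|y-\theta_{s,t}(x)|\le 2c_0^{-1/\alpha}(s-t)^{1/\alpha}\}$ and its complement: on the ball it bounds $\bar p_\alpha$ by its sup $(s-t)^{-d/\alpha}$ and pays the volume factor, which is where the power $c_0^{-d/\alpha}$ comes from, and on the complement it absorbs the shift $(1-\mu)(x'-x)$ into the denominator of the explicit majorant (using $|x-x'|\le c_0^{-1/\alpha}(s-t)^{1/\alpha}$, so that $1-\tfrac{|x-x'|}{(s-t)^{1/\alpha}}+\tfrac{|y-\theta_{s,t}(x)|}{(s-t)^{1/\alpha}}\ge 1+\tfrac{|y-\theta_{s,t}(x)|}{2(s-t)^{1/\alpha}}$) and integrates radially, arriving at $Cc_0^{-(d+\beta)/\alpha}$. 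You instead recenter the integral at the argument of $\bar p_\alpha$, split the weight by subadditivity of $r\mapsto r^\beta$, and use the exact self-similarity and polynomial decay of the explicit kernel in \eqref{DER_HK_ROT_INV}; this is simpler and yields the sharper bound $C(1+c_0^{-\beta/\alpha})\le Cc_0^{-\beta/\alpha}$, from which the stated estimate follows since $c_0\le1$. Your decomposition is in fact the same mechanism the paper uses in its main-text treatment of the diagonal perturbative term (proof of Lemma \ref{HOLDER_CTR_PERT_PART}, estimates \eqref{wee}--\eqref{FINAL_CONTROL_PERT_DIAG}), where $({\mathscr P}_\beta)$ is applied twice with $\gamma=\beta$ and $\gamma=0$ in place of the explicit kernel bound.

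One caveat on your final paragraph: the lemma as stated concerns the pointwise majorant $\bar p_\alpha$, and the two integrability facts you need for $J_1$ and $J_2$ cannot be deduced from $({\mathscr P}_\beta)$, since that property controls $\int|y|^\gamma|D^k p_\alpha(t,y)|dy$ and gives no information about an \emph{upper} majorant of $|D^kp_\alpha|$. Your direct justification via the explicit decay $(1+|w|)^{-(d+\alpha+1)}$ (integrable against $1+|w|^\beta$ because $\beta<\alpha+1$) is the correct one and is all that is needed here; your closing observation that in the general symmetric-stable or relativistic settings one should bypass $\bar p_\alpha$ and apply $({\mathscr P}_\beta)$ directly to $|D^2p_\alpha|$ is precisely how the paper's final (non-appendix) argument proceeds.
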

 %Write now from \eqref{KEY_EQ_DER}, up to a modification of the constant $C:=C(c_0)$ \textcolor{black}{Cercare di proporre una gestione pulita delle costanti, considerando  domini d'integrazione del tipo $|y-\theta_{s,t}(x)|\ge 2c_0^{-\frac 1\alpha}(s-t)^{\frac 1\alpha} $ e complementare}:% \eqref{DER_HK_ROT_INV_HIGH}:
%\begin{eqnarray}
%\big| D^2_{x}  \tilde p_\alpha^{\xi} (t,s,x'+\mu (x-x'),y)  \big|&\le& \frac{C}{(s-t)^{\frac 2\alpha}}\frac{1}{(s-t)^{\frac{d}{\alpha}}}\frac{1}{\Big((c_0^{-\frac1\alpha}+1)-\frac{|x-x'|}{(s-t)^{\frac 1\alpha}} +\frac{|y- m_{s,t}^\xi(x)%\theta_{s,t}(\xi)
%|}{(s-t)^{\frac 1\alpha}}\Big)^{d+\alpha+1}}\notag \\
%&\le &\frac{C}{(s-t)^{\frac 2\alpha}}\frac{1}{(s-t)^{\frac{d}{\alpha}}}\frac{1}{\Big(1 +\frac{|y- m_{s,t}^\xi(x)%\theta_{s,t}(\xi)
%|}{(s-t)^{\frac 1\alpha}}\Big)^{d+\alpha+1}},\label{FIRST_EQ_D2_DIAG}
%\end{eqnarray}
%up to a modification of $C$ for the first inequality and recalling that $|x-x'|/(s-t)^{\frac 1\alpha}\le c_0^{-\frac 1\alpha} $ for the second one. 
Plugging the above bound into \eqref{CTR_PERT_DIAG}
%, recalling that $m_{s,t}^\xi(x)|_{\xi=x}=\theta_{s,t}(x) $,  
yields:
\begin{eqnarray}
|D_{x} \Delta_{{\rm \textbf{diag}}}^{\xi,\tilde \xi'}(t,T,x,x')|\Big|_{(\xi,\tilde \xi')=(x,x)}
&\le&  C {K_0}\|Du\|_{L^\infty}c_0^{-\frac{(d+\beta)}\alpha}\int_{t_0}^T \frac{ds}{(s-t)^{\frac{2}{\alpha}-\frac{\beta}\alpha}}|x-x'|\notag\\
&\le & C {K_0}\|Du\|_{L^\infty}c_0^{-\frac{(d+\beta)}\alpha}(t_0-t)^{1+\frac{\beta}{\alpha}-\frac{2}{\alpha}}|x-x'|\notag\\
&\le & C{K_0}\|Du\|_{L^\infty} c_0^{1+\frac{\beta-2}{\alpha}-\frac{(d+\beta)}\alpha}|x-x'|^{\alpha+\beta-1},
\label{FINAL_CONTROL_PERT_DIAG}
\end{eqnarray}
recalling from \eqref{def_t0} that $t_0-t=c_0|x-x'|^\alpha $.\\
}
%%

%\end{proof}

%%%

\mysection{Existence result.}\label{ESISTENZA}
 %{\color{black} to be done}
We point out here that the classical continuity method, which is direct from the a priori estimate, and which was successfully used in \cite{prio:12} to establish existence in the elliptic setting, does not work  for $\alpha\in (0,1) $. The key point is that when one tries to write:
$$\partial_t u(t,x)+ 
 L_{\alpha} u +
\delta_0 F(t,x)\cdot D u(t,x)=-f(t,x)+(\delta_0-\delta) F(t,x)\cdot Dv (t,x), $$
where $v\in C_b^{\alpha+\beta}(\R^d,\R)$ then, the product $F(t,x)\cdot Dv (t,x)$ has under \A{A} a H\"older-regularity of order $\beta+\alpha-1<\beta $, since $\alpha\in (0,1) $. Therefore, we cannot in this framework readily apply our a priori estimate in a fixed point perspective.\\

%{\color{black} Enrico: I have not changed  the rest this section. One should adapt this section to the present case in which there is $L_{\alpha}$ instead of the  fractional Laplacian. }

% \subsection{A vanishing viscosity approach}
 We will proceed through a vanishing viscosity approach, as it was also for instance considered by Silvestre in \cite{silv:12} or by Zhang and Zhao in \cite{zhan:zhao:18}. 
% For practical and technical purposes we first get rid of the terminal condition $g$. Namely, establishing an existence result in ${\mathscr C}_b^{\alpha+\beta}([0,T]\times \R^d) $ for equation \eqref{Asso_PDE} is actually equivalent to establishing  a similar existence in that function space for
% \begin{eqnarray}\label{Asso_PDE}
%\p_t  \bar u(t,x) +  {L}_{\alpha} \bar u(t,x) + F(t,x) \cdot D_x \bar u(t,x) &=& -f(t,x),\quad \text{on }[0,T)\times \R^d, \notag\\
%u(T,x) &=& g(x),\quad \text{on }\R^d.
%\end{eqnarray}
Namely, we consider for a given parameter $\varepsilon>0 $  the IPDE:
 \begin{eqnarray}
 \p_t  u(t,x) +  {L}_{\alpha} u(t,x) + F_\varepsilon(t,x) \cdot D_x u(t,x) +\varepsilon \Delta^{\frac 12} u(t,x)&=& -f_\varepsilon(t,x),\quad \text{on }[0,T)\times \R^d, \notag\\
u(T,x) &=& g_\varepsilon(x),\quad \text{on }\R^d,  \label{we_EPS}
 \end{eqnarray}
where $f_\varepsilon, g_\varepsilon $ are mollified version of the initial sources and terminal condition $f$ and $g$ in time-space and space respectively which satisfy uniformly  w.r.t. the mollification procedure assumption \A{A}. Also,  $F_\varepsilon $ stands for a mollified truncation of $F$ so that for any fixed $\varepsilon>0 $, $F_\varepsilon $ is smooth, bounded and uniformly $\beta $-H\"older continuous in space and satisfies assumption \A{A}uniformly w.r.t. the mollification procedure  as well.\\

%%%%%% S. : Adesso ho capito.
%\textcolor{black}{From S. to E. and P.E.: I am afraid we will need two mollifying parameters, $\varepsilon $ for the vanishing viscosity and $\eta $ for $g_\eta$ which needs to be $C_b^{1+\beta}$ for the analysis. We would then first let $\varepsilon $ and then $\eta $ got to zero.}

The procedure is the following we aim at showing that, for any fixed $\varepsilon>0 $, there is a unique solution $u:=u^\varepsilon$ to equation \eqref{we_EPS} which belongs to the function space ${\mathscr C}_b^{1+\beta}([0,T]\times \R^d) $ (where for the regularity of the generalized time derivative in point iii) of the corresponding definition at p. 6, the parameter $\alpha+\beta-1$ has to be replaced by $\beta$ because, from the $ \varepsilon \Delta^{\frac 12}$ regularization term, we go back to the sub-critical case). The next step then consists in rewriting \eqref{we_EPS} as:
 \begin{eqnarray}
 \p_t  u(t,x) +  {L}_{\alpha} u(t,x) + F_\varepsilon(t,x) \cdot D_x u(t,x) &=& -f_\varepsilon(t,x)-\varepsilon \Delta^{\frac 12} u(t,x),\quad \text{on }[0,T)\times \R^d, \notag\\
u(T,x) &=& g_\varepsilon(x),\quad \text{on }\R^d,  \label{we_EPS_TO_BE_DEVELOP_ALONG_TILDE_P_ALPHA}
 \end{eqnarray}
and to establish that $\varepsilon \|\Delta^{\frac 12} u\|_{L^\infty([0,T],C_b^{\beta})}$ %\le C \|u\|_{L^\infty([0,T],C_b^{\alpha+\beta})} $, i.e. this term 
is controlled uniformly in $\varepsilon $   allowing thus to expand $u $ along the frozen semi-groups $\big(\tilde P_{s,t}^{(\tau,\xi)}\big)_{0\le s,t\le T} $ as in Section \ref{PERT} to establish that the solution satisfies the Schauder estimates uniformly in $\varepsilon$. The existence of a solution in ${\mathscr C}_b^{\alpha+\beta}([0,T]\times \R^d) $ then follows from a standard compactness argument letting $\varepsilon $ go to $0 $. \textcolor{black}{We \textcolor{black}{also} point out that, although inefficient in our case, the continuity method described above will be used \textcolor{black}{several times} for the analysis of the equation \eqref{we_EPS}.} 

%\begin{eqnarray}\label{Asso_PDE}
%\p_t  u(t,x) +  {L}_{\alpha} u(t,x) + F(t,x) \cdot D_x u(t,x) &=& -f(t,x),\quad \text{on }[0,T)\times \R^d, \notag\\
%u(T,x) &=& g(x),\quad \text{on }\R^d.
%\end{eqnarray}

\subsection{A priori controls for the regularized equation}

\subsubsection{Estimates for a generic source in $L^\infty([0,T],C_b^\beta) $}
 We focus in this section on an equation of the form
 \begin{eqnarray}
 \p_t  v(t,x)  + F_\varepsilon(t,x) \cdot D_x v(t,x) +\varepsilon \Delta^{\frac 12} v(t,x)&=& -\bar f_\varepsilon(t,x),\quad \text{on }[0,T)\times \R^d, \notag\\
v(T,x) &=& g_\varepsilon(x),\quad \text{on }\R^d,  \label{we_EPS_SUB_CRITICAL}
 \end{eqnarray}
where $\bar f_\varepsilon,F_\varepsilon $ is in $C_b^\beta([0,T]\times \R^d) $ (i.e., $f_\varepsilon$, $F_\varepsilon $ are $\beta $-H\"older continuous in both time and space) and $g_\varepsilon \in C^{1+\beta}(\R^d) $.
 
 In this framework, \textcolor{black}{it follows from \cite{miku:prag:14}} %is clear following \cite{prio:12} and \cite{kryl:prio:10} 
 that there exists a unique solution $v:=v^\varepsilon$ to \eqref{we_EPS_SUB_CRITICAL} in ${\mathscr C}_b^{1+\beta}([0,T]\times \R^d) $ which satisfies:
 \begin{equation}\label{EXPL_SCHAUDER}
 \|v\|_{L^\infty([0,T],C_b^{1+\beta})}\le C \big(\Theta_1(\varepsilon) \|g_\varepsilon\|_{C^{1+\beta}_b}+\Theta_2(\varepsilon)\|\bar f_\varepsilon\|_{L^{\infty}([0,T],C_b^{\beta})} \big).  
 \end{equation}
 With respect to the previously described procedure, we actually need to precisely quantify how the $\big(\Theta_i(\varepsilon)\big)_{i\in \{1,2\}}$ behave when $\varepsilon $ goes to $0$. %This can be derived from our previous analysis which relies on the smoothing effects associated with the underlying semigroup, here
 \textcolor{black}{This behavior actually depends {\color{black}on} the smoothing effects associated with $P_{s,1}^\varepsilon $ which denotes the semi-group associated with
 $\varepsilon \Delta^{\frac 12}$}. We can therefore appeal to the results of Section \ref{PERT} considering appropriate scaling arguments. Namely,
 let us consider $\textcolor{black}{w:=w^{\varepsilon}}  $ \textcolor{black}{solving}:
 \begin{eqnarray*}
 \p_t  w(t,x)   +\varepsilon \Delta^{\frac 12} w(t,x)&=& -\bar f_\varepsilon(t,x),\quad \text{on }[0,T)\times \R^d, \notag\\
w(T,x) &=& g_\varepsilon(x),\quad \text{on }\R^d.  \label{we_EPS_SUB_CRITICAL_NO_DRIFT}
 \end{eqnarray*}
% \textcolor{black}{Da S. a E. : la notazione in $u $ mi sembrava ambigua con quella usata per il sistema iniziale.}
Setting  $w(t,x) =: \textcolor{black}{\bar w} (t, x /\varepsilon)$ then 
\begin{eqnarray*}
 \p_t  \bar w(t,y)   +  \Delta^{\frac 12} \bar w(t,y)&=& -\bar f_\varepsilon(t,\varepsilon y),\quad \text{on }[0,T)\times \R^d, \notag\\
\bar w(T,y) &=& g_\varepsilon(\varepsilon y),\quad \text{on }\R^d,  \label{we_EPS_SUB_CRITICAL_NO_DRIFT_RESCALED}
 \end{eqnarray*}
%We have the estimate (to be checked; see .... )
\textcolor{black}{From the proof of point \textit{iii)} of Proposition \ref{SCHAUDER_FROZEN} (see also Lemma \ref{sd}), we get:} 
\begin{gather*} 
\varepsilon^{1+ \beta} [Dw(t, \cdot )]_{\beta} = [D\bar w (t, \cdot  )]_{\beta} \le C ( \varepsilon^{1 + \beta} [ Dg]_{\beta }  +  \varepsilon^{\beta} [f]_{\beta,T}).
\end{gather*}
We thus derive that there exists $C:=C(\A{A})$ idependent of $\varepsilon $ s.t. :
\begin{equation}
\label{SCHAUDER_DRIFTLESS_EPS}
\|w\|_{L^\infty([0,T],C_b^{1+\beta})}\le C\Big(\|g_\varepsilon\|_{C_b^{1+\beta}}+\varepsilon^{-1}\|\bar f_\varepsilon\|_{L^\infty([0,T],C_b^\beta)} \Big).
\end{equation}
We can then follow the arguments of  \cite{kryl:prio:10} to establish from the continuity method that a similar bound will also hold for the unique solution $v$ in ${\mathscr C}_b^{1+\beta}([0,T]\times \R^d) $ of the drifted equation \eqref{we_EPS_SUB_CRITICAL}. Namely,
\begin{equation}
\label{SCHAUDER_DRIFTED_EPS}
\|v\|_{L^\infty([0,T],C_b^{1+\beta})}\le C\Big(\|g_\varepsilon\|_{C_b^{1+\beta}}+\varepsilon^{-1}\|\bar f_\varepsilon\|_{L^\infty([0,T],C_b^\beta)} \Big),
\end{equation}
where the above $C$ also depends on $[F]_{\beta,T} $. Equation \eqref{SCHAUDER_DRIFTED_EPS} specifies that the functions  $\big(\Theta_i(\varepsilon)\big)_{i\in \{1,2\}} $ in \eqref{EXPL_SCHAUDER} actually write $\Theta_1(\varepsilon)=1,\Theta_2(\varepsilon)=\varepsilon^{-1} $.

 \subsubsection{Solvability of equation \eqref{we_EPS}}
In order to prove existence we would like to benefit from the results of the previous section. From equation \eqref{we_EPS} we introduce for a parameter $\lambda\in [0,1] $:
\begin{eqnarray}
 \p_t  u(t,x) +   F_\varepsilon(t,x) \cdot D_x u(t,x) +\varepsilon \Delta^{\frac 12} u(t,x)&=& -f_\varepsilon(t,x)- \lambda {L}_{\alpha} u(t,x),\quad \text{on }[0,T)\times \R^d, \notag\\
u(T,x) &=& g_\varepsilon(x),\quad \text{on }\R^d,  \label{we_EPS_lambda}
 \end{eqnarray}
which can be viewed as a particular case of \eqref{we_EPS_SUB_CRITICAL} with $\bar f_\varepsilon=f_\varepsilon+\lambda L_\alpha u $.

We now recall a useful inequality. For $\theta\in (0,1] $ consider an operator $L_\theta$ with symbol of the form \eqref{LEVY_KHINTCHINE} satisfying \A{ND} and  $\gamma\in (0,1)$ s.t $\theta+\gamma>1 $.
There exists $C_{\theta,\gamma}$ s.t. for a function $\varphi \in C_b^{\gamma+\theta}$,  it holds that:
\begin{equation}
\label{CTR_FRAC_OP_HOLDER}
\|L_\theta \varphi \|_{C_b^\gamma}\le C_{\theta,\gamma} \|\varphi\|_{C_b^{\gamma+\theta}}.
\end{equation}
Recalling that H\"older spaces can be viewed as Besov spaces, this inequality is as a direct consequence of norm equivalences on Besov spaces  (see e.g. Triebel \cite{trie:83}). We also provide a direct proof of \eqref{CTR_FRAC_OP_HOLDER} in Appendix \ref{SEC_PROOF_EST_OP} for a self-contained presentation.

    From \eqref{CTR_FRAC_OP_HOLDER}, we derive that for $v\in {\mathscr C}_b^{1+\beta}([0,T]\times \R^d), L_\alpha v \in L^\infty([0,T],C_b^{1+\beta-\alpha})  $. Hence, the continuity method will also give from the previous estimates that for any fixed $\varepsilon>0 $ there exists a unique solution $\textcolor{black}{u^\varepsilon=u}\in{\mathscr C}_b^{1+\beta}([0,T]\times \R^d)$ to \eqref{we_EPS_lambda} for $\lambda \in [0,1] $ and therefore to  \eqref{we_EPS} corresponding to $\lambda=1 $. Also, for the unique solution of \eqref{we_EPS} in ${\mathscr C}_b^{1+\beta}([0,T]\times \R^d) $ it holds that: 
\begin{equation}\label{FULL_SCHAUDER_VISCOUS}
\|u\|_{L^\infty([0,T],C_b^{1+\beta})}\le C \big(\|g_\varepsilon\|_{C_b^{1+\beta}}+\varepsilon^{-1}\|f_\varepsilon\|_{L^\infty([0,T],C_b^\beta)} \big) , C:=C(\A{A},
\end{equation}
and for all $0\le t<s\le T,\ x\in  \R^d $,
\begin{eqnarray}\label{INTEGRAL_VERSION_EPS}
u(t,x)=u(s,x)+\int_t^s dr f_\varepsilon(r,x)-\int_t^s dr\Big(\varepsilon \Delta^{\frac 12}+L_\alpha +F_\varepsilon(r,x)  \cdot D \Big) u(r,x).
\end{eqnarray}

%\textcolor{black}{From S. to E. and P.E.: I think there might be some subtleties about continuity in time for the sources. To be checked.}

\subsection{Viscosity viewed as a source and compactness arguments} 
We now rewrite equation \eqref{we_EPS} viewing the viscous perturbation as a source. Namely, as in \eqref{we_EPS_TO_BE_DEVELOP_ALONG_TILDE_P_ALPHA}. We now observe from \eqref{FULL_SCHAUDER_VISCOUS} and \eqref{CTR_FRAC_OP_HOLDER} that:
\begin{eqnarray*}
\varepsilon\|\Delta^{\frac 12} u\|_{L^\infty([0,T],C_b^{\beta})}&\le& C_{1,\beta}\varepsilon \|u\|_{L^\infty([0,T],C_b^{1+\beta})}\le C_{1,\beta} C(\varepsilon\|g_\varepsilon\|_{C_b^{1+\beta}}+\|f_\varepsilon\|_{L^\infty([0,T],C_b^\beta)})\\
&\le & C_{1,\beta} C(\varepsilon\|g_\varepsilon\|_{C_b^{1+\beta}}+\|f\|_{L^\infty([0,T],C_b^\beta)}).
\end{eqnarray*}
Hence, reproducing the previous expansion of Section \ref{PERT} we derive that $u:=u^\varepsilon $ solving \eqref{we_EPS} satisfy the estimate:
\begin{eqnarray}
\|u^\varepsilon\|_{L^\infty([0,T],C_b^{\alpha+\beta})}&\le& C\Big(\|g\|_{C_b^{\alpha+\beta}}  +\|f\|_{L^\infty([0,T],C_b^\beta)} +\varepsilon \|g_\varepsilon\|_{C_b^{1+\beta}}\Big)\notag\\
&\le & C\Big(\|g\|_{C_b^{\alpha+\beta}}  +\|f\|_{L^\infty([0,T],C_b^\beta)}+h(\varepsilon) \|g\|_{C_b^{\alpha+\beta}}\Big)\notag\\
&\le & C\Big(\|g\|_{C_b^{\alpha+\beta}}  +\|f\|_{L^\infty([0,T],C_b^\beta)} \Big),\label{SCHAUDER_PRIMA_DI_COMPATTEZZA}
\end{eqnarray}
where $h(\varepsilon)\underset{\varepsilon \rightarrow 0}{\rightarrow} 0 $, considering a suitable regularization of $g $, i.e., s.t. $\varepsilon\|g_\varepsilon\|_{C_b^{1+\beta}}\le h(\varepsilon) \|g\|_{C_b^{\alpha+\beta}} $, for the second inequality.

  Note that proceeding as in \textcolor{black}{Corollary 4.2} of \cite{kryl:prio:10}, we also have local $\gamma$-H\"older continuity in $[0,T] \times K $, for some small $\gamma>0,$ for $u^{\varepsilon}$ and $Du^{\varepsilon}$ for any compact set $K \subset \R^d$ (with a control of the  H\"older norm independent of $\varepsilon$).

 From the  Ascoli-Arzel\`a theorem,    we deduce that there exists a subsequence $\varepsilon_n \underset{n}{\rightarrow} 0$  and  a continuous function $u$ on ${[0,T] \times \R^d}$   having bounded and continuous derivatives
with respect to $x$
  such that
   $$
   (u^{\varepsilon_n}, D u^{\varepsilon_n} ) \to(
u, Du)
$$
as $n \to \infty$,  uniformly on  bounded subsets of
$[0,T ] \times \R^d$. We also have that 
  $u\in L^\infty([0,T],C_b^{\alpha+\beta}) $ satisfies the last inequality of \eqref{SCHAUDER_PRIMA_DI_COMPATTEZZA}.
  Rewrite now \eqref{INTEGRAL_VERSION_EPS} along the considered subsequence:
\begin{eqnarray}\label{INTEGRAL_VERSION_EPS_SUBSEQUENCE}
u^{\varepsilon_n}(t,x)=u^{\varepsilon_n}(s,x)+\int_t^s dr f_{\varepsilon_n}(r,x)-\int_t^s dr\Big(\varepsilon_n \Delta^{\frac 12}+L_\alpha +F_{\varepsilon_n}(r,x)  \cdot D \Big) u^{\varepsilon_n}(r,x).
\end{eqnarray}
It is readily seen that, in order to pass to the limit in the previous equation, the only \textit{delicate} term to analyze is $\varepsilon_n \Delta^{\frac 12} u^{\varepsilon_n}(t,x)  $.
Observe that:
\begin{eqnarray*}
|\Delta^{\frac 12} u^{\varepsilon_n}(t,x)|&\le& \Big|\int_{|z|\le 1} \Big( u^{\varepsilon_n}(t,x+z)-u^{\varepsilon_n}(t,x)-D u^{\varepsilon_n}(t,x)\cdot z \Big)\frac{dz}{|z|^2}\Big|+\int_{|z|\ge 1} \frac{2\|u^{\varepsilon_n}\|_\infty}{|z|^{2}}dz\notag\\
&\le &C\big( [Du^{\varepsilon_n}]_{\alpha+\beta-1,T} +\|u^{\varepsilon_n}\|_\infty\big)\le C\big(\|g\|_{C_b^{\alpha+\beta}}  +\|f\|_{L^\infty([0,T],C_b^\beta)} \big).
\end{eqnarray*}
We can then pass to the limit in \eqref{INTEGRAL_VERSION_EPS_SUBSEQUENCE} and so $u$ satisfies:
\begin{equation}
u(t,x)=u(s,x)+\int_t^s dr f(r,x)-\int_t^s dr\Big(L_\alpha +F(r,x)  \cdot D \Big) u(r,x).
\end{equation}
%This precisely proves that the limit $u\in {\mathscr C}_b^{\alpha+\beta}%([0,T]\times \R^d)$ and solves, for almost every $t\in [0,T] $, pointwise the initial equation \eqref{Asso_PDE}. 
\\
The Schauder estimates of Theorem \ref{THEO_SCHAU_ALPHA} then gives uniqueness. This also proves Theorem \ref{EXISTENCE_UNIQUENESS_ALPHA}.

\section{Proof of the Property (${\mathscr P}_\beta $) in the indicated case}
\label{SEC_PROOF_PROP_P}
\subsection{ Proof of Proposition \ref{CTR_OF_INTEGRABILITY_DER_GEN_STABLE1} on stable like operators close to $\triangle^{\alpha/2}$, $\alpha \in (0,1)$}\label{SEC_KOLO_P_BETA}

In other words, the L\'evy measure $\nu$ in \eqref{STAB_OPERATOR} rewrites:
$$\nu(dy) =\nu_{\alpha}(dy) = f\Big(\frac{y}{|y|}\Big)\frac{dy}{|y|^{d+\alpha}}.$$

We have to prove \eqref{wss}   for all $\alpha\in (0,1) $, and $\gamma \in [0,1] $; this   will rely on global estimates on the derivatives of $p_{\alpha}(t, \cdot )$, $t>0,$ which  can be deduced from the work of  Kolokoltsov \cite{kolo:97}. First by \cite[formula (2.38) in Proposition 2.6]{kolo:97} we know that
 there exists $c= c(\alpha, \eta )$ (where $\eta $ denotes the non degeneracy constant associated with the spectral measure in \eqref{LEVY_STABLE}) such that
\begin{equation} \label{df}
|D_y p_{\alpha} (t,y)| \le c \big (\frac{1}{t^{1/\alpha}} \wedge \frac{1}{|y|}\big ) \, p_{\alpha} (t,y),\;\;\; y \in \R^d, \, t>0.
\end{equation}
We need the following result for the second derivatives. 
%(the proof is postponed to  Appendix; it  uses  \cite[Proposition 2.5]%{kolo:97}).
\begin{lem}\label{222} For any positive $K>0$, there exists $c= c(\alpha, \eta , K)>0$ such that, for $|y| \le K t^{1/\alpha}$ we have 
\begin{equation} \label{df1}
|D_y^2 p_{\alpha} (t,y)| \le \frac{c}{t^{2/\alpha}}  \, p_{\alpha} (t,y),\;\;\;  \, t>0,
\end{equation}
and for  $|y| > K t^{1/\alpha}$ we have
\begin{equation} \label{df2}
|D_y^2 p_{\alpha} (t,y)| \le  \frac{c}{t^{}} \cdot  \frac{1}{|y|^{2- \alpha}}    \, p_{\alpha} (t,y),\;\;\; \, t>0.
\end{equation}
\end{lem}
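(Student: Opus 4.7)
The plan is to treat the diagonal and off-diagonal regimes separately, relying respectively on the scaling of $p_{\alpha}$ and on the sharp tail asymptotics of its derivatives provided in \cite{kolo:97}.

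For the diagonal regime $|y|\le K t^{1/\alpha}$, I would use the self-similarity $p_{\alpha}(t,y)=t^{-d/\alpha}p_{\alpha}(1,t^{-1/\alpha}y)$ established right after \eqref{DENSITY_FTI}, which yields by differentiation
\begin{equation*}
|D_y^2 p_{\alpha}(t,y)|=t^{-(d+2)/\alpha}|D^2 p_{\alpha}(1,t^{-1/\alpha}y)|.
\end{equation*}
Under \eqref{EQ_ND} together with the assumed smoothness of the spectral density, $p_{\alpha}(1,\cdot)$ is smooth, strictly positive and decays fast enough that on the compact ball $\{|z|\le K\}$ there exist $c_1,c_2,c_3>0$ with $c_1\le p_{\alpha}(1,z)$ and $|D^{j} p_{\alpha}(1,z)|\le c_3$ for $j\in\{0,1,2\}$. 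This gives $|D_y^2 p_{\alpha}(t,y)|\le (c_3/c_1)\,t^{-2/\alpha}p_{\alpha}(t,y)$, which is \eqref{df1}.

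For the off-diagonal regime $|y|>Kt^{1/\alpha}$, the point is to upgrade \eqref{df} by showing that each additional spatial derivative produces an extra factor $|y|^{-1}$ at infinity. This is precisely the content of the asymptotic expansions in \cite{kolo:97}, from which one extracts the two-sided estimate $p_{\alpha}(t,y)\asymp t|y|^{-d-\alpha}$ together with the one-sided derivative bound $|D_y^{k}p_{\alpha}(t,y)|\le c_k\, t|y|^{-d-\alpha-k}$ for $k\in\{1,2\}$ and $|y|\ge t^{1/\alpha}$. Dividing one by the other,
\begin{equation*}
|D_y^2 p_{\alpha}(t,y)|\le c\,|y|^{-2}\,p_{\alpha}(t,y),\qquad |y|>Kt^{1/\alpha}.
\end{equation*}
Since on this range $|y|^{\alpha}>K^{\alpha}t$, we may write $|y|^{-2}=|y|^{-\alpha}\cdot|y|^{\alpha-2}\le K^{-\alpha}t^{-1}|y|^{\alpha-2}$, yielding \eqref{df2}. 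Both bounds match continuously at $|y|\sim Kt^{1/\alpha}$ up to a constant depending on $K$, as $t^{-1}(Kt^{1/\alpha})^{\alpha-2}=K^{\alpha-2}t^{-2/\alpha}$.

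The main obstacle is the derivation of the second-derivative tail estimate $|D_y^2 p_{\alpha}(t,y)|\le c\,t|y|^{-d-\alpha-2}$, which is not a direct consequence of \eqref{df}. The cleanest route is to mimic Kolokoltsov's integration-by-parts on the Fourier representation \eqref{DENSITY_FTI}: writing
\begin{equation*}
D_y^2 p_{\alpha}(t,y)=\frac{1}{(2\pi)^d}\int_{\R^d}(-i\lambda)^{\otimes 2}e^{-i\langle y,\lambda\rangle}e^{t\Psi(\lambda)}\,d\lambda,
\end{equation*}
one localizes the integrand on $|\lambda|\lesssim|y|^{-1}$ and $|\lambda|\gtrsim|y|^{-1}$ and integrates by parts $N$ times in the high-frequency region, using the smoothness of the symbol $\Psi$ away from the origin (inherited from the smoothness of the spectral density) to transfer derivatives onto $e^{t\Psi(\lambda)}$ and gain the required $|y|^{-N}$ decay. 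This is exactly the mechanism behind \cite{kolo:97}, so in practice the argument reduces to quoting his Proposition~2.6 with the adaptation $k=2$; the rest of the proof is bookkeeping of the regimes as indicated above.
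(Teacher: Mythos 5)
Your diagonal-regime argument is fine: by the self-similarity $p_{\alpha}(t,y)=t^{-d/\alpha}p_{\alpha}(1,t^{-1/\alpha}y)$ and the fact that, for a smooth spectral density equivalent to the Lebesgue measure on ${\mathbb S}^{d-1}$, $p_{\alpha}(1,\cdot)$ is smooth with a strictly positive lower bound on the ball $\{|z|\le K\}$ (this positivity should be quoted, e.g. from Kolokoltsov's two-sided bounds), one gets \eqref{df1}. The genuine gap is in the off-diagonal regime: your whole argument rests on the tail estimate $|D_y^2 p_{\alpha}(t,y)|\le c\,t\,|y|^{-d-\alpha-2}$, i.e.\ a full gain of $|y|^{-1}$ per derivative, which is \emph{stronger} than what the lemma asserts and is not what the cited source provides. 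Kolokoltsov's Proposition 2.6 (formula (2.38), the paper's \eqref{df}) concerns only the first derivative; there is no "$k=2$ version" of it to quote, and the claimed reduction "in practice the argument reduces to quoting his Proposition~2.6 with the adaptation $k=2$" is precisely the missing step. Your sketched Fourier localization/integration-by-parts argument would have to be carried out in full for the anisotropic symbol $\Psi$, and the fact that the authors state \eqref{df2} with the weaker gain $t^{-1}|y|^{\alpha-2}$ (note $t^{-1}|y|^{\alpha-2}=|y|^{-2}\,|y|^{\alpha}/t\ge K^{\alpha}|y|^{-2}$ on this range) is an indication that the full gain is not available off the shelf in \cite{kolo:97}; the full-gain bound is known for the rotationally invariant case (Bogdan--Jakubowicz), but extending it to the general smooth stable-like class is exactly the non-trivial point you are skipping.

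For comparison, the paper's proof avoids this entirely: it invokes Kolokoltsov's Proposition 2.5, which estimates the auxiliary functions $\phi_{2,\theta}(x,\alpha,t\mu)$ (Fourier integrals carrying a quadratic factor $\int_{{\mathbb S}^{d-1}}\langle p,s\rangle^2\theta(ds)$), and then chooses the measure $\theta=|h|^2\delta_{h/|h|}$ so that $\phi_{2,\theta}=-D^2_{hh}p_{\alpha}(t,\cdot)$; estimates (2.29)--(2.30) of that proposition then give \eqref{df1} and \eqref{df2} directly, with the weaker (but entirely sufficient, since $\gamma+\alpha-2<0$ in the application) off-diagonal gain. If you replace your full-gain claim by this quotation, your bookkeeping of the two regimes goes through; as written, however, the key estimate is asserted rather than proved.
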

\begin{proof}
As in formula (2.23) of \cite{kolo:97}
we consider with $b=2$, $\sigma =t >0$,
\begin{gather*}
\phi_{2, \theta}(x, \alpha, t \mu) = \frac{1}{(2 \pi)^d} \int_{\R^d} 
\, [ \int_{{\mathbb S}^{d-1}} \langle p ,s\rangle^2 \theta(ds)]
 \, \exp \Big (- t\int_{{\mathbb S}^{d-1}} |\langle p,s\rangle|^\alpha \mu(ds) \Big) e^{-i \langle p ,  x \rangle } dp.
\end{gather*}
where $\theta$ is  a finite positive measure on ${\mathbb S}^{d-1}$ such that $\theta ({\mathbb S}^{d-1}) \le M$
for some $M>0$. 

Then by \cite[Proposition 2.5]{kolo:97} (see in particular estimates (2.29) and (2.30))  for any positive $K>0$, there exists $c= c(\alpha, \eta , K, M)>0$ such that, for $|x| \le K t^{1/\alpha}$ we have 
\begin{equation} \label{df1s}
|\phi_{2, \nu}(x, \alpha, t \mu)| \le \frac{c}{t^{2/\alpha}}  \, p_{\alpha} (t,x),\;\;\; x \in \R^d, \, t>0,
\end{equation}
and for  $|x| > K t^{1/\alpha}$ we have
\begin{equation} \label{df2s}
|\phi_{2, \nu}(x, \alpha, t \mu)| \le  \frac{c}{t^{}} \cdot  \frac{1}{|x|^{2- \alpha}}    \, p_{\alpha} (t,x),\;\;\; x \in \R^d, \, t>0
\end{equation}
(recalling  that in \cite{kolo:97} $p_{\alpha}(t,x)$ is denoted by  $ S(x, \alpha, t \mu)$).  Let us fix $M>0$.
Let $h \in \R^d$, $h \not = 0$, with $|h| \le \sqrt{M}$. Choosing  the measure $\theta = |h|^2\, \delta_{ \frac{ h} {|h|}}$ we find  that 
\begin{gather*}
\phi_{2, \theta}(x, \alpha, t \mu) = \frac{1}{(2 \pi)^d} \int_{\R^d} 
\,  \langle p ,h\rangle^2 
 \, \exp \Big (- t\int_{{\mathbb S}^{d-1}} |\langle p,s\rangle|^\alpha \mu(ds) \Big) e^{-i \langle p ,  x \rangle } dp = - D^2_{hh} p_{\alpha}(t,x),
\end{gather*}
i.e., we are considering the second derivative of $p_{\alpha}(t,\cdot)$ in the direction $h$. Assertions \eqref{df1} and \eqref{df2} now readily follow from \eqref{df1s}, \eqref{df2s}. 
\end{proof}

 %%%%
\def\a{
It is well known, see e.g. Kolokoltsov \cite{kolo:97} that there exists $C\ge 1$ s.t. for all $(t,z)\in \R_+^*\times\R^d $:
\begin{equation}
\label{CTR_DENS_ROT_INV}
C^{-1} \check p_\alpha(t,z) \le p_\alpha(t,z)\le C \check p_\alpha(t,z),\ \check p_\alpha(t,z):=\frac{1}{t^{\frac d\alpha}}\frac{\check c}{\Big(1+\frac{|z|}{t^{\frac 1 \alpha}}\Big)^{d+\alpha}}, \end{equation}
where $\check c $ is chosen s.t. $\int_{\R^d}\check p_\alpha(t,z) dz=1 $. In particular, \eqref{CORRESP_SHIFTED} and \eqref{CTR_DENS_ROT_INV} yield:
\begin{equation}
\label{BOUND_FROZEN_DENSITY}
C^{-1}\check p_\alpha\Big(s-t,y-m_{s,t}^{(\tau,\xi)}(x)\Big) \le \tilde p_\alpha^{(\tau,\xi)}(s,t,x,y)\le  C \check p_\alpha\Big(s-t,y-m_{s,t}^{(\tau,\xi)}(x)\Big).
\end{equation}

We will also thoroughly use the following important controls on the derivatives of $p_\alpha$ in the considered  rotationally invariant case. It
follows  e.g. from Lemma 5 and Remark 6 in Bogdan and Jakubowicz \cite{bogd:jaku:07} that:
\begin{equation}
\label{DER_HK_ROT_INV}
| D_xp_\alpha (t,z)|\le \frac{C}{t^{\frac 1\alpha}}\frac{1}{t^{\frac{d}{\alpha}}}\frac{1}{(1+\frac{|z|}{t^{\frac 1\alpha}})^{d+\alpha+1}}=:\frac{C}{t^{\frac 1\alpha}}\bar p_\alpha(t,z).
\end{equation}
In other words, the derivative induces a concentration gain. The same arguments as in \cite{bogd:jaku:07}, see also the decompositions in \cite{wata:07}, also yield that for $\ell\in \{1,2\} $:
\begin{equation}
\label{DER_HK_ROT_INV_HIGH}
| D_x^\ell p_\alpha (t,z)|\le \frac{C}{t^{\frac \ell\alpha}}\frac{1}{t^{\frac{d}{\alpha}}}\frac{1}{(1+\frac{|z|}{t^{\frac 1\alpha}})^{d+\alpha+\ell}}\le \frac{C}{t^{\frac \ell\alpha}}\bar p_\alpha(t,z).
\end{equation}

 \begin{REM}[About the driving symmetric stable processes]  
We point out that for a more general non-degenerate symmetric stable process, which has spherical measure equivalent to the Lebesgue measure of ${\mathbb S}^{d-1}$, denoting by   $\hat p_\alpha $ its density, we have the following weaker control (see e.g. Kolokoltsov \cite{kolo:97}). There exists $C:=C(c,d)$ s.t. for all $(t,z)\in \R_+^*\times \R^d$, $\ell \in \leftB 0,2\rightB $:% and setting $\check p_\alpha(t,z):=\frac{1}{t^{\frac d\alpha}}\frac{\check c}{\Big(1+\frac{|z|}{t^{\frac 1 \alpha}}\Big)^{d+\alpha}}$ where $\check c $ is chosen s.t. $\int_{\R^d}\check p_\alpha(t,z) dz=1 $:
\begin{eqnarray}
 |\partial_t^\ell  \hat p_\alpha(t,z)|&\le& \frac{C}{t^\ell}\check  p_\alpha(t,z),\ 
 |D_x^\ell  \hat p_\alpha(t,z)|\le \frac{C}{t^{\frac \ell \alpha}}\check  p_\alpha(t,z),\label{CTR_DER_HK_STAB}
\end{eqnarray} 
with $\check p_\alpha $ as in \eqref{CTR_DENS_ROT_INV}.

In particular, in this more general setting, there is no concentration gain on the derivatives, meaning that, in that case, the tails of $|D_x^\ell  \hat  p_\alpha(t,z)|$ can only integrate function growing as $C|z|^\eta,\ \eta<\alpha $ when $|z|$ goes to infinity.
\end{REM}
}
%%%%%%%%%%%%%%%
%We are now in position to prove 
%We have to prove 
%\eqref{wss}   for all $\alpha\in (0,1) $, and $\gamma \in [0,1] $ as 
% as a direct consequence of the next lemma. 
\noindent {\bf Proof of Proposition \ref{CTR_OF_INTEGRABILITY_DER_GEN_STABLE1}.}
 Let $l=1$. Using \eqref{df} we find 
\begin{gather*}
\int_{|y| \le t^{1/\alpha}} |y|^{\gamma}\, |D_y p_{\alpha}(t, y)| dy \le 
 \int_{|y| \le t^{ 1/\alpha}} t^{\gamma/\alpha}\, | D_y p_{\alpha}(t, y) | dy
 \\
 \le  c
 \int_{|y| \le t^{ 1/\alpha}} t^{\gamma/\alpha}\,   t^{- 1/\alpha} p_{\alpha}(t, y) dy \le  c  t^{[\gamma-1]/\alpha}.
\end{gather*}
On the other hand
\begin{gather*}
\int_{|y| > t^{1/\alpha}} |y|^{\gamma}\, |D_y p_{\alpha}(t, y)| dy \le 
 c \int_{|y| > t^{ 1/\alpha}}  
  |y|^{\gamma-1}
  p_{\alpha}(t, y)  dy
 \\
 \le  c
 \int_{|y| > t^{ 1/\alpha}}   t^{[\gamma-  1]/\alpha} p_{\alpha}(t, y) dy \le  c  t^{[\gamma-1]/\alpha}.
\end{gather*}
 Let $k =2$. Using \eqref{df1} and \eqref{df2} we find 
 \begin{gather*}
\int_{|y| \le t^{1/\alpha}} |y|^{\gamma}\, |D_y^2 p_{\alpha}(t, y)| dy  
 \le  c
 \int_{|y| \le t^{ 1/\alpha}} t^{\gamma/\alpha}\,   t^{- 2/\alpha} p_{\alpha}(t, y) dy \le  c  t^{[\gamma-2]/\alpha}.
\end{gather*}
Since $\alpha + \gamma <2$, we find
 \begin{gather*}
\int_{|y| > t^{1/\alpha}} |y|^{\gamma}\, |D_y^2 p_{\alpha}(t, y)| dy \le 
 c \int_{|y| > t^{ 1/\alpha}}  \frac{1}{t}
  |y|^{\gamma-2 + \alpha}
  p_{\alpha}(t, y)  dy
 \\
 \le  c
 \int_{|y| > t^{ 1/\alpha}}  \frac{1}{t}  t^{[\gamma-2 + \alpha]/ \alpha}
  p_{\alpha}(t, y) dy \le  c  t^{[\gamma-2]/\alpha}.
 \end{gather*}
 and the assertion follow.
 \qed

\vskip 1mm We can mention that, in the specific case of the rotationally invariant heat kernel, corresponding to the fractional Laplacian $\Delta^{\alpha/2} $, the previous computations could have been shortened exploiting explicitly the concentration gain for the derivatives of the heat kernel. Namely, it is known  from Lemma 5 and Remark 6 in Bogdan and Jakubowicz \cite{bogd:jaku:07} that in that case:
\begin{equation}
\label{DER_HK_ROT_INV}
| D_xp_\alpha (t,z)|\le \frac{C}{t^{\frac 1\alpha}}\frac{1}{t^{\frac{d}{\alpha}}}\frac{1}{(1+\frac{|z|}{t^{\frac 1\alpha}})^{d+\alpha+1}}.=:\frac{C}{t^{\frac 1\alpha}}\bar p_\alpha(t,z).
\end{equation}
In other words, the derivative induces a concentration gain. The same arguments as in \cite{bogd:jaku:07}, see also the decompositions in \cite{wata:07}, also yield the corresponding result for  $|D_x^2 p_\alpha(t,z)|\le Ct^{-2/\alpha}\bar p_\alpha(t,z) $.

\subsection{Proof of Proposition \ref{CTR_OF_INTEGRABILITY_DER_GEN_STABLE} on   symmetric  stable operators with  $\alpha\in (1/2,1) $}

Here we are considering  symmetric  stable operators such that  \eqref{EQ_ND} holds. We want to prove 
 \eqref{wss}   for all $\alpha\in (0,1) $, and $\gamma \in [0,\alpha) $.
 The result  follows easily from   the following key lemma.
\begin{lem}[Bounds and Sensitivities of the Stable Heat Kernel]
\label{SENS_SING_STAB}
There exists $C:=C(\A{A})$ s.t. for all $\ell \in \{1,2\} $, $t>0$, and $ y\in \R^d $:
\begin{equation} 
\label{CTR_BOUND_SING_KER}
|D_y^\ell p_\alpha(t,y)|\le \frac{C}{t^{\ell/\alpha}} q(t,y),
\end{equation}
where $\big( q(t,\cdot)\big)_{t>0}$ is a  family of probability densities on $\R^d $ such that $q(t,y) = {t^{-d/ \alpha}} \, q (1, t^{- 1/\alpha} y)$, $ t>0$, $ \in \R^d$ and  for all $\gamma \in [0,\alpha) $, there exists a constant $c:=c(\alpha,\eta,\gamma)$ s.t. 
\begin{equation}
\label{INT_Q}
\int_{\R^N}q(t,y)|y|^\gamma dy
\le C_{\gamma}t^{\frac{\gamma}{\alpha}},\;\;\; t>0,
\end{equation}

\end{lem}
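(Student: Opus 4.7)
The plan is to follow the decomposition approach of Watanabe~\cite{wata:07} adapted as in Lemma~4.2 of \cite{huan:meno:prio:19}, splitting the driving L\'evy process at the characteristic scale and exploiting self-similarity. Since $p_\alpha(t,y)=t^{-d/\alpha}p_\alpha(1,t^{-1/\alpha}y)$ by \eqref{DENSITY_FTI}, it suffices to produce a probability density $\bar q$ on $\R^d$ with $|D^\ell p_\alpha(1,y)|\le C\bar q(y)$ for $\ell\in\{1,2\}$ and $\int_{\R^d}|y|^\gamma \bar q(y)\,dy<\infty$ for every $\gamma\in[0,\alpha)$; then setting $q(t,y):=t^{-d/\alpha}\bar q(t^{-1/\alpha}y)$, both \eqref{CTR_BOUND_SING_KER} and \eqref{INT_Q} follow by scaling (with a change of variables $y=t^{1/\alpha}z$ in the moment integral). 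At $t=1$ I would decompose $Z_1=M_1+N_1$ into independent L\'evy components, with $M_1$ carrying the truncated measure $\nu_\alpha \1_{|y|\le 1}$ (small, hence bounded, jumps) and $N_1$ the compound Poisson part with measure $\nu_\alpha \1_{|y|>1}$ of finite intensity $\lambda_1:=\nu_\alpha(\{|y|>1\})$, so that $p_\alpha(1,\cdot)=p_M(1,\cdot)*\mu_{N_1}$ and $D^\ell p_\alpha(1,\cdot)=(D^\ell p_M(1,\cdot))*\mu_{N_1}$.

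The key estimate bears on $D^\ell p_M(1,\cdot)$, to be handled through Fourier inversion: $D^\ell p_M(1,y)=(2\pi)^{-d}\int_{\R^d}(i\xi)^\ell e^{\Psi_M(\xi)}e^{-iy\cdot\xi}\,d\xi$ with $-\Psi_M(\xi)=\int_{|z|\le 1}(1-\cos(\xi\cdot z))\nu_\alpha(dz)$. Combining \eqref{LEVY_STABLE}--\eqref{EQ_ND} with the trivial control $\int_{|z|>1}(1-\cos(\xi\cdot z))\nu_\alpha(dz)\le 2\lambda_1<\infty$ yields $-\Psi_M(\xi)\ge c|\xi|^\alpha/2$ for $|\xi|$ large, hence $|e^{\Psi_M(\xi)}|\le e^{-c|\xi|^\alpha/2}$ in that range. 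Since $M_1$ has bounded jumps, $\Psi_M$ is $C^\infty$ with polynomially bounded derivatives, and iterated integration by parts in $\xi$ produces arbitrary polynomial decay of $|D^\ell p_M(1,\cdot)|$ in $y$. Together with the uniform boundedness coming from integrability of $|\xi|^\ell e^{-c|\xi|^\alpha/2}$, this provides a smooth probability density $\tilde q$ on $\R^d$ with finite moments of every order and a constant $C$ such that $|D^\ell p_M(1,y)|\le C\tilde q(y)$ for all $y\in\R^d$, $\ell\in\{1,2\}$.

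Setting $\bar q:=\tilde q*\mu_{N_1}$ (itself a probability density, as the convolution of a probability density with a probability measure) immediately gives $|D^\ell p_\alpha(1,y)|\le C\bar q(y)$. For the $\gamma$-moment, $\gamma<\alpha<1$ allows the subadditivity $|y|^\gamma\le|y-z|^\gamma+|z|^\gamma$, whence
\[
\int_{\R^d}|y|^\gamma\bar q(y)\,dy\le \int_{\R^d}|u|^\gamma\tilde q(u)\,du+\E[|N_1|^\gamma].
\]
The first term is finite by construction; conditioning on the Poisson count $K_1$ of jumps of $N_1$ and applying subadditivity once more yields $\E[|N_1|^\gamma]\le \E[K_1]\,\E[|Y_1|^\gamma]=\lambda_1\cdot\lambda_1^{-1}\int_{|y|>1}|y|^\gamma\nu_\alpha(dy)=\tilde\mu(\S^{d-1})\int_1^\infty\rho^{\gamma-1-\alpha}\,d\rho$, which is finite precisely because $\gamma<\alpha$. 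The main obstacle is the middle step: jointly extracting uniform boundedness and rapid pointwise decay of $|D^\ell p_M(1,\cdot)|$, so as to dominate it by a probability density integrable against $|y|^\gamma$. Both properties rest on preservation of the $|\xi|^\alpha$-lower bound for the truncated characteristic exponent, reflecting that the stable singularity of $\nu_\alpha$ at the origin is left intact by truncation at scale $1$, and both ultimately confine the range of admissible H\"older exponents to $\gamma<\alpha$ via the heavy-tail integrability of $\mu_{N_1}$.
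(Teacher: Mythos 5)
Your proof is correct and follows essentially the same route as the paper: the Watanabe-type splitting of the driving process into a small-jump part truncated at the characteristic scale plus an independent compound Poisson large-jump part, domination of the derivatives of the small-jump density by a rapidly decaying probability density, and the tail-moment bound $\int_{|y|>1}|y|^\gamma\nu_\alpha(dy)<\infty$ for $\gamma<\alpha$ handling the large jumps. The only cosmetic differences are that you rescale to $t=1$ before truncating (equivalent to the paper's truncation at threshold $t^{1/\alpha}$ followed by the self-similarity identities for $M_t$ and $N_t$) and that you prove the Fourier/integration-by-parts estimate on $D^\ell p_M(1,\cdot)$ directly, whereas the paper cites it as Lemma A.2 of \cite{huan:meno:prio:19}.
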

 \begin{REM}
\label{NOTATION_Q} %\nero
{
From now on, for the family of stable densities $\big(q(t,\cdot)\big)_{t>0} $, 
we also use the notation  $q(\cdot):=q(1,\cdot) $,  i.e., without any specified argument $q(\cdot)$ stands for the density $q$ at time $1$.}
\end{REM} 
\begin{proof}
%For the computations 
%%% TOLTO 2/5/17
%We assume for notational simplicity  $\alpha_i\in (0,1) $, $i =1, \ldots n$.
%The results \nero{can be proved similarly for $\alpha_1 =\alpha\in [1,2) $ up} to an additional second   order expansion when Taylor formulas are needed (\textcolor{black}{see Remark \ref{ALPHA_2} for details}). 
%\nero
 %{\color{black} Enrico: maybe to be rewritten in a more analytic way, maybe referring more to Sztonik}
We denote by $(S_t)_{t\ge 0} $ a stable process defined on some probability space $(\Omega,{\mathcal F},({\mathcal F}_t)_{\ge 0},\P) $ whose L\'evy exponent is given by \eqref{LEVY_KHINTCHINE}. For $t>0 $, the heat kernel $p_\alpha(t,\cdot) $ associated with $L_\alpha$ given in \eqref{DENSITY_FTI} is then precisely the density of $S_t$.

 Let us recall that, for a given fixed $t>0$, we can use an It\^o-L\'evy  decomposition
 at the associated  characteristic stable time scale (i.e., the truncation is performed at the threshold $t^{\frac {1} {\alpha}} $) 
to write $S_t:=M_t+N_t$
where $M_t$ and $N_t $ are independent random variables. %\nero
%{ (we are considering a probability space $(\Omega,\F,\P) $ on which the process $S = (S_s)_{s\ge 0} $ is defined; $\E $ denotes the associated expectation).}  
More precisely, 
%\textcolor{black}{Ho tolto la $t$ dalla misura di Poisson. E' tutto %omogeneo no?}
 \begin{equation} \label{dec}
 N_s = \int_0^s \int_{ |x| > t^{\frac {1} {\alpha}} }
\; x  P(du,dx), \;\;\; \; M_s = S_s - N_s, \;\; s \ge 0,
 \end{equation} 
where $P$ is the  Poisson random measure associated with the process $S$; for the considered fixed $t>0$,
 $M_t$ and $N_t$ correspond to
%\nero{lo togilerei questo, per via del troncamente che dipdente dal tempo non credo che  hai la proprieta' di martingala the \textit{martingale} part }
 the \textit{small jumps part } and
%\textit{compound Poisson process}
\textit{large jumps part} respectively. 
A similar decomposition has been already used in
%is inspired by 
% We refer to  
 \cite{wata:07},        \cite{szto:10} 
 and \cite{huan:meno:15}, \cite{huan:meno:prio:19} (see in particular Lemma 4.3 therein). It is useful to note that the cutting threshold in \eqref{dec} precisely yields for the considered $t>0$ that:
\begin{equation} \label{ind}
N_t  \overset{({\rm law})}{=} t^{\frac 1\alpha} N_1 \;\; \text{and} \;\;
M_t  \overset{({\rm law})}{=} t^{\frac 1\alpha} M_1.
\end{equation}  
To check the assertion about $N$ we start with 
$$
\E [e^{i \langle p , N_t \rangle}] = 
\exp \Big(  t
\int_{\S^{d-1}} \int_{t^{\frac 1\alpha}}^{\infty}
 \Big(\cos (\langle p, r\xi \rangle)  - 1  \Big) \, \frac{dr}{r^{1+\alpha}}\tilde \mu_{S}(d\xi) \Big), \;\; p \in \R^d
$$
(see  \eqref{LEVY_KHINTCHINE} and \cite{sato:99}). Changing variable $\frac{r}{t^{\frac 1\alpha}} =s$
we get that $\E [e^{i \langle p , N_t \rangle}]$ $= \E [e^{i \langle p , t^{\frac 1\alpha} N_1 \rangle}]$ for any $p \in \R^d$ and this shows the assertion (similarly we get
the statement for $M$).
%; see Lemma \ref{
% or the Appendix in \cite{huan:meno:15}
%for further details. 
The density of $S_t$ then writes
\begin{equation}
\label{DECOMP_G_P}
p_S(t,x)=\int_{\R^d} p_{M}(t,x-\xi)P_{N_t}(d\xi),
\end{equation}
where $p_M(t,\cdot)$ corresponds to the density of $M_t$ and $P_{N_t}$ stands for the law of $N_t$. %\nero
{From Lemma A.2 in \cite{huan:meno:prio:19} (see as well
%Additionally, from 
Lemma B.1  }
in \cite{huan:meno:15}),  $p_M(t,\cdot)$  belongs to the Schwartz class ${\mathscr S}(\R^N) $ and satisfies that for all $m\ge 1 $ and all $\ell \in \{0,1,2\} $, there exist constants  $\bar C_m,\ C_{m}$ s.t. for all $t>0,\ x\in  \R^d  $:
\begin{equation}
\label{CTR_DER_M}
|D_x^\ell p_M(t,x)|\le \frac{\bar C_{m}}{t^{\frac{\ell}{\alpha} }} \, p_{\bar M}(t,x),\;\; \text{where} \;\; p_{\bar M}(t,x)
:=
\frac{C_{m}}{t^{\frac{d}{\alpha}}} \left( 1+ \frac{|x|}{t^{\frac 1\alpha}}\right)^{-m}
\end{equation}
where $C_m$ is chosen in order that {\it $p_{\bar M}(t,\cdot ) $ be a probability density.}

%\textcolor{black}{From S. to E. and P.E.: I prefer to refer to a Lemma in [HMP19] rather than doing the computations in appendix, it saves space and it would be almost a copy and paste, which is bad when we submit articles. Do you agree with that? YES }

%For the sake of completeness, we give a proof of this statement in 
%Appendix \ref{APP_TEC} 
%(see 
%%Lemma \ref{EST_DENS_MART}).
% Note that the asymptotic decay of $p_{\bar M} $ here depends on on the integer $m$ considered. For our analysis, recalling from Remark \ref{ID_LAW} and equation \eqref{CORRESP_L_S} that we are disintegrating the density of a non degenerate stable process in dimension $N$, and that we are led to investigate sensitivities, which involve for the small jumps derivatives up to order 2 or 3 (depending on $\alpha \in (0,1) $ or $\alpha \in [1,2) $), see e.g. \eqref{CTR_SMALL_DIAG} below, we can fix
% $$m:=N+4.$$
We carefully point out that, to establish the indicated results, since we are led to consider potentially singular spherical measures,  we only focus on integrability properties similarly to \cite{huan:meno:prio:19} and not on pointwise density estimates as for instance in \cite{huan:meno:15}. The main idea thus consists in exploiting %\nero
{\eqref{dec},}  \eqref{DECOMP_G_P} and \eqref{CTR_DER_M}.
The derivatives on which we want to obtain quantitative bounds  will be expressed through derivatives of $p_M(t,\cdot)$, which also give the corresponding time singularities. However, as for general stable processes, the integrability restrictions come from the large jumps (here $N_t $) and only depend on its index $\alpha$.
% (see \eqref{CTR_DER_M}).
%The various sensitivities involved come from \eqref{CTR_DER_M} which also gives the corresponding time singularities.
A crucial point then consists in observing  that the convolution $\int_{\R^d}p_{\bar M}(t,x-\xi)P_{N_t}(d\xi) $ actually corresponds to the density of the random variable 
\begin {equation} \label{we2}
\bar S_t:=\bar M_t+N_t,\;\; t>0 
\end{equation}
 (where $\bar M_t $ has density $p_{\bar M}(t,.)$ and is independent of $N_t $; %\nero
 {to have such decomposition one can define each $\bar S_t$ on a product probability space}). Then, the integrability properties of $\bar M_t+N_t $, and more generally of all random variables appearing below, come from those of $\bar M_t $ and $N_t$. 

One can easily check that $p_{\bar M}(t,x) = {t^{-\frac d\alpha}} \, p_{\bar M} (1, t^{-\frac 1\alpha} x),$ $ t>0, \, $ $x \in \R^d.$  Hence 
$$
\bar M_t  \overset{({\rm law})}{=} t^{\frac 1\alpha} \bar M_1,\;\;\; N_t  \overset{({\rm law})}{=} t^{\frac 1\alpha}  N_1.
$$
By independence of $\bar M_t$ and $N_t$, using the Fourier transform, one can easily prove that 
\begin{equation} \label{ser1}
\bar S_t  \overset{({\rm law})}{=} t^{\frac 1\alpha} \bar S_1.
\end{equation} 
Moreover, 
$
\E[|\bar S_t|^\gamma]=\E[|\bar M_t+N_t|^\gamma]\le C_\gamma t^{\frac\gamma \alpha}(\E[|\bar M_1|^\gamma]+\E[| N_1|^\gamma])\le C_\delta t^{\frac\gamma \alpha}, \; \gamma \in (0,\alpha).
$ 
This shows that the density of $\bar S_t$ verifies \eqref{INT_Q}. The controls on the derivatives are derived similarly using 
\ref{CTR_DER_M} for $\ell\in \{1,2\} $ and the same previous argument.
\end{proof}

\subsection{Proof of Property $({\mathscr P}_\beta) $ for the relativistic stable operator}
\label{SEC_P_BETA_REL}
%Let us consider here $L_{\alpha} $ corresponding to the relativistic stable operator with symbol 
%\begin{gather} \label{symb_rel1}
%\Psi (\lambda ) = \big( |\lambda|^2 + m^{\frac{2}{\alpha}} \big)^{\frac{\alpha}{2}} - m,
%\end{gather}
%for some $m>0$, $\alpha \in (0,1)$, $\lambda \in \R^d$. 
%
%According to Ryznar, \cite{ryzn:02} (see also the references therein), it appears to be an important object in the study of relativistic Schr\"odinger operators. 
%
%Let us fix $\alpha \in (0,1)$. The density $p= p_{m}$ of such operator is given in formula (7) of [R] at page 4 (in [R] $2\beta = \alpha$):
%\begin{gather} \label{re2}
%p_m(t,x) = e^{mt} \int_0^{\infty} g (u,x) e^{-m^{\frac{2}{\alpha}} u} \;\; \theta_{\alpha} (t,u) du,
%\end{gather}
% where $g (u,x) = (4 \pi u)^{- d/2} e^{- |x|^2/ 4u}$ is the Gaussian kernel. Moreover  $\theta_{\alpha} (t,u)$, $u>0$, is the density function of the strictly $\alpha/2$-stable subordinator at time $t$ (see (4) in [R]). It is well know  that $p_m(t, \cdot) \in C^{\infty}(\R^d)$, $t>0.$
%
%In the limit case of $m=0$ one get the density of $\triangle^{\alpha/2}$:
%\begin{gather} %\label{re2} 
%  p_0(t,x) = \int_0^{\infty} g (u,x)  \;\; \theta_{\alpha} (t,u) du, \;\; x \not =0,
%\end{gather}
% which is also considered in the proof of Lemma 5 in \cite{bogd:jaku:07}, 
% where it is used that $\theta_{\alpha} (t,u) \le c_1 \, t u^{-1 - \frac{\alpha}{2}}$.
%
%We obtain obtain easily that, for $t \in (0,1]$, $x \not =0$,
%\begin{equation}\label{rel1}
%p_m(t,x) \le e^{m} p_0(t,x).
%\end{equation}

In order to prove the required integrability properties we first have to differentiate the density. 

Starting from \eqref{re2} and similarly to  the proof of Lemma 5 in \cite{bogd:jaku:07} we  can differentiate under the integral sign and obtain, for $x \not =0$,
\begin{eqnarray}
|D_x p_{\alpha,m}(t,x)| &=& \Big | e^{mt} \int_0^{\infty} D_x g (u,x) e^{-m^{\frac{2}{\alpha}} u} \;\; \theta_{\alpha} (t,u) du    \Big|
%\\
%\nonumber 
= \Big | e^{mt} \; \frac{-x}{2} \int_0^{\infty}  \frac{g (u,x)}{u} e^{-m^{\frac{2}{\alpha}} u} \;\; \theta_{\alpha} (t,u) du    \Big|
\nonumber\\  
&\le& 
  e^{mt} \; \frac{|x|}{2} \int_0^{\infty}  \frac{g (u,x)}{u}  \; \theta_{\alpha} (t,u) du   = e^{mt} |D_x p_{\alpha,0} (t,x)|.\label{rel2}
\end{eqnarray}
The spatial concentration gain induced by the differentiation of $ p_{\alpha,0}(t,x) $ then provides the required integrability property for the first derivative.
 Similarly, 
\begin{eqnarray*}
D_x^2 p_{\alpha,m}(t,x) &=&  e^{mt} \int_0^{\infty} D_x^2 g (u,x) e^{-m^{\frac{2}{\alpha}} u} \;\; \theta_{\alpha} (t,u) du    
\\
&=&  e^{mt} \; \frac{-1}{2} \int_0^{\infty}  \frac{g (u,x)}{u} e^{-m^{\frac{2}{\alpha}} u} \;\; \theta_{\alpha} (t,u) du   + e^{mt}
\frac{|x|^2}{4} \int_0^{\infty}  \frac{g (u,x)}{u^2} e^{-m^{\frac{2}{\alpha}} u} \;\; \theta_{\alpha} (t,u) du.
%\\ 
% e^{mt} \;\int_0^{\infty}  \frac{1}{4 u^2}\Big[ |x|^2 - 2u \Big] \, {g %(u,x)}\,  e^{-m^{\frac{2}{\alpha}} u} \;\; \theta_{\alpha} (t,u) du  
\end{eqnarray*}
Since 
$$
 | e^{mt} \; \frac{-1}{2} \int_0^{\infty}  \frac{g (u,x)}{u} e^{-m^{\frac{2}{\alpha}} u} \;\; \theta_{\alpha} (t,u) du | \le 
 \frac{e^{m} |D_x p_{\alpha,0} (t,x)|}{|x|}, \;\;\; x \not =0,
$$
and 
\begin{eqnarray*}
e^{mt}
\frac{|x|^2}{4} \int_0^{\infty}  \frac{g (u,x)}{u^2} e^{-m^{\frac{2}{\alpha}} u} \;\; \theta_{\alpha} (t,u) du
 \le e^{m}
\frac{|x|^2}{4} \int_0^{\infty}  \frac{g (u,x)}{u^2}  \;\; \theta_{\alpha} (t,u) du %\\
\le e^m |D_x^2 p_{\alpha,0}(t,x)|  +  \frac{e^{m} |D_x p_{\alpha,0} (t,x)|}{|x|},
\end{eqnarray*}
to prove \A{NDb} for $k=2$, we concentrate on  $ \frac{|D_x p_{\alpha,0} (t,x)|}{|x|}$.
The estimate
$$
\frac{1}{|z|} \, | D_x p_{\alpha,0} (t,z)|\le \frac{C}{t^{\frac 1\alpha}}\frac{1}{t^{\frac{d}{\alpha}}}\frac{1}{(1+\frac{|z|}{t^{\frac 1\alpha}})^{d+\alpha+1}} \;
\frac{1}{\frac{|z|}{t^{\frac 1\alpha}} t^{\frac 1\alpha} },
$$
easily yields {$({\mathscr P}_\beta) $} for $k=2$.

\appendix
\section{Proof of some technical results}
\label{APP_TEC}
\subsection{Proof of the flow lemma \ref{FLOW_LEMMA}} %{\color{black} to be modified}
\textcolor{black}{We first assume for the proof that the control \eqref{22} holds globally, i.e., the function $F$ is globally $\beta $-H\"older continuous with constant $K_0$}.
The point is \textcolor{black}{then} to write for $0\le t\le s\le T,\ (x,x')\in (\R^d)^2$:
\begin{eqnarray}
\theta_{s,t}(x)-\theta_{s,t}(x')&=&x-x'+\int_{t}^s [F(u,\theta_{u,t}(x))-F(u,\theta_{u,t}(x'))]du\notag\\
&=&x-x'+\int_{t}^s [F(u,\theta_{u,t}(x))-F_\delta(u,\theta_{u,t}(x))]du \notag\\
&&+\int_{t}^s [F_\delta(u,\theta_{u,t}(x))-F_\delta(u,\theta_{u,t}(x'))]du+\int_{t}^s [F(u,\theta_{u,t}(x'))-F_\delta(u,\theta_{u,t}(x'))]du,\notag
\end{eqnarray}
where for all $y\in \R^d $, and $\delta>0 $, $F_\delta(y):=\int_{\R^d} F(y-z)\phi_\delta(z)dz $ where $\phi_\delta(z)=\frac{1}{\delta^d}\phi(\frac{z}{\delta}) $, where $\phi :\R^d\in \R^+$ is a standard mollifier, i.e., a smooth non negative function s.t. $\int_{\R^d } \phi(z)dz=1$. It is then clear from the fact that $F\in L^\infty(C^{\beta}) $ that:
\begin{eqnarray}
\forall (u,z)\in [0,T]\times \R^d, \ |F_\delta(u,z)-F(u,z)|&\le& {K_0}\delta^\beta,\notag\\
|DF_\delta|_\infty&\le& {K_0}\delta^{\beta-1}.\label{CTR_MOLL}
\end{eqnarray}
We therefore get:
\begin{eqnarray*}
|\theta_{s,t}(x)-\theta_{s,t}(x')|&\le& |x-x'|+2{K_0}\delta^\beta (s-t)+{K_0}\delta^{-1+\beta}\int_{t}^s |\theta_{u,t}(x)-\theta_{u,t}(x')|\\
&\le &\Big(|x-x'|+2{K_0}\delta^\beta (s-t)\Big)\exp({K_0}\delta^{-1+\beta}(s-t)),
\end{eqnarray*}
using the Gronwall lemma for the last inequality. Choose now $\delta^{-1+\beta}(s-t)=1 \iff (s-t)=\delta^{1-\beta} $ to equilibrate the previous contributions. We eventually derive:
\begin{eqnarray*}
|\theta_{s,t}(x)-\theta_{s,t}(x')|&\le& \Big(|x-x'|+2{K_0} (s-t)^{1+\frac{\beta}{1-\beta}}\Big)\exp({K_0}),\\
&\le &C(|x-x'|+(s-t)^{\frac{1}{1-\beta}})\le C(|x-x'|+(s-t)^{\frac{1}{\alpha}}),
\end{eqnarray*}
recalling for the last inequality that $(s-t)\le T\le 1 $ and $\frac{1}{\alpha}<\frac{1}{1-\beta} $ since $\alpha+\beta>1 $. This gives the result when $F$ is globally $\beta $-H\"older continuous. 

\textcolor{black}{Recalling now that we appeal to this result when $|x-x'|\le \big((T-t)/c_0\big)^{1/\alpha} $ where the r.h.s. is small, provided that $T$ is, it is plain to localize the above computations and to observe that the previous global results are actually also local when  the final time horizon $T$ is small enough and the initial points are close w.r.t. the time scale.
This concludes the proof of the Lemma}.

\subsection{Proof of equation \eqref{CTR_FRAC_OP_HOLDER}}
\label{SEC_PROOF_EST_OP}
Let $\varphi\in C_b^{\gamma+\theta}(\R^d)$ where we recall $\gamma+\theta>1 $ with $\theta\in (0,1],\ \gamma\in (0,1) $.
We aim at proving: 
$$\|L_\theta \varphi \|_{C_b^\gamma}\le C_{\theta,\gamma} \|\varphi\|_{C_b^{\gamma+\theta}}.$$
Write first, for all $x\in \R^d$:
\begin{eqnarray*}
|L_\theta \varphi(x)|&\le& \Big|\int_{|z|\le 1} \Big( \varphi(x+z)-\varphi(x)-D \varphi(x)\cdot z \I_{\theta=1} \Big)\nu_\theta(dz)\Big|+\int_{|z|\ge 1} \|\varphi\|_\infty\nu_\theta(dz)\notag\\
&\le &\big( \int_0^1 d\lambda \int_{{\mathbb S}^{d-1}}\tilde \mu(d\xi) \int_{\rho\in (0,1]}\frac{d\rho}{\rho^{1+\theta}}[ D\varphi(x+\lambda \xi \rho)-D\varphi(x)\I_{\theta=1}] \cdot \xi \rho+C_\theta\|\varphi\|_\infty\big)\\
&\le & C_{\theta,\gamma} (\|D\varphi\|_{\infty}\I_{\theta\in (0,1)}+[D\varphi]_\gamma\I_{\theta=1}+\|\varphi\|_\infty) \le C_{\theta,\gamma}\|\varphi\|_{C_b^{\theta+\gamma}}.
\end{eqnarray*}
This gives the control for the supremum norm. Let us now turn to the H\"older modulus.
Fix, $ x,x'\in \R^d,\ x\neq x'$. We first consider the case $\theta\in (0,1) $ for simplicity. Write:
\begin{eqnarray*}
&&|L_\theta \varphi(x)-L_\theta \varphi(x')|\\
&\le& \Big|\int_0^1d\lambda  \int_{|z|\le |x-x'|} \Big(D\varphi(x+\lambda z) -D\varphi(x'+\lambda z) \Big) \cdot z\nu_\theta(dz) \Big|\\
&&+\Big|\int_{0}^1 d\lambda \int_{|z|\ge |x-x'|} \Big( D \varphi(x'+z+\lambda(x-x'))-D\varphi(x'+\lambda (x-x'))\Big)\cdot (x-x') \nu_\theta(dz)\Big|\\
&\le & C\Big( \int_{\rho\in (0,|x-x'|]} \frac{d\rho}{\rho^{1+\theta}} [D\varphi]_{\theta+\gamma-1}|x-x'|^{\theta+\gamma-1}\rho
+\int_{\rho\ge |x-x'| }\frac{d\rho}{\rho^{1+\theta}}  [D\varphi]_{\alpha+\gamma-1} \rho^{\theta+\gamma-1}|x-x'|\Big) \\
&\le & C_{\theta,\gamma} [D\varphi]_{\theta+\gamma-1} |x-x'|^\gamma.
\end{eqnarray*}
The only modifications needed for $\theta=1 $ concern the \textit{small jumps}. Indeed, we can introduce the compensator only up to the threshold $|x-x'|$. We are simply led to analyze:
\begin{eqnarray*}
&&\Big|\int_0^1d\lambda  \int_{|z|\le |x-x'|} \Big([D\varphi(x+\lambda z) -D\varphi(x) ]-[D\varphi(x'+\lambda z)-D\varphi(x')] \Big) \cdot z\nu_1(dz) \Big|\\
&\le& C\int_{\rho\in (0,|x-x'|]}\frac {d\rho}{\rho^2}[D\varphi]_\gamma \rho^{(1+\gamma-1)+1}\le C_{1,\gamma}[D\varphi]_\gamma|x-x'|^\gamma.
\end{eqnarray*}
The other contribution can be handled as above. 
We have therefore established for all $\theta\in (0,1] $, $\gamma\in (0,1) $ s.t. $\theta+\gamma>1 $:
$$|L_\theta \varphi(x)-L_\theta \varphi(x')|\le C_{\theta,\gamma }[D \varphi]_{\theta+\gamma-1}|x-x'|^\gamma .$$ 
This completes the proof of inequality \eqref{CTR_FRAC_OP_HOLDER}.
%$Du(t,x)\cdot z\I_{|z|\le |x-x'|} $. 

%%
\def\c{ 
\subsection{Proof of Lemma \ref{INT_DIAG}}
We split $\R^{d}=\{ y\in \R^d: |y-\theta_{s,t}(x)|\le 2c_0^{-\frac 1\alpha }(s-t)^{\frac 1\alpha}\}\cup \{ y\in \R^d: |y-\theta_{s,t}(x)|> 2c_0^{-\frac 1\alpha }(s-t)^{\frac 1\alpha}\}=:{\mathscr D}_{s,t,c_0}\cup {\mathscr {OD}}_{s,t,c_0} $, which respectively correspond to diagonal and off-diagonal sets when integrating \eqref{CTR_PERT_DIAG}. Write for all $\mu\in [0,1] $:
\begin{eqnarray}
&&\int_{\R^{d}} dy\bar p_\alpha\Big(s-t,y-\big(\theta_{s,t}(x)+(1-\mu)(x'-x) \big)\Big) \left( \frac{|y-\theta_{s,t}(x)|}{(s-t)^{\frac 1\alpha}}\right)^\beta\notag\\
&\le& C\Big[\int_{{\mathscr D}_{s,t,c_0}}\frac{dy}{(s-t)^{\frac d\alpha}}\left( \frac{|y-\theta_{s,t}(x)|}{(s-t)^{\frac 1\alpha}}\right)^\beta \notag\\
& &+\int_{{\mathscr {OD}}_{s,t,c_0}}\frac{dy}{(s-t)^{\frac d\alpha}}\left( \frac{|y-\theta_{s,t}(x)|}{(s-t)^{\frac 1\alpha}}\right)^\beta\frac{1}{\Big(1- \frac{|x-x'|}{(s-t)^{\frac 1\alpha}}+\frac{|y-\theta_{s,t}(x)|}{(s-t)^{\frac 1\alpha}} \Big)^{d+\alpha+1}}\Big].\notag
\end{eqnarray}
Recalling that $|x-x'|/(s-t)^{\frac{1}{\alpha}}\le c_0^{-\frac 1\alpha} $, we obtain:
\begin{eqnarray}
&&\int_{\R^{d}}dy\bar p_\alpha\Big(s-t,y-\big(\theta_{s,t}(x)+(1-\mu)(x'-x) \big)\Big) \left( \frac{|y-\theta_{s,t}(x)|}{(s-t)^{\frac 1\alpha}}\right)^\beta\notag\\
&\le&C \Big[ (2c_0^{-\frac 1\alpha})^{d+\beta}+\int_{{\mathscr {OD}}_{s,t,c_0}}\frac{dy}{(s-t)^{\frac d\alpha}}\left( \frac{|y-\theta_{s,t}(x)|}{(s-t)^{\frac 1\alpha}}\right)^\beta\frac{1}{\Big(1+\frac{|y-\theta_{s,t}(x)|}{2(s-t)^{\frac 1\alpha}} \Big)^{d+\alpha+1}}\Big]\notag\\
&\le&C \Big[ (2c_0^{-\frac 1\alpha})^{d+\beta}+\int_{r\ge 2 c_0^{-\frac 1\alpha}} dr r^{\beta}r^{d-1}r^{-(d+\alpha+1)}\Big]\le C c_0^{-\frac{d+\beta}{\alpha}},
\end{eqnarray}
up to a modification of $C$ and recalling that $c_0\le 1$ for the last inequality. This concludes the proof of the lemma.
}

\bibliographystyle{alpha}
\bibliography{bibli}

\begin{thebibliography}{FRRO17}

\bibitem[App09]{appl:09}
D.~Applebaum.
\newblock {\em {L{\'e}vy {P}rocesses and {S}tochastic {C}alculus, II Edition}}.
\newblock Cambridge University Press, 2009.

\bibitem[Bas09]{bass:09}
R.~F. Bass.
\newblock {Regularity results for stable-like operators}.
\newblock {\em J. Funct. Anal.}, 257(8):2693--2722, 2009.

\bibitem[BJ07]{bogd:jaku:07}
K.~Bogdan and T.~Jakubowski.
\newblock {Estimates of Heat Kernel of Fractional Laplacian Perturbed by
  Gradient Operators}.
\newblock {\em Comm. Math. Physics}, 271--1:179--198, 2007.

\bibitem[CHM18]{chau:hono:meno:18}
P-E. {Chaudru de Raynal}, I.~Honor{\'e}, and S.~Menozzi.
\newblock {Sharp Schauder Estimates for some Degenerate Kolmogorov Equations}.
\newblock working paper or preprint, 2018.

\bibitem[CM16]{cham:meno:16}
D.~Chamorro and S.~Menozzi.
\newblock Fractional operators with singular drift: smoothing properties and
  {M}orrey--{C}ampanato spaces.
\newblock {\em Rev. Mat. Iberoam.}, 32(4):1445--1499, 2016.

\bibitem[CM18]{cham:meno:18}
D.~Chamorro and S.~Menozzi.
\newblock {Non Linear Singular Drifts and Fractional Operators: when Besov
  meets Morrey and Campanato}.
\newblock {\em Potential Analysis}, 49--1:1--35, 2018.

\bibitem[CS07]{caff:silv:07}
L.~A. Caffarelli and L.~Silvestre.
\newblock {An extension problem related to the fractional {L}aplacian}.
\newblock {\em Comm. Partial Differential Equations}, 32(7-9):1245--1260, 2007.

\bibitem[CV10]{caff:vass:10}
L.~A. Caffarelli and A.~Vasseur.
\newblock {Drift diffusion equations with fractional diffusion and the
  quasi-geostrophic equation}.
\newblock {\em Ann. of Math. (2)}, 171(3):1903--1930, 2010.

\bibitem[DK13]{dong:kim:13}
H.~Dong and D.~Kim.
\newblock Schauder estimates for a class of non-local elliptic equations.
\newblock {\em Discrete Contin. Dyn. Syst.}, 33(6):2319--2347, 2013.

\bibitem[DPL95]{DL95}
G.~Da~Prato and A.~Lunardi.
\newblock {On the Ornstein-Uhlenbeck operator in spaces of continuous
  functions}.
\newblock {\em J. Funct. Anal.}, 131:94--114, 1995.

\bibitem[FRRO17]{fern:roso:17}
X.~Fern{\'a}ndez-Real and X.~Ros-Oton.
\newblock {Regularity theory for general stable operators: parabolic
  equations}.
\newblock {\em J. Funct. Anal.}, 272(10):4165--4221, 2017.

\bibitem[HM16]{huan:meno:15}
L.~Huang and S.~Menozzi.
\newblock {A {P}arametrix {A}pproach for some {D}egenerate {S}table {D}riven
  {SDE}s}.
\newblock {\em {A}nnales {I}nstit. {H}. {P}oincar{\'e} (B)}, 52:1925--1975,
  2016.

\bibitem[HMP19]{huan:meno:prio:19}
L.~Huang, S.~Menozzi, and E.~Priola.
\newblock {$L^p$ Estimates For Degenerate Non-Local Kolmogorov Operators}.
\newblock {\em {Journal de Math{\'e}matiques Pures et Appliqu{\'e}es}},
  121:162--215, 2019.

\bibitem[IJS16]{imbe:jin:shvy:16}
C.~Imbert, T.~Jin, and R.~Shvydkoy.
\newblock {Schauder estimates for an integro-differential equation with
  applications to a nonlocal {B}urgers equation}.
\newblock {\em ar{X}iv 1604.07377, to appear in Ann. Fac. Sc. Toulouse}, 2016.

\bibitem[Jac05]{jaco:05}
N.~Jacob.
\newblock {\em {Pseudo differential operators and {M}arkov processes. Vol. I.}}
\newblock Imperial College Press, 2005.

\bibitem[KK15]{kim:kim:15}
I.~Kim and K.~H. Kim.
\newblock {A {H}{\"o}lder regularity theory for a class of non-local elliptic
  equations related to subordinate {B}rownian motions}.
\newblock {\em Potential Anal.}, 43(4):653--673, 2015.

\bibitem[Kol00]{kolo:97}
V.~Kolokoltsov.
\newblock {Symmetric stable laws and stable-like jump diffusions}.
\newblock {\em Proc. {L}ondon {M}ath. {S}oc.}, 80:725--768, 2000.

\bibitem[KP10]{kryl:prio:10}
N.~V. Krylov and E.~Priola.
\newblock {Elliptic and parabolic second-order {PDE}s with growing
  coefficients}.
\newblock {\em Comm. Partial Differential Equations}, 35(1):1--22, 2010.

\bibitem[Kry96]{kryl:96}
N.~V. Krylov.
\newblock {\em {Lectures on elliptic and parabolic equations in {H}{\"o}lder
  spaces}}.
\newblock Graduate {S}tudies in {M}athematics 12. AMS, 1996.

\bibitem[Lun09]{luna:09}
A.~Lunardi.
\newblock {\em {Interpolation Theory, second edition}}.
\newblock Appunti. Scuola Normale Superiore di Pisa (Nuova Serie), Edizioni
  della Normale, Pisa, 2009.

\bibitem[MO69]{molc:ostr:69}
S.~A. Molchanov and E.~Ostrovskii.
\newblock {Symmetric stable processes as traces of degenerate diffusion
  processes}.
\newblock {\em Teor. Verojatnost. i Primenen.}, 14:127--130, 1969.

\bibitem[MP14]{miku:prag:14}
R.~Mikulevicius and H.~Pragarauskas.
\newblock {On the {C}auchy problem for integro-differential operators in
  {H}{\"o}lder classes and the uniqueness of the martingale problem}.
\newblock {\em Potential Anal.}, 40(4):539--563, 2014.

\bibitem[Pen17]{peng:17}
J.~Peng.
\newblock Heat kernel estimates for non-symmetric stable-like processes.
\newblock {\em https://arxiv.org/abs/1709.02836}, 2017.

\bibitem[Pri12]{prio:12}
E.~Priola.
\newblock {Pathwise uniqueness for singular SDEs driven by stable processes}.
\newblock {\em Osaka J. Math.}, 49-- 2:421--447, 2012.

\bibitem[Pri18]{prio:18}
E.~Priola.
\newblock {Davie's type uniqueness for a class of {SDE}s with jumps}.
\newblock {\em Ann. Inst. Henri Poincar{\'e} Probab. Stat.}, 54(2):694--725,
  2018.

\bibitem[ROS16]{roso:serr:16}
X.~Ros-Oton and J.~Serra.
\newblock {Regularity theory for general stable operators}.
\newblock {\em J. Differential Equations}, 260(12):8675--8715, 2016.

\bibitem[Ryz02]{ryzn:02}
M.~Ryznar.
\newblock Estimates of {G}reen function for relativistic {$\alpha$}-stable
  process.
\newblock {\em Potential Anal.}, 17(1):1--23, 2002.

\bibitem[Sat99]{sato:99}
K.~Sato.
\newblock {\em {L{\'e}vy processes and Infinitely divisible Distributions}}.
\newblock Cambridge University Press, 1999.

\bibitem[Sil12]{silv:12}
L.~Silvestre.
\newblock {On the differentiability of the solution to an equation with drift
  and fractional diffusion}.
\newblock {\em Indiana University Mathematical Journal}, 61 -- 2:557--584,
  2012.

\bibitem[SVZ13]{silv:vico:zlat:13}
L.~Silvestre, V.~Vicol, and A.~Zlato\v{s}.
\newblock {On the loss of continuity for super-critical drift-diffusion
  equations}.
\newblock {\em Arch. Ration. Mech. Anal.}, 207(3):845--877, 2013.

\bibitem[Szt10]{szto:10}
P.~Sztonyk.
\newblock {Estimates of tempered stable densities}.
\newblock {\em J. Theoret. Probab.}, 23(1):127--147, 2010.

\bibitem[Tri83]{trie:83}
H.~Triebel.
\newblock {\em {Theory of function spaces, II}}.
\newblock Birkhauser, 1983.

\bibitem[TTW74]{tana:tsuc:wata:74}
H.~Tanaka, M.~Tsuchiya, and S.~Watanabe.
\newblock {Perturbation of drift-type for {L}{\'e}vy processes}.
\newblock {\em J. Math. Kyoto Univ.}, 14:73--92, 1974.

\bibitem[Wat07]{wata:07}
T.~Watanabe.
\newblock {Asymptotic estimates of multi-dimensional stable densities and their
  applications}.
\newblock {\em Transactions of the American Mathematical Society},
  359(6):2851--2879, 2007.

\bibitem[ZZ18]{zhan:zhao:18}
X.~Zhang and G.~Zhao.
\newblock {Dirichlet problem for supercritical non-local operators}.
\newblock {\em ar{X}iv:1809.05712}, 2018.

\end{thebibliography}

\end{document}